\definecolor{darkred}{rgb}{0.5,0,0}
\definecolor{darkgreen}{rgb}{0,0.5,0}
\definecolor{darkblue}{rgb}{0,0,0.5}
\newtheorem{theorem}{Theorem}[section]
\newtheorem{corollary}[theorem]{Corollary}
\newtheorem{proposition}[theorem]{Proposition}
\newtheorem{lemma}[theorem]{Lemma}
\newtheorem{lem}[theorem]{}
\theoremstyle{definition}
\newtheorem{definition}[theorem]{Definition}
\theoremstyle{remark}
\newtheorem{remark}[theorem]{Remark}
\newtheorem{example}[theorem]{Example}
\newcommand{\blem}{\begin{lem} \rm}
\newcommand{\elem}{\end{lem}}
\newcommand\A{\mathcal{A}}
\newcommand\B{\mathcal{B}}
\newcommand\E{\mathcal{E}}
\newcommand\M{\mathcal{M}}
\newcommand\cH{\mathcal{H}}
\renewcommand\M{\mathcal{M}}
\renewcommand\S{\mathcal{S}}
\newcommand{\V}{\mathcal{V}}
\renewcommand{\L}{\mathcal{L}}
\newcommand{\J}{\mathcal{J}}
\newcommand{\U}{\mathcal{U}}
\newcommand{\R}{\mathbb{R}}
\newcommand{\C}{\mathbb{C}}
\newcommand{\cC}{\mathcal{C}}
\newcommand{\Z}{\mathbb{Z}}
\newcommand{\ddt}{\frac{d}{dt}}
\newcommand{\ppth}{\frac{\partial}{\partial \theta}}
\newcommand\lie[1]{\mathfrak{#1}}
\newcommand{\h}{\lie{h}}
\newcommand{\g}{\lie{g}}
\renewcommand{\t}{\lie{t}}
\newcommand{\Alc}{\lie{A}}
\newcommand{\su}{\lie{su}}
\newcommand{\so}{\lie{so}}
\newcommand{\on}{\operatorname}
\newcommand{\ainfty}{{$A_\infty$\ }}
\newcommand{\mycap}{{\on{cap}}}
\newcommand{\cut}{{\on{cut}}}
\newcommand{\Fact}{\on{Fact}}
\newcommand{\dual}{\vee}
\newcommand{\Ab}{\on{Ab}}
\newcommand{\Fun}{\on{Func}}
\newcommand{\Obj}{\on{Obj}}
\newcommand{\graph}{\on{graph}}
\newcommand{\Lag}{\on{Lag}}
\newcommand{\Ad}{ \on{Ad} }
\newcommand{\Hol}{ \on{Hol} } 
\newcommand{\Hom}{ \on{Hom}}
\newcommand{\Ind}{ \on{Ind}}
\renewcommand{\ker}{ \on{ker}}
\newcommand{\coker}{ \on{coker}}
\newcommand{\Vol}{  \on{Vol}}
\newcommand{\diag}{  \on{diag}}
\newcommand{\codim}{\on{codim}}
\newcommand{\ssm}{-}
\newcommand\dirac{/\kern-1.2ex\partial} 
\newcommand\qu{/\kern-.7ex/} 
\newcommand\lqu{\backslash \kern-.7ex \backslash} 
\newcommand\dr{r_+ \kern-.7ex - \kern-.7ex r_-}
\renewcommand{\comment}[1]   {{}}
\newcommand{\labell}\label
\renewcommand{\d}{{\mbox{d}}}
\newcommand{\ol}{\overline}
\newcommand{\olp}{\ol{\partial}}
\newcommand\Phinv{\Phi^{-1}}
\newcommand\eps{\epsilon}
\newcommand\om{\omega}
\newcommand{\f}{\frac}
\newcommand{\lan}{\langle}
\newcommand{\ran}{\rangle}
\newcommand{\hh}{{\f{1}{2}}}
\newcommand{\ti}{\tilde}
\newcommand\pt{\on{pt}}
\newcommand\cE{\mathcal{E}}
\newcommand\Tr{\on{Tr}}
\newcommand\mE{\mathcal{E}}
\newcommand\mB{\mathcal{B}}
\newcommand\curv{\on{curv}}
\newcommand\Map{\on{Map}}
\newcommand\rank{\on{rank}}
\newcommand\ev{\on{ev}}
\newcommand\Vect{\on{Vect}}
\newcommand\ul{\underline}
\renewcommand\Im{\on{Im}}
\newcommand\reg{{\on{reg}}}
\newcommand\bra[1]{ \lan {#1} \ran} 
\newcommand\bdefn{\begin{definition}}
\newcommand\edefn{\end{definition}}
\newcommand\bea{\begin{eqnarray*}}
\newcommand\eea{\end{eqnarray*}}
\newcommand\bcv{\left[ \begin{array}{r} }
\newcommand\ecv{\end{array} \right] }
\newcommand\bma{\left[ \begin{array} }
\newcommand\ema{\end{array} \right]}
\newcommand\ben{\begin{enumerate}}
\newcommand\een{\end{enumerate}}
\newcommand\beq{\begin{equation}}
\newcommand\eeq{\end{equation}}
\newcommand\bex{\begin{example}}
\newcommand\bsj{\left\{ \begin{array}{rrr} }
\newcommand\esj{\end{array} \right\}}
\newcommand\Cone{\on{Cone}}
\newcommand\Id{\on{Id}}
\newcommand\cI{\mathcal{I}}
\newcommand\eex{\end{example}}
\newcommand\crit{{\on{crit}}}
\newcommand\critval{{\on{critval}}}
\newcommand\sx{*\kern-.5ex_X}
\def\mathunderaccent#1{\let\theaccent#1\mathpalette\putaccentunder}
\def\putaccentunder#1#2{\oalign{$#1#2$\crcr\hidewidth \vbox
to.2ex{\hbox{$#1\theaccent{}$}\vss}\hidewidth}}
\newcommand{\tri}[3]{
\setlength{\unitlength}{0.00037489in}
\begin{picture}(2643,2220)(0,-10)
\path(375,2130)(2175,2130)
\path(2055.000,2100.000)(2175.000,2130.000)(2055.000,2160.000)
\path(1050,330)(150,1889)
\path(235.977,1800.073)(150.000,1889.000)(184.014,1770.075)
\path(2400,1844)(1500,285)
\path(1534.014,403.925)(1500.000,285.000)(1585.977,373.927)
\put(-515,2040){\makebox(0,0)[b]\tiny {#1}}
\put(485,-115){\makebox(0,0)[b]\tiny {#2}}
\put(2490,2040){\makebox(0,0)[b]\tiny {#3}}
\end{picture}
}
\newcommand\one{\setlength{\unitlength}{0.00017489in}
{
\begin{picture}(1824,1839)(-300,1500)
\path(12,1812)(14,1810)(18,1807)
	(25,1800)(36,1790)(52,1775)
	(72,1757)(98,1734)(127,1707)
	(161,1678)(198,1645)(237,1611)
	(278,1576)(320,1541)(362,1505)
	(404,1471)(445,1439)(485,1408)
	(524,1380)(561,1353)(597,1329)
	(631,1307)(663,1288)(694,1271)
	(724,1257)(753,1244)(781,1234)
	(808,1226)(834,1220)(860,1215)
	(886,1213)(912,1212)(938,1213)
	(964,1215)(990,1220)(1016,1226)
	(1043,1234)(1071,1244)(1100,1257)
	(1130,1271)(1161,1288)(1193,1307)
	(1227,1329)(1263,1353)(1300,1380)
	(1339,1408)(1379,1439)(1420,1471)
	(1462,1505)(1504,1541)(1546,1576)
	(1587,1611)(1626,1645)(1663,1678)
	(1697,1707)(1726,1734)(1752,1757)
	(1772,1775)(1788,1790)(1799,1800)
	(1806,1807)(1810,1810)(1812,1812)
\path(12,12)(14,14)(18,17)
	(25,24)(36,34)(52,49)
	(72,67)(98,90)(127,117)
	(161,146)(198,179)(237,213)
	(278,248)(320,283)(362,319)
	(404,353)(445,385)(485,416)
	(524,444)(561,471)(597,495)
	(631,517)(663,536)(694,553)
	(724,567)(753,580)(781,590)
	(808,598)(834,604)(860,609)
	(886,611)(912,612)(938,611)
	(964,609)(990,604)(1016,598)
	(1043,590)(1071,580)(1100,567)
	(1130,553)(1161,536)(1193,517)
	(1227,495)(1263,471)(1300,444)
	(1339,416)(1379,385)(1420,353)
	(1462,319)(1504,283)(1546,248)
	(1587,213)(1626,179)(1663,146)
	(1697,117)(1726,90)(1752,67)
	(1772,49)(1788,34)(1799,24)
	(1806,17)(1810,14)(1812,12)
\end{picture}}}
\newcommand\cupcap{\setlength{\unitlength}{0.00002489in}
{ 
\begin{picture}(1824,839)(-300,500)
\path(12,1812)(14,1810)(18,1807)
	(25,1800)(36,1790)(52,1775)
	(72,1757)(98,1734)(127,1707)
	(161,1678)(198,1645)(237,1611)
	(278,1576)(320,1541)(362,1505)
	(404,1471)(445,1439)(485,1408)
	(524,1380)(561,1353)(597,1329)
	(631,1307)(663,1288)(694,1271)
	(724,1257)(753,1244)(781,1234)
	(808,1226)(834,1220)(860,1215)
	(886,1213)(912,1212)(938,1213)
	(964,1215)(990,1220)(1016,1226)
	(1043,1234)(1071,1244)(1100,1257)
	(1130,1271)(1161,1288)(1193,1307)
	(1227,1329)(1263,1353)(1300,1380)
	(1339,1408)(1379,1439)(1420,1471)
	(1462,1505)(1504,1541)(1546,1576)
	(1587,1611)(1626,1645)(1663,1678)
	(1697,1707)(1726,1734)(1752,1757)
	(1772,1775)(1788,1790)(1799,1800)
	(1806,1807)(1810,1810)(1812,1812)
\path(12,12)(14,14)(18,17)
	(25,24)(36,34)(52,49)
	(72,67)(98,90)(127,117)
	(161,146)(198,179)(237,213)
	(278,248)(320,283)(362,319)
	(404,353)(445,385)(485,416)
	(524,444)(561,471)(597,495)
	(631,517)(663,536)(694,553)
	(724,567)(753,580)(781,590)
	(808,598)(834,604)(860,609)
	(886,611)(912,612)(938,611)
	(964,609)(990,604)(1016,598)
	(1043,590)(1071,580)(1100,567)
	(1130,553)(1161,536)(1193,517)
	(1227,495)(1263,471)(1300,444)
	(1339,416)(1379,385)(1420,353)
	(1462,319)(1504,283)(1546,248)
	(1587,213)(1626,179)(1663,146)
	(1697,117)(1726,90)(1752,67)
	(1772,49)(1788,34)(1799,24)
	(1806,17)(1810,14)(1812,12)
\end{picture}}}
\newcommand\two{\setlength{\unitlength}{0.00017489in}
\begin{picture}(1824,1839)(0,-10)
\path(12,1812)(1812,12)
\path(1812,1812)(1137,1137)
\path(687,687)(12,12)
\end{picture}
}
\renewcommand\sharp{\setlength{\unitlength}{0.00013333in}
\begin{picture}(688,703)(0,-10)
\path(244,644)(244,44)
\path(444,644)(444,44)
\path(44,444)(644,444)
\path(644,244)(44,244)
\end{picture}
}
\newcommand\twist{\setlength{\unitlength}{0.00002489in}
\begin{picture}(1824,1839)(0,-10)
\path(12,1812)(1812,12)
\path(1812,1812)(1137,1137)
\path(687,687)(12,12)
\end{picture}
}
\newcommand\three{\setlength{\unitlength}{0.00017489in}
{
\begin{picture}(1824,1839)(0,-10)
\path(12,1812)(14,1810)(17,1806)
	(24,1799)(34,1788)(49,1772)
	(67,1752)(90,1726)(117,1697)
	(146,1663)(179,1626)(213,1587)
	(248,1546)(283,1504)(319,1462)
	(353,1420)(385,1379)(416,1339)
	(444,1300)(471,1263)(495,1227)
	(517,1193)(536,1161)(553,1130)
	(567,1100)(580,1071)(590,1043)
	(598,1016)(604,990)(609,964)
	(611,938)(612,912)(611,886)
	(609,860)(604,834)(598,808)
	(590,781)(580,753)(567,724)
	(553,694)(536,663)(517,631)
	(495,597)(471,561)(444,524)
	(416,485)(385,445)(353,404)
	(319,362)(283,320)(248,278)
	(213,237)(179,198)(146,161)
	(117,127)(90,98)(67,72)
	(49,52)(34,36)(24,25)
	(17,18)(14,14)(12,12)
\path(1812,1812)(1810,1810)(1807,1806)
	(1800,1799)(1790,1788)(1775,1772)
	(1757,1752)(1734,1726)(1707,1697)
	(1678,1663)(1645,1626)(1611,1587)
	(1576,1546)(1541,1504)(1505,1462)
	(1471,1420)(1439,1379)(1408,1339)
	(1380,1300)(1353,1263)(1329,1227)
	(1307,1193)(1288,1161)(1271,1130)
	(1257,1100)(1244,1071)(1234,1043)
	(1226,1016)(1220,990)(1215,964)
	(1213,938)(1212,912)(1213,886)
	(1215,860)(1220,834)(1226,808)
	(1234,781)(1244,753)(1257,724)
	(1271,694)(1288,663)(1307,631)
	(1329,597)(1353,561)(1380,524)
	(1408,485)(1439,445)(1471,404)
	(1505,362)(1541,320)(1576,278)
	(1611,237)(1645,198)(1678,161)
	(1707,127)(1734,98)(1757,72)
	(1775,52)(1790,36)(1800,25)
	(1807,18)(1810,14)(1812,12)
\end{picture}
}}
\newcommand{\kr}{
\setlength{\unitlength}{0.00087489in}
{
\begin{picture}(82,147)(0,30)
\path(12,100)(41,71)(70,100)
\path(41,71)(41,41)(12,12)
\path(41,41)(70,12)
\end{picture}
}
}
\newcommand{\Diff}{\on{Diff}}
\newcommand\GFuk{{\on{Fuk}^{\sharp}}}
\newcommand\Fuk{{\on{Fuk}}}
\newcommand{\Ob}{\on{Ob}}
\begin{document}

\title[Exact triangle for fibered Dehn twists] {Exact triangle for
fibered Dehn twists}

\author{Katrin Wehrheim} 
\address{
Department of Mathematics,
University of California, Berkeley, 
Berkeley, CA 94720.
{\em E-mail address: katrin@math.berkeley.edu}}

\author{Chris T. Woodward}
\address{Department of Mathematics, 
Rutgers University,
Piscataway, NJ 08854.
{\em E-mail address: ctw@math.rutgers.edu}}

\thanks{Partially supported by NSF grants CAREER 0844188 and DMS
  0904358}

\begin{abstract}  
We use quilted Floer theory to generalize Seidel's long exact sequence
in symplectic Floer theory to fibered Dehn twists.  We then apply the
sequence to construct versions of the Floer and Khovanov-Rozansky
exact triangles in Lagrangian Floer theory of moduli spaces of
bundles.
\end{abstract} 

\maketitle

\tableofcontents

\section{Introduction} 
\label{tri}

Seidel's long exact sequence \cite[Theorem 1]{se:lo} describes the
effect of a symplectic Dehn twist on Lagrangian Floer cohomology.  In
many examples (moduli spaces of bundles, nilpotent slices, etc.) the
relevant fibrations have Morse-Bott rather than Morse singularities
and the associated monodromy maps are {\em fibered} Dehn twists.  Many
years ago Seidel suggested that this sequence should generalize to the
fibered case.  In this paper we show how to carry out this suggestion 
using quilted Floer theory developed in Wehrheim-Woodward
\cite{ww:isom}, \cite{ww:quilts}, \cite{ww:quiltfloer}.  Quilted Floer
theory gives an expression of the third term in the exact triangle as
a push-pull functor, similar to the situation in the analogous
triangle in algebraic geometry developed by Horja \cite{ho:de}.

To state the main result suppose that $M$ is an exact or monotone
symplectic manifold.  If $M$ has boundary $\partial M$ then we assume
that the boundary $\partial M$ is convex so that our spaces of
pseudoholomorphic curves satisfy good compactness properties.  A {\em
  Lagrangian brane} in $M$ is a
\label{addedcompactoriented}
compact, oriented Lagrangian submanifold $L$ equipped with a grading
in the sense of \cite{ww:quiltfloer} and relative spin structure in
the sense of \cite{orient}.  We say that a Lagrangian brane $L$ is
{\em admissible} if $L$ is monotone in the sense of
\cite{ww:quiltfloer}, the image of the fundamental group $\pi_1(L)$ of
$L$ in $\pi_1(M)$ is torsion for any choice of base point and $L$ has
minimal Maslov number at least three.  This notion of admissibility is
chosen so that any pair $L^0,L^1$ of admissible Lagrangian branes in
$M$ is monotone as a pair, which implies an energy-index relation for
pseudoholomorphic strips.  This relation in turn implies that disk
bubbles cannot obstruct the proof of $\partial^2=0$ in the
construction of the {\em Lagrangian Floer homology group}
$HF(L^0,L^1)$.  The underlying complex for this group is generated by
perturbed intersection points of $L^0,L^1$.
The differential counts finite energy holomorphic strips with boundary
in $L^0,L^1$.  Taking the Floer groups as morphism spaces, one obtains
the cohomology of the Fukaya category of Lagrangian branes in $M$.
For the moment, we work with $\Z_2$ coefficients, although the main
result will be stated with $\Z$ coefficients. 

To recall Seidel's exact triangle in Lagrangian Floer cohomology, let
$C \subset M$ be a Lagrangian sphere equipped with an identification
with a unit sphere, and let $\iota_C: C \to C, \ v \mapsto -v$ denote
the antipodal map.  Associated to $C$ is a {\em symplectic Dehn twist}
$$\tau_C: M \to M, \quad \tau_C | C = \iota_C, \quad \on{supp}(\tau_C)
\subset U_C $$
equal to the antipodal map on $C$ and supported in a neighborhood
$U_C$ of $C$.  In the case of a Lefschetz fibration, if $C$ is a
vanishing cycle then $\tau_C$ is the monodromy around the
corresponding critical value.  The assumption $\dim M\ge 4$ in the
following is equivalent to its Lagrangian submanifolds, in particular
the sphere $C$, being of codimension (equal to dimension) at least
$2$.

\begin{theorem}  {\rm (Seidel exact triangle, \cite{se:lo})}  
Let $M$ be a compact monotone or exact symplectic manifold with convex
boundary and dimension at least four, and let $C,L^0,L^1$ be
admissible Lagrangian branes such that $C$ is equipped with a
diffeomorphism to a sphere.  Then there exists a long exact sequence
\vskip .1in
$$ \tri{\hskip -.7in$HF(L^0,L^1)$}{\hskip -.5in $HF(L^0,C) \otimes
  HF(C,L^1)$.}{$HF(L^0,\tau_C L^1)$} $$
\end{theorem}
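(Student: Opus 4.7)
The plan is to realize $\tau_C$ as the monodromy around a single critical value of a Lefschetz fibration and then derive the triangle from a Lagrangian cobordism of correspondences, using the quilted composition theorem of \cite{ww:isom}. One constructs a Lefschetz fibration $\pi : E \to D^2$ with generic fiber $M$ and a unique critical value $0 \in D^2$ whose vanishing cycle is $C$; parallel transport around $\partial D^2$ then realizes $\tau_C$. The fibration determines a Lagrangian correspondence $\mathcal{L}_\pi \subset M^- \times M$ (built from the vanishing thimble) whose embedded geometric composition with any Lagrangian $L$ transverse to $C$ is Hamiltonian isotopic to $\tau_C L$.

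First I would reinterpret the three terms as quilted Floer groups: $HF(L^0,\tau_C L^1) \cong HF(L^0,\mathcal{L}_\pi, L^1)$, $HF(L^0,L^1) \cong HF(L^0,\Delta_M, L^1)$, and $HF(L^0,C) \otimes HF(C,L^1) \cong HF(L^0, C \times C, L^1)$, where $C \times C \subset M^- \times M$ is the product correspondence and the K\"unneth splitting in the last case reflects the fact that $C \times C$ factors through a point. These identifications are consequences of the embedded composition theorem for Lagrangian correspondences. Next I would construct, working in the standard neighborhood model $T^*S^n$ of $C$, a Lagrangian cobordism in $\R \times M^- \times M$ with three ends $\mathcal{L}_\pi$, $\Delta_M$, and $C \times C$, arising from a local Lagrangian surgery in the Lefschetz model in which the vanishing thimble interpolates between the diagonal and the spherical correspondence. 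Tensoring this cobordism with $L^0$ and $L^1$ and applying the quilted version of the Biran--Cornea principle, namely that a three-ended Lagrangian cobordism induces an exact triangle of Yoneda modules, produces the desired long exact sequence.

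The main obstacle is twofold: verifying that the composition theorem from \cite{ww:isom} applies at each stage, i.e.\ that $\mathcal{L}_\pi$, $\Delta_M$, and $C \times C$ each compose transversally and embeddedly with $L^0$ and $L^1$, and that the grading, monotonicity, and relative spin data propagate coherently along the cobordism; and controlling the ends of the one-dimensional moduli spaces of quilted pseudoholomorphic strips in the cobordism so that they produce exactly the differentials of the mapping cone. Disk and sphere bubbling are excluded by the admissibility hypothesis (minimal Maslov number $\geq 3$) together with the dimension condition $\dim M \geq 4$, so once the cobordism is in hand the argument reduces to standard Floer-theoretic compactness and gluing. With $\Z$ coefficients the same argument works, provided the relative spin structures on $L^0, L^1, C$ and the grading on $\mathcal{L}_\pi$ are chosen compatibly along the cobordism, so that the induced signs on boundary degenerations match the signs in the mapping-cone differential.
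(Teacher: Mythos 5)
Your proposal is correct in outline but takes a genuinely different route from the paper. The paper proves the (more general, fibered) Theorem \ref{main} in the Floer--Braam--Donaldson/Seidel style: it defines the map $HF(L^0,C^t,C,\tau_C^{-1}L^1)\to HF(L^0,\tau_C^{-1}L^1)$ as a ``quilted chaps'' relative invariant and the map $HF(L^0,\tau_C^{-1}L^1)\to HF(L^0,L^1)$ as the relative invariant of a Lefschetz--Bott fibration over the strip with monodromy $\tau_C$; it shows the composition is null-homotopic by degenerating the fibration so a standard local piece bubbles off at the seam (Corollary \ref{pinch}); it matches generators via Lemma \ref{bijection} and proves exactness to leading order (Theorem \ref{leadingthm}); and it concludes from an abstract double-mapping-cone lemma over an $\R$-filtered complex (Lemma \ref{doublecone}). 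What you describe instead is the Lagrangian-cobordism route, which the paper explicitly cites at \cite{mak:dehn} as ``a different approach to exact triangles via Lagrangian cobordism'': build a three-ended Lagrangian cobordism in $\C\times M^-\times M$ with ends $\graph(\tau_C)$, $\Delta_M$ and $C\times C$, and feed it into the Biran--Cornea machinery. Your route is cleaner conceptually (the whole triangle is one geometric object and one lands directly in the derived category), while the paper's filtration argument is more elementary and extends more easily to the fibered and codimension-one settings that are the paper's actual concern.

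Two small corrections. First, the identification $HF(L^0,C^t,C,L^1)\cong HF(L^0,C)\otimes HF(C,L^1)$ is not an instance of the embedded geometric composition theorem; that theorem gives $HF(L^0,C^t,C,L^1)\cong HF(L^0,C\times C,L^1)$, and passing from there to the tensor product is a separate K\"unneth argument, valid here because the middle patch of any quilt maps to the point $B$ and is therefore constant (and, over $\Z$, because the Floer cochain groups are free abelian so no Tor term appears). Second, the correspondence you call $\mathcal{L}_\pi$ is just $\graph(\tau_C)$ (up to inversion, a bookkeeping matter); the vanishing thimble lives in the total space $E$, not in $M^-\times M$, and enters in the construction of the cobordism rather than the correspondence. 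Finally, be aware that the existence and monotonicity of the three-ended cobordism, together with the applicability of Biran--Cornea's theorem in the monotone quilted setting, is the genuine content of \cite{mak:dehn} rather than a routine verification; your phrase ``arising from a local Lagrangian surgery'' glosses over the part of the argument that actually carries the weight.
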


Seidel's result is often referred to as a categorification of
Picard-Lefschetz since by taking the Euler characteristics one
essentially recovers the Picard-Lefschetz formula as in Arnold
\cite[Chapter I]{arn:sing}.  The Fukaya-categorical version
(conjectured by Kontsevich) is developed in Seidel's book
\cite{se:bo}.

Many interesting fibrations that arise in representation theory or
gauge theory (such as nilpotent slices or moduli spaces of bundles
over a family of curves) have not just Morse singularities but rather
Morse-Bott singularities.  Here the words {\em Morse} and {\em
  Morse-Bott} are used in the sense of non-degeneracy of the Hessian,
which in this setting is a {\em complex} matrix.  Extensions of the
Picard-Lefschetz formula to fibrations $\pi: E \to S$ with more
general singularities are considered by Clemens \cite{cl:pl}, Landman
\cite{la:pl}, and many subsequent authors; the Morse-Bott situation is
a particularly easy case.  Let $s_0 \in S$ be a critical value, and
$$B := \crit(\pi) \cap E_{s_0}$$ 
the critical locus in its fiber, which need no longer consist of
isolated points.  Let $s \in S$ be a generic nearby point.  The analog
of the vanishing cycle in this case is a manifold $C$ fibering over
$B$, 
$$ E_s \supset C \to B $$ 
consisting of points that converge to $B$ under parallel transport,
called by Clemens \cite{cl:pl} the {\em vanishing bundle}.  In the
symplectic setting, $C$ is a coisotropic submanifold and the map to
$B$ is a smooth submersion with maximally isotropic fibers.  The
monodromy $\tau_C$, which arises from parallel transport around $s_0$,
is a {\em fibered Dehn twist} as introduced in Section~\ref{twists}.
Roughly speaking, a fibered Dehn twist is a Dehn twist in each fiber
of a fibered neighbourhood $U \to B$ of $C\subset M$.  Let $M = E_s$
and $c$ the codimension of $C$.  A special case of Clemens
\cite[Theorem 4.4]{cl:pl} gives that the monodromy $\tau_C$ acts on a
homology class $\alpha \in H(M)$ by the formula
\begin{equation} \label{fpl} 
(\tau_C)_*\alpha = \alpha + (-1)^{(c+1)(c+2)/2} [C]\cdot [C^t] \cdot \alpha ,
\end{equation} 
where the action of
$$[C] \cdot : H(B) \to H(M), \quad [C^t] \cdot : H(M) \to H(B) $$
is given by slant products, that is, the images of $[C]$ under
$$H(M \times B) \to \Hom(H(M),H(B)) \ \ \text{resp.} \ \Hom(H(B),H(M)) .$$  
The main result of this paper is a categorification of the fibered
Picard-Lefschetz formula \eqref{fpl} to the setting of Floer-Fukaya
theory.  In other words, we generalize Seidel's triangle to the
fibered case.  As before, need to assume will use suitable
monotonicity conditions to ensure well-defined Floer cohomology and
relative invariants arising from pseudoholomorphic quilts in
\cite{ww:quilts}.  However, we expect our proofs to directly
generalize to a version of the exact triangle in any setting in which
algebraic and analytic refinements provide well-defined Floer
cohomologies.

\begin{definition} \label{sphere} {\rm (Spherically fibered coisotropics)}  
A {\em spherically fibered coisotropic submanifold} of a symplectic
manifold $M$ is a coisotropic submanifold $C \subset M$ of codimension
$c\geq 1$ such that
\begin{enumerate} 
\item {\rm (Fibrating)} the null-foliation of $C$ is fibrating over a
  symplectic base $B$ with fiber $S^c$ a sphere of dimension $c$ and
\item {\rm (Orthogonal structure group)} the structure group of $p: C
  \to B$ is equipped with a reduction to $SO(c+1)$, that is, a
  principal $SO(c+1)$-bundle $P\to B$ and a bundle isomorphism
  $P\times_{SO(c+1)} S^c \cong C$.
\end{enumerate} 
\end{definition}  
\noindent Any spherically fibered coisotropic gives rise to a fibered
Dehn twist $\tau_C \in \Diff(M,\omega)$, see Section \ref{fd}.  We
identify $C$ with its Lagrangian image in $B^- \times M$, where $B^-$
denotes $B$ with symplectic structure reversed.  Let $C^t$ denote the
transpose of $C$ in $M^- \times B$.  Thus $C$ defines {\em Lagrangian
  correspondences} from $B$ to $M$ and vice-versa.  These
correspondences fit into the framework of {\em quilted Floer theory}
developed in \cite{we:co}, \cite{ww:isom}, \cite{ww:quilts},
\cite{ww:quiltfloer}.  Assuming monotonicity as in in \cite{ww:quilts}
(i.e.\ an energy-index relation for pseudoholomorphic quilts), the
correspondence $C$ defines functors from the Fukaya category of $B$ to
that of $M$ and vice versa.  On the level of homology, the latter
functor gives rise to a homomorphism between quilted Floer cohomology
groups
\begin{equation} \label{pantsmap}
HF(\ldots, L^0 ,C^t,C, L^1, \ldots )[\dim(B)] \to HF(\ldots, L^0 ,
L^1, \ldots) ,
\end{equation}
where the Lagrangian correspondences 
$$L^0\subset N_0^-\times M, \quad L^1\subset M^-\times N_1$$ 
are parts of generalized Lagrangian correspondences 
$$\ul{L}^0=(\ldots, L^0), \quad \ul{L}^1=(\ldots, (L^1)^t)$$ 
from a point to $M$ in the sense of \cite{ww:quiltfloer}.  In the
special case of simple Lagrangian submanifolds $\ul{L}^i=L^i\subset
M$, the homomorphism \eqref{pantsmap} is equivalent with $\Z_2$
coefficients to a homomorphism
$$ HF(L^0 \times C,C^t \times L^1)[\dim(B)] \to HF(L^0,L^1) .
$$
The map \eqref{pantsmap} is more precisely defined in \cite{ww:quilts}
and provides the ``quilted chaps'' map which we use to generalize the
``chaps'' map in Seidel's proof of the exact triangle.  In the above
special case, we obtain an exact triangle
\vskip .05in
$$ \tri{\hskip -.7in$HF(L^0,L^1)$}{\hskip -.5in $HF(L^0 \times C,C^t
  \times L^1)[\dim(B)].$}{$HF(L^0,\tau_C L^1)$}
$$
The precise statement including monotonicity conditions, and allowing for generalized Lagrangian correspondences, is the following, which we prove in Section~\ref{exact}.
Here $\on{\graph}(\tau_C) \subset M^- \times M$ denotes the graph of the
fibered Dehn twist $\tau_C$.

\begin{theorem} \label{main} {\rm (Exact triangle for fibered Dehn twists)}   
Let $M$ be a compact monotone or exact symplectic manifold with convex
boundary, let $\ul{L}^0=(\ldots, L^0)$ and $\ul{L}^1=(\ldots,
(L^1)^t)$ be admissible generalized Lagrangian branes in $M$, and let
$C \subset M$ be a spherically fibered coisotropic submanifold of
codimension $c \ge 2$ with base $B$, that is equipped with an
admissible brane structure as a Lagrangian submanifold of $M^-\times
B$.  Then there exists an exact triangle
\vskip .05in
$$ \tri{\hskip -1in$HF(\ldots, L^0 , L^1, \ldots)$}{\hskip -.5in
  $HF(\ldots, L^0 ,C^t,C, L^1, \ldots)[\dim(B)]$.}{$HF(\ldots, L^0  ,\on{graph}(\tau_C), L^1, \ldots )$} $$
\end{theorem}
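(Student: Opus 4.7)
The plan is to adapt Seidel's proof of Theorem~1.1 to the Morse-Bott setting, using quilted Floer theory to handle the spherically fibered vanishing coisotropic $C$. First I would construct a symplectic Lefschetz-type fibration $\pi\colon E \to D^2$ over the unit disk whose regular fibers are symplectomorphic to $M$ and which has a single Morse-Bott critical value at the origin with critical locus identified with $B$. Using the $\SO(c+1)$-reduction in Definition~\ref{sphere}, the fibration near the critical locus can be built from the family of quadrics $\{\sum_{i=1}^{c+1} z_i^2 = t\}$ associated to the principal bundle $P\to B$; the parallel transport around the critical value then realizes the fibered Dehn twist $\tau_C$, and the vanishing bundle in any regular fiber is identified with $C\subset M$.

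Given this fibration, the quilted chaps map~\eqref{pantsmap} is realized as a count of pseudoholomorphic quilted sections of $\pi$ with seam conditions determined by $\ul{L}^0$ and $\ul{L}^1$, where the insertion of $C^t,C$ in the middle term corresponds to a section passing near the Morse-Bott critical locus and sweeping out a sphere fiber of $C\to B$. The exact triangle itself will follow from a cone construction in the spirit of Seidel: one interpolates through a one-parameter family of section domains between the boundary-value problem whose count computes the differential on $HF(\ldots, L^0, L^1, \ldots)$ and the one that computes $HF(\ldots, L^0, \on{graph}(\tau_C), L^1, \ldots)$. The resulting chain homotopy produces a mapping cone whose third vertex is identified, via degeneration and gluing at the critical locus, with the middle quilted group $HF(\ldots, L^0, C^t, C, L^1, \ldots)[\dim B]$, the shift by $\dim B$ appearing naturally from the quilted composition of $C^t$ with $C$.

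The principal analytic obstacle will be the compactness and gluing analysis for pseudoholomorphic quilted sections approaching the critical locus. One must show that limit configurations factor as a quilted section of $\pi$ meeting $B$ transversally, glued to a vanishing thimble lying in the sphere fiber of $C\to B$ over the intersection point, and that this factorization matches the fiber product defining the quilted composition $C^t\circ C$. This requires almost complex structures on $E$ adapted to the Morse-Bott model, together with transversality for the resulting parametrized moduli spaces; the required orientation and sign data, including the grading shift, must be shown compatible with the brane structures on $C, L^0, L^1$ and the $\SO(c+1)$-reduction. Verifying these signs in the Morse-Bott gluing reduces to a nontrivial index computation for the linearized operator over the universal vanishing bundle, and this is where I expect the main technical difficulty to lie.
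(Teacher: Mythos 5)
Your proposal identifies several correct ingredients — the standard Lefschetz-Bott fibration $E_C$ with monodromy $\tau_C$ from Proposition~\ref{ECexist}, the quilted chaps map, the mapping-cone structure, and the analytic issues of compactness, gluing and orientations near the Morse-Bott locus — but the mechanism by which you propose to prove the triangle is not what makes the argument close, and I think it contains a genuine gap.

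First, a structural confusion: in the paper the chaps map $\Phi_1$ of \eqref{phi1} is the relative invariant of a quilted pair of pants (Figure~\ref{pants}) with \emph{no} Lefschetz-Bott singularity; its patches map to the fixed targets $M$ and $B$. The Lefschetz-Bott fibration $E_C$ enters only in the \emph{second} map $\Phi_2$ of \eqref{phi2}, the one from $HF(L^0,\tau_C^{-1}L^1)$ to $HF(L^0,L^1)$. Your proposal has the chaps map realized ``as a count of pseudoholomorphic quilted sections of $\pi$,'' which folds the two maps together and makes it unclear how the vanishing of the composition would be established. In the paper this vanishing (Lemma~\ref{vanishes}) is \emph{not} by factoring limit configurations through vanishing thimbles; it is by a one-parameter deformation that pinches off the small disk $(E_{C,r},F_{C,r})$ along a seam, followed by the index estimate of Lemma~\ref{above} ($\dim T_u\M(E_{C,r},F_{C,r}) \ge \dim C + (c-1)$) showing the relevant fiber-product moduli space is \emph{empty}. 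Your ``the limit factors as a section glued to a vanishing thimble'' would give a nontrivial gluing description, which is not the argument needed; one wants vanishing, not a description.

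Second, and more seriously, you have nothing playing the role of the leading-order analysis that drives the conclusion. The paper's proof that the cone on $C\Phi_1$ is quasi-isomorphic to $CF(L^0,\tau_C^{-1}L^1)$ (Theorem~\ref{cone}) does not follow by ``degeneration and gluing at the critical locus''; it follows from the Double Mapping Cone Lemma~\ref{doublecone}, and the input that lemma needs is: (a) the bijection of Lemma~\ref{bijection} between $(L^0\cap L^1)\cup((L^0\times C)\cap (C^t\times L^1))$ and $L^0\cap\tau_{C,\delta}^{-1}L^1$ for a fibered Dehn twist with a sharp angle function $\theta^\delta$ with $\delta\gg 0$; and (b) Theorem~\ref{leadingthm}, which computes the leading-order ($q$-exponent below a threshold $\eps$) contributions to $C\Phi_1$ and $C\Phi_2$ and shows they realize that bijection — via small-area triangles for $\Phi_1$ and horizontal sections for $\Phi_2$. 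Without the sharp-twist bijection and the area filtration separating ``order-zero'' from ``higher-order'' parts of the differentials and of $f,k,h$, there is no short exact sequence of leading terms to feed into the double-cone spectral-sequence argument, and the mapping cone would not be identified with the third group. Your proposal as written gives no route to this identification: ``interpolating through a one-parameter family of section domains'' between the two Floer differentials is not how either Seidel's or the paper's proof proceeds (there is no such interpolation of differentials), and the gluing heuristic does not by itself produce a quasi-isomorphism.

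In short: correct scaffolding (Lefschetz-Bott fibration, quilted chaps map, cone), but the actual engine of the proof — sharp angle functions, the intersection-point bijection, the small-triangle/horizontal-section leading-order computation, and the Double Mapping Cone Lemma over the $\Lambda$-filtered complexes — is absent, and what you propose in its place (degeneration to thimble configurations) does not close the argument.
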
 

A similar triangle was developed by T. Perutz, as part of the program
described in \cite{per:lag}.  A different approach to exact triangles
via Lagrangian cobordism is given in the recent work of Mak-Wu
\cite{mak:dehn} who also treated the codimension one case with $\Z_2$
coefficients for the first time. We also treat the codimension one
case below (see Remark \ref{codimonemain}) although the monotonicity
assumptions required in this case are more complicated.

As in Seidel's work, there is a connection with the mapping cone
construction in the derived Fukaya category, which we establish in
Section \ref{fukaya} as follows.

\begin{theorem}  \label{fukversion} 
{\rm (Derived Fukaya-categorical version of the exact triangle for
  fibered Dehn twists)} Let $C \subset M$ be a spherically fibered
coisotropic submanifold with admissible brane structure as above.
Then for every admissible generalized Lagrangian brane $\ul{L}$ in $M$
there exists an exact triangle in the derived Fukaya category
$$ \tri{\hskip .1in $\ul{L}$}{\hskip -.3in $ \ul{L}\sharp C^t{\sharp}
  C [\dim(B)]$.}{$ \ul{L} \sharp \on{graph}(\tau_C)$}
$$
\end{theorem}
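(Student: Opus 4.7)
The plan is to deduce Theorem~\ref{fukversion} from the Floer-level exact triangle (Theorem~\ref{main}) by a Yoneda-style argument in the derived Fukaya category, parallel to the way Seidel upgrades his Floer exact triangle to a cone in \cite{se:lo}, \cite{se:bo}.

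First I would upgrade the quilted chaps map \eqref{pantsmap} from a homomorphism of Floer cohomology groups to an honest morphism
$$
\alpha_{\ul L} \in \Hom_{\Fuk(M)}\bigl(\ul L,\; \ul L \sharp C^t \sharp C[\dim(B)]\bigr)
$$
in the derived Fukaya category. By the $A_\infty$-functoriality of quilted Floer theory developed in \cite{ww:quilts} and \cite{ww:quiltfloer}, geometric composition with the pair $(C^t, C)$ is represented by an $A_\infty$-endofunctor of $\Fuk(M)$, and the quilted pair-of-pants surface behind \eqref{pantsmap} provides a cocycle exhibiting a natural transformation from the identity functor to this endofunctor. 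Specializing this natural transformation at the object $\ul L$ yields $\alpha_{\ul L}$.

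Second I would form the mapping cone $\Cone(\alpha_{\ul L})$ as a twisted complex in $\Tw\Fuk(M)$. To prove the theorem it then suffices to exhibit a quasi-isomorphism
$$
\Cone(\alpha_{\ul L}) \simeq \ul L \sharp \graph(\tau_C).
$$
By a Yoneda argument in the derived Fukaya category, this reduces to constructing a morphism between these two objects which induces an isomorphism on $HF(\ul L^0, -)$ for every admissible generalized Lagrangian brane $\ul L^0$. The key point is that the long exact sequence obtained by applying $\Hom_{\Fuk}(\ul L^0, -)$ to the distinguished triangle $\ul L \to \Cone(\alpha_{\ul L})$ matches, by construction, two out of the three terms of the Floer exact triangle of Theorem~\ref{main} paired with $\ul L^0$; a five-lemma argument then identifies the third term with $HF$ of $\ul L \sharp \graph(\tau_C)$ against $\ul L^0$.

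The main obstacle is the packaging of the chain-level ingredients used in the proof of Theorem~\ref{main} into a morphism of twisted complexes, rather than just a map on cohomology, in a way that is natural in $\ul L^0$ so that the Yoneda step applies. Once the chaps cocycle has been promoted to a natural transformation as in the first step, this should reduce to the chain-level analysis already carried out in the proof of Theorem~\ref{main} (and already automatic in Seidel's original treatment in \cite{se:lo}, \cite{se:bo}), since the relevant moduli spaces of quilts are exactly those appearing there. The verification that the cone construction and the geometric composition $\sharp \graph(\tau_C)$ are compatible with the passage to twisted complexes is formal from the $A_\infty$-structure, and does not require any new analysis beyond what is used for Theorem~\ref{main}.
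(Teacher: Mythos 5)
Your outline captures the right overall plan: construct the chaps morphism, form a mapping cone as a twisted complex, and identify it with $\ul L \sharp \graph(\tau_C)$ by testing $\Hom(\ul L^0, -)$ for all objects $\ul L^0$. This is essentially what Section \ref{fukaya} does (Theorem \ref{fukthm}, Proposition \ref{fukver}). However, the plan glosses over the substantive constructions and misstates the mechanism. First, a direction slip: the chaps cocycle produces $f \in \Hom\bigl(\Phi(C)\Phi(C^t)L,\,L\bigr)$, i.e.\ $f : \ul L\sharp C^t\sharp C[\dim(B)] \to \ul L$, not a morphism from $\ul L$ into $\ul L\sharp C^t\sharp C$; with your orientation $\Cone(\alpha_{\ul L})$ is a shift of the wrong vertex. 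Second, the ``five-lemma'' step is circular as phrased: comparing the long exact sequence of $\Hom(\ul L^0,\Cone(f))$ with the triangle of Theorem \ref{main} by a five lemma presupposes a commuting ladder, i.e.\ a morphism $\Cone(f)\to\ul L\sharp\graph(\tau_C)$ --- which is precisely what you still have to construct. Two exact sequences sharing two terms need not share the third.

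What the paper actually does is construct the missing data explicitly and then verify the mapping-cone acyclicity criterion of Lemma~\ref{xyz}: a cochain-level element $k\in\Hom(L,\tau_C L)$ given by the relative invariant of a Lefschetz-Bott fibration over a cap with one strip-like end (Figure~\ref{second}), a nullhomotopy $h\in CF(L,C^t,C,L)$ of $\mu^2(f,k)$ given by counting quilts over a one-parameter family of degenerating quilted caps (Figure~\ref{hpic}), and then acyclicity of the double-cone complex \eqref{differ} via the same leading-order/$q$-filtration argument used for Theorem~\ref{cone}. Your remark that the identification ``should reduce to the chain-level analysis already carried out in the proof of Theorem~\ref{main}'' understates the work: the nullhomotopy in the proof of Theorem~\ref{cone} is a map on Floer cochain groups built from a parametrized family of quilted strips with two ends, whereas $h$ here is the relative invariant of a quilted cap with a single end --- an analogous but genuinely new moduli problem. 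Once $f$, $k$, and $h$ are in hand, the Yoneda-style acyclicity check you describe is indeed what closes the argument.
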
 

Here the notation $ \ul{L} \sharp \on{graph}(\tau_C)$ and $
\ul{L}\sharp C^t{\sharp} C$ indicates the generalized Lagrangian
submanifolds in the sense of \cite{ww:quiltfloer} formed by
concatenation $\sharp$.  For $\ul{L}=(L_0,\ldots, L_k)$ with
$L_i\subset M_{i-1}^-\times M_{i}$ and $M_k=M$ the object $ \ul{L}
\sharp \on{graph}(\tau_C)$ is equivalent to $\bigl(L_0,\ldots, ({\rm
  id}_{M_{k-1}}\times \tau_C)(L_k)\bigr)$ for suitable choices of
relative spin structures.  In particular $ L \sharp \on{graph}(\tau_C)
\sim \tau_C(L)$ for $k=0$.  One can also write the bottom object in
the exact triangle
$$L \sharp C^t {\sharp} C [\dim(B)] = \Phi(C)\Phi(C^t)L $$
where $\Phi(C^t), \Phi(C)$ are the \ainfty functors associated to
Lagrangian correspondences constructed in \cite{Ainfty}.  The
formulation of the third term as a push-pull functor makes clear that
the exact triangle is the mirror partner of Horja's exact triangle in
\cite{ho:de}.  We remark that Perutz \cite{per:gys} proves a related
exact triangle describing a symplectic version of the Gysin sequence;
roughly speaking Perutz' result describes the composition of the
functors for $C^t,C$ in the opposite order as a mapping cone for the
map given by multiplication of the Euler class.

We briefly outline the contents of the paper.  Section \ref{fd}
contains background results on fibered Dehn twists and Lefschetz-Bott
fibrations.  Section \ref{flat} describes various situations in which
surface Dehn twists induce generalized Dehn twists on moduli spaces of
flat bundles; these are mostly minor improvements of results of Seidel
and Callahan.  Sections \ref{sections} and \ref{floerversion} contain
the proof of the exact triangle.  Section \ref{field} applies the
triangle to moduli spaces of flat bundles to obtain generalizations of
Floer's exact triangle for surgery along a knot, as well as surgery
exact triangles for crossing changes in knots which have the same form
as the surgery exact triangles as Khovanov \cite{kh:ca} and
Khovanov-Rozansky \cite{kr:ma}.  Finally Section \ref{fukayaversion}
describes generalizations to the \ainfty setting.  These are limited
to the case of minimal Maslov number greater than two.

We thank Mohammed Abouzaid, Tim Perutz, and Reza Rezazadegan for
helpful comments, and especially Paul Seidel for initial discussions
about the project.

The present paper is an updated and more detailed version of a paper
the authors have circulated since 2007. The authors have unreconciled
differences over the exposition in the paper, and explain their points
of view at \cite{katrinpoint} and \cite{chrispoint}.  The publication
in the current form is the result of a mediation.

\section{Lefschetz-Bott fibrations and fibered Dehn twists}
\label{fd} 

This section covers the generalization of the theory of symplectic
Lefschetz fibrations to the Lefschetz-Bott case, that is, to the case
that the singularities of the fibration are not isolated but still
non-degenerate in the normal directions. Most of this material is
covered in an unpublished manuscript of Seidel \cite{se:phd} and in
the works of Perutz \cite{per:lag}, \cite{per:lag2}.  For more recent
appearance of fibered Dehn twists, see Chiang et al
\cite{chiang:open}.

\subsection{Symplectic Lefschetz-Bott fibrations} 

Lefschetz-Bott fibrations have a natural definition in the setting of
holomorphic geometry: one requires the projection to be proper and
Morse-Bott.  In the setting of symplectic geometry, there are several
analogous definitions which we discuss below.  We begin with the
holomorphic setting: Let $S$ be a complex curve.  A {\em Lefschetz
  fibration} over $S$ is a complex manifold $E$ equipped with a proper
holomorphic map $\pi: E \to S$ such that $\pi$ only has critical
points of Morse type.  That is, in local coordinates $z_1,\ldots,
z_{n}$ on $E$, the map $\pi$ is given by
$$ \pi(z_1,\ldots,z_{n}) = \sum_{i=1}^{n} z_i^2 . $$
A {\em Lefschetz-Bott fibration} over $S$ is a complex manifold
equipped with a proper holomorphic map $\pi:E\to S$ that has only
Morse-Bott singularities.  That is, the critical set
$$E^{\crit}=\{e\in E \, | \, D_e\pi = 0 \}$$
is a smooth (necessarily holomorphic) submanifold and the Hessian of
$\pi$ is non-degenerate along the normal bundle of $E^{\crit}$.  By
the parametric Morse lemma \cite[p.12]{ar:si1} for any critical point
$ e \in E^{\crit}$ there exist a neighborhood $U$ of $e$ and
coordinates $(z_1,\ldots,z_n) : U \to \C^n$ such that
\begin{equation} \label{normalform} \pi(z_1,\ldots,z_n) = \sum_{i=1}^{c+1} z_i^2 , \end{equation}
where $n$ is the dimension of $E$ and $c+1$ is the codimension of
$E^{\crit}$ at $e$.

In our examples we will not have global complex structures on $E$ and
$S$ (at least no canonical ones).  Instead, we work with symplectic
versions of Lefschetz-Bott fibrations.  The definition of the
symplectic version uses the following condition introduced in Perutz
\cite{per:lag}.  Let $(M,\omega)$ be a symplectic manifold equipped
with an almost complex structure $J$ and $M' \subset M$ an almost
complex submanifold.  The submanifold $M'$ is said to be {\em normally
  K\"ahler} if a tubular neighborhood $N$ of $M'$ in $M$ is foliated
by $J$-complex normal slices $\{ N_e \subset N\}, e \in M'$, such that
$J | N_e$ is integrable and $\omega | N_e$ is K\"ahler for each $e$.

\begin{definition} \label{fibdef} 
\begin{enumerate} 
\item {\rm (Symplectic fibrations)} A {\em symplectic fibration} is a
  manifold $E$ equipped with a closed two-form
$\omega_E \in \Omega^2(E)$ and a fibration $\pi: E \to S$ over a
  smooth surface $S$, such that the restriction of $\omega_E$ to any
  fiber of $\pi$ is symplectic:
$$ (\omega_E(e) |_{D_e \pi^{-1}(0)} )^{\dim(E)-1} \neq 0, \forall e \in E .$$
\item {\rm (Symplectic Lefschetz-Bott fibrations)} A {\em symplectic
  Lefschetz-Bott fibration} consists of
\begin{enumerate}
\item a smooth manifold $E$ equipped with a closed two-form
  $\omega_E$;
\item a smooth, oriented surface $S$;
\item a smooth proper map $\pi: E \to S$ with critical set  and values
$$E^{\crit} := \{e \in E \ | \ \rank(D_e \pi) < 2 \}, \quad
  S^{\crit}=\pi(E^{\crit})\subset S ;$$
\item a positively oriented complex structure $j_0\in{\rm
  End}(TS|_\U)$ defined in a neighborhood $\U\subset S$ of the
  critical values $S^{\crit}$; and
\item an almost complex structure $J_0\in{\rm End}(TE|_\V)$ defined in a neighborhood $\V\subset E$ of the critical set $E^{\crit}$
\end{enumerate}
satisfying the following conditions:
\begin{enumerate} 

\item $E^{\crit}\subset E$ is a smooth submanifold with finitely many
  components;
\item $E^{\crit}$ is normally K\"ahler;
\item $\pi|_\V: \V \to \U$ is $(J_0,j_0)$ holomorphic;
\item the normal Hessian $D^2\pi_e | T^{\otimes 2}N_e$ at any critical
  point is non-degenerate;
\item $\omega_E$ is non-degenerate on $\ker(D\pi)\subset TE$;
\item $\omega_E|_\V$ is non-degenerate and compatible with $J_0$.
\end{enumerate}
\end{enumerate}  
\end{definition} 

\begin{remark}  One natural reason to consider 
symplectic Lefschetz-Bott fibrations rather than symplectic Lefschetz
fibrations is that the category of Lefschetz-Bott fibrations is
somewhat better behaved than Lefschetz fibrations with respect to
products.  Suppose that $\pi_k: E_k \to S, k = 1,2$ are (symplectic)
Lefschetz-Bott fibrations such that $\critval(\pi_1) \cap
\critval(\pi_2)$ is empty, where $\critval$ denotes the set of
critical values.  Then
$$\pi_1
\times_S \pi_2 : E_1 \times_S E_2 \to S$$ 
is a (symplectic) Lefschetz-Bott fibration with 
$$ \critval(\pi_1 \times_S \pi_2) = \critval(\pi_1) \cup \critval(\pi_2) .$$
In particular, the fiber product of a fibration with a Lefschetz
fibration is a Lefschetz-Bott fibration.
\end{remark} 

Associated to any Lefschetz-Bott fibration there is a natural parallel
transport between the fibers.  The usual notion of parallel transport
in Lefschetz fibrations extends to the symplectic Lefschetz-Bott case.
First, suppose that $\pi: E \to S, \omega_E \in \Omega^2(E)$ is a
symplectic fibration with connected base $S$, for simplicity.  The
canonical symplectic connection on $E$ is the connection defined by
\begin{equation} \label{symconn}
T_e^hE := \bigl( \ker(D_e \pi) \bigr)^{\omega_{E}}  \subset T_e E .\end{equation}
Here the superscript denotes the symplectic complement with respect to
$\omega_{E}$.  The symplectic complement has dimension $2$ due to the
nondegeneracy of $\omega_E|_{\ker(D_e\pi)}$.  Moreover, for any
horizontal vector field $v\in\Gamma(T^hE)$ and fiber $E_s$ we have
$$
\L_v\omega_E|_{E_s} = (\d \iota_v\omega_E + \iota_v \d \omega_E)|_{E_s}
= \d ( \iota_v\omega_E|_{E_s} ) =0.
$$ 
Hence, given any smooth path $\gamma: \ [0,1] \to S \ssm S^{\crit}$
the parallel transport 
$$ \rho_{t,\tau} : E_{\gamma(t)} \to E_{\gamma(\tau)} $$ 
for any $t,\tau \in [0,1]$ is a symplectomorphism. (Also see
\cite[Section 1.2]{gu:sf}.)  This parallel transport gives a reduction
of structure group of the fibration to the symplectomorphism group of
any fiber.

The notion of parallel transport above extends to parallel transport
to critical fibers.  Suppose that $\pi: E \to S$ is a Lefschetz-Bott
fibration.  The smooth locus $E \ssm \pi^{-1}(S^{\on{crit}})$ is a
fibration over $S \ssm S^{\crit}$ with vertical tangent spaces
$T_e^vE=\ker (D_e\pi)$ and a canonical symplectic connection $T^h E
\subset T(E \ssm E^{\on{crit}}) $ defined by \eqref{symconn}.  Given
an embedded path $\gamma:[0,1] \to E$ ending on the critical locus
$\gamma(1) \in S^{\crit}$ parallel transport extends to a continuous
map
\begin{equation}
 \label{limrho} 
\rho_{t,1}: E_{\gamma(t)} \to E_{\gamma(1)}, \quad x \mapsto
\lim_{\tau \to 1} \rho_{t,\tau}(x) .\end{equation}
Indeed, choose a tubular neighborhood of $\gamma$.  After rescaling,
parallel transport becomes the gradient flow of the function $f \circ
\pi$ for some coordinate function $f: S \to \R$ with respect to the
metric $\omega_E(\cdot, J_0 \cdot)$, see \cite[Lemma 1.13]{se:lo}.
Since the critical points of $\pi$ are Lefschetz-Bott, the gradient
flow is hyperbolic and the limit is well-defined \cite{sh:gl}.

Vanishing thimbles and cycles for Lefschetz-Bott fibrations are
defined as follows.  Let $\gamma: \ [0,1] \to S$ be a smooth embedded
path with $\gamma(1) \in S^{\crit}$ such that $\gamma([0,1))\subset S
  \ssm S^{\crit}$.  Fix a connected component $B\subset E^{\crit} \cap
  E_{\gamma(1)}$ of the critical set in the endpoint fiber.  The {\em
    vanishing thimble} for the path $\gamma$ and component $B$ is
$$ 
T_{\gamma,B} = \Bigl\{ x \in \cup_{t\in[0,1)} E_{\gamma(t)} \,\Big| \,    \rho_{t,1}(x) \in B \Big\} \cup B .
$$
The vanishing thimble $T_{\gamma,B}\subset E$ is a smooth submanifold
with boundary since it is the stable manifold of $B$; see
\cite{sh:gl}.  The intersections
\begin{equation} \label{Ct} C_t := T_{\gamma,B}\cap E_{\gamma(t)} \end{equation}
with the smooth fibers of $\pi$ for $t\in[0,1)$ are called the {\em
    vanishing cycles} for the path $\gamma$.

\begin{proposition}  Each vanishing cycle $C_t$ from \eqref{Ct} is a coisotropic
submanifold of the fiber $E_{\gamma(t)}$.  The map $\rho_{t,1}:C_t \to
B$ is smooth and gives $C_t$ the structure of a spherically fibered
coisotropic submanifold in the sense of Definition~\ref{sphere}.
\end{proposition}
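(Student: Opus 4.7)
First I would apply the parametric Morse lemma to choose, around each $b \in B$, coordinates $(z_1,\ldots,z_{c+1},w_1,\ldots,w_{n-c-1})$ on a neighbourhood of $b$ in $E$ in which $B = \{z=0\}$ and $\pi(z,w) = \pi(b) + \sum_{i=1}^{c+1} z_i^2$. Using the normally K\"ahler assumption and the compatibility of $\omega_E$ with $J_0$ near $E^{\crit}$, a relative Moser argument on each $J_0$-complex normal slice $N_e = \{w = w(e)\}$ lets me arrange in addition that $(\omega_E, J_0)|_{N_e}$ agrees with the standard K\"ahler structure on $\C^{c+1}$. The plan is then to verify the proposition in this local model and patch the resulting structures over $B$.

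Next, I would identify the vanishing cycle in this model. After reparametrizing $\gamma$ near its endpoint as in \cite[Lemma 1.13]{se:lo}, parallel transport in each slice becomes the negative gradient flow of $\Re(e^{-i\theta}\pi)$ with $\theta = \arg(\gamma(t)-\pi(b))$, whose stable manifold at the hyperbolic fixed set $0 \in N_e$ is the totally real subspace $e^{i\theta/2}\R^{c+1} \subset N_e$. Its intersection with $\pi^{-1}(\gamma(t))$ is a round sphere of dimension $c$, so near $B$ the vanishing cycle $C_t$ is a smooth sphere bundle over $B$ and $\rho_{t,1}|_{C_t}$ is the bundle projection, hence smooth; smoothness up to and including $B$ follows from the standard linearization of the normally hyperbolic gradient flow.

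For coisotropy, I would use that each sphere fiber sits as a Lagrangian sphere inside the symplectic submanifold $N_e \cap E_{\gamma(t)} \cong \{\sum z_i^2 = \on{const}\} \subset \C^{c+1}$. Using the $\omega_E$-orthogonal splitting $T_p E_{\gamma(t)} \cong T_{w(p)}B \oplus T_p(N_e \cap E_{\gamma(t)})$ coming from normal K\"ahlerness, a short tangent-space calculation shows that the symplectic complement of $TC_t$ in $TE_{\gamma(t)}$ coincides with the tangent to the sphere fiber, so the null-foliation of $C_t$ is precisely the fibration over $B$. To produce the $SO(c+1)$-reduction, I observe that the Hessian $D^2\pi$ is a nondegenerate complex-bilinear form on the normal bundle $N_B$; combined with the compatible Hermitian metric $\omega_E(\cdot,J_0\cdot)$ it canonically determines an anti-holomorphic involution $\sigma$ on $N_B$ whose fixed locus is a totally real rank-$(c+1)$ subbundle $P \subset N_B$ carrying a Euclidean inner product. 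The sphere bundle of $P$ is identified with $C_t$ via rescaled parallel transport, and the complex orientation of $N_B$ induced from $J_0$ picks out the $SO(c+1)$-component of the orthonormal frame bundle of $P$.

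The main obstacle I anticipate is the compatibility of these local constructions along $B$: the parametric Morse lemma pins down the coordinates only up to an $O(c+1,\C)$-valued gauge transformation over $B$, so normalizing the symplectic form simultaneously with the Morse coordinates requires a careful relative Moser argument. Once this normalization is achieved, the involution $\sigma$ is globally well-defined and the resulting $SO(c+1)$-structure on $C_t \to B$ patches automatically, establishing that $C_t$ is a spherically fibered coisotropic in the sense of Definition~\ref{sphere}.
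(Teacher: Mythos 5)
Your approach is genuinely different from the paper's and, on the whole, more explicit.  The paper deduces that $\rho_{t,1}\colon C_t\to B$ is a smooth sphere fibration with structure group $SO(c+1)$ simply by citing the stable manifold theorem, and proves coisotropy by observing that parallel transport is a rescaled Hamiltonian flow: being symplectic, this forces $\omega_E$ to vanish on the spherical fibers $\rho_{t,1}^{-1}(b)$ (more precisely, $\omega_E|_{C_t}$ becomes the pullback $\rho_{t,1}^*(\omega_E|_B)$ of the symplectic form on $B$, whose kernel is the fiber direction), while $J_0$-invariance of $TB$ makes $\omega_E|_B$ symplectic.  You instead pass through an explicit local normal form (parametric Morse lemma together with a relative Moser argument) and then build the $SO(c+1)$-reduction by hand from the anti-holomorphic involution on $N_B$ determined by the complex Hessian $D^2\pi$ and the Hermitian metric $\omega_E(\cdot,J_0\cdot)$, identifying $C_t$ with the sphere bundle of the resulting totally real subbundle $P\subset N_B$.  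This construction of $P$ is a nice, concrete way to produce the $SO(c+1)$-structure and is actually more precise on that point than the paper's appeal to \cite{sh:gl}.

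There is, however, a gap in your coisotropy step.  You invoke an ``$\omega_E$-orthogonal splitting $T_p E_{\gamma(t)} \cong T_{w(p)}B \oplus T_p(N_e\cap E_{\gamma(t)})$ coming from normal K\"ahlerness'' at an arbitrary point $p$ of $C_t$.  But the normal K\"ahler condition of Definition~\ref{fibdef} only says that a tubular neighborhood is foliated by $J_0$-complex slices $N_e$ on which $\omega_E$ is K\"ahler; it does not assert that the slice directions are $\omega_E$-orthogonal to the $B$-directions at points $p$ off $B$, and the relative Moser normalization you sketch only standardizes $\omega_E|_{N_e}$, not the mixed pairing between slice and horizontal directions.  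Without that orthogonality your ``short tangent-space calculation'' does not close.  To repair this you would either need to strengthen the Moser argument so that it also kills the mixed terms (a nontrivial extra step, since those terms live in $\Omega^1(B)\otimes\Omega^1(N_e)$ and are not controlled by a Moser flow within each slice), or you should switch to the paper's cleaner route: since $\rho_{t,\tau}$ is a symplectomorphism for $\tau<1$ and $\rho_{t,\tau}|_{C_t}\to\rho_{t,1}|_{C_t}$, the restricted form $\omega_E|_{C_t}$ equals $\rho_{t,1}^*(\omega_E|_B)$, whose kernel is exactly the fiber direction; combined with the codimension count $\codim C_t = c$ this immediately gives coisotropy with null foliation $\rho_{t,1}$, with no need for any transverse splitting.
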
 

\begin{proof}  
The parallel transport map $\rho_{t,1}$ of \eqref{limrho} is a smooth
fibration with fibers $c$-dimensional spheres and structure group
$SO(c+1)$ by the stable manifold theorem, see for example
\cite{sh:gl}.  The dimension $c$ is the dimension of the fiber as well
as the codimension of $C_t\subset E_{\gamma(t)}$, by the normal form
\eqref{normalform} of $\pi$.  The parallel transport can also be
written as a rescaled Hamiltonian flow of $g \circ \pi$ for some
coordinate function $g:S\to\R$, as in the unfibered case described in
\cite{se:lo}.  Since the Hamiltonian flow preserves the symplectic
form, the symplectic form vanishes on the fibers of $C_t$:
$$ \omega_E |_{\rho_{t,1}^{-1}(b)} = 0, \quad \forall b \in B .$$
Since $\pi$ is $J_0$-holomorphic on $B$, the tangent space $TB$ (which
is the null space of the Hessian of $\pi$) is $J_0$-invariant.  Hence
nondegeneracy of $\omega_E$ in a neighborhood of $B$ implies that the
restriction of $\omega_E$ to $B$ is symplectic.  Thus the total space
$C_t$ is coisotropic and the projection $\rho_{t,1}: C_t \to B$ is the
null foliation as claimed.
\end{proof}  

\subsection{Fibered Dehn twists}
\label{twists}

The symplectic Dehn twist along a Lagrangian sphere in \cite{se:lo}
can be generalized to spherically fibered coisotropics using the
associated symplectic fiber bundle construction.  This construction
associates to any principal bundle and Hamiltonian action with small
moment map a symplectic fiber bundle.

We first set up notation for connections on principal bundles.  Let
$G$ be a compact Lie group with Lie algebra $\g$.  Recall that a {\em
  connection one-form} on a principal $G$-bundle $P$ is a one-form
$\alpha \in \Omega^1(P,\g)$ with values in the Lie algebra $\g$
satisfying the following two conditions:
\begin{enumerate} 
\item the evaluation $\alpha(\xi_P)$ is the constant function equal to
  $\xi$ for any $\xi \in \g$, where $\xi_P \in \Vect(P)$ is the vector
  field generating the action of $\xi$, and
\item the pull-back $ g^* \alpha$ is equal to $ \Ad(g)^{-1} \alpha$
  for any $g \in G$, where the adjoint action is on the values of
  $\alpha$.
\end{enumerate} 
Let $\ker(\alpha) \subset TP$ denote the {\em horizontal subbundle} of
vectors whose fiber at a point point $p \in P$ is the space of vectors
annihilated by $\alpha_p$.  The tangent bundle $TP$ admits a splitting
into horizontal and vertical parts
$$ TP \cong \ker(\alpha) \oplus \ker (D \pi) $$
invariant under the group action.  Denote by $\A(P)$ the space of
connection one-forms on $P$; it is an affine space modelled on
$\Omega^1(C,P(\g))$.

Given a principal bundle with connection and a Hamiltonian action, the
associated fiber bundle has a natural two-form defined as follows.  To
set up notation, suppose that the following are given:
\begin{itemize}
\item a {\em base symplectic manifold} $(B,\omega_B)$;
\item a {\em principal bundle} $\pi: P \to B$ with structure group
  $G$;
\item a {\em fiber symplectic manifold} $(F,\omega_F)$ equipped with a
  Hamiltonian $G$-action with moment map $\Phi_F: F \to \g^\dual :=
  \Hom(\g,\R)$; and
\item a {\em connection one-form} $\alpha \in \Omega^1(P,\g)^{G}$.
\end{itemize} 
Denote by $\pi_1,\pi_2$ the projections to the factors of $P \times
F$.  The {\em minimally coupled form} on $P \times F$ is
\begin{equation} \label{mincoup}
\omega_{P \times F,\alpha} = \pi_1^* \pi^*\omega_B + \pi_2^* \omega_F
+ \d \lan \pi_1^* \alpha, \pi_2^* \Phi_F \ran \in \Omega^2(P \times F)
.\end{equation}

\begin{theorem} 
\label{assoc}
{\rm (Symplectic associated fiber bundles)} (see e.g.  \cite{gu:sy})
Let $P$ be a principal $G$-bundle, $F$ a Hamiltonian $G$-manifold and $\alpha$ a connection
one-form as above. 
\begin{enumerate} 
\item The minimally coupled form $\omega_{P \times F,\alpha}$ descends
  to a closed two-form 
$$\omega_{P(F),\alpha} \in \Omega^2(P(F)), \quad P(F)  := P \times_G
  F.$$
The form $\omega_{P(F),\alpha}$ is non-degenerate on the fibers in a
neighborhood
$$ P(F)_\eps:= P(
  \{ \vert \Phi_F \vert < \eps \})$$ 
of the associated bundle to the zero level set $P(\Phinv_F(0)) \subset
P(F)$.  Here $\vert \Phi_F \vert$ denotes the norm of $\Phi_F$ with
respect to an invariant metric on $\g \cong \g^\dual$.  Hence
$P(F)_\eps$ is a symplectic fiber bundle over $B$ for sufficiently
small $\eps > 0 $.
\item If $\Phinv_F(0)$ is smooth, then $P(\Phinv_F(0))$ is a smooth
  submanifold of $P(F)_\eps$ with coisotropic fibers.
\item Given two choices of connection $\alpha_j, j = 0,1$, there exists
  an isomorphism of symplectic fiber bundles from
  $(P(F)_\eps,\omega_{P(F),\alpha_0})$ to $(P(F)_\eps,
  \omega_{P(F),\alpha_1})$ for sufficiently small $\eps > 0 $.
\item The association $(F,\omega_F,\Phi_F) \to
  (P(F)_\eps,\omega_{P(F)})$ is functorial in the sense that any
  isomorphism of Hamiltonian $G$-manifolds
  $(F_0,\omega_{F_0},\Phi_{F_0})$ to $(F_1,\omega_{F_1}, \Phi_{F_1})$
  induces an isomorphism of symplectic fiber bundles $P(F_0)_\eps \to
  P(F_1)_\eps$.
\end{enumerate} 
\end{theorem}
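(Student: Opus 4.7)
The plan is to prove the four claims of Theorem \ref{assoc} in order, using direct calculation for (a) and (d) and a Moser-type argument for (c). For (a), I would first show that the minimally coupled form $\omega_{P\times F,\alpha}$ is basic with respect to the diagonal $G$-action on $P\times F$, so that it descends to $P(F)=P\times_G F$. $G$-invariance follows from the equivariance of $\omega_B$, $\omega_F$, $\alpha$, and $\Phi_F$. Horizontality can be checked using the defining identities $\alpha(\xi_P)=\xi$ and $\iota_{\xi_F}\omega_F=-d\langle\Phi_F,\xi\rangle$, which together give $\iota_{(\xi_P,\xi_F)}\omega_{P\times F,\alpha}=0$ for every $\xi\in\g$. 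Closedness of the descended form $\omega_{P(F),\alpha}$ is immediate since $\omega_B,\omega_F$ are closed and the remaining term in \eqref{mincoup} is exact. For non-degeneracy, the restriction of $\omega_{P(F),\alpha}$ to a fiber reduces to $\omega_F$; in a local trivialization the full 2-form on $P(F)|_U\cong U\times F$ decomposes as $\pi^*\omega_B+\omega_F$ plus mixed terms of order $|\Phi_F|$, so by continuity of the determinant non-degeneracy extends from $P(\Phi_F^{-1}(0))$ to a neighborhood $P(F)_\eps$ for $\eps>0$ sufficiently small.

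For (b), smoothness of $\Phi_F^{-1}(0)$ together with local-freeness of the $G$-action on it (equivalent to $0$ being a regular value of $\Phi_F$) implies that $P(\Phi_F^{-1}(0))\subset P(F)$ is a smooth submanifold. Its fibers over $B$ are identified with $\Phi_F^{-1}(0)$, which is coisotropic in $F$ by the standard fact that its symplectic orthogonal at each point is the $G$-orbit direction, hence contained in the tangent space. Coisotropy transfers to the fiberwise symplectic structure on $P(F)_\eps$.

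For (c), the main tool is a Moser isotopy. Setting $\alpha_t=(1-t)\alpha_0+t\alpha_1$ and $\beta=\alpha_1-\alpha_0$, each $\alpha_t$ is a connection and $\beta$ is horizontal and equivariant. The family $\omega_t=\omega_{P(F),\alpha_t}$ satisfies $\tfrac{d}{dt}\omega_t=d\eta_t$, where $\eta_t$ is the descent of $\langle\pi_1^*\beta,\pi_2^*\Phi_F\rangle$ and hence vanishes to first order along $P(\Phi_F^{-1}(0))$. Solving the Moser equation $\iota_{X_t}\omega_t=-\eta_t$ for a time-dependent vector field $X_t$ yields $X_t$ of order $|\Phi_F|$, so its flow $\psi_t$ exists on a uniform neighborhood of $P(\Phi_F^{-1}(0))$ for all $t\in[0,1]$ and $\psi_1$ provides the desired isomorphism of symplectic fiber bundles, covering the identity on $B$. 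The main technical obstacle here is uniform existence of the Moser flow on $P(F)_\eps$, which requires shrinking $\eps$ together with pointwise estimates on $X_t$ in terms of $|\Phi_F|$.

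For (d), any $G$-equivariant symplectomorphism $\phi\colon(F_0,\omega_{F_0})\to(F_1,\omega_{F_1})$ with $\Phi_{F_1}\circ\phi=\Phi_{F_0}$ induces $\mathrm{id}_P\times\phi\colon P\times F_0\to P\times F_1$, which is $G$-equivariant and pulls back $\omega_{P\times F_1,\alpha}$ to $\omega_{P\times F_0,\alpha}$ by direct substitution in \eqref{mincoup}; passing to the $G$-quotients yields the required isomorphism of symplectic fiber bundles.
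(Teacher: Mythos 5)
The paper does not prove Theorem~\ref{assoc}; it refers to Guillemin--Sternberg, so there is no in-paper proof to compare against. Your overall plan (basicness, descent, Schur-type non-degeneracy argument, Moser for connection-independence, direct substitution for functoriality) is the standard one, and parts (b) and (d) are fine. There are, however, two places where the reasoning has a real flaw even though the conclusions are correct.

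First, in part (a): your claim that in a local trivialization the form is $\pi^*\omega_B+\omega_F$ ``plus mixed terms of order $|\Phi_F|$'' is not accurate. Writing the connection locally as $A$, one has $\d\lan A,\Phi_F\ran=\lan \d A,\Phi_F\ran+\lan A\wedge \d\Phi_F\ran$; the curvature piece $\lan \d A,\Phi_F\ran$ indeed vanishes along $\Phi_F=0$, but the cross term $\lan A\wedge \d\Phi_F\ran$ does \emph{not}, since $\d\Phi_F$ need not vanish on $\Phi_F^{-1}(0)$. The correct mechanism for non-degeneracy along $\Phi_F^{-1}(0)$ is that the pairing identity $\lan \d\Phi_F(v),\xi\ran=\omega_F(\xi_F,v)$ together with equivariance forces $\d\Phi_F$ to vanish on the orbit directions $\g\cdot f$ at any $f\in\Phi_F^{-1}(0)$. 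From this one checks directly that a null vector $(u,v)$ for the form must satisfy $v=-A(u)_F$ and then $\omega_B(u,\cdot)=0$, so $u=0$ and $v=0$. Once non-degeneracy along the zero level is established correctly, your continuity argument for a neighborhood is fine.

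Second, and more seriously, your Moser argument in part (c) does not yield an \emph{isomorphism of symplectic fiber bundles}: it only gives a symplectomorphism of neighborhoods of $P(\Phi_F^{-1}(0))$. The primitive $\eta_t=\lan\pi_1^*\beta,\pi_2^*\Phi_F\ran$ annihilates all vertical vectors, so the Moser equation $\iota_{X_t}\omega_t=-\eta_t$ forces $X_t$ to lie in the $\omega_t$-orthogonal complement of the vertical distribution, i.e.\ $X_t$ is $\omega_t$-horizontal, not vertical. Moreover its projection to $B$ depends on $\Phi_F$, so $X_t$ is not even projectable; hence the resulting flow need not send fibers to fibers, and a priori the time-one map is only a symplectomorphism, not a bundle map covering the identity. (One can verify this concretely for $P=B\times U(1)$, $F=T^\dual S^1$: for $\beta=A_1\,\d x$ the Moser field works out to $X_t=rA_1\partial_y$, which is horizontal with $r$-dependent projection.) To obtain a genuine fiber bundle isomorphism one has to work harder --- e.g.\ construct a $G$-equivariant family of fiberwise diffeomorphisms directly, or choose a different primitive $\eta_t+\d f_t$ designed to make the Moser field vertical --- and you should either do so, or be upfront that the Moser argument as written only gives the weaker statement of a symplectomorphism near the zero section, which is in fact all that is used later via the coisotropic embedding theorem.
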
  

\begin{example} \label{functor}
{\rm (Associated bundles with cotangent-sphere fibers)} We are mainly
interested in the following special case of the general construction.
For any integer $c \ge 1$, let $S^c$ denote a sphere of dimension $c$
and $T^\dual S^c$ its cotangent bundle.  Consider $T^\dual S^c$ with
canonical symplectic form $\omega_{T^\dual S^c}$ and the canonical
$SO(c+1)$-action.  The action is Hamiltonian with a moment map $
\Phi_{T^\dual S^c}$ whose zero level set is $S^c$. Thus for any
principal $SO(c+1)$-bundle $\pi: P \to B$ the associated fiber bundle
construction yields a symplectic fiber bundle $P(T^\dual S^c)_\eps$
over $B$.  By functoriality, any automorphism $\tau$ of $(T^\dual
S^c,\omega_{T^\dual S^c},\Phi_{T^\dual S^c})$ induces a bundle
isomorphism $\tau_{P(T^\dual S^c)}: P(T^\dual S^c) \to P(T^\dual
S^c)$. The latter is an isomorphism of symplectic fiber bundles on
$P(T^\dual S^c)_\eps$.
\end{example} 

The notion of Dehn twist is most familiar from Riemann surface theory,
where a Dehn twist denotes a diffeomorphism obtained by twisting
around a circle on a handle.  In \cite{se:lo} Seidel introduces a
generalized notion of Dehn twist which is a symplectomorphism around a
Lagrangian sphere, called a {\em generalized Dehn twist}.  The
symplectomorphisms we consider here are further generalized by
allowing the twists to be fibered, so that the vanishing cycles are
fibered coistropics.  To save space, we call these simply fibered Dehn
twists.  We begin with the local model introduced by Seidel
\cite{se:lo}.

\begin{definition} {\rm (Model Dehn twist, Seidel \cite[Lemma 1.8]{se:lo})} 
For any sphere $S^c$, a {\em model Dehn twist} $\tau_{S^c}$ along the
zero section $S^c$ in the cotangent bundle $T^\dual S^c$ is a
compactly supported symplectomorphism equal to the antipodal map on
$S^c$, given as follows.  Let $\zeta \in C^\infty(\R)$ be a function
satisfying
\begin{equation} \label{rt}
 \zeta(t) = 0 \quad\text{for}\;t \geq \eps
 \qquad\text{and}\qquad 
\zeta(-t) = \zeta(t) - t  \quad\text{for all } t \in \R.\end{equation}
In particular, $\zeta$ is compactly supported and $\zeta'(0) = 1/2$.
Fix the standard Riemannian structure on $S^c$ and let
$$  T^\dual  S^c \to \R, \quad v \mapsto \vert v \vert $$
denote the Riemannian norm.  The norm is smooth on the complement of
the zero section and similarly for composition with $\zeta$.  The time
$2 \pi$ flow of $v \mapsto \zeta(|v|)$ extends to a smooth
symplectomorphism $\tau_{S^c}$ of $T^\dual S^c$.  The flow acts on the
zero section by the antipodal map given by
$$\tau_{S^c} |_{S^c} : S^c \to S^c, \quad v \mapsto - v .$$
Furthermore $\tau_{S^c}$ is $SO(c+1)$-equivariant and preserves the
moment map for the $SO(c+1)$-action.  Any two model Dehn twists given
by different choices of $\zeta$ are equivalent up to symplectomorphism
generated by a compactly supported Hamiltonian.
\end{definition} 

We construct fibered Dehn twists along spherically fibered
coisotropics as follows.  

\begin{definition}\label{model}  
Let $(M,\omega)$ be a symplectic
  manifold and $C\subset M$ a spherically fibered coisotropic
  submanifold of codimension $c\geq 1$, fibering $C\to B$ over a
  symplectic manifold $(B,\omega_B)$, as in Definition \ref{sphere}.
\begin{enumerate}
\item {\rm (Coisotropic embedding)} 
  Recall that $C$ is diffeomorphic to an associated fiber bundle
  $P(S^c) := P \times_{SO(c+1)} S^c ,$ for some principal
  $SO(c+1)$-bundle $\pi: P \to B$.  By the coisotropic embedding
  theorem \cite[p.315]{gu:sy}, a neighborhood of $C$ in $M$ is
  symplectomorphic to a neighborhood of the zero section in $P(T^\dual
  S^c)_\eps$ as in Theorem \ref{assoc}.
\item {\rm (Model fibered Dehn twists)} Any $SO(c+1)$-equivariant
  model Dehn twist $\tau_{T^\dual S^c}: T^\dual S^c \to T^\dual S^c$
  induces a symplectomorphism
$$\tau_{P(T^\dual S^c)} : P(T^\dual S^c)_\eps \to P(T^\dual
  S^c)_\eps$$ 
by functoriality of the associated symplectic fiber bundle
construction as in Example \ref{functor}.  Given a
symplectomorphism $\phi$ of a neighborhood $U$ of $C\subset M$ with
$P(T^\dual S^c)_\eps$ we define a symplectomorphism $ \tau_C : M \to M
$ by $\tau_{ P(T^\dual S^c)}$ on the neighborhood of $C$ and the
identity outside:
$$ \tau_C|_U = \phi^{-1} \tau_{T^\dual P(S^c)}|_{\phi(U)} \phi, \quad
  \tau_C |_{M - U}= \on{Id}_M .$$
We call $\tau_C$ a {\em model fibered Dehn twist along $C$}.
\item {\rm (Fibered Dehn twists)} A symplectomorphism $\tau_C$ of $M$
  is called a {\em fibered Dehn twist along $C$} if $\tau_C$ is
  Hamiltonian isotopic to a model Dehn twist.
\end{enumerate} 
\end{definition} 

\begin{remark} 
The Hamiltonian isotopy class of a fibered Dehn twist is independent
of the choice of local model and fibered Dehn twist used in its
construction: Any two local models for a fibered coisotropic are
isotopic, by a family version of Moser's construction.  This fact
implies that any two fibered Dehn twists $\tau_{C,0}, \tau_{C,1}$
defined using different local models and model twists may be connected
by a family $\tau_{C,t}$.  The vector field $ v_t : = (\tau_{C,t}^{-1}
)_* \ddt \tau_{C,t} \in \Vect(M)$ vanishes on $C$ and is necessarily
Hamiltonian in a neighborhood of $C$.
\end{remark}

\subsection{Equivariant fibered Dehn twists}
\label{sec:twistred} 

In this section we discuss the interaction of equivariant fibered Dehn
twists with symplectic reduction.  Let $G$ be a compact connected Lie
group and $(M,\om)$ a symplectic $G$-manifold.  A {\em spherically
  fibered coisotropic $G$-submanifold} is an invariant coisotropic
submanifold $C \subset M$ of codimension $c\geq 1$ such that there
exists
\begin{itemize} 
\item a principal $SO(c+1)$-bundle $\pi: P\to B$ equipped with an
  action of $G$ by bundle automorphisms (i.e. $SO(c+1)$-equivariant
  diffeomorphisms) and
\item a $G$-equivariant bundle isomorphism $P\times_{SO(c+1)} S^c
  \cong C$, where the $G$-action is induced by the action on the first
  factor.
\end{itemize} 
Given such a coisotropic $C$, one obtains a $G$-equivariant Dehn twist
on $P(T^\dual S^c)$ by the $G$-equivariant version of the associated
symplectic fiber bundle construction.  One obtains a {\em
  $G$-equivariant model fibered Dehn twist} on $M$ via the
$G$-equivariant coisotropic embedding theorem.  A $G$-equivariant
symplectomorphism $\tau \in \Diff(M,\omega)$ is a {\em $G$-equivariant
  fibered Dehn twist} if $\tau$ is equivariantly Hamiltonian isotopic
to a model Dehn twist, that is, Hamiltonian isotopic via a
symplectomorphism generated by a family of {\em $G$-invariant}
Hamiltonians.

We now show that equivariant fibered Dehn twists give rise to fibered
Dehn twists in symplectic quotients. Let $({M},{\om})$ be a
Hamiltonian $G$-manifold with moment map ${\Phi}: M \to \g^\dual$.
The symplectic quotient of $M$ by $G$ is
$$M\qu G:={\Phi}^{-1}(0)/G .$$  
Assuming that $G$ acts freely on $\Phi^{-1}(0)$, $M \qu G$ is a
symplectic manifold with unique symplectic form that lifts to the
restriction of the symplectic form to $\Phi^{-1}(0)$.

\begin{lemma} \label{via} 
 Let $({M},{\om})$ be a Hamiltonian $G$-manifold with moment map
 ${\Phi}$.  Let ${C} \subset {M}$ be a $G$-invariant coisotropic
 submanifold.  Suppose that $0$ is a regular value of $\Phi$, the
 action of $G$ on $\Phi^{-1}(0)$ is free, and ${C}$ intersects
 ${\Phi}^{-1}(0)$ transversally. The quotient
$$ C\qu G:=({C} \cap {\Phi}^{-1}(0))/G $$ 
is a coisotropic submanifold of $M \qu G$.
\end{lemma}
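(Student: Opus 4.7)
The plan is to verify coisotropicity pointwise by a short symplectic-linear-algebra computation at each $x \in C \cap \Phi^{-1}(0)$. First I would confirm that $C \qu G$ is a smooth submanifold of $M \qu G$: transversality of $C$ and $\Phi^{-1}(0)$ makes $C \cap \Phi^{-1}(0)$ a smooth $G$-invariant submanifold of $\Phi^{-1}(0)$, and freeness of the $G$-action on $\Phi^{-1}(0)$ restricts to freeness on $C \cap \Phi^{-1}(0)$, so the quotient embeds smoothly in $M \qu G = \Phi^{-1}(0)/G$.

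Fix $x \in C \cap \Phi^{-1}(0)$ and abbreviate $V = T_x M$, $W = T_x C$, $U = T_x \Phi^{-1}(0)$, $Z = T_x(Gx)$, with $\omega = \omega_x$. I would record the four structural facts used below: coisotropy of $C$ gives $W^\omega \subset W$; the moment map identity gives $U = \ker D_x \Phi = Z^\omega$, equivalently $Z = U^\omega$; $G$-invariance of $C$ and $\Phi^{-1}(0)$ gives $Z \subset W \cap U$; and transversality gives $W + U = V$. Under the natural identifications $T_{[x]}(M \qu G) \cong U / Z$ and $T_{[x]}(C \qu G) \cong (W \cap U)/Z$, the reduced symplectic form on $M \qu G$ is the descent of $\omega|_U$.

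Next I would compute the symplectic orthogonal of $(W \cap U)/Z$ inside $U/Z$. A class $[v]$ is orthogonal iff $v \in U \cap (W \cap U)^\omega$. Using the standard symplectic-complement identity $(W \cap U)^\omega = W^\omega + U^\omega = W^\omega + Z$, the orthogonal equals $U \cap (W^\omega + Z)$. Given a decomposition $v = w^\ast + z$ with $w^\ast \in W^\omega$ and $z \in Z \subset U$, the vector $v - z = w^\ast$ lies in $W^\omega \subset W$ by coisotropy and in $U$ since both $v$ and $z$ do. Hence $v - z \in W \cap U$, so $[v] = [v - z] \in (W \cap U)/Z$, proving coisotropy of the reduction.

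The only mildly subtle ingredient is the identity $(W \cap U)^\omega = W^\omega + U^\omega$, valid in any finite-dimensional symplectic vector space; it is what converts the transversality hypothesis plus coisotropy of $C$ into coisotropy of $C \qu G$. All other ingredients are direct consequences of $G$-invariance, freeness of the action, and the moment map identity, so I do not expect any serious obstacle.
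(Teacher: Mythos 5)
Your proof is correct and follows essentially the same route as the paper's, both hinging on the identity $(W\cap U)^\omega = W^\omega + U^\omega$ together with the inclusions $W^\omega \subset W$ (coisotropy of $C$) and $U^\omega = Z \subset W$ ($G$-invariance plus the moment map condition) to deduce $(W\cap U)^\omega \cap U \subseteq W\cap U$. Your explicit decomposition $v = w^* + z$ is a cosmetic variant of the paper's direct containment of orthogonal complements; you also correctly observe that $(W\cap U)^\omega = W^\omega + U^\omega$ is a general finite-dimensional fact not requiring transversality, whereas the paper's phrasing slightly suggests otherwise (transversality only gives the directness of the sum, which neither argument actually uses, and is otherwise needed just for smoothness of $C\cap\Phi^{-1}(0)$).
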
 

\begin{proof} 
 The transversality $TC\pitchfork \ker D \Phi$ implies $(\ker D
 \Phi)^\omega \cap TC^\omega=\{0\}$.  It follows that 
$$(\ker D \Phi \cap TC )^\omega = (\ker D \Phi)^\omega + TC^\omega =
 (\ker D \Phi)^\omega \oplus TC^\omega .$$
Now $(\ker D \Phi)^\omega\cong\g$ is the tangent space to the
$G$-orbits and contained in $TC$.  On the other hand, $TC^\omega
\subseteq TC$ since $C$ is coisotropic.  Hence
$$ (\ker D \Phi \cap TC)^\omega \cap (\ker D
\Phi) \subseteq \ker D \Phi \cap TC .$$  
This implies $T(C \qu G)^\omega \subseteq T(C \qu G)$.
\end{proof} 

\begin{lemma} \label{induced} 
Suppose that $C \subset M$ is a spherically fibered $G$-coisotropic
over a base $B$ where $M$ is a Hamiltonian $G$-manifold with moment
map $\Phi$.  Then $\Phi$ is constant on the fibers of $C$ and the
induced action of $G$ on $B$ is Hamiltonian with moment map $\Phi_B:B
\to \g^\dual$ the unique map satisfying $p^* \Phi_B = \Phi | C$.
\end{lemma}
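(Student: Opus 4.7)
The plan is to establish the two claims of the lemma directly from the interplay between the $G$-invariance of $C$ and the null-foliation structure $TC^\omega \subset TC$ whose leaves are the fibers of $p: C \to B$.

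First I would show $\Phi$ is constant on the fibers of $p$. For any $\xi \in \g$, let $\xi_M$ denote the generating vector field, so that $d\Phi^\xi = \iota_{\xi_M}\omega$. Since $C$ is $G$-invariant, $\xi_M|_C$ is tangent to $C$. For any vector $v \in TC^\omega \subset TC$ we have
\begin{equation*}
d(\Phi^\xi|_C)(v) = \omega(\xi_M, v) = 0,
\end{equation*}
because $\xi_M \in TC$ and $v \in TC^\omega$. Thus $\Phi|_C$ is locally constant along the null-foliation, and since the fibers of $p$ are spheres $S^c$ with $c \geq 1$ (hence connected), $\Phi|_C$ descends to a smooth map $\Phi_B : B \to \g^\dual$ characterized by $p^*\Phi_B = \Phi|_C$.

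Next I would verify the moment map property. Recall that $\omega_B$ on $B$ is characterized by $p^*\omega_B = \omega|_C$, since the null directions $TC^\omega = \ker(Dp)$ are exactly the kernel of the restricted form. The $G$-action on $B$ is induced by equivariance of $p$, so its generating vector field satisfies $\xi_B \circ p = Dp \circ (\xi_M|_C)$. Then for any $v \in T_cC$,
\begin{equation*}
p^*(\iota_{\xi_B}\omega_B)(v) = \omega_B(Dp(\xi_M), Dp(v)) = (p^*\omega_B)(\xi_M, v) = \iota_{\xi_M}\omega(v) = d(\Phi^\xi|_C)(v) = p^*(d\Phi_B^\xi)(v).
\end{equation*}
Since $p$ is a surjective submersion, $p^*$ is injective on forms on $B$, giving $\iota_{\xi_B}\omega_B = d\Phi_B^\xi$. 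Equivariance of $\Phi_B$ follows from the equivariance of $\Phi$ together with the $G$-equivariance of $p$: for $g \in G$ and $b = p(c) \in B$, one has $\Phi_B(g \cdot b) = \Phi(g \cdot c) = \Ad^\dual(g)\Phi(c) = \Ad^\dual(g)\Phi_B(b)$.

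This is essentially a routine consequence of the coisotropic reduction setup; there is no significant obstacle. The only small subtlety is ensuring that $\Phi|_C$ descends, which requires the connectedness of the fibers — guaranteed here because the spherical fiber condition forces $c \geq 1$.
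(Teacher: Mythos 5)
Your argument is correct and takes essentially the same route as the paper's proof: both observe that $\xi_M$ is tangent to $C$ by invariance, so $d\Phi^\xi(v) = \omega(\xi_M, v) = 0$ for $v \in TC^\omega$, and then deduce that $\Phi|_C$ descends and that $\Phi_B$ satisfies the moment map identity. You simply spell out the final verification $\iota_{\xi_B}\omega_B = d\Phi_B^\xi$ via injectivity of $p^*$, which the paper leaves implicit, and you add the (helpful) equivariance check at the end.
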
 

\begin{proof} By assumption, the action of $G$ on $P$ is 
$SO(c+1)$-equivariant and so induces an action on $B$.  For any
  $\xi\in \g$ the infinitesimal action $\xi_M\in \Vect(M)$ is tangent
  to $C$.  Hence $L_v \lan {\Phi},\xi \ran = {\omega}(\xi_M,v) = 0$
  for all fiber tangent vectors $v \in T^{\on{vert}} {C}=TC^{\omega}$.
  It follows that $\Phi$ is constant on the fibers of $p: C \to B$.
  So $\Phi$ induces a map $\Phi_B: B \to \g^\dual$, satisfying $\d
  \lan \Phi_B, \xi \ran = \iota(\xi_B) \omega_B$ for all Lie algebra
  vectors $\xi \in \g$ as claimed.
\end{proof}  

\begin{remark}
 {\rm (Quotients of spherically fibered coisotropics are spherically
   fibered)} Suppose that in the setting of Lemma \ref{induced}, $G$
 also acts freely on $\Phi_B^{-1}(0)$.  It follows that $B\qu
 G=\Phi_B^{-1}(0)/G$ is a smooth symplectic quotient. Then the null
 foliation on $C\qu G$ fibers over $B\qu G$: If $p: C \to B$ is the
 projection then
$$ C\qu G :=({C} \cap {\Phi}^{-1}(0))/G = p^{-1}( {\Phi_B}^{-1}(0)) / G
\;\overset{p}{\longrightarrow} \; {\Phi_B}^{-1}(0) / G =: B \qu G .
$$ 
Define $\ti{P} = ({P} | {\Phi_B}^{-1}(0) )/G$; this quotient is a
principal $SO(c+1)$-bundle over $\ti{B}=B \qu G$.  We have a bundle
isomorphism $\ti{P}\times_{SO(c+1)} S^c \cong C\qu G$.  It follows
that $C\qu G$ is spherically fibered in the sense of
Definition~\ref{sphere}.
\end{remark} 

In this setting every $G$-equivariant fibered Dehn twist along $C$
descends to a fibered Dehn twist of $M\qu G$ along $C\qu G$:

\begin{theorem} \label{thm fibered quotients}
Let $({M},{\om})$ be a Hamiltonian $G$-manifold with moment map
${\Phi}$ such that $0$ is a regular value of $\Phi$ and the action of
$G$ on $\Phi^{-1}(0)$ is free.  Let ${C} \subset {M}$ be a spherically
fibered $G$-coisotropic over a base $B$. Suppose that $C$ intersects
${\Phi}^{-1}(0)$ transversally, and that the induced action of $G$ on
the base $\Phi_B^{-1}(0)\subset B$ is free.  Let $\tau_C \in
\Diff(M,\omega)$ be a $G$-equivariant fibered Dehn twist along $C$.
Then the symplectomorphism
$$[\tau_C]: M\qu G \to M\qu G, \quad [m] \mapsto [\tau_C(m)]$$ 
is a fibered Dehn twist $[\tau_C]=:\tau_{C\qu G}$ along $ C\qu G$.
\end{theorem}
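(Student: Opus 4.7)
The plan is to descend $\tau_C$ to $[\tau_C]$ on $M\qu G$, then reduce to a model twist by descending a $G$-invariant Hamiltonian isotopy, and finally verify that the symplectic reduction of the local model $(P(T^\dual S^c)_\eps,\omega_{P(T^\dual S^c),\alpha})$ is precisely the local model for $C\qu G\subset M\qu G$. First I would handle the descent: a $G$-equivariant symplectomorphism of a Hamiltonian $G$-manifold shifts the moment map by a constant in $\g^\dual$ (using connectedness of $G$), and the identity-at-infinity condition on $\tau_C$ forces this constant to vanish. Hence $\tau_C^*\Phi=\Phi$, so $\tau_C$ preserves $\Phi^{-1}(0)$, commutes with the $G$-action, and descends to a symplectomorphism $[\tau_C]$ of $M\qu G$ supported in a neighborhood of $C\qu G$. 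The freeness and transversality hypotheses, together with Lemma \ref{via} and the remark following Lemma \ref{induced}, already ensure that $C\qu G$ is spherically fibered with principal $SO(c+1)$-bundle $\tilde P=(P|_{\Phi_B^{-1}(0)})/G$ over $B\qu G$.

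Next, by Definition \ref{model}(c) applied $G$-equivariantly, $\tau_C$ is connected to some $G$-equivariant model fibered Dehn twist $\tau_C^{\on{mod}}$ through a Hamiltonian isotopy generated by a $G$-invariant family $H_t$. Since $H_t$ is $G$-invariant and defined on an open set containing $\Phi^{-1}(0)$, it descends to $[H_t]$ on $M\qu G$, and its flow connects $[\tau_C]$ to $[\tau_C^{\on{mod}}]$. Hence it suffices to show that $[\tau_C^{\on{mod}}]$ is itself a model fibered Dehn twist along $C\qu G$ in the sense of Definition \ref{model}(b).

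The main work, and the step I expect to be the principal obstacle, is the identification of local models under reduction. Using the $G$-equivariant coisotropic embedding theorem I identify a $G$-invariant neighborhood of $C$ with $P(T^\dual S^c)_\eps$ equipped with the minimally coupled form \eqref{mincoup} for a $G$-invariant connection $\alpha$. Because the fiber $T^\dual S^c$ carries no intrinsic $G$-action, a direct computation with \eqref{mincoup} (using that $\alpha$ is $G$-invariant and that the $SO(c+1)$-moment map $\Phi_{T^\dual S^c}$ is automatically $G$-invariant) shows that the $G$-moment map on $P(T^\dual S^c)_\eps$ coincides with the pullback $\pi^*\Phi_B$ of the base moment map from Lemma \ref{induced}. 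Consequently its zero level set is $(P|_{\Phi_B^{-1}(0)})(T^\dual S^c)_\eps$, the restriction of $\alpha$ descends to a $G$-invariant connection $\tilde\alpha$ on $\tilde P$, and a further straightforward calculation identifies the reduced symplectic form on $\tilde P(T^\dual S^c)_\eps$ with the minimally coupled form for $\tilde\alpha$. Under this identification, the associated bundle construction of Example \ref{functor} applied to the same $SO(c+1)$-equivariant fiber twist $\tau_{T^\dual S^c}$ yields precisely $[\tau_C^{\on{mod}}]$ as the model twist $\tau_{\tilde P(T^\dual S^c)}$ on $\tilde P(T^\dual S^c)_\eps$. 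Since this is a model fibered Dehn twist along $C\qu G$, combining with the two previous steps completes the proof.
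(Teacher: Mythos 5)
Your overall strategy mirrors the paper's: descend the $G$-invariant Hamiltonian isotopy connecting $\tau_C$ to a model twist, and then check that the model twist on $P(T^\dual S^c)_\eps$ reduces to a model twist on $\ti{P}(T^\dual S^c)_\eps$. The paper's proof is terse at exactly the step you identify as the ``main work'' (it simply asserts that any $G$-equivariant local model induces a local model on the quotient), so you are filling in a real gap. Your initial descent observation (that a $G$-equivariant fibered Dehn twist preserves $\Phi$) is also a sound addition that the paper leaves implicit.

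However, the central computation contains an error. You claim that the $G$-moment map on $P(T^\dual S^c)_\eps$ is $\pi^*\Phi_B$, but computing $\iota_{(\xi_P,0)}$ applied to the minimally coupled form \eqref{mincoup} with $\alpha$ merely $G$-invariant produces a correction:
$$\Phi_{P(F)}^\xi = \pi^*\Phi_B^\xi - \langle \alpha(\xi_P), \Phi_F\rangle,$$
and $\alpha(\xi_P) \in \so(c+1)$ need not vanish, since $\alpha$ is a connection for the $SO(c+1)$-action, not the $G$-action. The correction vanishes on the zero section $P(S^c)$ (where $\Phi_F = 0$), consistent with Lemma \ref{induced}, but not off it, so the zero level set of the $G$-moment map is generally \emph{not} $(P|_{\Phi_B^{-1}(0)})(T^\dual S^c)_\eps$, and the subsequent identification of the reduced form with the minimally coupled form for $\tilde\alpha$ does not follow as stated. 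The fix is to additionally arrange $\alpha(\xi_P)=0$, i.e.\ to choose the $SO(c+1)$-connection so that $G$-orbits are horizontal; this is possible over a neighborhood of $\Phi_B^{-1}(0)$ because there the $G$-orbits in $P$ project nontrivially to $B$ and hence are transverse to the $SO(c+1)$-orbits, but one must check that such an $\alpha$ can be built $G$-invariantly and consistently with the equivariant coisotropic embedding. Without that extra step the argument as written does not close.
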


\begin{proof}  By definition $\tau_C$ is Hamiltonian isotopic to 
a model Dehn twist on $P(T^\dual S^c)$ given by $\tau^0_C:[{p},v]
\mapsto [{p},\tau_{S^c}(v)]$.  The latter is $G$-equivariant since the
$G$-action commutes with the $SO(c+1)$ action.  Any $G$-equivariant
local model $P(T^\dual S^c) \to M$ induces a local model given by a
symplectomorphism of a neighborhood of the zero section in
$\ti{P}(T^\dual S^c)$ to $M \qu G$.  One obtains from the local model
a Dehn twist
$$\tau^0_{C\qu G}:[\ti{p},v] \mapsto [\ti{p},\tau_{S^c}(v)]$$ 
on $\ti{P}(T^\dual S^c)$ along $C\qu G$.  The equivariant Hamiltonian
isotopy of $\tau_C$ to $\tau_C^0$ induces a Hamiltonian isotopy of
$\tau_{C \qu G}$ to $\tau_{C \qu G}^0$.  This completes the proof.
\end{proof}

\subsection{Lefschetz-Bott fibrations associated to fibered Dehn twists} 
\label{standard}

In this section we explain that any fibered Dehn twist appears as the
monodromy of a symplectic Lefschetz-Bott fibration.  Conversely, the
monodromy of a symplectic Lefschetz-Bott fibration is given by a
fibered Dehn twist up to isotopy by Theorem \ref{isafdt} of Perutz
\cite{per:lag} recalled below.  (Theorem \ref{isafdt} is not used in
this paper; we mention it only for its conceptual importance linking
Lefschetz-Bott fibrations and fibered Dehn twists.)

\begin{proposition} \label{ECexist} 
 Let $M$ be a symplectic manifold, $C \subset M$ a spherically fibered
 coisotropic, and $\tau_C: M \to M $ a fibered Dehn twist around $C$.
 There is a {\em standard Lefschetz-Bott fibration} $E_C$ with generic
 fiber $M$ and symplectic monodromy $\tau_C$.
\end{proposition}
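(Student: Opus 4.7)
The plan is to construct $E_C$ by gluing a standard local quadric Lefschetz--Bott model over a small disk $D \subset \C$ (realizing the singular fiber with critical set $B$) to the trivial symplectic fibration $M \times (\C \setminus \{0\})$, matching the monodromies up to a Hamiltonian isotopy.

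For the local model, let $P \to B$ be the $SO(c+1)$-principal bundle from Definition \ref{sphere} and consider the complex quadratic form $q : \C^{c+1} \to \C$, $q(z) = \sum_{i=1}^{c+1} z_i^2$. Since $q$ is $O(c+1,\R)$-invariant under the action on $\C^{c+1} = \R^{c+1} \otimes_\R \C$, it descends to a smooth map $\pi^{\loc} : E^{\loc} := P \times_{SO(c+1)} \C^{c+1} \to \C$ whose only critical fiber is $(\pi^{\loc})^{-1}(0)$ with critical set the zero section $B \hookrightarrow E^{\loc}$. Equip $E^{\loc}$ with the minimally coupled two-form of Theorem \ref{assoc} built from a chosen connection $\alpha \in \A(P)$, the standard K\"ahler form on $\C^{c+1}$, and the standard $SO(c+1)$-moment map on $\C^{c+1}$. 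Near $B$ this form is non-degenerate and compatible with the obvious integrable almost complex structure $J_0$ inherited from $\C^{c+1}$; the projection $\pi^{\loc}$ is $(J_0,j_0)$-holomorphic there; and the normal Hessian is the non-degenerate quadratic form $q$. Hence all conditions of Definition \ref{fibdef} are verified in a neighborhood of $B$.

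For $t \in \C \setminus \{0\}$ small, the smooth affine quadric $q^{-1}(t) \subset \C^{c+1}$ is $SO(c+1)$-equivariantly symplectomorphic to $T^\dual S^c$ (with zero section the real sphere of radius $\sqrt{t}$), so the generic fiber $(\pi^{\loc})^{-1}(t)$ is symplectomorphic to a neighborhood of the zero section in $P(T^\dual S^c)$. The symplectic monodromy of $\pi^{\loc}$ around $0 \in \C$ is $SO(c+1)$-equivariant and, fiberwise over $B$, is exactly Seidel's unfibered model Dehn twist on $T^\dual S^c$ by the gradient-flow computation of \cite{se:lo} applied to the $\C^{c+1}$-fibers. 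By the functoriality of the associated bundle construction (Example \ref{functor}), the global monodromy is therefore the model fibered Dehn twist $\tau^0_C$ on $P(T^\dual S^c)_\eps$, which has compact support in a fiberwise neighborhood $K$ of the zero section.

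To globalize, fix a coisotropic embedding $\phi : P(T^\dual S^c)_\eps \hookrightarrow U \subset M$ from Definition \ref{model} and shrink $D$ so that $(\pi^{\loc})^{-1}(\overline D) \cap \phi^{-1}(K) \Subset \phi^{-1}(U)$. Glue the restriction of $\pi^{\loc}$ over $D$ to the trivial symplectic fibration $M \times (\C \setminus \{0\})$ via $\phi$ on the common open region $\phi(P(T^\dual S^c)_\eps \setminus K) \times (D \setminus \{0\})$, where the monodromy acts trivially. The resulting $E_C \to \C$ is a symplectic Lefschetz--Bott fibration with generic fiber $M$, unique critical fiber containing the critical set $B$, and monodromy $\tau^0_C$. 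Since $\tau_C$ is by Definition \ref{model} Hamiltonian isotopic to $\tau^0_C$, inserting the generating Hamiltonian isotopy on an annular collar in $\C$ (lifted via the symplectic connection on the trivial piece) replaces the monodromy by $\tau_C$ itself without altering the Lefschetz--Bott structure. The main technical obstacle is matching the minimally coupled two-form on $E^{\loc}$ with the product two-form on $M \times (\C \setminus \{0\})$ across the gluing annulus; this is handled by a Moser interpolation in the presence of the compact support of $\tau^0_C$, using that both two-forms restrict to the same symplectic form on each fiber of the overlap and differ by an exact two-form pulled back from $D \setminus \{0\}$.
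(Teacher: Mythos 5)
Your overall strategy is the same as the paper's: build the local model over a disk as the associated bundle $P \times_{SO(c+1)} \C^{c+1}$ with the minimally coupled form and the $SO(c+1)$-invariant quadric $\pi_V(z) = \sum z_i^2$, then identify the fibration with the trivial one $M \times \C$ away from the vanishing thimbles via the coisotropic embedding, and glue. However, you gloss over the two places where the actual work lies, and the Moser argument you propose for the gluing is not justified.

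First, the symplectic monodromy of $\pi_V: \C^{c+1} \to \C$ with the \emph{standard} K\"ahler form around a loop near the boundary of a disk is not compactly supported in the fiber, so it is not yet one of the compactly supported model twists admitted by Definition \ref{model} (where the angle function satisfies $\zeta(t)=0$ for $t \geq \eps$). The paper therefore invokes Seidel's modification \cite[(1.17)]{se:lo} of the symplectic form on $V$ before passing to the associated bundle; you assert that the monodromy ``has compact support in a fiberwise neighborhood $K$'' without making this modification explicit. Second, and more seriously, the gluing step is exactly where Moser is not routine. Over the annular overlap, the fibers of $\pi^{\loc}$ are the rescaled affine quadrics $P(q^{-1}(z))$, $z \in D\setminus\{0\}$, whose symplectic forms depend on $z$; they are not a priori the same as the constant fiber $M$ of the trivial fibration, and the difference of the two ambient two-forms is not visibly a pull-back from the base. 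The paper resolves this by citing Seidel's Lemma \cite[1.13]{se:lo}, which gives an \emph{exact} isomorphism of Hamiltonian $SO(c+1)$-manifolds $V \setminus T \to \C \times (T^\dual S^c \setminus S^c)$; applying functoriality of the associated-bundle construction then identifies $P(V) \setminus P(T)$ with $\C \times (P(T^\dual S^c) \setminus P(S^c))$ with matching two-forms, so no Moser interpolation is needed. Your claim that the two forms ``differ by an exact two-form pulled back from $D\setminus\{0\}$'' is not argued, and it is precisely this matching that Seidel's Lemma 1.13 provides. You should replace the Moser sketch by that identification (or prove a fibered analogue of it).
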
 

\begin{proof}  
Let $p: C \to B$ denote the fibration, and $P \to B$ the associated
$SO(c+1)$-bundle.  Recall from \cite{se:lo} the symplectic Lefschetz
fibration associated to a model Dehn twist.  Given the standard
representation of $SO(c+1)$ on $V = \C^{c+1}$ we have a vector bundle
$$P(V) := (P \times V)/SO(c+1) \to B .$$  
Let 
$$ \omega_{V} \in \Omega^2(V), \quad \Phi_{V}: V \to \so(c+1)^\dual $$
denote the symplectic form and moment map for the $SO(c+1)$-action
induced from the identification $V \to T^\dual \R^{c+1}$.  The
associated symplectic fiber bundle construction above produces a
closed form $\omega_{P(V)}$ on $P(V)$, equal to $\omega_B$ on $B$ and
equal to $ \omega_{V}$ on the fiber $ V$.  The map
\begin{equation} \label{piV}
 \pi_V: \C^{c+1} \to \C,  \quad (z_0,\ldots,z_c) \mapsto \sum_{i=0}^c
 z_i^2 \end{equation}
is $SO(c+1)$-invariant and has a single Morse singularity.  By the
associated bundle construction $\pi_V$ induces the structure of a
Lefschetz-Bott fibration on a neighborhood of the zero section of
$P(V)$ over $\C$.  Let $S^c \subset \R^{c+1} \subset V$ denote the
unit sphere and
\begin{equation} \label{Sig} T_z := \sqrt{z}S^c \subset V, \ \ \ T := \bigcup_{z \in \C}
T_z .\end{equation}
By \cite[(1.17)]{se:lo} the symplectic form on $V$ can be changed
slightly so that the symplectic monodromy around $0$ is a Dehn twist
along $S^c$.  By \cite[1.13]{se:lo} there exists an isomorphism of
Hamiltonian $SO(c+1)$-manifolds
$$ V \ssm T \to \C \times ( T^\dual  S^{c} \ssm S^{c} ).$$
By the coisotropic embedding theorem, a neighborhood of $C$ in $M$ is
symplectomorphic to the fiber bundle $P(U)$, where $U$ is a
neighborhood of the zero section in $T^\dual  S^{c}$.  It follows that $
P(V) \ssm P(T)$ is symplectomorphic to $P(\C \times T^\dual  S^c \ssm S^c)$
in a neighborhood of $P(T)$ resp.\ $P(\C \times S^c)$.  By replacing a
neighborhood of $\C \times C$ in $\C \times M$ with a neighborhood of
$P(T)$ in $P(V)$, one obtains a Lefschetz-Bott fibration $E_C \to \C$
with monodromy $\tau_C$.
\end{proof}

\begin{theorem} \label{isafdt}
\cite[Theorem 2.19]{per:lag} Suppose that $\pi: E \to S$ is a
symplectic Lefschetz-Bott fibration, and $s_0 \in S^{\crit}$ is such
that $\pi^{-1}(s_0) \cap E^{\crit}$ has a unique connected component.
Then the symplectic monodromy around $s_0$ is a fibered Dehn twist.
\end{theorem}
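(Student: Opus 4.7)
The plan is to reduce the question, after shrinking to a neighborhood of $s_0$, to the standard model $E_C \to \C$ constructed in Proposition~\ref{ECexist}, so that the monodromy inherits the fibered Dehn twist structure by functoriality of the associated fiber bundle construction in Theorem~\ref{assoc}. First I would choose a small disk $D \subset S$ around $s_0$ containing no other critical values, pick a smooth embedded path $\gamma : [0,1] \to D$ with $\gamma(1) = s_0$, and consider the connected component $B = \pi^{-1}(s_0) \cap E^{\crit}$ together with the vanishing cycle $C = C_0 \subset E_{\gamma(0)}$. By the proposition preceding this theorem, $C \to B$ is already a spherically fibered coisotropic with structure group $SO(c+1)$, so the input data for Definition~\ref{model} is in place.

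The core technical step is to produce a simultaneous Morse-Bott normal form along $B$. The normal Hessian $D^2\pi | T^{\otimes 2}N$ defines a non-degenerate complex-bilinear form on the normal bundle $N \to B$, and on $\V$ the pair $(\omega_E,J_0)$ is K\"ahler and $B$ is normally K\"ahler, so the complex normal slices to $B$ carry compatible integrable complex structures and K\"ahler forms. Applying the parametric holomorphic Morse lemma slice-by-slice and gluing with a partition of unity, one produces a $(J_0,j_0)$-holomorphic identification of a neighborhood of $B$ in $E$ with a neighborhood of the zero section in $P(V)$ over $D$, $V = \C^{c+1}$, in which $\pi$ becomes the fiberwise quadratic map $\pi_V$ of \eqref{piV}. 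Here $P \to B$ is the principal $SO(c+1)$-bundle obtained as the orthonormal frame bundle of $N$, and the $SO(c+1)$-reduction is exactly the structure on $C$ recorded in Definition~\ref{sphere}.

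Next I would upgrade this holomorphic normal form to a symplectic one. Because $\omega_E |_\V$ is compatible with $J_0$ and $B$ is normally K\"ahler, a relative, $SO(c+1)$-equivariant version of Moser's theorem, combined with the equivariant coisotropic embedding theorem used in Definition~\ref{model}, modifies the identification by an isotopy supported near $B$ so that $\omega_E$ matches the minimally coupled form $\omega_{P(V),\alpha}$ of \eqref{mincoup} for a suitable connection $\alpha$ on $P$. Outside a neighborhood of the vanishing thimble $T$ of \eqref{Sig}, the map $\pi$ restricts to a symplectic fibration over $D$ with fiber symplectomorphic to $M$, and the canonical symplectic connection \eqref{symconn} gives a symplectomorphism of this complement with the corresponding complement in the standard model $E_C$. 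Patching these two identifications along the overlap, where Seidel's computation \cite[(1.17)]{se:lo} shows that the minimally coupled form agrees up to a compactly supported Hamiltonian isotopy with the pullback model, expresses $\pi^{-1}(D)$ as a symplectic Lefschetz-Bott fibration isomorphic to $E_C |_D$.

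Finally, the monodromy of $E_C$ around $0$ is a fibered Dehn twist $\tau_C$ along $C$ by the construction in Proposition~\ref{ECexist}, and the isomorphism transports this to the monodromy of $\pi$ around $s_0$, modulo a Hamiltonian isotopy generated by the relative Moser step. The main obstacle will be the simultaneous normal form over $B$: one must arrange the fiberwise Morse coordinates so that they glue to a $J_0$-holomorphic trivialization of $N$ compatible with both the Hermitian structure induced by $(\omega_E,J_0)$ and the $SO(c+1)$-reduction coming from $C$, and then control the difference between $\omega_E$ and the minimally coupled form by an equivariant Moser argument supported near the singular locus without disturbing the projection $\pi$.
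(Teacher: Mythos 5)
The paper does not actually prove this theorem: it is quoted verbatim from Perutz \cite[Theorem 2.19]{per:lag}, and the text immediately preceding the statement explicitly says ``Theorem \ref{isafdt} is not used in this paper; we mention it only for its conceptual importance.'' So there is no in-paper proof to compare against, and your attempt should be judged on its own merits against Perutz's argument.

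Your overall plan (reduce to the standard model $E_C$ of Proposition \ref{ECexist} over a small disk, match the two fibrations on the complement of the thimble via symplectic parallel transport, and transport the model monodromy) is the right shape and is in the spirit of what Perutz does. The genuine gap is in your core technical step, the ``simultaneous Morse-Bott normal form along $B$.'' You propose to apply the holomorphic Morse lemma slice-by-slice and then glue with a partition of unity. Gluing $(J_0,j_0)$-holomorphic coordinate systems by a partition of unity does not produce a $J_0$-holomorphic map; convex combinations of holomorphic charts are generically not holomorphic, and the same objection applies to the fiberwise quadratic normal form for $\pi$ -- a partition-of-unity average of fiberwise coordinates in which $\pi$ is quadratic will not in general keep $\pi$ quadratic. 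The correct route here is to observe that the normal Hessian together with the Hermitian metric from $(\omega_E,J_0)$ reduces the structure group of the complex normal bundle $N\to B$ to $U(c+1)\cap O(c+1,\C)=O(c+1)$ (then to $SO(c+1)$ after orienting), and then to run the Morse-lemma straightening \emph{parametrically in $B$} using the canonicity of the Moser-trick construction rather than any gluing; the normally K\"ahler hypothesis is exactly what makes the fiberwise construction vary smoothly over $B$. Even then you must separately verify that the $SO(c+1)$-reduction you obtain from the Hessian agrees, as a reduction of the bundle $C\to B$, with the one produced by the stable manifold theorem in the proposition preceding Theorem \ref{isafdt}; you assert this agreement but do not argue it. The remaining steps (equivariant coisotropic embedding, Moser isotopy to match $\omega_E$ with the minimally coupled form, and patching with the trivial part via symplectic parallel transport as in Seidel \cite[(1.17), 1.13]{se:lo}) are reasonable but each presupposes that the normal-form step has already been carried out globally and equivariantly, so the gap above is load-bearing for the whole argument.
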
 

\subsection{Further examples of fibered Dehn twists} 
\label{collective}

Fibered Dehn twists are often induced by flows of components of moment
maps. First let $U(1) = \{ z \in \C \ | \ |z| = 1 \}$ denote the
circle group.  We identify the Lie algebra of $U(1)$ with $\R$ via
division by $2 \pi i$.  The integers $\Z = \exp^{-1}(1)$ are then the
coweight lattice.  Let $(M,\omega)$ be a symplectic manifold, and $M_0
\subset M$ an open subset equipped with a free Hamiltonian action of
$U(1)$ with moment map $\Phi: M_0 \to (c_-,c_+)$.

\begin{proposition} \label{circle} {\rm (Fibered Dehn twists via Hamiltonian circle actions)}  
Let $\psi \in C^\infty[c_-,c_+]$ be a function such that $\psi' = 1$
on a neighborhood of $c_+$ and $\psi' = 0 $ on a neighborhood of
$c_-$.  Then the time one flow of $ \psi \circ \Phi $ on $M_0$ extends
to a smooth flow on $M$ equal to the identity on the complement of
$M_0$ and the extension is a fibered Dehn twist along $\Phinv(c)$ for
any $c \in (c_-,c_+)$.  If $M_0$ is a Hamiltonian $G$-manifold for a
compact Lie group $G$ and $\psi$ is $G$-invariant then this fibered
Dehn twist is equivariant.
\end{proposition}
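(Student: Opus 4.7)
My plan is (i) to verify that the time-one flow of $H := \psi \circ \Phi$ extends smoothly to $M$ by the identity outside $M_0$, (ii) to apply a $U(1)$-equivariant coisotropic neighborhood theorem to put a tubular neighborhood of each level set $C := \Phi^{-1}(c)$ in standard form, and (iii) to identify the resulting fiber-wise map with a model Dehn twist via a convex homotopy of fiber Hamiltonians. The $G$-equivariance will be automatic from $G$-invariance of $\Phi$.

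For (i): on $M_0$, $X_H = \psi'(\Phi)\, X_\Phi$ vanishes near $\Phi^{-1}(c_-)$ (since $\psi' = 0$ there) and equals $X_\Phi$ near $\Phi^{-1}(c_+)$ (since $\psi' = 1$); the time-one flow of $X_\Phi$ is the identity under the normalization $\Z = \exp^{-1}(1)$. Hence the flow is trivial near the boundary of $M_0$ in $M$, and extends smoothly by the identity on $M \ssm M_0$.

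For (ii), fix $c \in (c_-, c_+)$. The level set $C := \Phi^{-1}(c)$ is a codimension-one coisotropic whose null foliation consists of the free $U(1)$-orbits, fibering over $B := C/U(1)$ with structure group $U(1) = SO(2)$; hence $C$ is spherically fibered in the sense of Definition~\ref{sphere}. A $U(1)$-equivariant coisotropic neighborhood theorem identifies a tubular neighborhood of $C$ symplectically and equivariantly with a neighborhood of the zero section in the associated bundle $P(T^\dual S^1)$ (with $P = C$), sending $\Phi - c$ to the fiber moment map $p$. In these coordinates $H$ reduces to the fiber Hamiltonian $\psi(c + p)$, whose time-one flow is the fiber rotation $(\theta, p) \mapsto (\theta + \psi'(c + p), p)$ in $U(1) = \R/\Z$.

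For (iii), both this rotation and the model fibered Dehn twist are $U(1)$-equivariant fiber-wise symplectomorphisms supported near the zero section whose rotation angle sweeps once around $U(1)$ as $p$ crosses the support --- monotonically from $0$ to $1$ in our case, and via Seidel's condition $\zeta(-t) = \zeta(t) - t$ in the model. The space of fiber Hamiltonians $K : \R \to \R$ with $K'(-\infty) = 0$ and $K'(+\infty) = 1$ is convex, so a straight-line homotopy between a reference $\zeta$-type Hamiltonian and $\psi(c + \cdot)$ provides a family of fiber rotations connecting the two flows, extending to $M$ by the identity as in step (i); this is the required Hamiltonian isotopy. $G$-invariance is preserved throughout since $\Phi$ is $G$-invariant ($G$ commutes with $U(1)$). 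The main obstacle is this last comparison: matching the winding-number data of our globally smooth $\psi(c+p)$ to that of Seidel's $\zeta(|p|)$, which is non-smooth at the zero section but whose time-$2\pi$ flow extends smoothly via the $\zeta$-condition. Once both are identified as single-turn fiber rotations in the same topological class, the convex interpolation yields the required equivariant Hamiltonian isotopy.
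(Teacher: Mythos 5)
Your proof follows the same basic route as the paper's (which is extremely terse: it computes the Hamiltonian vector field, observes the flow is a fiberwise rotation, and then declares the rest immediate); you supply the local‑model and interpolation details that the paper omits. Steps (i) and (ii) are fine.

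The gap is in step (iii). You appeal to convexity of the space of $K$ with $K'(-\infty)=0$, $K'(+\infty)=1$, but the smooth Hamiltonian that generates Seidel's model Dehn twist does not lie in that space. The model uses $\zeta(|p|)$, and while its time-$2\pi$ flow coincides with that of the \emph{smooth} function $\zeta(p)$ (their difference is $\min(p,0)$, whose time-$2\pi$ flow is a full rotation, hence the identity), that smooth $\zeta$ satisfies $\zeta(t)=0$ for $t\geq\eps$ and $\zeta(t)=t$ for $t\leq-\eps$, so $\zeta'(-\infty)=1$ and $\zeta'(+\infty)=0$ --- \emph{opposite} to the asymptotics of $\psi'$. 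The convex combination $s\zeta + (1-s)\psi(c+\cdot)$ has derivative $\to s$ as $p\to-\infty$ and $\to 1-s$ as $p\to+\infty$; for $s\in(0,1)$ neither is an integer, so the time-one flows of the interpolating family are not the identity outside a compact set and you do not get a compactly supported Hamiltonian isotopy. Your closing sentence names this as ``the main obstacle'' and claims it is solved ``once both are identified as single-turn fiber rotations in the same topological class,'' but that is precisely the nontrivial point: you have to check the winding numbers (degrees of the rotation-angle maps $\R \cup \{\infty\} \to S^1$) actually agree, which depends on the choice of identification $U(1)\cong SO(2)$ used in the reduction to structure group $SO(2)$, equivalently on the sign of the fiber coordinate. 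With the wrong sign you get the \emph{inverse} Dehn twist, which is not isotopic to the model through compactly supported maps. A correct argument should either (a) fix the $U(1)\cong SO(2)$ identification so that the fiber moment map corresponds to $-(\Phi-c)$, after which the boundary slopes of $\psi'$ and $\zeta'$ agree and your convexity argument does apply; or (b) work directly with the rotation angles as maps $\R\to\R/\Z$, verify equality of degrees, and homotope through such angle maps (whose $s$-derivatives lift to compactly supported $\R$-valued functions, hence give a genuine Hamiltonian isotopy) rather than through $\R$-valued Hamiltonians with mismatched ends.
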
  

\begin{proof}  Let $\psi$ be as in the statement of the Proposition.  
We have
$$ \d (\psi \circ \Phi) = \psi' \d \Phi = \iota \left(\psi' \ppth
\right) \omega \in \Omega^1(M_0),$$
where $\ppth \in \Vect(M_0)$ is the generating vector field for the
action.  Hence the flow of $\psi \circ \Phi$ is rotation by the angle
$2\pi \psi'$.  The rest is immediate from the definitions.
\end{proof}  

\begin{remark} {\rm (Standard Dehn twists as symplectic Dehn twists)}  
Standard Dehn twists of complex curves arise from the construction in
\ref{circle} as follows.  Suppose that $M$ is a complex curve and $C
\to M$ an embedded circle equipped with an identification $C \cong
S^1$.  Choose an area form $\omega_M$ on $M$.  Since $C \subset M$ is
Lagrangian, by the Lagrangian embedding theorem there exists a tubular
neighborhood $M_0 = C \times (c_-,c_+)$ on which the symplectic form
is standard.  Then the $U(1)$ action by rotation on the left factor of
$M_0$ is free, and the projection $\Phi$ on the second factor is a
moment map.  For any $\psi$ with the properties in Proposition
\ref{circle}, the flow of $\psi \circ \Phi$ is a standard Dehn twist.
\end{remark} 

Next we consider Dehn twists induced by flows of moment maps of
$SU(2)$-actions. We fix a metric on the Lie algebra $\su(2)$ so that
non-zero elements $\xi$ with $\exp(\xi) = 1$ have minimal length $1$.

\begin{proposition} \label{su2twist}
{\rm (Fibered Dehn twists via Hamiltonian $SU(2)$-actions)} Suppose
that $(M,\omega,\Phi)$ is a Hamiltonian $SU(2)$-manifold with moment
map $\Phi: M \to \su(2)^\dual \cong \su(2)$ and the stabilizer $H$ of
the action of $SU(2)$ on any point in $\Phinv(0)$ is trivial resp.
isomorphic to $U(1)$.  Let $\psi \in C^\infty_c[0,\infty)$ be a
  compactly-supported function such that $\psi' = 1/2$ in a
  neighborhood of $0$.  Then $\Phinv(0)$ is a spherically fibered
  coisotropic of codimension $3$ resp.\ $2$ and the flow of $\psi
  \circ \vert \Phi \vert$ is a Dehn twist along $\Phinv(0)$.
\end{proposition}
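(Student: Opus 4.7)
The plan is to separate the two claims: first establish that $\Phinv(0)$ is spherically fibered with the claimed codimension, and then show that the flow of $\psi\circ|\Phi|$ matches a model fibered Dehn twist along it via a computation in a local model. The key observation driving everything is that $\psi\circ|\Phi|$ is a collective Hamiltonian, giving its flow a completely explicit form.

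For the spherically fibered structure, at any $m\in\Phinv(0)$ moment map duality gives $T_m\Phinv(0)^\omega = T_m(SU(2)\cdot m)$, so the null foliation of $\Phinv(0)$ is by $SU(2)$-orbits. In the free-stabilizer case, these orbits are $SU(2)\cong S^3$ and $\Phinv(0)\to B := \Phinv(0)/SU(2)$ is a principal $SU(2)$-bundle; extension by the standard left-multiplication embedding $SU(2)\hookrightarrow SO(4)$ on $\H\cong\R^4$ provides the required $SO(4)$-reduction. In the $U(1)$-stabilizer case, the orbits are $SU(2)/U(1)\cong S^2$; since the center $\{\pm I\}\subset U(1)$ acts trivially, the quotient $SO(3) = SU(2)/\{\pm I\}$ acts transitively on each orbit with stabilizer $SO(2)$, producing the associated principal $SO(3)$-bundle reduction.

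For the flow, the plan is to exploit that $\psi\circ|\Phi|$ is $SU(2)$-invariant and a function of $\Phi$. By the moment map identity, its Hamiltonian vector field at $m$ with $\Phi(m)\neq 0$ is the infinitesimal action of $\psi'(|\Phi(m)|)\,\Phi^\sharp(m)/|\Phi(m)|\in\su(2)$, where $\sharp$ is the musical isomorphism via the fixed invariant inner product. Since this direction is proportional to $\Phi^\sharp(m)$ itself, its coadjoint action fixes $\Phi(m)$, so $\Phi$ (and hence $\psi'(|\Phi|)$) is preserved along the flow. Consequently the time-$t$ map has the closed form $m\mapsto \exp\!\bigl(t\psi'(|\Phi(m)|)\,\Phi^\sharp(m)/|\Phi(m)|\bigr)\cdot m$, and compact support of $\psi'$ ensures this extends to the identity outside a neighborhood of $\Phinv(0)$.

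To identify this flow as a fibered Dehn twist, I would apply the Marle--Guillemin--Sternberg equivariant normal form to a tubular neighborhood of $\Phinv(0)$, obtaining an $SU(2)$-equivariant symplectomorphism with a neighborhood of the zero section in the associated bundle $P(T^\dual S^c)_\eps$ of Example~\ref{functor}, where $c=3$ (resp.\ $c=2$). Under this identification, $\Phi$ corresponds to the fiberwise moment map on $T^\dual S^c$ and $\psi\circ|\Phi|$ pulls back to $\zeta(|v|)$ for some compactly supported $\zeta$, so the time-$1$ flow becomes the model fibered Dehn twist $\tau_{P(T^\dual S^c)}$ of Definition~\ref{model}. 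The main technical hurdle will be the normalization check: the prescription that length-$1$ elements $\xi\in\su(2)$ satisfy $\exp(\xi)=1$ is tuned precisely so that the ``halfway'' exponential $\exp\bigl(\tfrac12\Phi^\sharp/|\Phi|\bigr) = -I$ acts by the antipodal map on the $S^c$-fibers of the null foliation---directly on $S^3$ in the free case by left multiplication, and via the cover $SU(2)\to SO(3)$ together with geodesic flow at time $\pi$ on $S^2$ in the $U(1)$ case. A compactly supported Hamiltonian isotopy between $\zeta$ and the model generator from Section~\ref{twists} then places the flow in the isotopy class of a fibered Dehn twist.
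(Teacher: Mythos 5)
Your overall strategy matches the paper's: equivariant coisotropic embedding (MGS normal form) to a neighborhood of $\Phinv(0)$, followed by identification of the collective Hamiltonian $\psi\circ|\Phi|$ with the model Dehn twist generator $\zeta(|v|)$. The explicit closed form for the flow, $m\mapsto\exp\!\bigl(t\psi'(|\Phi(m)|)\,\Phi^\sharp(m)/|\Phi(m)|\bigr)\cdot m$, is correct (it follows from $\Ad^*_{\exp(s\Phi^\sharp)}\Phi = \Phi$, so both $\Phi$ and the generating direction are preserved along the flow), and it makes the argument cleaner than the paper's version, which just asserts the match of Hamiltonians.

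However, the ``normalization check'' in the $U(1)$-stabilizer case contains a real gap. You write that the time-$1$ group element $\exp\bigl(\tfrac12\Phi^\sharp/|\Phi|\bigr)=-I$ ``acts by the antipodal map on the $S^c$-fibers\ldots via the cover $SU(2)\to SO(3)$.'' But $-I$ is central in $SU(2)$: its image in $SO(3)$ is the identity, and it acts \emph{trivially} on $S^2 \cong SU(2)/U(1)$. Indeed, since the stabilizer $H\cong U(1)$ contains $\{\pm I\}$, the element $-I$ fixes $\Phinv(0)$ pointwise and acts trivially on a neighborhood (as $\Ad^*_{-I}=\mathrm{id}$). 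So in the $U(1)$ case the time-$1$ flow is \emph{not} given by $-I$ acting antipodally---that mechanism is specific to the free case, where $-I\cdot q = -q$ is literally the antipodal map on $S^3=SU(2)$. The antipodal extension on $S^2$ must instead come, as in Seidel's definition of the model twist, from the continuous extension to the zero section of the reparametrized geodesic flow on $T^\dual S^2\setminus S^2$; this is a limit argument, not the action of a group element. Your parenthetical ``together with geodesic flow at time $\pi$ on $S^2$'' gestures at this but does not carry it out, and in particular does not address the fact that $SU(2)/U(1)$ has \emph{half} the geodesic circumference of $SU(2)$ in the submersion metric---so the arc-length traveled by the time-$1$ flow with $\psi'(0)=1/2$ must be re-examined relative to the fiber geometry of $S^2$, which is the part of the argument that actually needs checking. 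You should redo the $U(1)$ normalization computation explicitly rather than appealing to $-I$.
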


\begin{proof} 
The zero level set $P := \Phinv(0)$ is a $G$-equivariant $G/H$-bundle
over the symplectic quotient $M \qu G$, by the assumption on
stabilizers and existence of local slices.  We identify $G/H \cong
S^c$ with $c = 3$ resp. $c = 2$, in the trivial stabilizer
resp. $U(1)$-stabilizer case.  We show that $P$ is induced from an
$SO(c+1)$ principal bundle and that the flow of $\psi \circ \vert \Phi
\vert$ is obtained by a Dehn twist by the symplectic fiber bundle
construction.

Consider the case of trivial stabilizers.  By the equivariant
coisotropic embedding theorem, there exists an equivariant
symplectomorphism of a neighborhood of $\Phinv(0)$ in $M$ with a
neighborhood of the zero section in
$$ (T^\dual G) \times_G P \cong P \times \g^\dual $$
where the quotient is by the diagonal action $g(v,p) = (R_{g^{-1},*}
v, gp)$ and $R_{g^{-1},*}: T^\dual G \to T^\dual G$ is induced by the
right action of $G$.  The moment map for the action in the local model
is
$$ \Phi: (T^\dual G) \times_G P \to \g^\dual, \quad [v,p] \mapsto
\Phi_{T^\dual G}(v) $$
where $\Phi$ is the moment map for the left action of $G$ on $T^\dual
G$.  The norm
$$\vert \Phi \vert : (T^\dual G) \times_G P \to \R_{\ge 0}, \quad
([v,p]) \mapsto \vert v \vert $$
is identified with the norm on the fibers $T^\dual S^c$.  Thus the
function $\psi \circ \vert \Phi \vert$ is that in the definition of
fibered Dehn twist.  The claim follows.

In the case of circle stabilizers, denote the fixed point set of the
action of the circle subgroup $H$ of diagonal matrices by
$$ P^H \subset P, \quad \{ hp = p \ | \ \forall h \in H \} .$$
The set $P^H$ is a double cover of $M \qu G$, since there are two
$H$-fixed points (the poles) in each fiber $G/H \cong S^2$.  The local
model in this case is
$$  T^\dual(G /H) \times_{N(H)}  P^H \cong G \times_{N(H)}  (P^H \times (\g/\h)^\dual)  .$$
Since the center $Z \subset N(H)$ acts trivially on $P$, we may write
$$T^\dual (G/H) \times_{N(H)} P^H \cong T^\dual(G/H) \times_{G/Z} (
(G/Z) \times_{N(H)} P^H ) $$
where now $G/Z \times_{N(H)} P^H$ is a $G/Z \cong SO(3)$ bundle.  The
moment map is given by
$$ \Phi: (T^\dual (G/H)) \times_G P \to \g^\dual, \quad [v,p] \mapsto
\Phi_{T^\dual (G/H)}(v) $$
and the norm is again the Riemannian norm of the cotangent vector in
the local model.
\end{proof} 

\section{Fibered Dehn twists on moduli spaces of flat bundles}
\label{flat}

This section describes a natural collection of fibered Dehn twists on
moduli spaces of flat bundles, which are our motivating examples.

\subsection{Moduli spaces of flat bundles} 

We first recall the construction of symplectic structures on moduli
spaces of flat bundles on surfaces with markings, that is, flat
bundles on the complement of the markings with holonomies around them
in fixed conjugacy classes.

\begin{definition} \label{labels} 
\begin{enumerate} 
\item {\rm (Conjugacy classes in compact $1$-connected Lie groups)}
  Let $G$ be a simple compact, 1-connected Lie group, with maximal
  torus $T$ and Weyl group $W = N(T)/T$.  Let $\g,\t$ denote the Lie
  algebras of $G$ and $T$.  We choose a {\em highest root} $\alpha_0
  \in \t^\dual$ and {\em positive closed Weyl chamber} $\t_+ \subset
  \t$.  Conjugacy classes in $G$ are parametrized by the Weyl alcove
$$\Alc := \{ \xi \in \t_+,  \ \alpha_0(\xi) \leq 1 \} ,$$ 
see \cite{ps:lg}.  For any $\mu \in \Alc$, we denote by
$$\mathcal{C}_\mu = \{ g \exp(\mu) g^{-1}, \ g \in G \} $$
the corresponding conjugacy class.  Inverting each conjugacy
class defines an involution
\begin{equation} \label{star} 
*: \Alc \to \Alc, \quad \mathcal{C}_{*\mu} = \mathcal{C}_\mu^{-1} .\end{equation}
In the case $G = SU(2)$, we identify $ \t \cong \R$ and $ \Alc \cong
[0,1/2] $ so that
\begin{equation} \label{interval} 
\cC_\mu = \{ \Ad(g) \diag( \exp( 2\pi i \mu), -\exp( 2\pi i \mu)) \} . \end{equation}
In particular 
$$\cC_{1/4} = \{ A \in SU(2) \ | \ \Tr(A)  = 0 \} $$ 
is the conjugacy class of {\em traceless} elements of $SU(2)$.
\item {\rm (Marked surfaces)} By a {\em marked surface} we mean a
  compact oriented connected surface $X$ equipped with a collection of
  distinct points 
$$\ul{x} = (x_1,\ldots, x_n) \in X^n$$ 
and a collection of 
{\em labels} 
$$\ul{\mu}  = (\mu_1,\ldots, \mu_n) \in \Alc^n .$$  
For simplicity, we denote such a surface $(X,\ul{\mu})$.
\item {\rm (Holonomies)} Let $P \to X$ be a $G$-bundle equipped with a
  flat connection $A \in \A(P)$.  Parallel transport around loops in
  $X$ gives rise to a {\em holonomy representation}
$$ \pi_1(X) \to G .$$
In particular, for any point $x \in X$ a small loop around $x$ defines
a conjugacy class in $\pi_1(X)$, obtained by joining the loop to a
base point, and the holonomy around $x$ is well-defined up to
conjugacy.
\item {\rm (Moduli spaces of bundles on marked surfaces)} Let
  $(X,\ul{\mu})$ be a marked surface.  Let $M(X,\ul{\mu})$ denote the
  moduli space of isomorphism classes of flat $G$-bundles on $X - \{
  x_1,\ldots, x_n \}$ with holonomy around $x_i$ conjugate to
  $\exp(\mu_i)$, for each $i = 1,\ldots,n$, see
  e.g. Meinrenken-Woodward \cite{me:lo}.  If $M(X,\ul{\mu})$ contains
  no reducible bundles (bundles with non-central automorphisms) then
  $M(X,\ul{\mu})$ is a compact symplectic orbifold.
\end{enumerate} 
\end{definition} 

\begin{remark}
\begin{enumerate} 
\item {\rm (Action of central bundles)} Let $Z$ denote the center of
  $G$.  Let $M_Z(X)$ denote the moduli space of $Z$-bundles on $X$
  with trivial holonomy around the markings.  The group multiplication
  on $Z$ induces a group structure on $M_Z(X)$, isomorphic to $Z^{2g}$
  where $g$ is the genus of $X$.  The action of $Z$ on $G$ induces a
  symplectic action of $M_Z(X)$ on $M(X,\ul{\mu})$
$$ M_Z(X) \times M(X,\ul{\mu}) \to M(X,\ul{\mu}) $$
corresponding to twisting the holonomies by elements of $Z$.
\item {\rm (Combining central markings)} A label $\mu \in \Alc$ is
  {\em central} if $\exp(\mu)$ lies in the center $Z$ of $G$.  In the
  case $G = SU(r)$, the central labels are the vertices of the
  alcove $\Alc$.  Several central labels may be combined into a
  single central label as follows: Suppose that $\lambda_1 \in \Alc$
  resp.\ $\lambda_2 \in \Alc$ are labels corresponding to $z_1,z_2 \in
  Z$, and $\lambda_{12} \in \Alc$ is the label that corresponds to
  $z_1z_2 \in Z$.  Then there is a symplectomorphism
$$M(X,\lambda_1,\lambda_2,\lambda_3, \ldots,\lambda_n)
\to M(X,\lambda_{12},\lambda_3,\ldots,\lambda_n) .$$ 
This follows immediately from the description of the moduli space as
representations of the fundamental group as in \eqref{fundrep}.
\end{enumerate} 
\end{remark} 

\begin{remark} \label{cutmod}
 {\rm (Moduli of flat bundles as a symplectic quotient
by the loop group)} The moduli space of flat bundles may be realized
  as the symplectic quotient of the moduli space of framed bundles on
  a cut surface described in \cite{me:lo}.  

First we introduce notation for the cut surface. Let $Y \subset X$ be
an embedded {\em circle}, that is, a compact, oriented, connected
one-manifold, disjoint from the markings $\ul{x}$.  Let $X_\cut$
denote the surface obtained from $X$ by cutting along $Y$ as in Figure
\ref{capoff}, with boundary components $(\partial X_\cut)_j \cong S^1,
j =1,2$.  The cut surface $X_\cut$ may be disconnected or connected
depending on whether $Y$ is separating.

The moduli space of {\em framed} flat bundles on the cut surface
naturally has an action of two copies of the loop group, acting by
changing the framings.  Let $M(X_\cut,\ul{\mu})$ be the moduli space
of flat bundles with framings (trivializations) on the boundary of
$X_\cut$:
$$ M(X_\cut,\ul{\mu}) = \{ P \to X_\cut, \ A \in \A(P), \ \phi: P|
\partial X_{\cut} \to \partial X_\cut \times G \ | \curv(A) = 0 \} / \sim .$$
Equivalently, in the case of simply-connected structure group,
$M(X_\cut,\ul{\mu})$ is the quotient of flat connections by gauge
transformations which vanish on the boundary.  Working in suitable
spaces of completions the moduli space $M(X_\cut,\ul{\mu})$ is a
symplectic Banach manifold, with symplectic form given by the usual
pairing of one forms and integration.  Let $LG = \Map(S^1,G)$ denote
the loop group of $G$, with multiplication given by pointwise
multiplication.  Any element $g \in LG^2$ acts on $M(X_\cut,\ul{\mu})$
by changing the framing:
$$ g (P,A,\phi) = (P,A, (1 \times g) \phi) .$$ 
The moment map for the action of $LG^2$ is restriction to the boundary
\begin{equation} \label{phis}
 (\Phi_1,\Phi_2) : M(X_\cut,\ul{\mu}) \to \Omega^1(S^1,\g)^2, \quad
A \mapsto \tau_{\phi} ( A | \partial X_{\cut}) \end{equation}
where $\tau_\phi: \A(P | \partial X_{\cut}) \to \Omega^1(S^1,\g)^2$ is
the parametrization of the space of connections induced by $\phi$.
Since the orientations on the two extra boundary circles are opposite,
the diagonal action of $LG$ has moment map $\Phi$ given by the
difference of the moment maps for each boundary component:
$$ \Phi = \Phi_1 - \Phi_2 : M(X_\cut,\ul{\mu}) \to \Omega^1(S^1,\g) .$$

Taking the quotient by the diagonal loop group action recovers the
moduli space for the original surface: By \cite{me:lo} \label{diag}
$M(X,\ul{\mu})$ is naturally symplectomorphic (on the smooth locus) to
the symplectic quotient of $M(X_\cut,\ul{\mu})$ by the diagonal action
of $LG$, that is,
$$ M(X,\ul{\mu}) \cong M(X_\cut,\ul{\mu}) \qu LG .$$
In particular, the symplectic structure on $M(X,\ul{\mu})$ descends
from a symplectic structure on $M(X_\cut,\ul{\mu})$.  

Locally the loop group actions admit finite dimensional slices and so
the infinite-dimensional quotients above are equivalent to
finite-dimensional quotients \cite{me:lo}.  In particular, let
$\Alc^\circ \subset \Alc$ denote the interior of the alcove.  For
example, for $SU(2)$, we have as in \eqref{interval}
$$\Alc \cong [0,1/2], \quad \Alc^\circ \cong (0,1/2) .$$
Let
$$M(X,\ul{\mu})^\circ = \{ [A] \in M(X,\ul{\mu}) \ | \ A|_U = \xi \d
\theta, \xi \in \Alc^\circ \} $$
be the subset consisting of connections of ``standard form'' $\xi \d
\theta, \xi \in \Alc^\circ$ in a neighborhood $U$ of the circle $Y$.
Similar let
\begin{equation} \label{circ}
 M(X_\cut,\ul{\mu})^\circ = \Phi^{-1}((\Alc^\circ )^2) = \left\{ [A]
 \in M(X_\cut,\ul{\mu}) \ \left| \ \begin{array}{l} \forall k \in \{
 0,1 \}, \ \exists \xi_k \in \Alc^\circ \\ A | (\partial X_\cut)_k =
 \xi_k \d \theta_k \end{array} \right. \right\} \end{equation}
The locus $M(X_\cut,\ul{\mu})^\circ$ is an open subsets of
$M(X_\cut,\ul{\mu})$, dense if non-empty, which is the quotient by the
diagonal action of the maximal torus $T$:
\begin{equation} \label{qu} M(X,\ul{\mu})^\circ = M(X_\cut,\ul{\mu})^\circ \qu T; \end{equation} 
see \cite{me:lo}.  More generally, for any face $\sigma$ of the alcove
let $LG_\sigma$ denote the stabilizer of any point in $\sigma$.  The
stabilizer $LG_\sigma$ is isomorphic to a finite-dimensional subgroup
of $G$ via evaluation at a base point.  Denote by
$$ \Alc_\sigma := \bigcup_{\ol{\tau} \supset \sigma} LG_\sigma \tau $$
the slice for the action of $LG$ on $L\g^\dual$ at $\sigma$.  Then 
$$  M(X_\cut,\ul{\mu})^\sigma 
:= (\Phi_1 \times \Phi_2)^{-1}(\Alc_\sigma \times \Alc_\sigma) $$%
is a Hamiltonian $LG_\sigma^2$ space whose quotient 
\begin{equation} \label{cover} M(X,\ul{\mu})^\sigma
 := M(X_\cut,\ul{\mu})^\sigma \qu LG_\sigma \end{equation}
is a dense (if non-empty) open subset of $M(X_\cut,\ul{\mu})$.  As the
face $\sigma$ varies the collection $ M(X,\ul{\mu})^\sigma$ covers
$M(X_\cut,\ul{\mu})$:
$$ M(X_\cut,\ul{\mu}) = \bigcup_{\sigma \subset \Alc}
M(X,\ul{\mu})^\sigma .$$
\end{remark}  

\subsection{Symplectomorphisms induced by Dehn twists}

Any Dehn twist on a marked surface induces a symplectomorphism of the
moduli space of flat bundles.  In this section we explicitly describe
this symplectomorphism as the Hamiltonian flow of a {\em non-smooth}
function.

We begin by setting up notation for symplectomorphisms of moduli
spaces induced by diffeomorphisms of the underlying surface.  Let
$(X,\ul{\mu})$ be a marked surface.  Let $\Diff^+ \subset \Diff(X)$
denote the group of orientation-preserving diffeomorphisms of $X$ and
$\Map^+(X,\ul{\mu})$ the group of isotopy classes of orientation
preserving diffeomorphisms $\phi$ of $X$ preserving the labels
$\ul{\mu}$:
$$ \Map^+(X,\ul{\mu}) = \{ \phi \in \Diff^+(X) \, | \, \phi( \{ x_i
\}) = \{ x_i \}, \ \phi(x_i) = x_j \implies \mu_i = \mu_j \ \forall
i,j \}/ \sim .$$
The following is elementary and left to the reader:

\begin{proposition} {\rm (Symplectomorphisms associated to mapping 
class group elements)} Pullback defines a homomorphism from
  $\Map^+(X,\ul{\mu})$ to the group of symplectomorphisms
  $\Diff(M(X,\ul{\mu}),\omega)$ of $M(X,\ul{\mu})$,
$$ \Map^+(X,\ul{\mu}) \to \Diff(M(X,\ul{\mu}),\omega), \ \ [\phi]
  \mapsto ([A] \mapsto [(\phi^{-1})^* A]).  $$
\end{proposition}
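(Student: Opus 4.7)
The plan is to verify three claims: (i) for any orientation-preserving diffeomorphism $\phi$ preserving the marked points with their labels, the formula $[A]\mapsto [(\phi^{-1})^*A]$ descends to a well-defined self-map of $M(X,\ul{\mu})$; (ii) this assignment depends only on the isotopy class $[\phi]\in\Map^+(X,\ul{\mu})$; (iii) the resulting map is a symplectomorphism and the assignment $[\phi]\mapsto\phi_*$ is a group homomorphism.

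For (i), I would start with a flat $G$-bundle $(P,A)$ representing a class in $M(X,\ul{\mu})$ and observe that $\phi^{-1}$ pulls back both the bundle and the connection, flatness being preserved since $\phi^{-1}$ is a local diffeomorphism.  A small loop around $x_i$ is sent by $\phi^{-1}$ to a small loop around $\phi^{-1}(x_i)=x_j$ for some $j$, which by the label-preserving condition satisfies $\mu_j=\mu_i$, so the holonomy remains in the required conjugacy class $\cC_{\mu_i}$.  Gauge equivalent connections pull back to gauge equivalent connections since a gauge transformation $g$ on $P$ becomes $(\phi^{-1})^*g$ on $(\phi^{-1})^*P$, so the assignment descends from $\A(P)\times\Aut(P)$ to $M(X,\ul{\mu})$.

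For (ii), the essential point is that two isotopic diffeomorphisms induce automorphisms of $\pi_1(X\smallsetminus\ul{x})$ that differ by an inner automorphism (conjugation by the path along which the basepoint is dragged by the isotopy).  Under the holonomy description
\begin{equation}\label{fundrep}
M(X,\ul{\mu})\cong\{\rho:\pi_1(X\smallsetminus\ul{x})\to G \ |\ \rho(\gamma_i)\in\cC_{\mu_i}\}/G,
\end{equation}
inner automorphisms act trivially on representations taken modulo overall conjugation by $G$, so the induced self-map of $M(X,\ul{\mu})$ depends only on $[\phi]$.  For (iii), the symplectic form on $M(X,\ul{\mu})$ descends from the Atiyah--Bott form $\omega(a,b)=\int_X\langle a\wedge b\rangle$ on $\A(P)$, which is manifestly natural under orientation-preserving diffeomorphisms of $X$: $\int_X (\phi^{-1})^*\langle a\wedge b\rangle=\int_X\langle a\wedge b\rangle$.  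The homomorphism property is the identity $(\phi_1\phi_2)^{-1}=\phi_2^{-1}\phi_1^{-1}$, which yields $((\phi_1\phi_2)^{-1})^*=(\phi_1^{-1})^*\circ(\phi_2^{-1})^*$ on connections and hence composes correctly on moduli.

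The main obstacle is really step (ii), since one must treat the case where the isotopy moves the marked points around (returning to a common set as a whole).  The cleanest way I would handle this is to fix a base point away from the markings, choose a reference system of loops, and track how the basepoint path induced by $\phi_t$ provides the conjugating element that trivializes the variation of the induced maps on $\pi_1$; passage to the $G$-conjugation quotient then absorbs this ambiguity.  Everything else amounts to naturality of the Atiyah--Bott construction under pullback.
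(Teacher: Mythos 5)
The paper declares this proposition ``elementary and left to the reader'' and supplies no proof, so there is no argument to compare yours against; your proof fills the gap correctly and in what is surely the intended way. All three steps are sound: (i) pullback by $\phi^{-1}$ preserves flatness, holonomy conjugacy classes (by the label-preserving condition), and gauge equivalence, so it descends to the quotient; (ii) isotopic diffeomorphisms induce maps on $\pi_1(X\smallsetminus\ul x)$ differing by an inner automorphism given by the basepoint trace of the isotopy, and inner automorphisms are absorbed by the $G$-conjugation quotient in the holonomy description of the moduli space; (iii) naturality of the Atiyah--Bott form $\int_X\langle a\wedge b\rangle$ under orientation-preserving pullback gives the symplectomorphism property, and $((\phi_1\phi_2)^{-1})^*=(\phi_1^{-1})^*\circ(\phi_2^{-1})^*$ gives the homomorphism property. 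The one point worth making explicit (you gesture at it but could state it) is that the holonomy of $(\phi^{-1})^*A$ around a loop $\gamma$ equals the holonomy of $A$ around $\phi^{-1}\circ\gamma$, so the verification in (ii) reduces entirely to the statement that holonomy is a homotopy invariant of loops for a flat connection; with that sentence in place the argument is complete.
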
  

In particular, a Dehn twist on the surface induces a symplectomorphism
of the moduli space of bundles.  Our aim is to describe this
symplectomorphism as a Hamiltonian flow of a function relating to the
holonomy.  The space of connections on the trivial bundle $S^1 \times
G \to S^1$ may be canonically identified with the space of $\g$-valued
one-forms
$$ L\g^\dual := \Omega^1(S^1,\g) .$$
Denote $\pi:\R \to S^1$ and let 
$$\gamma_{t_0,t_1}(t) = \pi(t_0 + t(t_1
- t_0)), \quad t \in [0,1] $$ 
denote the standard path from $t_0$ to $t_1$. Parallel transport along
$\gamma_{t_0,t_1}$ defines a map
$$ \rho_{t_0,t_1}: L\g^\dual \to G .$$
Given an embedded oriented circle $Y$ in $X$ disjoint from the
markings, we suppose that $Y$ is the image of a path
$$ Y = \iota([0,1]), \quad \iota: [0,1] \to X, \quad \iota(0) =
\iota(1) $$
such that $\iota|(0,1)$ is an embedding.  Let $\rho_{0,1}$ denote
parallel transport once around $Y$ and define
$$ \rho_Y: M(X,\ul{\mu}) \to \Alc, \quad [A] \mapsto [\rho_{0,1}(A |
  Y)] $$
by mapping an equivalence class of flat connections $[A]$ to the
conjugacy class of the holonomy of any representative $A$ around $Y$.
The map $\rho_Y$ is independent of the choice of base point on
$Y$. Let
$$h_Y: M(X,\ul{\mu}) \to \R_{ \ge 0},  \quad [A] \mapsto (\rho_Y([A]), \rho_Y([A]))/2 .$$
The function $h_Y$ is smooth on the inverse image
$\rho_Y^{-1}(\Alc^\circ)$ of the interior $\Alc^\circ$ of the Weyl
alcove.

\begin{proposition}
 \label{flow}
 {\rm (The action of a Dehn twist is a Hamiltonian flow,
   c.f. \cite[Theorem 4.5]{al:mom})} Let $(X,\ul{\mu})$ be a marked
 surface such that $M(X,\ul{\mu})$ contains no reducibles, $Y \subset
 X$ an embedded circle and $\tau_Y: X \to X$ a Dehn twist around $Y$.
 Then $\tau_Y$ acts on $M(X,\ul{\mu})$ by the time-one Hamiltonian
 flow of $h_Y$ on $\rho_Y^{-1}(\Alc^\circ)$.  In particular, the
 time-one Hamiltonian flow of $h_Y$ extends smoothly to all of
 $M(X,\ul{\mu})$.
\end{proposition}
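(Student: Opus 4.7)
The plan is to verify the identity on the dense open subset $\rho_Y^{-1}(\Alc^\circ)$ using the cut surface picture of Remark~\ref{cutmod}, then extend by continuity. On this subset the identification \eqref{qu} gives
$$ M(X,\ul\mu)^\circ \;\cong\; M(X_\cut,\ul\mu)^\circ \qu T^{\on{diag}}, $$
where the residual $T^2$-action on the numerator has moment map $(\Phi_1,\Phi_2)$ from \eqref{phis} recording the alcove coefficient $\xi\in\Alc^\circ$ of the standard-form connection $\xi\,d\theta$ on each component of $\partial X_\cut$. On the level set $\Phi_1=\Phi_2$ this common value coincides (modulo Weyl action) with $\rho_Y$, so $h_Y$ pulls back to $\tfrac{1}{2}(\Phi_1,\Phi_1)$.

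Next I would compute the effect of the Dehn twist in this local model. Choose a tubular neighborhood $U\cong[-1,1]\times S^1$ of $Y$ and realize $\tau_Y$ by $\phi(s,\theta)=(s,\theta+2\pi\chi(s))$ for a bump function $\chi$ equal to $0$ near $s=-1$ and to $1$ near $s=1$. For a standard-form connection $A|_U=\xi\,d\theta$,
$$ \phi^*A \;=\; \xi\,d\theta+2\pi\chi'(s)\,\xi\,ds \;=\; A+d\bigl(2\pi\chi(s)\xi\bigr), $$
so $\phi^*A$ is gauge-equivalent to $A$ via $g(s,\theta)=\exp(2\pi\chi(s)\xi)$, which equals $1$ at $s=-1$ and $\exp(2\pi\xi)$ at $s=1$. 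In the cut surface picture this reinterprets $\tau_Y$ as the point-dependent framing change $\exp(2\pi\Phi_1(\cdot))\in T$ applied to the boundary component parametrized by $\Phi_1$ alone.

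Finally I would identify this action with the Hamiltonian flow. On any Hamiltonian $T$-space the calculation $d\bigl(\tfrac{1}{2}(\Phi_1,\Phi_1)\bigr)=\iota_{(\Phi_1)_M}\omega$, combined with the conservation of $\Phi_1$ along its own flow, shows that the time-$t$ Hamiltonian flow of $\tfrac{1}{2}(\Phi_1,\Phi_1)$ at $p$ is the action of $\exp(2\pi t\Phi_1(p))$ under the normalization of \eqref{interval}; at $t=1$ this matches step two. Since $\Phi_1$ is $T^{\on{diag}}$-invariant the flow descends to $M(X,\ul\mu)^\circ$ and agrees with $\tau_Y$ there. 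Because $\tau_Y$ is a diffeomorphism of $X$, its pullback action is smooth on all of $M(X,\ul\mu)$, so the time-one flow of $h_Y$ extends smoothly across the locus $\rho_Y^{-1}(\partial\Alc)$ where $h_Y$ itself fails to be smooth. The main obstacle is the normalization bookkeeping needed to match the $2\pi$ factor appearing in $\phi^*A$ to the $2\pi$ in \eqref{interval}, so that precisely the \emph{time-one} (rather than time $2\pi$ or $1/(2\pi)$) flow of $h_Y$ realizes $\tau_Y$; this matching is essentially the content of \cite[Theorem 4.5]{al:mom}.
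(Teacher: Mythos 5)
Your proof follows essentially the same route as the paper's: pass to the cut surface, identify the action of $\tau_Y$ with a framing change by a holonomy, recognize this as the time-one flow of $\tfrac12|\Phi_i|^2$, and descend by Theorem~\ref{thm fibered quotients}--type reduction. The difference is in how you establish the key intermediate fact (the paper's Lemma~\ref{cutact}): you compute $\phi^*(\xi\,d\theta)$ directly in an explicit tubular-neighborhood model $\phi(s,\theta)=(s,\theta+2\pi\chi(s))$, while the paper argues via equality of holonomy representations of $\pi_1(X_\cut)$ and a formula for the relating gauge transformation in terms of parallel transports. Your local calculation is cleaner and more elementary, which is what it buys.

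However, there is a gap in the transition from your pullback computation to the ``framing change'' interpretation. You find that $g(s,\theta)=\exp(2\pi\chi(s)\xi)$ relates $\phi^*A$ to $A$ on $U\cong[-1,1]\times S^1$, with $g(-1)=1$ and $g(1)=\exp(2\pi\xi)$. But the cut circle is $Y=\{0\}\times S^1$, not $\{\pm 1\}\times S^1$, so the naive boundary values of $g$ on $(\partial X_\cut)_1$ and $(\partial X_\cut)_2$ (both copies of $Y$) are $g(0)$ on each side, giving a \emph{diagonal}, hence trivial, framing change. The correct argument has to cut $X$ at $Y$, then observe that on $X_\cut$ the gauge transformation is determined only up to independent locally constant factors on each component of $U_\cut$, and that requiring $g$ to extend constantly over $X_\cut\setminus U$ forces those constants to differ by precisely $\exp(2\pi\xi)$ (in the non-separating case the single component of $X_\cut\setminus U$ is glued to both ends $s=\pm 1$, pinning the ratio). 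Only then does one get the framing change $(1,\exp(2\pi\xi))$ modulo diagonal, matching Lemma~\ref{cutact}. Equivalently, one can isotope $\tau_Y$ to be supported on one side of the cut, as the paper does. You should also fix a sign convention for whether $\phi=\tau_Y$ or $\tau_Y^{-1}$, since the proposition concerns the action $[A]\mapsto[(\tau_Y^{-1})^*A]$, and confirm the $2\pi$-normalization of $\exp$ (the paper's convention, stated in Section~\ref{collective}, makes $\exp(\xi)$ rather than $\exp(2\pi\xi)$ the holonomy). Once those points are made explicit the proof is complete and is a reasonable alternative to the paper's parallel-transport computation.
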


The idea of the proof is to describe the twist upstairs on the moduli
space of the cut surface, and then descend to the glued surface.
Recall from Remark \ref{cutmod} that the moduli space
$M(X_\cut,\ul{\mu})$ has an $LG^2$-action.  In particular the action
of $LG^2$ restricts to an action of the subgroup of constant loops
$G^2$.  For $[A] \in M(X_\cut,\ul{\mu})$ the notation
$$(1,\rho_{0,1}(A | (\partial X_\cut)_2) )[ A ] \in
M(X_\cut,\ul{\mu}) $$
indicates the connection obtained by acting by the holonomy
$\rho_{0,1}(A | (\partial X_\cut)_2)$ of the connection on the second
component of the boundary $(\partial X_\cut)_2$.

\begin{lemma} \label{cutact}
The twist $\tau_Y$ acts on $M(X_\cut,\ul{\mu})$ by changing the
framing of $A$ by the holonomy along the second boundary component:
$$ (\tau_Y^{-1})^* : M(X_\cut,\ul{\mu}) \to M(X_\cut,\ul{\mu}),
\ \ [A] \mapsto (1, \rho_{0,1}(A | (\partial X_\cut)_2) )[ A ] .$$
\end{lemma}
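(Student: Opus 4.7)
The plan is to reduce the statement to an explicit local computation in a collar neighborhood of $Y$ by choosing a convenient gauge representative, then to identify the pullback with a change of framing via a gauge transformation whose boundary values are exactly the holonomy.

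First, pick a representative $A$ of $[A] \in M(X_\cut,\ul{\mu})^\circ$ that is in standard form on a tubular neighborhood of $Y$ in $X$: in coordinates $(\theta,s) \in S^1\times(-\eps,\eps)$ around $Y$, we may assume $A = \xi\, \d\theta$ for some $\xi \in \Alc$ representing $\rho_Y([A])$. Connections of this form are dense in each stratum (see the slice description recalled around \eqref{circ}), so it suffices to treat this case. On the cut surface $X_\cut$, this choice produces the same local expression $\xi\,\d\theta$ in the inward-pointing collars of $(\partial X_\cut)_1$ and $(\partial X_\cut)_2$, and in particular the boundary holonomies satisfy $\rho_{0,1}(A | (\partial X_\cut)_j) = \exp(2\pi\xi)$ for $j=1,2$.

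Next, choose a lift $\ti\tau_Y$ of $\tau_Y$ to a diffeomorphism of $X_\cut$ supported in the collars of the two boundary components. Concretely, fix a smooth function $f:[0,\eps)\to[0,1]$ with $f(0)=1$ and $f(s)=0$ for $s$ near $\eps$, and take $\ti\tau_Y$ to be the identity outside these collars, the identity in the collar of $(\partial X_\cut)_1$, and $(\theta,s)\mapsto(\theta+2\pi f(s),s)$ in the collar of $(\partial X_\cut)_2$. This is a legitimate lift because the composition, viewed on $X$ after regluing, performs one full rotation through the annular neighborhood of $Y$, i.e.\ is isotopic rel endpoints to the Dehn twist $\tau_Y$.

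The main computation is then immediate. In the collar of $(\partial X_\cut)_2$ we find
\[
(\ti\tau_Y^{-1})^* A \;=\; \xi\,\d\theta \;-\; 2\pi\xi f'(s)\,\d s,
\]
which is gauge equivalent to $A$ via the gauge transformation $g(\theta,s) = \exp(2\pi f(s)\xi)$, since $g^{-1}\d g = 2\pi f'(s)\xi\,\d s$ and $g^{-1}\xi g = \xi$. Extending $g$ by the identity elsewhere on $X_\cut$ yields a smooth gauge transformation that restricts to the identity on $(\partial X_\cut)_1$ and to the constant loop $\exp(2\pi\xi) = \rho_{0,1}(A | (\partial X_\cut)_2)$ on $(\partial X_\cut)_2$. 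Thus $[(\ti\tau_Y^{-1})^* A]$, viewed as a framed class, differs from $[A]$ precisely by the action of the element $(1,\rho_{0,1}(A|(\partial X_\cut)_2)) \in G^2 \subset LG^2$ on the framing, proving the formula.

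The only real obstacle is bookkeeping: the direction of the rotation, the side on which the twist is placed, and the sign conventions for the $LG^2$-action on framings must be consistent with the conventions fixed in Remark~\ref{cutmod} and in the definition of $\rho_{0,1}$. Once the lift $\ti\tau_Y$ is chosen with the convention above, every other sign is forced, and the argument extends by continuity from $M(X_\cut,\ul{\mu})^\circ$ to all strata $M(X_\cut,\ul{\mu})^\sigma$ via the same local calculation performed in any slice-adapted gauge.
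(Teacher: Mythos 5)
Your argument is correct, and it takes a genuinely different route from the paper's. The paper's proof is more abstract: it first uses the fact that $\tau_Y$ is isotopic to the identity (hence does not change the holonomy representation $\pi_1(X_\cut,x_0)\to G$) to conclude that $(\tau_Y^{-1})^*A$ must be gauge equivalent to $A$, and then identifies the gauge transformation via the parallel-transport formula $g(x)=\rho^{(\tau_Y^{-1})^*A}(\gamma_x)(\rho^A(\gamma_x))^{-1}$ evaluated along the boundary. You instead pick a concrete model for the twist in the collar $(\theta,s)\mapsto(\theta+2\pi f(s),s)$, compute the pullback one-form directly, and read off the gauge transformation $g(\theta,s)=\exp(2\pi f(s)\xi)$. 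Your approach is more elementary and makes the gauge transformation visible in coordinates, at the cost of having to put $A$ in standard form near $Y$ first and of having to match sign/orientation conventions by hand, as you note. The paper's approach needs no gauge-fixing near $Y$ and works on the full moduli space in one stroke (rather than on $M(X_\cut,\ul{\mu})^\circ$ followed by a continuity argument), but hides the mechanism behind parallel transport identities. One small caveat on your side: for the stated gauge-action convention $g\cdot A=g^{-1}Ag+g^{-1}\d g$ your computed $g$ relates $A$ to $(\ti\tau_Y)^*A$ rather than $(\ti\tau_Y^{-1})^*A$; either the direction of the collar twist or the gauge convention must be flipped, which falls squarely under the bookkeeping you flag. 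Also keep in mind the paper normalizes the boundary circle so that $A=\xi\,\d\theta$ has holonomy $\exp(\xi)$ (i.e.\ the $2\pi$ is absorbed), so the factors of $2\pi$ in your formula should be reconciled with Remark~\ref{cutmod}.
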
 

\begin{proof} 
Since a Dehn twist along a circle in a Riemann surface is only defined
up to isotopy, we may assume that $\tau_Y$ is a Dehn twist along a
small translation of the boundary component $(\partial X_\cut)_2$.
This twist induces a Dehn twist on $X_\cut$, also denoted $\tau_Y$.

\vskip .05in
\noindent {\em Step 1: We show that on the cut surface $X_\cut$ the
Dehn twist $\tau_Y$ maps any connection to a gauge equivalent
connection.}  Suppose first that $X_\cut$ is connected, and choose a
base point $x_0 \in X_\cut$.  Given a connection $A$, the connections
$A, (\tau_Y^{-1})^* A$ determine representations of the fundamental
group
\begin{equation} \label{fundrep}
\Hol_A: \pi_1(X_\cut,x_0) \to G, \quad \Hol_{ (\tau_Y^{-1})^* A}:
\pi_1(X_\cut,x_0) \to G \end{equation}
given by mapping any homotopy class of loops to the holonomy of the
connection.  The two representations are equal, since the generators
of $\pi_1(X_\cut,x_0)$ have representatives that are disjoint from the
support of $\tau_Y$ (or alternatively, since $\tau_Y$ is homotopic to
the identity).  Therefore, $(\tau_Y^{-1})^*A$ is gauge equivalent to
$A$.

\vskip .05in
\noindent {\em Step 2: We compute the gauge transformation relating
  the two connections above on the boundary.}  The necessary gauge
transformation $g : X \to G$ given by the difference in parallel
transports of $A$ and $(\tau_Y^{-1})^* A$.  For any $x \in X_\cut$,
choose a path $\gamma_x$ from $x_0$ to $x$ and let $\rho^A(\gamma_x)
\in G$ denote parallel transport along $\gamma_x$ using the connection
$A$, and similarly for the pull-back conection $(\tau_Y^{-1})^* A$.
Define a gauge transformation
\begin{equation} \label{diff}
g : X_\cut \to G, \quad x \mapsto \rho^{(\tau_Y^{-1})^*A}(\gamma_x)
(\rho^A(\gamma_x))^{-1} .\end{equation}
The gauge transformed connection $g A$ has parallel transport along
$\gamma_x$ given by
$$ \rho^{(\tau_Y^{-1})^*A)}(\gamma_x) (\rho^A(\gamma_x))^{-1}
\rho^A(\gamma_x) = \rho^{(\tau_Y^{-1})^*A)}(\gamma_x).
$$
Therefore, 
$$g A = (\tau_Y^{-1})^* A  .$$  
To compute the gauge transformation $g$, denote by
$$ \rho_{0,t} := \rho_{0,t} ( A | (\partial X_\cut)_2 ) $$
 parallel transport from $0$ to $t$ along the boundary $(\partial
 X_\cut)_2 \cong S^1$.  Define a path from $x_0$ to $x_2$ on the
 boundary, identified with $t \in S^1$ by concatenating a path from
 $x_0$ to $x_1$ and a path from $0$ to $t$ in $S^1$.  The parallel
 transports of $A$ resp. $(\tau_Y^{-1})^* A$ are
$$ \rho^A( \gamma_{x_2} ) = \rho_{0,t} \rho^A(\gamma_{x_1}), \quad
 \rho_{x_2}^{(\tau_Y^{-1})^*A} = \rho_{0,t}
 \rho^{(\tau_Y^{-1})^*A}(\gamma_{x_1}) = \rho_{0,t} \rho_{0,1}
 \rho^A(\gamma_{x_1}) .$$
It follows that \eqref{diff} is given at points $t$ on the second
boundary component $(\partial X)_2 \cong S^1 $ by
\begin{equation}   \label{tosimplify} g(t) = \rho_{0,t} \rho_{0,1} g_0 (\rho_{0,t} g_0)^{-1} = \rho_{0,t}
\rho_{0,1} \rho_{0,t} ^{-1} .\end{equation}

\noindent {\em Step 3: We identify the gauge transformation on the
  boundary with the loop group element in the statement of the Lemma}.
We simplify the formula \eqref{tosimplify} as follows.  After gauge
transformation we may assume that the restriction of $A$ to $(\partial
X_\cut)_2 )$ is of the form $\xi_2 \d \theta$, for some $\xi_2 \in
\g$.  The set of group elements $\rho_{0,t} = \exp( t \xi_2)$ forms a
one-parameter subgroup and $\rho_{0,t}$ and $\rho_{0,1}$ commute.
Hence
$$ g | (\partial X_\cut)_2 = \rho_{0,1} = \exp(\xi_2) , \quad g |
(\partial X_\cut)_1 = 1 $$
as claimed.  The case that $X_\cut$ is disconnected is similar, using
that $\tau_Y$ is trivial on one of the components.
\end{proof} 

\begin{proof}[Proof of  Proposition \ref{flow}]  By Lemma \ref{cutact},
the restriction of $(\tau_Y^{-1})^*$ to $M(X_\cut,\ul{\mu})^\circ$ of
\eqref{circ} has the form
$$ M(X_\cut,\ul{\mu})^\circ \to M(X_\cut,\ul{\mu})^\circ, \ \ [A] \mapsto
(1,\exp(\xi_2)) [A] .$$
Thus the action of $\tau_Y$ is given by the time-one Hamiltonian flow
of the function
$$ \ti{h}_Y: M(X_\cut,\ul{\mu})^\circ \to \R, \quad [A] \mapsto (\xi_2,\xi_2)/2
.$$
The Proposition follows since the restriction of $\ti{h}_Y$ to the
zero-level set in $M(X_\cut,\ul{\mu})$ is a lift of $h_Y$.
\end{proof} 

For later use, we recall the following facts about level sets of
holonomy maps which follow from the {\em gluing equals reduction}
description in Meinrenken-Woodward \cite[Section 3.4]{me:lo}.  We
recall the involution $* :\Alc \to \Alc$ from \eqref{star}.  Let
$X_\mycap$ denote the surface obtained from $X_\cut$ capping off with
a pair of disks, with an additional marked point on each disk, as in
Figure \ref{capoff}.

\begin{figure}[ht]
\begin{picture}(0,0)%
\includegraphics{capoff.pstex}%
\end{picture}%
\setlength{\unitlength}{4144sp}%
\begingroup\makeatletter\ifx\SetFigFont\undefined%
\gdef\SetFigFont#1#2#3#4#5{%
  \reset@font\fontsize{#1}{#2pt}%
  \fontfamily{#3}\fontseries{#4}\fontshape{#5}%
  \selectfont}%
\fi\endgroup%
\begin{picture}(4823,646)(4947,-5019)
\put(8755,-4527){\makebox(0,0)[lb]{{{{$X_{\mycap}$}
}}}}
\put(5540,-4603){\makebox(0,0)[lb]{{{{$Y$}%
}}}}
\put(5040,-4523){\makebox(0,0)[lb]{{{{$X$}%
}}}}
\put(6957,-4496){\makebox(0,0)[lb]{{{{$X_\cut$}%
}}}}
\end{picture}%
\caption{Capping off a surface}
\label{capoff}
\end{figure}
\noindent 

\begin{lemma}  {\rm (Quotients of holonomy level sets)}   Let $(X,\ul{\mu})$ be a marked surface, and $Y \subset X$
an embedded circle.
\label{facts}
 For any $\lambda \in \Alc$, let $G_{\exp(\lambda)}$ denote the
 centralizer of $\exp(\lambda)$.  The product $G_{\exp(\lambda)}
 \times G_{\exp(\lambda)}$ acts on $ \Phi_1^{-1}(\lambda) \times
 \Phi_2^{-1}(\lambda) $ with diagonal quotient resp.\ full quotient %
\begin{eqnarray*}
 \rho^{-1}_Y(\lambda) &\cong & ( \Phi_1^{-1}(\lambda) \times
 \Phi_2^{-1}(\lambda)) /G_{\exp(\lambda)}
 \\ M(X_\mycap,\ul{\mu},\lambda,*\lambda) &\cong&
 (\Phi_1^{-1}(\lambda) \times \Phi_2^{-1}(\lambda))/
 G_{\exp(\lambda)}^2 .\end{eqnarray*}
If all points in the level set of $(\lambda,\lambda)$ have the same
stabilizer $H \subset G_{\exp(\lambda)}^2$ up to conjugacy, then
$$\rho^{-1}_Y(\lambda) \to M(X_\mycap,\ul{\mu},\lambda,*\lambda) $$ 
is a $G_{\exp(\lambda)} \backslash G^2_{\exp(\lambda)}/H$-fiber bundle
and equips $\rho^{-1}_Y(\lambda)$ with the structure of a fibered
coisotropic.
\end{lemma}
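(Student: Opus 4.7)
The plan is to derive all three statements from the gluing-equals-reduction description of Remark \ref{cutmod}, which identifies $M(X,\ul{\mu})$ as the symplectic quotient of $M(X_\cut,\ul{\mu})$ by the diagonal $LG$-action with moment map $\Phi=\Phi_1-\Phi_2$. Under this identification, the holonomy map $\rho_Y$ corresponds to the conjugacy class of the common boundary value $\Phi_1=\Phi_2$ in the reduced space. For $\lambda\in\Alc^\circ$ the slice description \eqref{qu} shows that $\rho_Y^{-1}(\lambda)$ is obtained by restricting to the common level set $\Phi_1^{-1}(\lambda)\cap\Phi_2^{-1}(\lambda)$ (which is the literal product $\Phi_1^{-1}(\lambda)\times\Phi_2^{-1}(\lambda)$ when $X_\cut$ is disconnected, the notation used in the statement) and then quotienting by the residual stabilizer $LG_\lambda=G_{\exp(\lambda)}$ acting diagonally. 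For $\lambda$ on a proper face of $\Alc$ the same conclusion follows from the variant \eqref{cover} with $LG_\sigma$-reductions, since the loop-group stabilizer of $\lambda$ reduces to $G_{\exp(\lambda)}$ on the finite-dimensional slice.

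For the second identification, capping off the two boundary circles of $X_\cut$ with disks carrying markings $\lambda$ and $*\lambda$ amounts to requiring the holonomy around each new boundary circle to lie in the conjugacy class of $\exp(\lambda)$ resp.\ $\exp(\lambda)^{-1}$. In the gluing-equals-reduction picture this corresponds to reducing $M(X_\cut,\ul{\mu})$ \emph{separately} at both $LG$-factors (rather than at their diagonal) with moment-map constraints $\Phi_1\in LG\cdot\lambda$ and $\Phi_2\in LG\cdot\lambda$ (the latter via the orientation-reversing involution $*$). Passing to the slice in each factor produces $M(X_\mycap,\ul{\mu},\lambda,*\lambda)\cong(\Phi_1^{-1}(\lambda)\cap\Phi_2^{-1}(\lambda))/G_{\exp(\lambda)}^2$. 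Comparing the two quotients identifies the map $\rho_Y^{-1}(\lambda)\to M(X_\mycap,\ul{\mu},\lambda,*\lambda)$ as the one induced by further quotienting by the antidiagonal $G_{\exp(\lambda)}$ inside $G_{\exp(\lambda)}^2$.

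To obtain the fiber-bundle structure and coisotropy statements, suppose every point of the level set has stabilizer conjugate to a fixed $H\subset G_{\exp(\lambda)}^2$. Then each $G_{\exp(\lambda)}^2$-orbit is diffeomorphic to $G_{\exp(\lambda)}^2/H$, and the set of $G_{\exp(\lambda),\mathrm{diag}}$-orbits within such an orbit is the double coset space $G_{\exp(\lambda)}\backslash G_{\exp(\lambda)}^2/H$, giving the advertised fiber. The coisotropy is the general principle that a regular moment-map level set is coisotropic with null foliation equal to the orbits of the acting group: applied upstairs, $\Phi_1^{-1}(\lambda)\cap\Phi_2^{-1}(\lambda)\subset M(X_\cut,\ul{\mu})$ is coisotropic for the $G_{\exp(\lambda)}^2$-action, and this property descends to $\rho_Y^{-1}(\lambda)\subset\rho_Y^{-1}(\Alc^\circ)\subset M(X,\ul{\mu})$ after quotienting by the diagonal $G_{\exp(\lambda)}$, with residual null foliation exactly the fibers of the projection to $M(X_\mycap,\ul{\mu},\lambda,*\lambda)$. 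The main obstacle is the bookkeeping of which quotients are taken by infinite-dimensional loop groups and which are replaced by the finite-dimensional centralizer actions on the slice; this is precisely the content of the Meinrenken--Woodward framework invoked in Remark \ref{cutmod}, and once that framework is in hand the three claims follow by inspection.
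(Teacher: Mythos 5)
Your proposal is correct and follows the same route the paper has in mind: the paper itself offers no written proof of Lemma \ref{facts}, merely attributing it to the gluing-equals-reduction picture of Meinrenken--Woodward (Remark \ref{cutmod}), and your argument fills in exactly that picture — passing to the cut surface, descending from the loop-group quotient to the finite-dimensional slice by the centralizer $G_{\exp(\lambda)}$, and reading off the two quotient descriptions and the double coset fiber once the orbit type is constant. Your observations that the "product" notation $\Phi_1^{-1}(\lambda)\times\Phi_2^{-1}(\lambda)$ implicitly reads as the joint level set in the nonseparating case, and that the $*\lambda$ arises from orientation reversal of the second capping circle, are both correct clarifications; the coisotropy/null-foliation step is the standard fact about regular group-valued moment map level sets carried through the reduction, which matches the framework the paper is invoking.
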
 

\subsection{Full twists for rank two bundles} 

In this section we show that Dehn twists on a surface induce fibered
Dehn twists of the moduli space of flat rank two bundles with trivial
determinant.  The following is a slight generalization of a result of
M. Callahan (unpublished) resp.\ P. Seidel \cite[Section 1.7]{se:phd}
in the case of a separating resp.\ non-separating curve on a surface.

\begin{theorem} \label{nomarkings} 
{\rm (For $G = SU(2)$, Dehn twists on the surface act by fibered Dehn
  twists on the moduli space)} Suppose $G = SU(2)$, and $(X,\ul{\mu})$
is a marked surface such that $M(X,\ul{\mu})$ contains no
reducibles.  Let $Y \subset X$ be an embedded circle.
\begin{enumerate}
\item {\rm (Separating case gives a codimension one Dehn twist)} If
  $Y$ is separating then $\tau_Y$ acts on $M(X,\ul{\mu})$ by a fibered
  Dehn twist along the fibered coisotropic
$$C_\lambda := \rho_Y^{-1}(\lambda)$$ 
for any $\lambda \in (0,\hh)$ such that
$M(X,\ul{\mu},\lambda,*\lambda)$ contains no reducibles.
\item {\rm (Non-separating case gives a codimension three Dehn twist)}
  If $Y$ is non-separating and $M(X_{\mycap},\ul{\mu})$ contains no
  reducibles, then $\tau_Y$ acts by a fibered Dehn twist along the
  fibered coisotropic
$$C_{1/2} := \rho_Y^{-1}(1/2)$$ 
of bundles with holonomy along $Y$ equal to $\exp(1/2)= -I \in SU(2)$.
\end{enumerate}
\end{theorem}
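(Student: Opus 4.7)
The plan is to combine Proposition~\ref{flow}, which identifies $\tau_Y$ with the time-one Hamiltonian flow of $h_Y = \tfrac{1}{2}\langle \rho_Y, \rho_Y\rangle$, with the model fibered Dehn twist criteria of Propositions~\ref{circle} and~\ref{su2twist}. The unifying observation is that Hamiltonian flows generated by functions of the same moment map commute, so comparing $\tau_Y$ with a model fibered Dehn twist reduces to checking that the difference of the two Hamiltonians generates a flow extending smoothly by the identity across the central strata $\rho_Y^{-1}(\{0,1/2\})$.

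In the separating case, on the interior locus $M_0 := \rho_Y^{-1}(\Alc^\circ) = \rho_Y^{-1}((0,1/2))$ there is a free Hamiltonian $U(1)$-action coming from the torus stabilizer $T \subset SU(2)$ of interior conjugacy classes (the ``gluing twist'' of Remark~\ref{cutmod}), with moment map $\rho_Y$. By Lemma~\ref{facts} and the hypothesis that $M(X_\mycap,\ul{\mu},\lambda,*\lambda)$ contains no reducibles, the level set $C_\lambda = \rho_Y^{-1}(\lambda)$ is an $S^1$-bundle over $M(X_\mycap,\ul{\mu},\lambda,*\lambda)$, hence a codimension-one spherically fibered coisotropic in the sense of Definition~\ref{sphere}. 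Choose $\psi \in C^\infty[0,1/2]$ satisfying the hypotheses of Proposition~\ref{circle} with $c_- = 0$ and $c_+ = 1/2$, so that $\phi^{\psi\circ\rho_Y}_1$ is a fibered Dehn twist along $C_\lambda$. Since both $h_Y$ and $\psi\circ\rho_Y$ are functions of $\rho_Y$, their Hamiltonian flows commute, and the path $t \mapsto \phi^{h_Y-\psi\circ\rho_Y}_t \circ \phi^{\psi\circ\rho_Y}_1$ for $t \in [0,1]$ provides a Hamiltonian isotopy from the fibered Dehn twist to $\tau_Y$, provided that $h_Y - \psi\circ\rho_Y$ generates a globally defined flow. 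Its derivative vanishes at $\rho_Y = 0$ and equals $-1/2$ at $\rho_Y = 1/2$; in the normalization in which the interior $U(1)$ has period $1$, the value $-1/2$ corresponds to the element $-I \in SU(2)$, which acts trivially on the unframed moduli (as $-I$ is central), so the flow extends by the identity across both central strata.

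In the non-separating case, the coisotropic $C_{1/2} = \rho_Y^{-1}(1/2)$ sits at the alcove wall where the stabilizer of $-I$ is the full group $SU(2)$. Combining the central labels $\{1/2,1/2\}$ identifies $M(X_\mycap,\ul{\mu},1/2,1/2)$ with $M(X_\mycap,\ul{\mu})$, and under the no-reducibles hypothesis on the latter the generic stabilizer in $SU(2)^2$ of a point in $\Phi_1^{-1}(1/2) \cap \Phi_2^{-1}(1/2)$ is the diagonal center $\{\pm I\}^{\mathrm{diag}}$. By Lemma~\ref{facts} the fiber of $C_{1/2} \to M(X_\mycap,\ul{\mu})$ is
\[
SU(2)^{\mathrm{diag}} \backslash SU(2)^2 / \{\pm I\}^{\mathrm{diag}} \;\cong\; SU(2) \;\cong\; S^3,
\]
exhibiting $C_{1/2}$ as a codimension-three spherically fibered coisotropic. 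The coisotropic embedding theorem together with the local model of Definition~\ref{model} equips a neighborhood of $C_{1/2}$ in $M(X,\ul{\mu})$ with a Hamiltonian $SU(2)$-action whose moment map $\Phi$ vanishes on $C_{1/2}$ and whose norm $|\Phi|$ is a smooth positive multiple of $\tfrac{1}{2} - \rho_Y$. For a cutoff $\psi$ satisfying the hypotheses of Proposition~\ref{su2twist} in the trivial-stabilizer case, the time-one flow of $\psi\circ|\Phi|$ is a fibered Dehn twist along $C_{1/2}$; since both $h_Y$ and $\psi\circ|\Phi|$ are functions of $|\Phi|$ on a neighborhood of $C_{1/2}$, the same commutation argument produces the required Hamiltonian isotopy.

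The main technical obstacle is the careful verification that residual Hamiltonians such as $h_Y - \psi\circ\rho_Y$ (and its analog in the non-separating case) generate flows extending smoothly by the identity across the central strata: the Hamiltonians themselves are not smooth on $M(X,\ul{\mu})$ there, and one must exploit the fact that $-I$ acts trivially on the unframed moduli to control the extension. In the non-separating case one must additionally match the locally defined norm $|\Phi|$ with a function of $\rho_Y$ on a neighborhood of $C_{1/2}$ large enough to carry out the interpolation.
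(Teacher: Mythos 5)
Your proposal takes a structurally different route than the paper's: you work directly on the quotient $M(X,\ul{\mu})$ using the residual torus action, whereas the paper lifts to the cut moduli space $M(X_\cut,\ul{\mu})$, where there are honest free torus resp.\ $SU(2)$ actions, constructs an \emph{equivariant} fibered Dehn twist there via Lemma~\ref{cutact} and composition with flows of manifestly smooth cutoff Hamiltonians, and then descends via Theorem~\ref{thm fibered quotients}. You correctly identify the key fact that $-I$ acts trivially --- this is the paper's observation that the time-one flow of $\rho_Y/2 - 1/8$ is the identity --- but the execution has two gaps that the lift-then-descend strategy is designed precisely to avoid, and which you flag but do not close.

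First, the path $t\mapsto \phi^{h_Y-\psi\circ\rho_Y}_t \circ \phi^{\psi\circ\rho_Y}_1$ is not a Hamiltonian isotopy of $M(X,\ul{\mu})$. Near $\rho_Y = 1/2$ the function $h_Y-\psi\circ\rho_Y$ has a non-smooth summand linear in the moment-map norm $s := 1/2-\rho_Y = |\Phi'|$, and the time-$t$ flow of a positive multiple of $|\Phi'|$ for $0<t<1$ is \emph{discontinuous} across $\{\Phi'=0\}$: the rotation by $\exp\bigl(t\,\Phi'/(2|\Phi'|)\bigr)$ depends on the direction of $\Phi'$ and has no limit as $\Phi'\to 0$. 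Only the time-one flow, which lands on the direction-independent central element $-I$, extends. So the intermediate maps in your path are not smooth symplectomorphisms of $M(X,\ul{\mu})$, and the existence of a smooth extension at time one does not give a smooth isotopy. The paper instead composes $(\tau_Y^{-1})^*$ with the flow of the everywhere-smooth cutoff $-\beta\cdot(\rho_Y-1/2)^2/2$ of~\eqref{cutoff}, smooth because $(\rho_Y-1/2)^2 = |\Phi'|^2$ is a polynomial in $\Phi'$; this gives a genuine Hamiltonian isotopy to a symplectomorphism that is the identity near $\rho_Y^{-1}(1/2)$, and a similar cutoff handles $\rho_Y^{-1}(0)$. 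Second, in the non-separating case the assertion that a neighborhood of $C_{1/2}$ in $M(X,\ul{\mu})$ carries a Hamiltonian $SU(2)$-action is false: the local model $P(T^\dual S^3)_\eps$ is an associated bundle of an $SO(4)$-principal bundle, and the structure group does not act on the total space of an associated bundle. The $SU(2)$-action lives only upstairs on the slice $M(X_\cut,\ul{\mu})^\sigma$, which is exactly why the paper works there and descends. Replacing $|\Phi|$ by the $SO(4)$-invariant fiberwise Riemannian norm $|v|$ of the local model, which does descend to $M(X,\ul{\mu})$, would repair the statement, but you would then meet the same smoothness obstruction as in the first point and would need the cutoff argument anyway.
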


\begin{proof}  
We have already expressed the action of the Dehn twist as the action
of holonomy in Proposition \ref{flow}.  It remains to identify these
flows as Dehn twists in the rank two case.  For this, it suffices as
in Theorem \ref{thm fibered quotients} to show that the corresponding
Dehn twists induce equivariant Dehn twists on the moduli space of the
cut surface.

\vskip .05in
\noindent {\em Step 1: we compute the generic stabilizers, in order to
  establish that the level sets in the statement are fibrations, and
  examine the action of the Dehn twists on the level sets.}  By the
irreducible-free assumption, the generic stabilizer of the $LG^2$
action on $M(X_\cut,\ul{\mu})$ is canonically identified with $Z^{\#
  \pi_0(X_\cut)}$, where $Z$ is the center of $G$ and $\#
\pi_0(X_\cut)$ is the number of components; the identification is via
evaluation at any point on the boundary.  By Lemma \ref{facts} the
level set $\rho_Y^{-1}(\lambda) $ with $ \lambda \in (0,1/2)$
resp. $\lambda= 1/2$ is a fibered coisotropic with fiber
$$ G_{\exp(\lambda)} \backslash G^2_{\exp(\lambda)}/H = U(1)
\backslash U(1)^2/ (\Z_2 \times \Z_2) \cong S^1 $$
in the non-separating case and
$$ G_{\exp(\lambda)} \backslash G^2_{\exp(\lambda)}/H = 
SU(2)
\backslash SU(2)^2 / \Z_2 \cong S^3$$
in the separating case. Explicitly, the latter isomorphism is given by
$$SU(2) \backslash SU(2)^2 / \Z_2 \to SU(2) \cong S^3,
\quad [ a,b] \mapsto a^{-1} b .$$

\vskip .05in
\noindent {\em Step 2: We establish the Theorem in the separating
  case}.  We write the moduli space as a finite dimensional quotient
in a neighborhood of the fibered coisotropic in the statement.  
Denote
the subset with generic holonomy
$$M(X,\ul{\mu})^\circ = M(X_\cut,\ul{\mu})^\circ \qu T = \rho_Y^{-1}(
  \Alc^\circ) .$$
Since $M(X_\cut,\ul{\mu})^\circ$ has a $T \times T$-action, there is a
residual $T$-action on $M(X,\ul{\mu})^\circ$ with generic stabilizer
$Z^{\# \pi_0(X_\cut) - 1}$.  The action of $\tau_Y$ on
$M(X,\ul{\mu})^\circ$ is Hamiltonian isotopic to a symplectomorphism
given by Proposition \ref{flow} as the Hamiltonian flow of
$\rho_Y^2/2$ on the complement of $\rho_Y^{-1}(0), \rho_Y^{-1}(1/2)$.

We use a small Hamiltonian deformation to construct a
symplectomorphism that is supported on a neighborhood of a fibered
coisotropic.  Let $\sigma = \{ 1/2 \} \subset \Alc$ denote the
endpoint of the alcove.  By \eqref{cover}, a neighborhood of
$\rho_Y^{-1}(1/2)$ is the symplectic reduction of a neighborhood of a
cross-section for $ \Phi_1^{-1}(1/2) \times \Phi_1^{-1}(1/2) \subset
M(X_{\cut},\ul{\mu})$ by a diagonal $G$ action.  More precisely,
evaluation a base points defines an isomorphism $ LG^2_{(1/2,1/2)}
\cong G^2$.  The maximal slice at $(1/2,1/2)$ is
$$\Alc^2_\sigma = LG_{(1/2,1/2)}^2 (0,1/2]^2 \subset L
\g^\dual_{(1/2,1/2)}.$$
Then the quotient of $ M(X_\cut,\ul{\mu})_\sigma = (\Phi_1 \times
\Phi_2)^{-1}(\Alc^2_\sigma)$ by $LG_\sigma \cong G$ is a neighborhood
of $\rho_Y^{-1}(1/2)$.  The function $ \Phi_2^2/2 - ( \Phi_2/2 - 1/8)
$ on $M(X_\cut,\ul{\mu})^\circ$ is equal to the restriction of the
function $\Vert \Phi_2 - 1/2 \Vert^2/2$ on
$M(X_{\cut},\ul{\mu})_{1/2}$ which is smooth.  Let
$$\beta \in C^\infty(M(X,\ul{\mu}))
, \quad \beta |_{\rho_Y^{-1}(1/2)}
= 1 $$
be an cutoff function equal to $1$ near $\rho_Y^{-1}(1/2)$ and
supported in a small neighborhood of $\rho_Y^{-1}(1/2)$.  The function
\begin{equation} \label{cutoff} - \beta (\rho_Y^2/2 - (\rho_Y/2 - 1/8) )
\in C^\infty(M(X,\ul{\mu})^\circ)
\end{equation}
has as smooth extension to $M(X,\ul{\mu})$, which agrees with the
action of $(\tau_Y^{-1})^*$ near $\rho_Y^{-1}(1/2)$ since the time-one
flow of $\rho_Y/2 - 1/8$ is the identity.  Composing the action
of $(\tau_Y^{-1})^*$ with this flow yields an equivariant
symplectomorphism that is the identity near $\rho_Y^{-1}(1/2)$, and is
Hamiltonian isotopic to the original symplectomorphism.  A similar
modification produces a equivariant Hamiltonian perturbation of
$(\tau_Y^{-1})^*$ that is the identity near $\Phi_2^{-1}(0)$.

It now follows from the results on equivariant Dehn twists that the
Dehn twist on the surface acts by a fibered Dehn twist on the moduli
space.  That is, $(\tau_Y^{-1})^*$ is a Dehn twist along any
$\Phi_2^{-1}(\lambda) \subset M(X_\cut,\ul{\mu})^\circ$ for $\lambda$
generic. By Proposition \ref{circle}, $(\tau_Y^{-1})^*$ acts as an
equivariant Dehn twist on $M(X_\cut,\ul{\mu})^\circ$.  By Theorem
\ref{thm fibered quotients} the action of $(\tau_Y^{-1})^*$ descends
to a fibered Dehn twist on $M(X,\ul{\mu})$.

\vskip .05in
\noindent {\em Step 3: We establish the Theorem in the non-separating
  case}.  As before, a neighborhood of the coisotropic is given as a
symplectic reduction of a finite dimensional manifold.  The level set
$$\widetilde{C}_{1/2} = \Phi_2^{-1}(1/2) \subset U \subset
M(X_{\cut},\ul{\mu})$$
is an equivariant fibered coisotropic in the finite dimensional
manifold $U$ since it is the zero level set of a moment map for a free
action.  Lemma \ref{cutact} shows that $(\tau_Y^{-1})^*$ is an
equivariant fibered Dehn twist around $\widetilde{C}_{1/2}$.  Again a
Hamiltonian perturbation gives a symplectomorphism that acts by the
identity on a neighborhood of $\rho_Y^{-1}(0)$.  It follows from
Theorem \ref{thm fibered quotients} that $(\tau_Y^{-1})^*$ acts on
$M(X,\ul{\mu})$ by a fibered Dehn twist around $C_{1/2} =
\widetilde{C}_{1/2} \qu G$.
\end{proof}

\subsection{Half twists for rank two bundles with fixed holonomies} 

In this section we show that a half-twist on a marked surface
$(X,\ul{\mu})$ induces a fibered Dehn twist on the moduli space
$M(X,\ul{\mu})$ of flat $G = SU(2)$ bundles with fixed holonomy.

\begin{definition} \label{CY} 
{\rm (Half-twist and corresponding coisotropic)} Given a pair of
points $x_i,x_j \in X$ with the same label $\mu_i = \mu_j$, let $Y$
denote a circle around $x_i,x_j$ and
$${\tau_Y}:X \to X, \quad x_i \mapsto x_j, \ \ x_j \mapsto x_i $$
a half-twist along $Y$ that interchanges $x_i,x_j$.  Denote the
coisotropic of bundles with trivial holonomy along $Y$
$$ C_Y := \{ [A] \in M(X,\ul{\mu}), \ \rho_Y([A]) = 0 \} .$$
\end{definition} 

\begin{example}  {\rm (Moduli spaces of flat bundles on the sphere punctured
five times)} Let $X = S^2$ with five markings $x_1,\ldots, x_5$ all
  with labels $1/4 \in \Alc$.  The moduli space of flat bundles
$$ M(X,\ul{\mu}) = \left\{ (g_1,\ldots, g_5) \in \cC_{1/4}^5 \, | \, 
g_1 \ldots g_5 = 1 \right\}/G $$
is real dimension four.  The real manifold $M(X,\ul{\mu})$ admits the
structure of a del Pezzo surface obtained by blowing up the projective
plane at four points.  This fact follows from the existence of a
K\"ahler structure by the Mehta-Seshadri theorem \cite{ms:pb}, and
computing its homology by any number of standard techniques and noting
the rationality of the moduli space or by the more detailed discussion
in \cite{loray:lag}.  The submanifold $C_Y$ from \eqref{CY} given by a
loop $Y$ around the $i$-th and $j$-th marking is a Lagrangian sphere
described as bundles whose holonomy along a loop containing the $i$-th
and $j$-th markings is the identity:
$$ C_Y = \{ g_i g_j = 1 \} \subset M(X,\ul{\mu}) .$$
The intersection diagram of these Lagrangians reproduces the root
system $A_4$ corresponding to the fifth del Pezzo, discussed in
general in Manin \cite{manin:cubic}.  After choosing suitable
generators for the fundamental group we may assume that $i,j$ are
adjacent.  The moduli space $M(X,\ul{\mu})$ can be described as the
moduli space of closed spherical polygons in $S^3 \cong SU(2)$ with
side lengths $\pi$ and vertices
$$1,\ g_1,\ g_1g_2,\ldots, \ g_1g_2g_3g_4g_5 \ =\  1 .$$  
The submanifold $C_Y$ consists of closed spherical polygons of side
lengths $\pi$ such that the $i$ and $j$-th edges are opposite.  That
is, the polygon consists of a bigon and a triangle as in Figure
\ref{bigon}.
\begin{figure}[ht]
\includegraphics[height=1in]{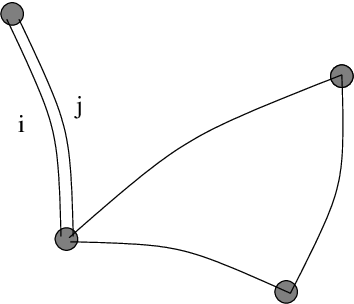}
\caption{Coisotropic of degenerate configurations}
\label{bigon}
\end{figure} 
Let $Y_{i(i+1)}, Y_{(i-1)i} \subset X $ be small circles around the
pairs consisting of $i,i+1$ resp. $i-1,i$-th markings.  The
intersection $C_{Y_{i(i+1)}}, C_{Y_{(i-1)i}}$ is the configuration
where the bigon is conincident with the $i+1$-st edge of the triangle
as in Figure \ref{degen}.
\begin{figure}[ht]
\includegraphics[height=1in]{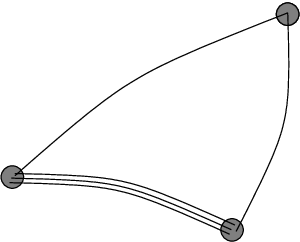}
\caption{Configuration in the intersection of two coisotropics}
\label{degen}
\end{figure} 
See \cite[Section 5]{fieldb} for more discussion.  More generally, in
the case of $2n+1$ markings, the coisotropics $C_{ij}$ consist of
configurations composed of a bigon and an $2n-1$-gon; each is
$S^2$-fibered over the moduli space with $2n-1$ markings by forgetting
the bigon.  Applied to the case $X = S^2$ with five markings, this
construction gives a lift of generators of the action of the Weyl
group $S_4$ (the symmetric group on five letters) of type $A_4$ on
homology to Dehn twists on the fifth del Pezzo.  A lift of the action
to the braid group is discussed in Seidel \cite{se:le}.
\end{example}

The following is slight generalization of a result of Seidel
\cite{se:phd}.

\begin{theorem} {\rm (For $G = SU(2)$ half-twists around pairs of markings act by 
codimension two fibered Dehn twists on the moduli
space)} \label{halftwist} Let $G = SU(2)$.  Let $(X,\ul{\mu})$ be a
  marked surface such that $M(X,\ul{\mu})$ contains no reducibles and
  $\mu_i = \mu_j = 1/4$. Let $Y \subset X$ be an embedded circle
  disjoint from the markings that is the boundary of a disk containing
  $x_i,x_j$.  Then the set $C_Y$ from \eqref{CY} is a spherically
  fibered coisotropic submanifold of codimension $2$, and the action
  of ${\tau_Y}$ on $M(X,\ul{\mu})$ is a fibered Dehn twist along
  $C_Y$.
\end{theorem}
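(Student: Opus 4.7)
The plan is to mirror the strategy of Theorem \ref{nomarkings} (separating case), now using a cross-section at the corner $0 \in \Alc$ to obtain a finite-dimensional local model, and then recognizing the half-twist as an $SU(2)$-equivariant Dehn twist in this model via Proposition \ref{su2twist}. Cut $X$ along $Y$ into $X_\cut = X_{\on{disk}} \sqcup X_{\on{out}}$ with $X_{\on{disk}}$ the disk containing $x_i, x_j$. By Remark \ref{cutmod}, $M(X, \ul{\mu}) \cong M(X_\cut, \ul{\mu}) \qu LG$ with $M(X_\cut, \ul{\mu}) = M_{\on{disk}} \times M_{\on{out}}$. Near $C_Y = \rho_Y^{-1}(0)$, take the cross-section at $0 \in L\g^\dual$ (where $LG_0 = SU(2)$), giving a finite-dimensional Hamiltonian $SU(2)^2$-space whose diagonal $SU(2)$-reduction is a neighborhood of $C_Y$. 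An application of Lemma \ref{facts} at $\lambda = 0$ realizes $C_Y$ as a fibration over $M(X_\mycap, \ul{\mu}_{\on{rest}})$ with fiber $SU(2)\backslash SU(2)^2/(T \cdot Z) \cong SU(2)/T \cong S^2$; since the center $Z$ acts trivially on each fiber, the structure group reduces to $SO(3) = SU(2)/Z$, so $C_Y$ is spherically fibered of codimension $2$.

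Next identify the disk slice $M_{\on{disk}}^0$: it is a $4$-dimensional Hamiltonian $SU(2)$-space parametrized by pairs $(A_i, A_j) \in \cC_{1/4}^2$ with moment map $(A_i, A_j) \mapsto \log(A_i A_j)$ near the zero level set $\{A_i A_j = I\} \cong \cC_{1/4} \cong S^2$ (via $(A_i, -A_i) \leftrightarrow A_i$, using that $A_i^{-1} = -A_i$ for traceless $A_i$). The $SU(2)$-action is by conjugation with $T$-stabilizer on this level set, so by the equivariant coisotropic embedding theorem a neighborhood is equivariantly symplectomorphic to a neighborhood of the zero section in $T^* S^2$ with its standard $SU(2) \to SO(3)$-structure. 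The half-twist $\tau_Y$, supported in the disk, acts trivially on $M_{\on{out}}^0$ and on $M_{\on{disk}}^0$ as the braid generator $(A_i, A_j) \mapsto (A_i A_j A_i^{-1}, A_i)$. On the zero level set this is $(A_i, -A_i) \mapsto (-A_i, A_i)$, which under the identification above is exactly the antipodal map $A_i \mapsto -A_i$, matching the action of a model Dehn twist on the zero section of $T^*S^2$.

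To upgrade the match on the zero section to an equivariant Hamiltonian isotopy with a model Dehn twist, we adapt the proof of Proposition \ref{flow} and Lemma \ref{cutact}: the half-twist in the cut picture corresponds, after a $\pi$-rotation of the disk, to a loop-group element which is a square root of the full-twist gauge transformation in Lemma \ref{cutact}. Near $\rho_Y^{-1}(0)$ this realizes $\tau_Y$ on $M_{\on{disk}}^0$ as the time-one flow of a function asymptotic to $\psi(|\Phi_1|)$ with $\psi'(0) = 1/2$, modified away from $C_Y$ by a cutoff as in \eqref{cutoff}. Proposition \ref{su2twist} then identifies this flow with an $SU(2)$-equivariant model fibered Dehn twist on the $T^* S^2$-factor, and Theorem \ref{thm fibered quotients} implies that the induced symplectomorphism on the symplectic reduction $M(X, \ul{\mu})$ is a fibered Dehn twist along $C_Y$. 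The main obstacle is this last identification: unlike the full twist case where Lemma \ref{cutact} gives an explicit gauge transformation on $X_\cut$, a half-twist permutes the two markings and so does not act by a pure change of framing, and one must carefully separate the rotation and braiding contributions to verify that the resulting symplectomorphism lies in the correct equivariant Hamiltonian isotopy class.
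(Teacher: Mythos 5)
Your proposal follows essentially the same route as the paper: cut along $Y$, identify $C_Y$ as a spherically fibered coisotropic of codimension two using Lemma~\ref{facts}, recognize the half-twist as an $SU(2)$-equivariant twist on the cross-section at the alcove vertex $\{0\}$, and descend via Theorem~\ref{thm fibered quotients}. Your computation of the fiber, your identification of the zero level set of the disk slice with $\cC_{1/4}\cong S^2$, and your observation that the braid generator $(A_i,A_j)\mapsto(A_iA_jA_i^{-1},A_i)$ restricts to the antipodal map on that level set all match the paper's argument.

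The step you flag as ``the main obstacle'' --- pinning down the equivariant Hamiltonian isotopy class of the half-twist away from the zero level set --- is indeed a genuine gap in your write-up, and the paper resolves it by a direct computation you do not carry out. Specifically, the paper chooses representatives $g_i=n$, $g_j=tn$ (with $n$ the Weyl reflection representative, $n^{-1}=-n$, $t\in T$), computes that the half-twist sends $(g_i,g_j)$ to $\bigl(\Ad((-g_ig_j)^{1/2})g_i,\;\Ad((-g_ig_j)^{1/2})g_j\bigr)$, and concludes that on the cut moduli space $\tau_Y$ acts by the loop-group element $(1,(-g_ig_j)^{1/2})$. Crucially there is an extra central sign $-I$ relative to the naive ``square root of the full-twist gauge transformation'' that you gesture at; tracking that sign is what makes the subsequent Hamiltonian computation come out correctly. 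The paper then identifies $\tau_Y$ on the cross-section as the time-one flow of $|\Phi_2|(|\Phi_2|+1)/4$ (the $|\Phi_2|/4$ term accounting for the $(-I)^{1/2}$ factor and the $|\Phi_2|^2/4$ term for the $(g_ig_j)^{1/2}$ factor), rather than appealing to Proposition~\ref{su2twist} in the form you cite; after a cutoff as in \eqref{cutoff} this is an $SU(2)$-equivariant model Dehn twist along $\Phi_2^{-1}(0)$, and Theorem~\ref{thm fibered quotients} finishes. So: your outline is right, but you should replace the heuristic ``$\pi$-rotation of the disk contributes a square root'' with the explicit gauge-transformation calculation, which is where the central sign enters and where the Hamiltonian generating the flow is actually identified.
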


\begin{proof}   
First show that $M(X, \ul{\mu} - \{ \mu_i, \mu_j \})$ is smooth, or
equivalently, contains no reducible representations.  Indeed, note
that the stabilizers of 
$$ (g_i,g_j), \quad g_i g_j = 1, \quad g_i,g_j \in \cC_{1/4}$$
range over all one-parameter subgroups of $SU(2)$.  Hence if some
point in $M(X, \ul{\mu} - \{ \mu_i, \mu_j \})$ has a one-parameter
group of automorphisms then there is also a point in $M(X,\ul{\mu})$
with a one-parameter group of automorphisms.  Since $M(X,\ul{\mu})$
contains no reducibles, $M(X, \ul{\mu} - \{ \mu_i, \mu_j \})$ also
contains no reducibles.

By Lemma \ref{facts}, the constancy of generic stabilizer implies the
existence of a fibered coisotropic.  Let
$$X_\cut = X_{\cut,1} \sqcup X_{\cut,2}, \quad \{ x_i, x_j \} \subset X_{\cut,2} $$ 
denote the surface obtained by cutting $X$ into two components
$X_{\cut,1}, X_{\cut,2}$ along $Y$, so that $X_{\cut,2}$ contains the
markings $x_i,x_j$.  The generic stabilizer $H$ from \eqref{CY} is the
product of stabilizers for the two factors, since $Y$ is
disconnecting: The center $Z = \Z_2$ for $M(X_{\cut,1}, \ul{\mu} - \{
\mu_i, \mu_j \})$ by the previous paragraph, and the maximal torus
$U(1)$ for the right factor $M(X_{\cut,2}, \mu_i, \mu_j)$.  By
\ref{facts} $C_Y \subset M(X,\ul{\mu})$ from \eqref{CY} is a fibered
coisotropic of codimension $2$ with fiber
$$ G_{\exp(\lambda)} \backslash G^2_{\exp(\lambda)}/H = SU(2)/(\Z_2
\times U(1)) \cong S^2 .$$

To describe the action of the half-twist, we first describe the action
of the half-twist on the moduli space of the cut surface.  By
restriction an element of $M(X_\cut,\ul{\mu})$ gives a flat connection
on the twice-punctured disk $X_{\cut,2} - \{x_i, x_j \}$, with
holonomies $g_i,g_j$ around the punctures.  We choose convenient
representatives for $g_i,g_j$ as follows.  Let
$$n = \left[ \begin{array}{ll} 0 & i \\ i & 0 \end{array} \right] 
\in N(T)$$ 
be a representative of the non-trivial element $w$ of the Weyl group
$W \cong \Z_2$.  The inverse is
$$n^{-1} = -n \in \cC_{1/4} .$$
Furthermore, the stabilizer $G_n$ acts on $\cC_{1/4}$ by rotation
fixing $n$ and $n^{-1}$.  The set $nT$ is diffeomorphic to a circle
passing through $n$.  Thus after conjugating $g_i,g_j$ by some $g \in
G$, we may assume
$$ g_i =n, \quad g_j =
t n $$ 
for some $t \in T$.  We compute the action of the half-twist on the
holonomies: For the $i$-th holonomy
$$ g_i \mapsto g_ig_jg_i^{-1} = n t = t^{-1} n = \Ad( ( t^{-1})^{\hh})
  g_i = \Ad( (- g_i g_j)^{\hh} ) g_i $$
(the square root is unique up to an element of the center, which acts
  trivially).  On the other hand, for the $j$-th holonomy
$$ g_j \mapsto g_i = n = \Ad( ( t^{-1})^{\hh}) t n = \Ad( (- g_i
  g_j)^{\hh} ) g_j.$$
This shows that ${\tau_Y}$ acts on $g_i,g_j$ by conjugation by $\Ad(
(- g_i g_j)^{\hh})$.  As in the case of a full twist, this implies
that $(\tau_Y^{-1})^*A$ is gauge equivalent to the connection obtained
from $A$ by acting by $(1, (-g_i g_j)^{\hh})$. 

We write a neighborhood of the fibered coisotropic as a
finite-dimensional symplectic reduction.  Let $\sigma = \{ 0 \}$
denote the endpoint of the alcove.  By \eqref{qu}
$$ M(X,\ul{\mu})^\sigma = M(X_\cut,\ul{\mu})^\sigma \qu SU(2) $$
is a neighborhood of $C_Y$.  Restricted to
$M(X_\cut,\ul{\mu})^\sigma$, $|\Phi_2|/2$ gives a real-valued function
whose Hamiltonian flow is the action of $-I$.  Hence $|\Phi_2|/4$ has
Hamiltonian flow given by the action of $(-I)^{1/2} = \exp(1/4)$.
Similarly $|\Phi_2|^2/4$ has Hamiltonian flow given by the action of
$(g_ig_j)^{1/2}$.  Combining these remarks shows that the
action of $\tau_Y$ on $M(X_\cut,\ul{\mu})^\sigma$ is the time-one
Hamiltonian flow of $|\Phi_2| (|\Phi_2| + 1)/4$, and the level set
$\Phi_2^{-1}(0)$ is an equivariant fibered coisotropic.  By using a
cutoff function as in \eqref{cutoff}, the action of $\tau_Y$ is
Hamiltonian isotopic to an equivariant Dehn twist along
$\Phi_2^{-1}(0)$.  The proof is completed by applying Theorem \ref{thm
  fibered quotients}.
\end{proof} 

\subsection{Half-twists for  higher rank bundles}

For special labels, a half twist on the marked surface $X$ induces a
fibered Dehn twist on the moduli space of flat $SU(r)$ bundles with
fixed holonomy.

\begin{definition} {\rm (Khovanov-Rozansky modification on moduli spaces)} 
\label{kr} 
Let $r \ge 2$ and let $\omega_k \in \Alc, \ k = 0,\ldots, r-1$ denote
the vertices of the alcove.  We identify $\g$ with $\g^\dual$ using
the basic inner product for which the roots have norm-square equal to
$2$.  Under this identification $\omega_0 = 0$ while
$$\omega_k = (\underbrace{(r-k)/r, \ldots, (r-k)/r}_k, \underbrace{
  -k/r,\ldots, -k/r}_{r-k}), \quad 1 \leq k \leq r$$
are the fundamental weights of $G = SU(r)$.  Let $\ul{\mu} = (\mu_1,
\ldots,\mu_n)$ be a collection of labels with
$$\mu_i = \mu_j = \nu_k^1 := (\omega_k + \omega_{k+1})/2$$
for some $i,j,k$.  (That is, $\mu_i,\mu_j$ equal the midpoint
$\nu_k^1$ of the edge of $\Alc$ connecting the $k$-th vertex with the
$k+1$-st vertex.)  Let $C_Y$ be the subset of $M(X,\ul{\mu})$ defined
by
\begin{equation} \label{CY2}
 C_Y = \{ [A] \in M(X,\ul{\mu}) \, | \, \rho_Y([A]) \sim
 \exp(\nu_{k}^2) \}
\end{equation} 
where $\nu_k^2$ is the midpoint between the vertices
$\omega_k,\omega_{k+2}$ of the alcove $\Alc$,
$$ \nu_{k}^2 := (\omega_{k+2} + \omega_{k})/2 .$$ 
For example, 
$$ ( r = 2,  \ \ k = 0) \quad \implies \quad (\nu_k^1 = 1/4, \ \ \nu_k^2 = 0) $$
while 
$$ (r = 3, \ \ k = 0) \quad \implies \quad (\nu_k^1 = \omega_1/2,
\ \ \nu_k^2 = \omega_2/2) .$$
Denote by
$$ M(X,\ul{\mu},(\mu_i,\mu_j) \mapsto \nu_k^2 ) :=
M(X,\mu_1, \ldots, \hat{\mu}_i, \ldots, \hat{\mu_i},
\ldots, \mu_n, \nu_k^2)$$
the moduli space of flat bundles on $X$ with labels obtained by
removing $\mu_i,\mu_j = \nu_k^1$ and adding $\nu_{k}^2$.
\end{definition} 

\begin{theorem} \label{halftwistr} {\rm (Codimension two 
fibered Dehn twists via half-twists)} Suppose that $G = SU(r)$ and
  $\mu_i,\mu_j, \nu_k^1,\nu_k^2$ are as in Definition \ref{kr} such
  that the moduli spaces $M(X,\ul{\mu})$ and
  $M(X,\ul{\mu},(\mu_i,\mu_j) \mapsto \nu_k^2 )$ contain no
  reducibles.  Let $Y \subset X$ denote an embedded circle enclosing
  only the $i$-th and $j$-th markings.  Then the subset $C_Y$ from
  \eqref{CY2} is a spherically fibered coisotropic submanifold of
  codimension $2$, fibered over the moduli space $M(X,\ul{\mu},
  (\mu_i,\mu_j) \mapsto \nu_k^2)$.  The action of ${\tau_Y}$ on
  $M(X,\ul{\mu})$ is a fibered Dehn twist along $C_Y$.
\end{theorem}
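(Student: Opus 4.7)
The strategy is to reduce Theorem \ref{halftwistr} to the $SU(2)$ case established in Theorem \ref{halftwist}. The specific choice of labels is crucial: the midpoint $\nu_k^1$ of the edge $[\omega_k,\omega_{k+1}]$ and the midpoint $\nu_k^2$ of the edge $[\omega_k,\omega_{k+2}]$ are chosen so that, modulo the action of $G_{\exp(\nu_k^2)}$, every pair $(g_i,g_j)\in\cC_{\nu_k^1}^2$ with product $g_ig_j$ in $\cC_{\nu_k^2}$ lies inside a distinguished $SU(2)$-subgroup of the centralizer $G_{\exp(\nu_k^2)}$, with the corresponding product inside this $SU(2)$-subgroup lying in the traceless conjugacy class $\cC_{1/4}\subset SU(2)$.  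In this normal form the local geometry near such a pair is precisely the $SU(2)$-setting of Theorem \ref{halftwist}, with the remaining directions coming from block-diagonal factors on which the half-twist will act trivially.

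With this block-reduction in hand, I would cut $X$ along $Y$ into $X_{\cut,1}\sqcup X_{\cut,2}$ with $x_i,x_j\in X_{\cut,2}$ and use the loop-group quotient description from Remark \ref{cutmod}.  By Lemma \ref{facts} the level set $\rho_Y^{-1}(\nu_k^2)$ is a fibered coisotropic with fiber $G_{\exp(\nu_k^2)}\backslash G_{\exp(\nu_k^2)}^2/H$ for the generic stabilizer $H$ of the $LG_\sigma^2$-action.  The non-reducibility hypotheses on both $M(X,\ul{\mu})$ and $M(X,\ul{\mu},(\mu_i,\mu_j)\mapsto\nu_k^2)$ pin down $H$ so that this double-coset space is exactly $SU(2)/(\Z_2\times U(1))\cong S^2$, reproducing the fiber computation from the proof of Theorem \ref{halftwist} after restriction to the distinguished $SU(2)$-block.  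This establishes that $C_Y$ is a codimension-$2$ spherically fibered coisotropic over the moduli space $M(X,\ul{\mu},(\mu_i,\mu_j)\mapsto\nu_k^2)$, with structure group the $SO(3)$ induced by the double cover $SU(2)\to SO(3)$.

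Finally, I would compute the action of $\tau_Y$ on the cut moduli space.  By Lemma \ref{cutact} the half-twist acts by multiplication on the second boundary component by a loop-group element represented by a square root of $g_ig_j$; by the reduction of the first paragraph this square root lies in the middle $SU(2)$-block and acts trivially on the complementary block-diagonal directions.  On a finite-dimensional cross-section at $\nu_k^2$, the action is then the time-one Hamiltonian flow of a function of $\Phi_2$ whose restriction to the $SU(2)$-block coincides with the function $|\Phi_2|(|\Phi_2|+|\nu_k^2|)/4$ appearing in the proof of Theorem \ref{halftwist}.  A cutoff modification as in \eqref{cutoff} produces an equivariant model fibered Dehn twist along the level set $\Phi_2^{-1}(\nu_k^2)$, and Theorem \ref{thm fibered quotients} then descends this to a fibered Dehn twist on $M(X,\ul{\mu})$ along $C_Y$.

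The main obstacle is justifying the block-reduction rigorously.  One must show that for the specific labels $\nu_k^1,\nu_k^2$, every pair $(g_i,g_j)\in\cC_{\nu_k^1}^2$ with $g_ig_j\in\cC_{\nu_k^2}$ can be conjugated by $G_{\exp(\nu_k^2)}$ into the distinguished $SU(2)$-subgroup (corresponding to the pair of eigenvalues of $\exp(\nu_k^2)$ whose multiplicity jumps from $1$ in $\exp(\nu_k^1)$ to $2$), and to verify the corresponding square-root computation.  This amounts to a Lie-theoretic normal form for the $G_h$-orbits on the fiber product $\cC_{\nu_k^1}\times_{h}\cC_{\nu_k^1}$, which should follow from a direct decomposition argument analogous to the Weyl element computation at the end of the proof of Theorem \ref{halftwist}.
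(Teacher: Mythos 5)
Your proposed reduction to the $SU(2)$ case of Theorem \ref{halftwist} does not go through as stated, and this is in fact a genuine gap rather than a technicality.  The obstruction is that for $r>2$ the elements $g_i, g_j \in \cC_{\nu_k^1}$ cannot be conjugated into any $SU(2)$ subgroup of $G=SU(r)$, nor into one inside $G_{\exp(\nu_k^2)}$.  Taking $k=0$ and the paper's normal form $g_i = \exp(\omega_1/2) = \diag(-e^{-\pi i/r}, e^{-\pi i/r},\ldots, e^{-\pi i/r})$, the eigenvalue $e^{-\pi i/r}$ appears with multiplicity $r-1$ and is a nontrivial scalar, so $g_i$ is not supported in a $2\times 2$ block; its restriction to the first $2\times 2$ block has determinant $e^{-2\pi i/r}\ne 1$.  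The scalar cannot be removed by multiplying by an element of $Z(SU(r))$ either.  So the claim ``every pair $(g_i,g_j)$ with $g_ig_j\in\cC_{\nu_k^2}$ lies, modulo $G_{\exp(\nu_k^2)}$-conjugation, inside a distinguished $SU(2)$-subgroup of $G_{\exp(\nu_k^2)}$'' is false, and one cannot literally import the Hamiltonian $|\Phi_2|(|\Phi_2|+1)/4$ from the proof of Theorem \ref{halftwist}.  Relatedly, the double coset in Lemma \ref{facts} is $S(U(2)\times U(r-2))/S(U(1)^2\times U(r-2))$, not $SU(2)/(\Z_2\times U(1))$; both are $S^2$, but the identification you want does not come from an $SU(2)$-block isomorphism.

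The paper avoids this by not reducing to Theorem \ref{halftwist} at all.  Instead it runs a parallel argument directly in $SU(r)$: keeping $g_i = \exp(\omega_1/2)$, it introduces $O = \on{Im}(SO(2,\R)\hookrightarrow G)$ in the first two coordinates, observes that $\Ad(g_i)$ inverts $O$ and that $\cC_1 = \cC_{\nu_0^1}$ is a rank-one symmetric space so $Z_{g_i}$ acts transitively on directions, and concludes that up to conjugation $g_j = og_i$ for $o\in O$.  This gives \eqref{union} and in particular identifies $\cC_2 = \cC_{\nu_0^2}$ inside $\cC_1^2$ at the Weyl representative.  The half-twist then acts by conjugation by $o^{1/2}$ as in \eqref{holact}, and the crucial relation $o = g_i^{-1}g_j = \exp(-\omega_1 + \xi)$ with $\xi = \Phi_2([A])$ translates this into the Hamiltonian flow of $(\Phi_2,\alpha_1)^2/2 - (\Phi_2,\omega_1/2)$ on the slice, followed by the cutoff modification and Theorem \ref{thm fibered quotients}.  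Your ``main obstacle'' is indeed the crux, but the way to resolve it is not a normal form landing in $SU(2)$; it is the $O$-normal form combined with the explicit identification of $o$ in terms of $\Phi_2$, which is where the correct Hamiltonian comes from.
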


\begin{proof} 
First we show that the given level set is a fibered coisotropic.  Let
$g_i$ resp.\ $g_j$ denote the holonomies around the $i$-th
resp.\ $j$-th marking.  The half-twist produces a connection with
holonomies in which $g_i,g_j$ have been replaced with $g_i g_j
g_i^{-1}, g_i$.  Let $\cC_1$ denote the conjugacy class of
$\exp(\omega_1/2)$.  It suffices to consider the case that $k = 0$ so
that $g_i,g_j \in \cC_1$.  That is, let
$$\nu_k^1 = \mu_i = \mu_j = \omega_1/2, \quad \nu_k^2
= \omega_2/2 .$$ 
We begin by choosing convenient representatives of $g_i,g_j$.  We may
assume
\begin{eqnarray*} 
 g_i &=& \exp( \omega_1/2) = \exp( 2\pi i (\diag( (r-1)/2r , -1/2r,
 \ldots, -1/2r))) \\ &=& \diag(-\exp (\pi i / r), \exp(\pi i /r),
 \ldots, \exp(\pi i /r)) .\end{eqnarray*}
The centralizer of $g_i$ is therefore 
$$Z_{g_i} = S(U(1) \times U(r-1))
\cong SU(r-1) .$$  
Let
$$O 
 = \on{Im}( SO(2,\R) \to G, \ A \mapsto \diag(A,1,1,1
\ldots, 1)) $$
denote the subgroup of real orthogonal rotations in the first two
coordinates in $\C^r$.  Since $g_i$ is the product of
$\diag(-1,1\ldots,1)$ with a central element in $U(r)$, the adjoint
action of $g_i$ on $O$ is given by $g_i o g_i^{-1} = o^{-1}, o \in O$.  So 
$$ og_i = \Ad( o^{1/2}) g_i \in \cC_1, \forall o \in O .$$
The conjugacy class $\cC_1$ is a symmetric space of rank one.  In
particular $Z_{g_i}$ acts transitively on the unit sphere in $T_{g_i}
\cC_1$.  This implies that the map $O g_i \to \cC_1/Z_{g_i}$ is
surjective.  By conjugating $g_i,g_j$ by an element of $Z_{g_i}$ we
may choose the second element $g_j$ so that
$$g_j = o g_i = g_i o^{-1} , \quad \text{ for \ some\ } o \in O .$$  
The subgroup $O$ is conjugate to the one-parameter subgroup generated
by the simple root $\alpha_1$ (or rather its dual coweight).  It
follows that the square of $\cC_1$ in $G$ is given by the union of
conjugacy classes
\begin{equation} \label{union} \cC_1^2 = \Ad(G) \{ g_1^2 o  \ | \  o \in O \} = \bigcup_{\eps \in [0,-\hh]}
\cC_{ \omega_1 + \eps \alpha_1 } ,\end{equation}
In particular, the conjugacy class $\cC_2$ of $\exp(\omega_2/2)$
appears in $\cC^2_1$ as the orbit of the element 
$$ \exp(\omega_1/2) \exp(s_1 \omega_1/2) = \exp(\omega_2/2) $$
where $s_1 \in G$ is a representative first simple reflection. The
generic stabilizer for the action of $G$ on $\cC_1 \times \cC_1$ at
the inverse image of $\cC_2$ is the maximal torus
$$ S(U(2) \times U(r-2)) \cap \Ad(s_1) (SU(2) \times U(r-2)) = S(U(1)
\times U(1) \times U(r-2)) .$$
By Lemma \ref{facts}, $ C_Y $ is a fibered coisotropic with fiber
$$ S(U(2) \times U(r-2)) / S(U(1) \times U(1) \times U(r-2)) \cong S^2
.$$

Next we identify the action of the half-twist.  On the holonomies the
half-twist acts by
\begin{equation} \label{holact}  g_i \mapsto g_ig_jg_i^{-1} = g_i o = \Ad( o^{1/2} ) g_i, \quad g_j
\mapsto g_i = o g_j = \Ad( o^{1/2}) g_j .\end{equation} 
Let $\sigma$ denote the face of $\Alc$ containing $\omega_2/2$ and
$LG_\sigma$ the stabilizers so that
$$ M(X,\ul{\mu})^{\sigma} = M(X_\cut,\ul{\mu})^{\sigma} \qu
LG_\sigma $$
is an open neighborhood of $C_Y$ in $M(X,\ul{\mu})$.  Let $A$ be a
framed connection on $X_\cut$.  The pull-back connection
$(\tau_Y^{-1})^* A$ has the same holonomies as that of $(1,o^{1/2})A$,
where $(1,o^{1/2}) \in LG^2$ denotes the constant element of the loop
group with values $1,o^{1/2}$.  As in the previous cases, this
equality implies that the connections $A, (\tau_Y^{-1})^* A$ are gauge
equivalent.  

To identify the gauge transformation, suppose that $A | (\partial
X)_{\cut,2} = \xi \d \theta$ for some $\xi \in \g$.  The evaluation of
the moment map $\Phi_2([A]) = \xi$ is related to the holonomies around
$x_i,x_j$ by $ \exp(\xi) = g_i g_j $, since $\exp(\xi)$ is the
holonomy around the second boundary circle $(\partial X_{\cut})_2$.
Since $g_i^2 = \exp(\omega_1)$ we have
$$ o = g_i^{-1} g_j = (g_i^{-2}) g_i g_j = \exp(- \omega_1 + \xi) .$$
By \eqref{union}, the image of $M(X_\cut,\ul{\mu})^\circ$ under
$\Phi_2$ is the interval with endpoints $\omega_1,\omega_2/2$, see
Figure \ref{halftw} for the $SU(3)$ case.  It follows that the action
of the half-twist is the Hamiltonian flow of the function on
$M(X_{\cut},\ul{\mu})$ whose restriction to
$M(X_{\cut},\ul{\mu})^\circ$ is $ (\Phi_2,\alpha_1)^2/2 - (\Phi_2,
\omega_1/2)$.  An argument using cutoff functions as in \eqref{cutoff}
shows that the half-twist is an equivariant fibered Dehn twist along
$\Phi_2^{-1}(\omega_2/2)$. The claim follows from Theorem \ref{thm
  fibered quotients}.
\end{proof} 

\begin{figure}[h!]
\begin{picture}(0,0)%
\includegraphics{halftw.pstex}%
\end{picture}%
\setlength{\unitlength}{4144sp}%
\begingroup\makeatletter\ifx\SetFigFont\undefined%
\gdef\SetFigFont#1#2#3#4#5{%
  \reset@font\fontsize{#1}{#2pt}%
  \fontfamily{#3}\fontseries{#4}\fontshape{#5}%
  \selectfont}%
\fi\endgroup%
\begin{picture}(3941,2367)(2703,-3866)
\put(4117,-2051){\makebox(0,0)[lb]{\smash{{\SetFigFont{10}{14.4}{\rmdefault}{\mddefault}{\updefault}{\color[rgb]{0,0,0}is the identity on this fiber}%
}}}}
\put(4118,-1840){\makebox(0,0)[lb]{\smash{{\SetFigFont{10}{14.4}{\rmdefault}{\mddefault}{\updefault}{\color[rgb]{0,0,0}the action of the half-twist}%
}}}}
\put(5344,-2925){\makebox(0,0)[lb]{\smash{{\SetFigFont{10}{14.4}{\rmdefault}{\mddefault}{\updefault}{\color[rgb]{0,0,0}the action of}%
}}}}
\put(5337,-3138){\makebox(0,0)[lb]{\smash{{\SetFigFont{10}{14.4}{\rmdefault}{\mddefault}{\updefault}{\color[rgb]{0,0,0}the half-twist}%
}}}}
\put(5337,-3343){\makebox(0,0)[lb]{\smash{{\SetFigFont{10}{14.4}{\rmdefault}{\mddefault}{\updefault}{\color[rgb]{0,0,0}is the antipodal}%
}}}}
\put(5317,-3528){\makebox(0,0)[lb]{\smash{{\SetFigFont{10}{14.4}{\rmdefault}{\mddefault}{\updefault}{\color[rgb]{0,0,0}map on this }%
}}}}
\put(5317,-3753){\makebox(0,0)[lb]{\smash{{\SetFigFont{10}{14.4}{\rmdefault}{\mddefault}{\updefault}{\color[rgb]{0,0,0}fiber}%
}}}}
\put(4195,-3423){\makebox(0,0)[lb]{\smash{{\SetFigFont{10}{14.4}{\rmdefault}{\mddefault}{\updefault}{\color[rgb]{0,0,0}$\omega_2/2$}%
}}}}
\put(2718,-2893){\makebox(0,0)[lb]{\smash{{\SetFigFont{10}{14.4}{\rmdefault}{\mddefault}{\updefault}{\color[rgb]{0,0,0}$\omega_1/2$}%
}}}}
\put(2775,-1881){\makebox(0,0)[lb]{\smash{{\SetFigFont{10}{14.4}{\rmdefault}{\mddefault}{\updefault}{\color[rgb]{0,0,0}$\omega_1$}%
}}}}
\end{picture}%
\caption{Action of half-twist and and product of holonomies}
\label{halftw}\end{figure}

\section{Pseudoholomorphic sections of Lefschetz-Bott fibrations} 
\label{sections}

In this section we describe the relative invariants associated to
Lefschetz-Bott fibrations.  These are maps between Lagrangian Floer
cohomology groups obtained by counting pseudoholomorphic sections.

\subsection{Monotone Lagrangian Floer cohomology}

Novikov rings are not needed to define Lagrangian Floer cohomology for
a monotone {\em pair} of Lagrangian submanifolds.  However, we will
need our cochain complexes to admit ``action filtrations''.  For this
we find it convenient to use the version incorporating a formal
variable keeping track of area, as in the construction of the spectral
sequence in Fukaya-Oh-Ono-Ohta \cite{fooo}.

\begin{definition}  {\rm (Monotonicity conditions)}
\begin{enumerate} 
\item {\rm (Symplectic backgrounds)} Fix a monotonicity constant
  $\lambda\geq 0$ and an even integer $N > 0$.  A {\em symplectic
  background} is a tuple $(M,\omega,b,\Lag^N(M))$ as follows.
\begin{enumerate}
\item {\rm (Bounded geometry)} $(M,\omega)$ is a compact smooth
  symplectic manifold with either empty or convex boundary;
\item {\rm (Monotonicity)} $\omega$ is $\lambda$-monotone,
  i.e.\ $[\omega] = \lambda c_1(TM)$;
\item {\rm (Background class)} $b \in H^2(M,\Z_2)$ is a {\em
  background class}, which will be used for the construction of
  orientations; and
\item {\rm (Maslov cover)} $\Lag^N(M) \to \Lag(M)$ is an $N$-fold
  Maslov cover such that the induced $2$-fold Maslov covering
  $\Lag^2(M)$ is the oriented double cover.
\end{enumerate}
We often refer to a symplectic background $(M,\omega,b,\Lag^N(M))$ as
$M$.
\item {\rm (Monotone Lagrangians)} A Lagrangian $L \subset M
  \backslash \partial M$ is {\em monotone} if
there is an area-index relation for disks with boundary in $L$, that
is,
$$ A(u) = \frac{\lambda}{2} I(u) , \quad \forall u: (D,\partial D) \to
(M,L) $$
where $A(u) = \lan [\omega], [u] \ran$ (the pairing of $[\omega] \in
H^2(M,L)$ with $[u] \in H_2(M,L)$) is again the symplectic area and
$I(u)$ is the Maslov index of $u$ as in \cite[Appendix]{ms:jh}.
\item {\rm (Monotone tuples of Lagrangians)} A tuple
  $(L_b)_{b\in\mathcal{B}}$ of Lagrangians in $M$ is {\em monotone} if
  the following holds: Let $S$ be any compact surface with boundary
  given as a disjoint union of one-manifolds $C_b$
$$\partial S=\sqcup_{b\in\mathcal{B}} C_b$$ 
(with $C_b$ possibly empty or disconnected).  Then for some constant
  $c(S,M, (L_b)_{b \in \mB})$ independent of $u$,
\begin{equation} \label{Au} A(u) = \frac{\lambda}{2} \cdot I(u) + c(S,M,(L_b)_{b \in
       \mB}), \quad \forall u:(S, ( C_b)_{b \in \mB}) \to (M,
  (L_b)_{b \in \mB})
\end{equation} 
where $I(u)$ is the sum of the Maslov indices of the totally real
subbundles $(u|_{C_b})^*TL_b$ in some fixed trivialization of $u^*TM$.
There is a similar definition of monotonicity for tuples of Lagrangian
correspondences \cite{ww:quiltfloer}.
\item {\rm (Admissible Lagrangians)} As mentioned in Section \ref{tri}
  we say that a compact monotone Lagrangian submanifold $L$ is {\em
    admissible} if the image of the fundamental group of $L$ in $M$ is
  torsion and $L$ has minimal Maslov number at least $3$.  One may
  also allow the case that $L$ has minimal Maslov number $2$ and disk
  invariant (number of disks passing through a generic point) $0$.
  However, the Maslov index $2$ case is discussed separately in
  \ref{maslovtwo}.  Any tuple of admissible, monotone Lagrangians is
  automatically monotone, by the argument in Oh \cite{oh:fl1}.  This
  argument involves completing each boundary component of the surface
  by adding a disk obtained by contracting a loop, possibly after
  passing to a finite cover.  Products of admissible Lagrangians are
  also automatically admissible.  Thus if $L^0,L^1$ resp $C$. are
  admissible Lagrangians in $M$ resp. $M^- \times B$ then the tuples
  $(L^0,L^1)$, $(L^0 \times C, C^t \times L^1)$ are monotone.
\item {\rm (Generalized Lagrangian correspondences)} Let $M,M'$ be
  symplectic manifolds.  A {\em generalized Lagrangian correspondence}
  $\ul{L}$ from $M$ to $M'$ consists of
\begin{enumerate}
\item a sequence $N_0,\ldots,N_r$ of any length $r+1\geq 2$ of
symplectic manifolds with $N_0 = M$ and $N_r = M'$ ,
\item a sequence $L_{01},\ldots, L_{(r-1)r}$ of compact Lagrangian
correspondences with $L_{(j-1)j} \subset N_{j-1}^-\times N_{j}$ for
$j=1,\ldots,r$.
\end{enumerate}
Let $\ul{L}$ from $M$ to $M'$ and $\ul{L}'$ from $M'$ to $M''$ be two
generalized Lagrangian correspondences. Then we define concatenation
$$ \ul{L} \sharp \ul{L}' :=
\bigl(L_{01},\ldots,L_{(r-1)r},L'_{01},\ldots,L'_{(r'-1)r'}\bigr)
$$
as a generalized Lagrangian correspondence from $M$ to $M''$.
Moreover, we define the dual
$$
\ul{L}^t := \bigl(L_{(r-1)r}^t,\ldots,L_{01}^t\bigr) .
$$ 
as a generalized Lagrangian correspondence from $M'$ to $M$.  A
generalized Lagrangian is called admissible if each component is.
\item 
  {\rm (Lagrangian branes)} Generally speaking a Lagrangian {\em
    brane} means a Lagrangian with extra structure sufficient for the
  definition of Floer cohomology.  In particular a {\em grading} of a
  Lagrangian submanifold $L \subset M$ is a lift of the canonical
  section $L \to \Lag(M)$ to $\Lag^N(M)$, as in Seidel \cite{se:gr}.
  A {\em brane structure} on a connected Lagrangian $L$ consists of a
  grading and relative spin structure for the embedding $L \to M$
  (equivalent to a trivialization of $w_2(M)$ in the relative chain
  complex for $(M,L)$, see for example \cite{orient}).  A {\em
    Lagrangian brane} is an oriented Lagrangian submanifold equipped
  with a brane structure.  A {\em generalized Lagrangian brane} is a
  generalized Lagrangian correspondence equipped with a brane
  structure.
\end{enumerate} 
\end{definition} 

We define Floer cochains as formal sums of perturbed intersection
points.  Let $(L^0,L^1)$ be a compact, monotone pair of Lagrangian
branes in $M$.  Choose a Hamiltonian $H \in C^\infty([0,1] \times M)$
and denote by $\phi_t \in \Diff(M,\omega)$ the time $t$ flow of the
Hamiltonian vector field $H^\# \in \Map([0,1],\Vect(M))$.  Choose $H$
satisfying the condition that $\phi_1(L^0)$ intersects $L^1$
transversally.  Define the set of {\em perturbed intersection points}
$$
\cI(L^0,L^1) := \bigl\{ x: [0,1] \to M \,\big|\, x(t) 
= \phi_t(x(0)), \ x(0) \in L^0, \  x(1) \in L^1 \bigr\}.
$$ 
The gradings on $L^0,L^1$ induce a {\em degree map}
$$ \cI(L^0,L^1) \to \Z_{N}, \quad x \mapsto |x| $$
given by the Maslov index of a path from the lifts in the Maslov cover
\cite{se:gr}.  By the assumption on the Maslov cover, the mod $2$
degree is determined purely by the orientations.  That is, the mod $2$
degree is $0$ resp. $1$ if the two Lagrangians meet positively
resp. negatively after perturbation.  The generalized intersection
points decompose into subsets of intersections points with fixed
index:
$$ \cI(L^0,L^1) = \bigcup_{d \in \Z_{N}} \cI_d(L^0,L^1), \quad
\cI_d(L^0,L^1) = \{ x \in \cI(L^0,L^1) \ | |x| = d \} .$$
Denote the space of time-dependent $\omega$-compatible almost complex
structures
$$ \J_t(M,\omega) := \Map([0,1], \J(M,\omega)).
$$ 
Given 
$$J \in \J_t(M,\omega), \quad H \in C^\infty([0,1] \times M)$$
as above, we say that a map $u: \R \times [0,1] \to M$ is {\em
  $(J,H)$-holomorphic} iff
$$ \overline{\partial}_{J,H} u(s,t) := \partial_s u({s},t) +
J_{t,u({s},t)}(\partial_tu({s},t) - H^\#_t(u({s},t))) = 0, \quad \forall
(s,t) \in \R \times [0,1] .$$
For any $x_\pm \in \cI(L^0,L^1)$ we denote by $ \M(x_-,x_+)$ the space
of finite energy $(J,H)$-holomorphic maps modulo translation in
$s\in\R$, and $\M(x_-,x_+)_0$ the subset of formal dimension $0$, that
is, index $1$.  The relative spin structures on $L^0,L^1$ induce a map
$$ o : \M(x_-,x_+)_0 \to \{ \pm 1 \} $$
measuring the difference between the orientation on each element $u$
and the canonical orientation of a point \cite{se:bo,orient}.  The Floer cochain complex is
the direct sum
$$ CF(L^0,L^1) = \bigoplus_{x \in \cI(L^0,L^1)} \Z \bra{x} .$$
The Floer coboundary operator $ \partial : \ CF(L^0,L^1) \to
CF(L^0,L^1) $ is
$$ \partial \bra{x_-} := \sum_{x_+\in\cI(L^0,L^1)}
\left( \sum_{[u] \in\M(x_-,x_+)_0} o(u) \right) \bra{x_+} .$$
For the following, see \cite{oh:fl1}, \cite{fooo}.

\begin{proposition} {\rm (Construction of Floer cohomology)}   
Let $M$ be a symplectic background, $L^0,L^1$ admissible Lagrangian
branes and $H$ a time-dependent Hamiltonian making the intersection
transverse.  There exists a comeager subset $\J^\reg_t(M)
\subset \J_t(M)$ such that every Floer trajectory is regular.
In this case, the Floer coboundary $\partial$ is well-defined and
satisfies $\partial^2 = 0$.  The {\em Floer cohomology}
$$ HF(L^0,L^1) := {\ker (\partial)}/{\on{im}}(\partial) $$
is independent of the choice of $H$ and $J$, up to isomorphism.
\end{proposition}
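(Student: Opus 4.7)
The plan is to follow the standard construction of monotone Lagrangian Floer cohomology as in Oh \cite{oh:fl1}, with the refinements of Fukaya--Oh--Ono--Ohta \cite{fooo}. First I would establish transversality: the universal moduli space of $(J,H)$-holomorphic strips, parametrized by $J\in\J_t(M,\omega)$, is cut out transversally because every non-constant strip admits an injective point where $J$ can be freely perturbed. A Sard--Smale argument then produces the comeager subset $\J_t^\reg(M,\omega)\subset\J_t(M,\omega)$ of regular structures, for which each moduli space $\M(x_-,x_+)$ is a smooth manifold of the expected dimension. The grading on $\Lag^N(M)$ gives the index mod $N$, so the components are indexed by $|x_+|-|x_-|\in\Z_N$.

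Next I would prove compactness. The monotonicity relation $A(u)=(\lambda/2) I(u)+c$ bounds the energy of a finite-energy strip by its index, so Gromov compactness applies: every sequence in $\M(x_-,x_+)_0$ has a subsequence converging to a stable configuration of broken strips together with possible pseudoholomorphic disk and sphere bubbles. Admissibility enters critically here. The minimal Maslov numbers $\mu_{\min}(L^i)\geq 3$ together with the induced lower bound $\geq 2$ on the minimal Chern number of $M$ (via the doubling argument of \cite{oh:fl1}) force disk and sphere bubbles to appear in codimension $\geq 2$. Consequently $\M(x_-,x_+)_0$ consists of finitely many points, equipped with signs $o(u)\in\{\pm 1\}$ coming from the relative spin structures via \cite{orient}, and the coboundary $\partial$ is well-defined on the direct sum $CF(L^0,L^1)$.

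The main obstacle is showing $\partial^2=0$. I would compactify each one-dimensional moduli $\overline{\M}(x_-,x_+)_1$ to a one-manifold with boundary, and identify its boundary with once-broken trajectories
$$ \partial\overline{\M}(x_-,x_+)_1 \;\cong\; \bigsqcup_{y\in\cI(L^0,L^1)} \M(x_-,y)_0 \times \M(y,x_+)_0 .$$
The hard part is the exclusion of disk bubbles, which in the non-monotone setting obstructs $\partial^2=0$: a disk bubble at an interior boundary point of a strip would contribute a stratum of codimension $\mu_{\min}-1\geq 2$, so it does not appear in the boundary of a one-manifold, and sphere bubbles are excluded analogously. A coherent-orientations argument along the lines of \cite{orient} then yields $\sum o(u_1)o(u_2)=0$ for each signed boundary count, giving $\partial^2=0$.

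Finally, independence of $(H,J)$ would be proved by the standard continuation argument: given two regular choices $(H^0,J^0)$ and $(H^1,J^1)$, interpolate by an $s$-dependent pair $(H^s,J^s)$ and count solutions of the resulting non-autonomous equation to get a chain map; two such homotopies are linked by a chain homotopy obtained from a two-parameter family of interpolations; a constant homotopy recovers the identity up to homotopy. The same admissibility analysis, including the codimension estimates on disk and sphere bubbling, rules out bubbling throughout, so the continuation maps are well-defined and compose correctly up to homotopy, completing the proof.
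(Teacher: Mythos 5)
The paper does not prove this proposition; it simply points the reader to Oh \cite{oh:fl1} and Fukaya--Oh--Ohta--Ono \cite{fooo}. Your sketch is essentially a correct condensation of the standard arguments found there (Sard--Smale transversality via injective points, Gromov compactness with the monotone energy--index relation, codimension estimates to exclude disk and sphere bubbles, and continuation maps), so in that sense it matches the route the paper implicitly takes.

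One small point worth tightening: you invoke the pair monotonicity relation $A(u)=(\lambda/2)I(u)+c$ to bound energy by index, but you never say where pair monotonicity comes from. For a single admissible Lagrangian it is part of the definition, but for the pair $(L^0,L^1)$ it is a consequence of the hypothesis that $\pi_1(L^i)\to\pi_1(M)$ has torsion image (the capping-off argument of Oh, alluded to in the paper's definition of admissibility). Since this is the only place the $\pi_1$ condition enters, it deserves an explicit mention alongside your use of the minimal Maslov bound. Your appeal to the ``doubling argument'' to get minimal Chern number $\geq 2$ is correct in substance; the cleaner way to say it is that the map $\pi_2(M)\to\pi_2(M,L)$ forces $N_{L}\mid 2N_M$, so $N_L\geq 3$ gives $N_M\geq 2$, whence sphere bubbles occur in codimension at least two.
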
  

We wish for our Floer cochain complexes to admit action filtrations.
To obtain these filtrations we pass to versions over Floer cochains
over polynomials in a formal variable.  Namely let 
$$ \Lambda = \left\{ \sum_{j= 1}^k n_j q^{\nu_j}, \ n_j \in \Z, \ \nu_j \in \R \right\} $$ 
denote the space of sums of real powers of a formal variable $q$.  In
this paper we do not need to complete $\Lambda$, that is, the Novikov
ring is not needed.  The Floer cochain complex over the coefficient
ring $\Lambda$ is the direct sum
$$ CF(L^0,L^1;\Lambda) = \bigoplus_{x \in \cI(L^0,L^1)}
\Lambda \bra{x} $$ 
with coboundary incorporating the energies of trajectories: Let
$H_t^\# \in \Vect(M), t \in [0,1]$ be the Hamiltonian vector field of
$H$ and
$$ {\mathcal{E}}_H(u) = \int_{\R \times [0,1]} \omega(\partial_s u(s,t),
\partial_t u(s,t) - H_t^\#(u(s,t))) \d s \d t $$
the perturbed energy.  Then
$$ \partial \bra{x_-} := \sum_{x_+\in\cI(L^0,L^1)} \Bigl( \sum_{[u]
  \in\M(x_-,x_+)_0} o(u) q^{{\mathcal{E}}_H(u)} \Bigr) \bra{x_+} $$ 
As before, the {\em Floer cohomology} with $\Lambda$ coefficients 
$$ HF(L^0,L^1;\Lambda) := {\ker (\partial)}/{\on{im}}(\partial) $$ 
is independent of the choice of $H$ and $J$, up to isomorphism of
graded $\Lambda$-modules.  More generally, for any admissible
generalized Lagrangian correspondence $\ul{L}$ from a point to a point
there is \cite{ww:quiltfloer} a {\em quilted Floer cohomology group}
$HF(\ul{L};\Lambda)$ defined by counting quilted strips.

\begin{remark} \label{spec} {\rm (Specialization)} 
In the monotone setting the natural map
$$ HF(L^0,L^1;\Lambda)/(q - \mu) \to HF(L^0,L^1)$$ 
is an isomorphism for any non-zero $\mu$.  Indeed, for any generators
$x, y \in \cI(L^0,L^1)$ let $c(x,y)$ be the constant in the
monotonicity relation for strips from $x$ to $y$ as in \eqref{Au}.
Choose a base point $x \in \cI(L^0,L^1)$ and consider the map of
$\Lambda$-modules 
$$\iota: CF(L^0,L^1) \to CF(L^0,L^1), \quad \bra{y} \mapsto q^{
  c(x,y)/2} \bra{y} .$$
Additivity of index and energy implies that 
$$c(x,y) + c(y,z ) =
c(x,z), \quad \forall x,y,z \in \cI(L^0,L^1) .$$  
Hence for any intersection point $y \in \cI(L^0,L^1)$,
\begin{eqnarray*}
\partial \iota \bra{y} &=& \sum_{z,[u]} q^{{\mathcal{E}}_H(u) + c(x,y)/2} \bra{z}
\\ &=& \sum_{z,[u]} q^{\lambda I(u)/2 + c(y,z)/2 + c(x,y)/2} \bra{z}
\\ &=& \sum_{z,[u]} q^{\lambda/2 + c(x,z)/2} \bra{z} = q^{\lambda/2}
\iota \partial_1 \bra{y} \end{eqnarray*}
where $\partial_1$ is obtained from $\partial$ by setting $q =1$.  It
follows that the coboundary for $HF(L^0,L^1)$ is, up to an
automorphism, equal to the coboundary for $HF(L^0,L^1;\Lambda)$ up to
a rescaling by a power of $q$.  The claim follows.
\end{remark} 

For later use we recall a basic fact about approximate intersection
points.

\begin{lemma}  \label{approxint} 
Suppose that $M$ is a compact Riemannian manifold and $L^0,L^1 \subset
M$ are compact submanifolds.  For any $\eps > 0$ there exists an
$\delta > 0$ such that if $m \in M$ is a point with $d(m,L^0) <
\delta$ and $d(m,L^1) < \delta$, then $d(m, L^0 \cap L^1) < \eps$.
\end{lemma}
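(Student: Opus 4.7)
The plan is to argue by contradiction using the compactness of $M$ and the closedness of $L^0, L^1$.

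Suppose the conclusion fails. Then there exist $\eps>0$ and a sequence of points $m_n\in M$ with $d(m_n,L^0)<1/n$ and $d(m_n,L^1)<1/n$, but $d(m_n,L^0\cap L^1)\ge\eps$. By compactness of $M$, after passing to a subsequence we may assume $m_n\to m_\infty$ for some $m_\infty\in M$. Since $L^0$ and $L^1$ are compact, they are closed, and the distance functions $d(\cdot,L^0)$ and $d(\cdot,L^1)$ are continuous; passing to the limit in $d(m_n,L^i)<1/n$ yields $d(m_\infty,L^i)=0$ for $i=0,1$, so $m_\infty\in L^0\cap L^1$. But then $d(m_n,L^0\cap L^1)\le d(m_n,m_\infty)\to 0$, contradicting $d(m_n,L^0\cap L^1)\ge\eps$.

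There is essentially no obstacle in this argument; the lemma is a soft consequence of compactness. The only point to note is that no hypothesis of transversality or clean intersection is needed: the conclusion holds even if $L^0\cap L^1$ is empty, since in that case compactness of $L^0$ and $L^1$ forces a positive minimum distance $d(L^0,L^1)>0$, so for $\delta<d(L^0,L^1)/2$ no point $m$ can simultaneously satisfy $d(m,L^0)<\delta$ and $d(m,L^1)<\delta$, and the implication holds vacuously.
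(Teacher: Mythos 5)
Your proof is correct and follows essentially the same contradiction-by-compactness argument as the paper: extract a subsequence $m_n \to m_\infty$, observe $m_\infty \in L^0 \cap L^1$ by closedness, and derive a contradiction from $d(m_n, L^0 \cap L^1) \to 0$. The only addition is your (correct) remark about the vacuous case $L^0 \cap L^1 = \emptyset$, which the paper leaves implicit.
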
 

\begin{proof}   Suppose otherwise.  Then there exists an
$\eps >0$, a sequence $\delta_\nu \to 0$ and a sequence $m_\nu \in M$
  with 
$$d(m,L^0) < \delta_\nu, \quad d(m,L^1) < \delta_\nu, \quad d(m, L^0
  \cap L^1) > \eps .$$
By compactness of $M$, after passing to a subsequence we may assume
that $m_\nu$ converges to a point $m$, necessarily in $L^0$ and $L^1$.
Then 
$$d(m_\nu, L^0 \cap L^1) \to d(m,L^0 \cap L^1) = 0 .$$ 
This is a contradiction.
\end{proof}

It follows that any map that is sufficiently close to both $L^0$ and
$L^1$ at every point in the domain is in fact contained in a small
neighborhood of $L^0 \cap L^1$.

\subsection{Relative invariants for Lefschetz-Bott fibrations} 

We may now associate to Lefschetz-Bott fibrations over surfaces with
strip-like ends relative invariants that are morphisms of Floer
cohomology groups associated to the ends, given by counting
pseudoholomorphic sections.  The following material can also be found,
in a slightly different form, in Perutz \cite{per:lag2}.

\begin{definition}  {\rm (Monotonicity for Lefschetz-Bott fibrations)} 
Let $S$ be a surface without boundary or strip-like ends, and $\pi: \,
E \to S$ a symplectic Lefschetz-Bott fibration.  Let 
$$\Gamma(E): \{ u: S \to E \ | \ \pi \circ u = \on{Id}_S \} $$
denote the set of smooth sections of $E$.
\begin{enumerate} 
\item Given $u \in \Gamma(E)$ with image disjoint from the critical
  set define its {\em index} and {\em symplectic area}
$$ I(u) = 2 (c_1( u^* T^{\on{vert}} E),[S]), \ \ \ A(u) = \int_S u^*
  \omega_E .$$
Note that the form $\omega_E$ is only fiber-wise symplectic.  Thus the
area $A(u)$ may be negative, in the general case.
\item 
A symplectic Lefschetz-Bott fibration $E$ is {\em monotone} with
monotonicity constant $\lambda \ge 0$ if there exists a constant
$c(E)$ such that
$$ \lambda I(u) =  2 A(u) + c(E), \ \ \ \forall u \in \Gamma(E) .$$
\end{enumerate} 
\end{definition}

\begin{remark} {\rm (Behavior of the monotonicity constants under shifts)}  
Given a symplectic Lefschetz-Bott fibration $(\pi: E \to S, \omega_E)$
and a non-negative, compactly supported two-form $\omega_S \in
\Omega^2_c(S)$, another Lefschetz-Bott fibration may be constructed by
replacing $\omega_E$ with $\omega_E + \pi^* \omega_S$.  The symplectic
area of any section changes under this operation by
$$ \lan u^* [\omega_E + \pi^* \omega_S], [S] \ran  - 
\lan u^* [\omega_E ], [S] \ran   = \int_S \omega_S .$$  
The monotonicity constant therefore changes by $-2 \int_S \omega_S$.
\end{remark} 

\begin{proposition}  \label{monotone} 
Let $\pi: E \to S$ be a symplectic Lefschetz-Bott fibration with $S,E$
compact.  If the generic fiber $M$ of $E$ is monotone, $H_1(M) = 0 $
and all vanishing cycles in $E$ have codimension at least $2$ then $E$
is monotone.
\end{proposition}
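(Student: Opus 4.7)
The plan is to prove the cohomological identity
$$ \eta := [\omega_E] - \lambda \, c_1(T^{\on{vert}} E) = \pi^*\alpha \quad\text{in}\quad H^2(E;\R) $$
for some class $\alpha \in H^2(S;\R)$. Granting this, every section $u \in \Gamma(E)$ can be perturbed to miss $E^{\crit}$ (which is possible because by the normal form \eqref{normalform} and the hypothesis that vanishing cycles have codimension $c \ge 2$, the critical locus $E^{\crit}$ has complex codimension $c+1 \geq 3$, hence real codimension $\ge 6 > \dim S$ in $E$), and pairing against $u_*[S]$ then yields
$$ 2A(u) - \lambda I(u) \;=\; 2\langle\eta,u_*[S]\rangle \;=\; 2\langle\pi^*\alpha,u_*[S]\rangle \;=\; 2\langle\alpha,\pi_*u_*[S]\rangle \;=\; 2\langle\alpha,[S]\rangle, $$
a constant independent of $u$, giving the monotonicity relation with $c(E) := -2\langle\alpha,[S]\rangle$.

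The Chern class $c_1(T^{\on{vert}}E)$ is a priori defined only on $E^* := E \ssm E^{\crit}$, where $T^{\on{vert}} E = \ker(D\pi)$ is a complex vector bundle; however the same codimension bound together with Thom/excision gives $H^k(E,E^*;\R)=0$ for $k \leq 5$, so the class extends uniquely to $H^2(E;\R)$. The fiberwise monotonicity $[\omega_M] = \lambda c_1(TM)$ then ensures that $\eta$ restricts to zero on every regular fiber $E_s$ with $s \in S\ssm S^{\crit}$.

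To upgrade this fiberwise vanishing to the global statement $\eta = \pi^*\alpha$, I would invoke the Leray spectral sequence
$$ E_2^{p,q} = H^p(S; R^q\pi_*\R) \;\Longrightarrow\; H^{p+q}(E;\R). $$
The assumption $H_1(M)=0$ gives $H^1(M;\R)=0$, so $R^1\pi_*\R$ has vanishing stalks on the open dense set $S\ssm S^{\crit}$ and is therefore a skyscraper sheaf supported on the finite set $S^{\crit}$; in particular $E_2^{1,1}=H^1(S;R^1\pi_*\R)=0$. Meanwhile the fiberwise vanishing of $\eta$ shows that the image of $\eta$ under the edge map $H^2(E;\R) \to E_\infty^{0,2}\subset H^0(S;R^2\pi_*\R)$ is zero. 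Combining, $\eta$ lies in the bottom filtration piece $F^2 H^2(E;\R) = E_\infty^{2,0}$, which the edge map identifies with the image of $\pi^*:H^2(S;\R)\to H^2(E;\R)$, producing the required $\alpha$.

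The main obstacle is that $\pi:E\to S$ is not a fibration near the critical values, so one cannot directly apply standard arguments for fiber bundles. The codimension hypothesis on vanishing cycles enters twice: it ensures that the first Chern class of the vertical tangent bundle extends to a global class on $E$, and it guarantees that sections may be perturbed to miss the singular locus. The vanishing $H_1(M)=0$ localizes the non-fibration pathology of $R^1\pi_*\R$ to the finite set $S^{\crit}$, where it contributes trivially to $H^1$, allowing the cohomological argument to proceed exactly as in the smooth fibration case.
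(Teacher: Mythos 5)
Your approach is a cohomological reformulation of the paper's homological one: both set up a spectral sequence for $\pi : E \to S$, both use $H_1(M) = 0$ to kill the $(1,1)$ entry, and both must come to grips with the contribution of the singular fibers. It is at the singular fibers that your argument has a genuine gap.

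You want to conclude that $\eta := [\omega_E] - \lambda\,c_1(T^{\on{vert}}E)$ maps to zero under the edge homomorphism $H^2(E;\R) \to E_\infty^{0,2} = H^0(S; R^2\pi_*\R)$, so that $\eta$ lies in $F^1 = F^2$ and hence in the image of $\pi^*$. But the germ of the edge-map image of $\eta$ at a point $s$ is, by proper base change, the restriction $\eta|_{E_s} \in H^2(E_s;\R)$, and a global section is zero only when \emph{all} its germs vanish. You have verified $\eta|_{E_s} = 0$ only for regular $s$; a section of $R^2\pi_*\R$ can vanish on the dense open set $S \ssm S^{\crit}$ yet be nonzero (exactly as for a skyscraper sheaf), so the step ``fiberwise vanishing $\Rightarrow$ edge image is zero'' is unjustified unless you also show $\eta|_{E_{s_i}} = 0$ for each critical value $s_i$. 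Equivalently, since $\eta|_{E_s} = \rho_i^*\bigl(\eta|_{E_{s_i}}\bigr)$ via parallel transport $\rho_i: E_s \to E_{s_i}$, you would need $\rho_i^* : H^2(E_{s_i};\R) \to H^2(M;\R)$ to be injective.

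This missing injectivity is precisely the dual of the statement that occupies most of the paper's proof: the surjectivity of $\rho_{i,*} : H_2(M) \to H_2(E_{s_i})$, established via the long exact sequence of the mapping cone and the computation $H_2(\Cone(p_i)) = 0$, which is where the hypothesis that all vanishing cycles have codimension $c \ge 2$ is used most essentially. You invoke the codimension assumption for two other purposes (extending $c_1(T^{\on{vert}}E)$ across $E^{\crit}$ and perturbing sections away from $E^{\crit}$), but you never bring it to bear on the cohomology of the singular fibers, which is the crux. Once that lemma is supplied the rest of your cohomological route goes through; it then amounts to a dualized version of the paper's argument rather than an independent shortcut.
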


\begin{proof} 
We first show that the homology classes of any two sections differ by
homology classes in the fibers.  For each critical value $s_i \in
S^\crit$ let $\rho_i: M \to E_{s_i}$ denote the map given by
symplectic parallel transport.  In particular on the vanishing cycles
this map collapses the null foliation.  We claim that for any two
sections $u_0,u_1 \in \Gamma(E)$, the push-forwards $u_{j,*}[S]$
differ by an element in the span of the homology of the fibers
$H_2(M),H_2(E_{s_i})$.  The Leray-F{\'a}ry spectral sequence for the
map $\pi:E \to S$, see F{\'a}ry \cite[Section 10]{far:val}, Bredon
\cite[Section IV.12]{br:sh} allows the computation of the homology as
follows.  Equip the base with the filtration
$$ \emptyset \subset \{ s_0,\ldots, s_n \} \subset S .$$ %
The fibers of $E$ are diffeomorphic over each associated graded piece
$\{ s_0,\ldots, s_n \}, S - \{ s_0,\ldots, s_n \} $.  Denote by
$\cH_q(M)$ the local system of homology groups with fibers
$H_q(\pi^{-1}(z)) \cong H_q(M)$ over $S - \{ s_0,\ldots, s_n \}$.  The
second page in the spectral sequence is the direct sum of relative
homologies of successive spaces in the filtration with values in the
homology of the fiber,
$$ \cC_{p,q} = H_p( S - \{ s_1,\ldots, s_n \}; \cH_q(M)) \oplus
\bigoplus_{i=1}^n H_q(E_{s_i}).$$
Since $H_1(M) = 0$, the contribution from $p = q = 1$ vanishes.  It
follows that degree two homology classes in the total space arise
either as homology classes in the generic fibers invariant under
monodromy; homology classes in the special fibers; or homology classes
of the base.  The projection $\pi_*$ is an isomorphism on the classes
of the last type.  It follows that the degree $2$ part of the kernel
of $\pi_*$ under the projection map is generated by the image of
$H_2(M)$ and $H_2(E_{s_i})$.

We now show the monotonicity relation, using the spectral sequence for
the parallel transport maps to the special fibers.  By the previous
paragraph, it suffices to check monotonicity evaluated on homology
classes of fibers.  For this we show that maps $\rho_{i,*} : H_2(M)
\to H_2(E_{s_i})$ are surjective.  Consider the long exact sequence
\begin{equation} \label{les} H_2(M) \to H_2(E_{s_i}) \to H_2(\Cone(\rho_i)) \to H_1(M) \end{equation} 
where $\Cone(\rho_i)$ is the mapping cone on $\rho_i$.  Since $\rho_i$
is a diffeomorphism away from the vanishing cycle, $\Cone(\rho_i)$
admits a deformation retraction to $\Cone(p_i)$ where $p_i: C_i \to
B_i$ is the projection.  Since $C_i = P_i \times_{SO(c+1)} S^c$, we
have 
$$\Cone(p_i) = (P_i \times_{SO(c+1)} S^{c+1})/ \sim$$ 
where $\sim$ is the equivalence relation that collapses the section
corresponding to the fixed point $(0,\ldots,0, 1)$.  For $c \ge 2$,
the spectral sequence for the fibration $P_i \times_{SO(c+1)} S^{c+1}
\to B_i$ implies that any degree two homology class arises from a
homology class in the base.  On the other hand, the base homology
classes $H_2(B_i)$ have trivial image in the cohomology
$H(\Cone(p_i))$ of the mapping cone.  Hence $H_2(\Cone(p_i)) = 0$.
The long exact sequence \eqref{les} implies that $H_2(M)$ surjects
onto $H_2(E_{s_i})$.
\end{proof}  

We now wish to allow our Lefschetz-Bott fibrations to have strip-like
ends.  The monotonicity conditions in this case will be similar, but
with additional constants depending on the limits.

\begin{definition} {\rm (Symplectic Lefschetz-Bott fibrations with strip-like ends)}  
\begin{enumerate} 
\item Let $S$ be a complex curve with boundary obtained from a compact
  curve with boundary $\ol{S}$ be removing points on the boundary
  $z_1,\ldots,z_n$.  A {\em strip-like end} for the $j$-th puncture of
  $S$ is a holomorphic map
$$\eps_j: [0,\pm \infty) \times [0,1] \to S$$ 
such that $\exp(2 \pi( ( \eps_j^{-1})_{1} + i (\eps_{j}^{-1})_{2} ))$
is a local holomorphic coordinate on the closure of the image of
$\eps_j$; the end is called {\em incoming} resp.\ {\em outgoing} if
the sign is negative resp.\ positive.  A {\em collection of strip-like
  ends} $\E $ is a set of strip-like ends, one for each $j = 1,\ldots,
n$.  We write $\E = \E_- \cup \E_+$ the union of the incoming and
outgoing ends.
\item A symplectic {\em Lefschetz-Bott fibration over a surface with
  strip-like ends} $S$ with fiber given by a symplectic manifold $M$
  is a Lefschetz-Bott fibration $E \to S$ with $S^\crit$ contained in
  the interior of $S$, together with a trivialization
$$\varphi_{S,e}: \eps_{S,e}^* E \to 
(0,\pm \infty) \times [0,1] \times
M$$
for each end $e \in \mE$, such that $ \varphi_{s,e}^* \omega_E =
\pi_M^* \omega_M $ where 
$$\pi_M: (0,\pm \infty) \times [0,1] \times M
\to M$$ 
is projection on the last factor.
\item Let $S$ be a surface with strip-like ends and $\pi: E \to S$ a
  symplectic Lefschetz-Bott fibration with fiber $M$.  A {\em
    Lagrangian boundary condition} for $E$ is a submanifold $F \subset
  \partial E$ such that
\ben
\item $\pi|F $ is a fiber bundle over $\partial S$;
\item each fiber $F_z \subset E_z, z \in \partial S$
is a Lagrangian submanifold; 
\item for each $e \in \mE$ there exist Lagrangian submanifolds
  $L^{0,e},L^{1,e} \subset M$ such that $F$ is constant sufficiently
  close to $z_e$ that is, 
$$ \varphi_{S,e}(F_{\eps_{S,e}(s,j)}) =
  L^{j,e}, \ \ \ \pm s \gg 0 ;$$ 
and
\item for each $e \in \mE$, the intersection $L^{0,e} \cap L^{1,e}$ is
  transversal.  \een
\end{enumerate} 
\end{definition}

\begin{definition} 
\begin{enumerate} 
\item 
{\rm (Monotonicity condition for Lefschetz-Bott fibrations with
  boundary)} Let $S$ be a compact surface with boundary, $(E,F)$ a
bundle with boundary condition, and $\Gamma(E,F)$ the set of smooth
sections $u: (S,\partial S) \to (E,F)$.  Each $u \in \Gamma(E,F)$
takes values in the smooth locus of $E$.  By pull-back one obtains
bundles
$$u^* T^{\on{vert}}E \to S, \quad (u | \partial S)^* T^{\on{vert}} F \to
\partial S .$$  
Taking the Maslov index of this pair gives rise to a {\em Maslov
  index} map
$$ I: \Gamma(E,F) \to \Z .$$
We also define the {\em symplectic area}
$$ A: \Gamma(E,F) \to \R, \ \ u \mapsto \int_S u^* \omega_E $$
keeping in mind that the form $\omega_E$ is only symplectic on the fibers.
A pair $(E,F)$ {\em monotone with monotonicity constant $\lambda$} if
the index depends linearly on the area; that is,
\begin{equation} \label{IA}
 \lambda I(u) = 2 A(u) + c(E,F), \ \ \forall u \in \Gamma(E,F) \end{equation}
for some constant $c(E,F)$.  
\item {\rm (Linearized operator)}
Suppose that $S$ is a surface with boundary and
strip-like ends $\mE = \mE_- \cup \mE_+$, and $(E,F)$ a bundle with
boundary condition.  For any collection %
$$ (x_e \in \cI(L^{0,e},L^{1,e}))_{e \in \mE} ,$$ 
let
$$ \Gamma(E,F;(x_e)_{e \in \mE}) = \left\{ u \in \Gamma(E) \, \left| \, \quad 
\begin{array}{c} \lim_{s \to
  \pm \infty} u(\eps_e(s,t)) = x_e , \forall e \in \E \\  u |_{\partial S}
\in \Gamma(F) \end{array} \right. \right\} $$
denote the space of sections with boundary values in $F$ and
asymptotic limits $x_e$.  For sections $u \in \Gamma(E,F)$ let
\begin{equation} \label{cr2}
 \olp u = \hh ( \d u + J(u) \circ \d u \circ j) = 0 \end{equation}
denote the Cauchy-Riemann equation associated to the pair $(j,J)$. The
linearized operator
\begin{multline} \label{Du}
D_u: \Omega^0( u^* T^{\on{vert}} E, u^* T^{\on{vert}} F) \to
\Omega^{0,1}(u^* T^{\on{vert}} E), \quad \xi \mapsto \ddt |_{t =0}
\Pi_{t\xi}^{-1} \olp \exp_u(t\xi) \end{multline}
is given by differentiating the Cauchy-Riemann operator along a path
$\exp_u(t\xi)$ of geodesic exponentials, and using parallel transport
$\Pi_{t\xi}^{-1}$ back to $u$.  The operator $D_u$ is Fredholm since
the boundary conditions at infinity are assumed transversal.
\item
{\rm (Monotonicity condition for Lefschetz-Bott fibrations with
  strip-like ends)} The pair $(E,F)$ is {\em monotone} with
monotonicity constant $\lambda \ge 0$ if for any $(x_e)_{e \in \mE}$
there exists a constant $c(E,F;(x_e)_{e \in \mE})$ such that
\begin{equation} \label{IA2}
 \lambda \Ind(D_u) = 2 A(u) + c(E,F;(x_e)_{e \in \mE}), \ \ \forall u \in
\Gamma(E,F;(x_e)_{e \in \mE}) .\end{equation}
\end{enumerate} 
\end{definition} 

\begin{proposition} \label{monotonebound}
 \label{monotoneends}  {\rm (Condition for monotonicity 
of the fiber to imply monotonicity of a fibration with strip-like
ends)} Let $E \to S$ be a symplectic Lefschetz-Bott fibration over a
 surface with boundary $S$ with connected and simply-connected fibers.
 Let $F \subset E | \partial S$ be a Lagrangian boundary condition
 with connected and simply-connected fibers.  Suppose that the generic
 fiber of $E$ is monotone and the vanishing cycles of $E$ have
 codimension at least $2$.  Then $(E,F)$ is monotone.
\end{proposition}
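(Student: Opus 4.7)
The plan is to adapt the spectral sequence argument from Proposition \ref{monotone} to the relative setting, using capping to handle strip-like ends and homotopy in $F$ to handle the Lagrangian boundary condition.

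In the presence of strip-like ends, I would first reduce to the compact boundary case. For any collection of asymptotic intersection points $(x_e)_{e \in \mE}$, I would truncate each strip-like end at large $|s|$ and cap off the truncated piece with the constant section determined by $x_e$ over a half-disk. Since $\omega_E$ is pulled back from $M$ on strip-like ends, this modification changes the area and index of every section by constants depending only on $(x_e)$, the truncation parameter, and the gradings of $x_e$; these constants can be absorbed into $c(E,F,(x_e))$. Thus it suffices to establish \eqref{IA} for the resulting compact pair $(\bar E, \bar F)$.

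For two sections $u_0, u_1 \in \Gamma(\bar E, \bar F)$, the simple-connectedness of the fibers of $\bar F$ allows me to homotope $u_1|_{\partial \bar S}$ to $u_0|_{\partial \bar S}$ within $\bar F$. Gluing this homotopy to $u_1$ along $\partial \bar S$ produces a section $u_1'$ with boundary values matching $u_0$, so that $u_0 - u_1'$ represents an absolute class in $H_2(\bar E)$. The Leray--F{\'a}ry argument of Proposition \ref{monotone} then applies verbatim: filtering $\bar S$ by $\emptyset \subset S^{\crit} \subset \bar S$ and using $H_1(M) = 0$, the class lies in the span of $H_2(M)$ together with the $H_2(E_{s_i})$, and the codimension $\ge 2$ hypothesis on vanishing cycles reduces it further to a class $\alpha \in H_2(M)$ inserted into a generic fiber. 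The differences $I(u_0) - I(u_1) = 2\langle c_1(TM), \alpha \rangle$ and $A(u_0) - A(u_1) = \langle [\omega_M], \alpha \rangle$, combined with monotonicity of $M$, yield $\lambda I(u_0) - 2 A(u_0) = \lambda I(u_1) - 2 A(u_1)$, which is the required area-index relation with constant determined by any reference section.

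The main obstacle I anticipate is the careful bookkeeping of Maslov indices under the capping and homotopy-gluing operations, since the index $\Ind(D_u)$ in \eqref{IA2} differs from the purely topological $I(u)$ by boundary contributions from the gradings of the $x_e$. The simple-connectedness of the fibers of $F$ is essential: without it, $\pi_1(F_z)$-classes arising from the homotopy of boundary values would obstruct the reduction of the relative difference class to an absolute fiber class in $H_2(M)$, and the spectral sequence would pick up unwanted degree-one contributions $H_1(\bar S, \partial \bar S; \mathcal{H}_1(M, L_\bullet))$.
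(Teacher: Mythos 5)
Your proposal takes a genuinely different route from the paper. The paper handles the strip-like ends by a doubling trick: fix a reference section $v$ with the same limits $(x_e)$, view $v$ as a map from the conjugate surface $S^-$, glue $u$ to $v$ along the ends to obtain a section $w$ of $E\#E^-$ over $S\#S^-$, and then use linearity of index and area under gluing, $\Ind(D_u)=\Ind(D_w)-\Ind(D_v)$ and $A(u)=A(w)-A(v)$, to pass to a surface without ends in a single step. Your compact-case argument --- homotope the boundary values through $\bar F$ using simple-connectedness of the fibers of $F$, then run Leray--F\'ary to reduce the difference class to a fiber class where monotonicity of $M$ applies --- is sound and usefully fills in a detail the paper leaves terse.

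The gap is in your reduction from the case with ends to the compact case. As literally written, truncating at $|s|=T$ and gluing in a constant half-disk at $x_e$ is not well-defined: $u$ is not constant at $|s|=T$, so there is a discontinuity, and the discarded area $\int_{|s|>T}u^*\omega_E$ depends on $u$, not only on $(x_e)$ and $T$. The salvageable version is to pass to the conformal compactification $\bar S$ and use that $u$ extends continuously to $\bar u:\bar S\to\bar E$ by asymptotic convergence to $x_e$; this preserves the area exactly, but the Fredholm index $\Ind(D_u)$ must then be related to a topological Maslov index of $\bar u$ over $\bar S$, and the two differ by corner contributions at each filled-in puncture where $\bar F$ jumps from $L^{0,e}$ to $L^{1,e}$. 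You flag this bookkeeping as ``the main obstacle'' and do not carry it out, yet it is precisely the content the reduction has to supply. The paper's doubling circumvents it entirely, since $w$ lives on a surface with no strip-like ends and no corners, so $\Ind(D_w)$ is given by the ordinary index formula. If you supply the corner index formula (which does depend only on the local transverse data at each $x_e$, so the constant can be absorbed as you propose), your route would close up as an alternative proof.
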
  

\begin{proof}  First consider the case without ends.  As in 
Proposition \ref{monotone}, any two sections of $E$ differ by a
homology sphere $H_2(E_s)$ in some fiber $E_s $ of $E$, and these are
generated by classes in the generic fiber by the codimension
assumption for which monotonicity holds.  In the case with ends, as in
\cite{ww:quilts}, one may use gluing to reduce to the case that $S$
has no strip-like ends as in the statement of the Proposition.  More
precisely, fix a section $v: S \to E$ with limits $\ul{x}_\pm$.  View
$v$ as a map from the surface $S^- = (S,-j)$ with complex structure
reversed.  Then for any other section $u: S \to E$ with the same
limits, gluing along the strip-like ends produces a section of the
doubled surface $w: S \# S^- \to E \# E^-$.  Then
\begin{eqnarray*}
\Ind(D_u) &=& \Ind(D_w) - \Ind(D_v) \\ &=& I(w) + \dim(E) \chi(S \#
S^-)/2 - \Ind(D_v) \\ &=& \lambda A(w) + \dim(E) \chi(S \# S^-)/2 -
\Ind(D_v) \\ &=& \lambda A(u) + \lambda A(v) + \dim(E) \chi(S \# S^-)/2 -
\Ind(D_v) .\end{eqnarray*}
Since the last three terms are independent of $u$, this proves the
statement of the Proposition.\end{proof}

We now turn to the construction of relative invariants associated to
symplectic Lefschetz-Bott fibrations.  Let $\pi: E \to S$ be such a
fibration, equipped as in Definition \ref{fibdef} with a complex
structure $j_0$ on a neighborhood of the critical values in $S$ and
$J_0$ on a neighborhood of the critical points in $E$.  In the case
that $S$ has ends, we assume furthermore that almost complex
structures $J_e \in \J(M)$ are fixed making the moduli spaces of Floer
trajectories for that end regular.  We wish to extend these to almost
complex structures on the entire fibration and base, so that we may
define moduli spaces of pseudoholomorphic sections.

\begin{definition} \label{compatac}
\begin{enumerate} 
\item {\rm (Compatible almost complex structures)} Let $\pi: E \to S$
  be a Lefschetz-Bott fibration over a surface with strip-like ends
  $S$.  A complex structure $j$ on $S$ is {\em compatible} with $E$ if
  $j = j_0$ in an neighborhood of $S^\crit$.  An almost complex
  structure $J$ on $E$ is {\em compatible} with $\pi,j$ iff
\ben 
\item $J = J_0$ in a neighborhood of $E^\crit$;
\item $\pi$ is $(J,j)$-holomorphic in a neighborhood of $E^\crit$,
  that is, $ J \circ \d \pi = \d \pi \circ j$; and
\item $\omega_E(\cdot,J \cdot)$ is symmetric and positive
definite on $TE^v_x$, for any $x \in E$.
\item $J$ is equal to fixed almost complex structure $j \times J_e, e
  \in \cE(S)$ on the ends of $S$ in the given trivializations on a
  neighborhood $U_e \subset S$ of each end.  \een
Let $\J(E)$ denote the set of $(\pi,j)$-compatible almost complex
structures.  
To see that $\J(E)$ is non-empty, note that the symplectic form
defines a connection $\ker(D \pi)^{\omega}$ on $E \to S$ away from the
singular locus, given by the symplectic perpendicular of the vertical
part of the tangent space. First choose $j$ equal to $j_0$ near the
critical values.  Then take $J$ to equal $j$ on the horizontal
subspace, and any almost complex structure compatible with the
symplectic form on the vertical subspace.
\item {\rm (Moduli space of pseudoholomorphic sections)} Let
  $\M(E,F;(x_e)_{e \in \mE})$ denote the set of finite area sections
  $u \in \Gamma(E,F,(x_e)_{e \in \E})$ such that $u$ is
  $(j,J)$-holomorphic, with limits $(x_e)_{e \in \E}$ along the ends.
\end{enumerate}
\end{definition}  

\begin{theorem}  \label{EF} {\rm (Existence of regular almost complex structures
for symplectic Lefschetz-Bott fibrations)} Suppose that $(E,F)$ is a
  monotone symplectic Lefschetz-Bott fibration with Lagrangian
  boundary conditions over a surface $S$ with strip-like ends.  There
  exists a comeager subset $\J^\reg(E) \subset \J(E)$ such that
\ben
\item $\M(E,F;(x_e)_{e \in \mE})$ is a smooth manifold with tangent space
  at $u$ given by $\ker(D_u)$;
\item \label{zero} the zero-dimensional component 
$\M(E,F;(x_e)_{e \in \mE})_0 \subset \M(E,F;(x_e)_{e \in \mE})$
is finite; 
\item \label{one} the one-dimensional component $\M(E,F;(x_e)_{e \in
  \mE})_1 \subset \M(E,F;(x_e)_{e \in \mE})$ has a compactification
  with boundary
$$ \bigcup_{(x_e)_{e \in \mE},f,x_f'} \M(E,F;(x_e)_{e \in \mE}, x_f \mapsto x_f')_0 \times
\M(x_f,x_f')_0 $$
consisting of pairs of a section with bubbled-off trajectory; and
\item \label{orient} any relative spin structure on $(E,F)$ induces a
  set of orientations on the manifolds $\M(E,F;(x_e)_{e \in \mE})_0$
  that are coherent in the sense that they are compatible with the
  gluing maps from (c) in the sense that the inclusion of the boundary
  in (c) has the signs $(-1)^{\sum_{e<f} |x_{e}^-|}$ (for incoming
  trajectories) and $-(-1)^{\sum_{e<f} |x_{e}^+|}$ (for outgoing
  trajectories.)
\een
\end{theorem}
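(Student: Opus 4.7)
The plan is to follow the standard template for moduli spaces of pseudoholomorphic sections, as in Seidel \cite{se:bo} and McDuff--Salamon \cite{ms:jh}, adapted to the Lefschetz--Bott setting. The essential inputs are Fredholmness of $D_u$ in \eqref{Du}, a Sard--Smale argument over $\J(E)$, Gromov compactness, and the energy-index relation \eqref{IA2}. For \textup{(a)}, I would consider the universal moduli space of pairs $(u,J)$ with $u$ a $(j,J)$-holomorphic section and $J \in \J(E)$. Although $J$ is constrained to equal $J_0$ near $E^\crit$ and $j \times J_e$ on the ends, these constraints hold on small open subsets only, so $J$ can be varied freely in between. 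Since $\pi \circ u = \on{Id}_S$, each section is automatically somewhere injective in the horizontal direction, and the standard argument of \cite[Chapter 3]{ms:jh} then produces a comeager subset $\J^\reg(E) \subset \J(E)$ for which $D_u$ is surjective at every $u \in \M(E,F;(x_e)_{e \in \mE})$. The potential obstruction---a section contained entirely in the $J$-fixed locus---is ruled out because $E^\crit$ projects to the discrete set $S^\crit \subset S$ (meeting each section at isolated points, where $\pi \circ u = \on{Id}$ already pins down $u$ locally), and because the strip-like ends are where transversality was assumed a priori via the choice of $J_e$.

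For \textup{(b)} and the compactification in \textup{(c)}, I would apply Gromov compactness to a sequence $u_\nu$ in $\M(E,F;(x_e)_{e \in \mE})_k$. The relation \eqref{IA2} bounds total energies in terms of the fixed index, and admissibility (minimal Maslov number $\geq 3$) together with monotonicity of the background rules out disk and sphere bubbles in codimension at most one. No sequence can concentrate at a critical value without violating the section condition $\pi \circ u_\nu = \on{Id}_S$. Consequently the only codimension-one degeneration is Floer strip-breaking at an end, which produces precisely the boundary stratum listed in \textup{(c)}. The converse inclusion follows from the standard gluing theorem for pseudoholomorphic sections with strip-like ends, as in \cite[Section 10]{se:bo}; assertion \textup{(b)} is then the statement that the zero-dimensional stratum is already compact and hence finite.

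For \textup{(d)}, the relative spin structure on $(E,F)$ trivializes $w_2$ on the relevant relative $2$-skeleton and, by the construction of \cite{orient}, induces a coherent orientation on the determinant line bundle of $D_u$ over the universal moduli space, which restricts to orientations on each $\M(E,F;(x_e))_0$. The signs $(-1)^{\sum_{e<f}|x_e^-|}$ for incoming and $-(-1)^{\sum_{e<f}|x_e^+|}$ for outgoing breaking ends arise from a Koszul bookkeeping for commuting the index-one breaking factor past the remaining end contributions, together with the usual reversal of sign for outgoing versus incoming ends in the linear gluing formula for determinant lines. The main obstacle throughout is establishing transversality in the presence of the fixed-$J_0$ constraint near $E^\crit$; this is resolved by the codimension-at-least-$2$ assumption on vanishing cycles, which ensures that regularity of $D_u$ can be achieved by perturbing $J$ only on the complement of a small neighborhood of the critical locus.
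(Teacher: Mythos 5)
Your proposal is correct and follows essentially the same route as the paper, which itself only gestures at the standard template (``the proof is similar to that of Seidel in the exact case; bubbling for sections can only occur in the fiber, so it is ruled out by monotonicity; orientations are constructed in \cite{orient}''). You have filled in the standard details: somewhere-injectivity of sections, Sard--Smale over the universal moduli space, energy bounds from \eqref{IA2}, Gromov compactness with fiber-bubbling excluded, gluing, and the orientation bookkeeping from \cite{orient}.

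One small misattribution worth flagging: in your closing paragraph you credit the codimension-at-least-$2$ hypothesis on vanishing cycles with resolving transversality near $E^\crit$. That is not its role here. Transversality already works because a section is somewhere injective and $S^\crit$ is finite, so $J$ can be perturbed on the complement of a neighborhood of $E^\crit$ regardless of codimension; moreover the theorem's hypothesis is that $(E,F)$ is \emph{monotone}, which is all that is used. The codimension condition enters elsewhere, in Propositions~\ref{monotone} and~\ref{monotoneends}, as a sufficient condition for monotonicity of the fibration (and in the codimension-one case one needs the strong monotonicity of Definition~\ref{stronglymonotone} instead). Since monotonicity is assumed in the statement, the codimension hypothesis plays no role inside this particular proof.
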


The proof is similar to that of \cite{se:lo} in the exact case.
Bubbling for sections can occur only in the fiber.  So sphere and disk
bubbling on the zero and one-dimensional moduli spaces is ruled out by
the monotonicity condition.  The construction of coherent orientations
is given in \cite{orient}.

\begin{remark} \label{sketch} {\rm (Sketch of construction of orientations from \cite{orient})} 
We briefly recall the construction of orientations: On any disk with
Lagrangian boundary conditions where the Lagrangian is equipped with a
relative spin structure, the linearized operator $D_u$ over the
surface $S$ may be deformed via nodal degeneration to a boundary value
problem $D_u'$ on the sphere $S_s$ and a constant boundary value
problem on the disk $S_d$. On the sphere the linearized Cauchy-Riemann
operator $D_u' | S_s$ is homotopic to a complex linear operator
$D_u''$.  The determinant line $\det(D_u')$ inherits an orientation
via the complex structure on the kernel $\ker(D_u'')$ and cokernel
$\coker(D_u'')$.  On the other hand, on the disk the boundary
condition $(u |_{\partial S_d})^* TL$ admits a canonical stable
trivialization determined by the relative spin structure.  The
determinant line on the disk admits an orientation induced from a
trivialization of $(u |_{\partial S_d})^* TL$ and an isomorphism of
the kernel $\ker(D_u' | S_d)$ with the tangent space to the Lagrangian
$T_{u(z)}L$ at any point on the boundary.  These combine to an
orientation on the determinant line $\det(D_u) \cong \det(D_u' | S_s)
\otimes \det(D_u' | S_d)$.
\end{remark}

\begin{remark} {\rm (Brane structures on spherically fibered coisotropics)}  
In order to specify the signs in the exact triangle it is necessary to
specify the brane structure on the fibered coisotropic.  If $C$ is a
spherically fibered coisotropic that reduces to a {\em spin} principal
bundle, then $TC$ admits a relative spin structure for the embedding
$C \to M \times B$ with background class $w_2(TM)$ induced by the
isomorphism
$$TC \oplus \ul{\R} \cong \pi^* TB \oplus P ( TS^c \oplus \ul{\R})
\cong \pi^* TB \oplus P(\ul{\R}^{c+1})$$
where $\ul{\R}$ denotes the trivial bundle with fiber $\R$.  
\end{remark}

For later use we recall the basic fact that maps with sufficiently
small energy must also have small diameter:

\begin{lemma}   \label{smalllength}  Let $\ul{S}$ be a holomorphic quilt with strip-like 
ends, with symplectic labels $\ul{M}$ and Lagrangian boundary and seam
condition $\ul{L}$.  For any $\delta > 0, \ell_0$, there exists $\eps
> 0$ such that if $u: \ul{S} \to \ul{M}$ is a pseudoholomorphic quilt
with energy $E(u) < \eps$, and $\gamma: [0,1] \to \ul{S}$ is a path
connecting boundary components of length less than $\ell_0$, then the
length of $u \circ \gamma$ is less than $\delta$.
\end{lemma}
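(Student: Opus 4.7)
The plan is to deduce the conclusion from a uniform mean value inequality for pseudoholomorphic quilts. I would first establish (or invoke from the quilt literature) the following local statement: there exist constants $\eps_0 > 0$, $r_0 > 0$, $C > 0$, depending only on the geometry of $(\ul{S}, \ul{M}, \ul{L})$, such that for any pseudoholomorphic quilt $u:\ul{S}\to\ul{M}$ and any point $z \in \ul{S}$, if the energy of $u$ on a ball $B_{r_0}(z) \subset \ul{S}$ (in the quilted sense, i.e.\ a half-ball at a Lagrangian boundary, or a pair of half-balls glued along a seam) is less than $\eps_0$, then
\[
|\d u(z)|^2 \;\leq\; \frac{C}{r_0^2}\, E(u; B_{r_0}(z)).
\]
For interior points this is the standard Hofer--Sacks--Uhlenbeck mean value inequality. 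For boundary points one applies the usual doubling trick using the Lagrangian condition. For points on a seam between patches labelled $M_i, M_{i+1}$ with seam $L\subset M_i^-\times M_{i+1}$, one folds locally to realize the quilt as a single $J$-holomorphic map into $M_i^-\times M_{i+1}$ with boundary in the Lagrangian $L$, and then applies the boundary version.

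Second, I would observe that the mean value inequality can be applied uniformly over the entire (non-compact) quilt $\ul{S}$, including on the strip-like ends. On the ends, the metric is the standard flat metric and the Lagrangian boundary conditions are translation-invariant in the given trivialization, so the constants $\eps_0, r_0, C$ are uniform all the way out to infinity. Thus the hypothesis $E(u) < \eps$ with $\eps < \eps_0$ implies that $E(u; B_{r_0}(z)) < \eps_0$ for every $z \in \ul{S}$, so that
\[
|\d u(z)| \;\leq\; \frac{C^{1/2}}{r_0}\,\sqrt{\eps} \qquad \forall z \in \ul{S}.
\]

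Third, given a path $\gamma:[0,1]\to\ul{S}$ of length at most $\ell_0$ (connecting boundary components, hence in particular staying in $\ul{S}$), the length of the image satisfies
\[
\mathrm{length}(u\circ\gamma)\;=\;\int_0^1 |\d u(\gamma(t))\gamma'(t)|\,\d t \;\leq\; \sup_{z\in\ul S}|\d u(z)|\cdot \ell_0 \;\leq\; \frac{C^{1/2}\ell_0}{r_0}\sqrt{\eps}.
\]
Choosing $\eps = \min\bigl(\eps_0,\; \delta^2 r_0^2 / (C \ell_0^2)\bigr)$ yields $\mathrm{length}(u\circ\gamma) < \delta$, as desired.

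The main obstacle is the uniform mean value inequality at seams and at the meeting of a seam with a Lagrangian boundary: one must check that the local folding/unfolding procedure gives uniform constants $\eps_0, r_0, C$ that do not degenerate near such intersections. This is handled exactly as in the compactness arguments of \cite{ww:quilts}, where the local model near each type of point (interior, boundary, seam, corner of the quilted surface) is treated separately and then patched by a finite open cover argument, using compactness of the closure of a fundamental domain for the ends.
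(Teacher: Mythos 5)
Your proposal takes essentially the same route as the paper: both prove the lemma by (i) invoking the mean value inequality for pseudoholomorphic curves (McDuff--Salamon \cite[Lemma 4.3.1]{ms:jh}), for interior balls and half-balls at Lagrangian boundary, to obtain a uniform pointwise bound on $|\d u|$ once the total energy is below the threshold, and (ii) integrating this bound along $\gamma$. The paper also uses compactness of $\ul S$ away from the strip-like ends to extract a uniform inner radius $r_0$. You fill in two points that the paper leaves implicit: you spell out the folding argument needed to apply the mean value inequality at seam points (the paper says only ``apply the mean value inequality in each patch''), and you explain why the constants remain uniform out on the strip-like ends. You also carry the exponent on the energy correctly---the derivative bound is $|\d u|\le (C/r_0^2)^{1/2}\,E(u)^{1/2}$ and hence the final length bound scales like $\sqrt{\eps}\,\ell_0$, where the paper's displayed chain of inequalities slips from $(\int|\d u|^2)^{1/2}$ to $E(u)$ without the square root. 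This is a harmless bookkeeping error in the paper (it only affects the explicit threshold $\eps$), and your version is the arithmetically correct one. Overall this is the same argument, slightly more carefully executed.
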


\begin{proof} 
The claim is an application of the mean value inequality for
pseudoholomorphic maps \cite[Lemma 4.3.1 (i)]{ms:jh}: For any compact
almost complex manifold $M$ there exist constants $c,\eps > 0$ such
that for any $r > 0 $ and any pseudoholomorphic map $u: B_r(z) \to M$
with energy at most $\eps$,
\begin{equation} \label{mvt}
| du(z) |^2 \leq \frac{c}{r^2} \int_{B_r(z)} | du(w)|^2 \d w
\end{equation}
where $B_r(z)$ is a ball of radius $r$ around $z$.  There is a similar
version for Lagrangian boundary conditions, in which the
right-hand-side is replaced by a half-ball \cite[Lemma 4.3.1
  (i)]{ms:jh}.  In our quilted situation, we may apply the mean value
inequality in each patch.  Since $\ul{S}$ is compact on a complement
of the strip-like ends, there exists a constant $r_0 > 0 $ such that
any point in $\ul{S}$ is contained in ball of radius at least $r_0$,
or half-ball of radius $r_0$ in $\ul{S}$.  By integrating over
$\gamma(t)$ we see that the length $\ell(\gamma)$ satisfies
\begin{eqnarray*}  \ell(u \circ \gamma) &=& \int_0^1 | \d (u \circ \gamma)(t))) | \d t
\\ &\leq& \int_0^1 \frac{c}{r_0^2} \left(\int_{B_{r_0}(\gamma(t))} |
\d u(w)|^2 \d w \right)^{1/2} | \gamma'(t)| \d t \\ &\leq& c r_0^{-2}
E(u) \ell(\gamma) \leq c r_0^{-2} \eps \ell_0 . \end{eqnarray*}
This proves the claim for $\eps$ such that $ \eps < \delta r_0^2/ c
\ell_0$.
\end{proof}

\begin{definition}  {\rm (Relative invariants for Lefschetz-Bott fibrations with 
strip-like ends)} For Lefschetz-Bott fibration $E$ with boundary
  condition $F$ equipped with a relative spin structure, suppose that
  the ends $L^{e,0}, L^{e,1}$ of $F$ are transverse and $CF(L^{e,0},
  L^{e,1})$ have been defined without Hamiltonian perturbation for
  each end $e$.  Define
$$ C\Phi(E,F;\Lambda): \bigotimes_{e \in \mE_-}
  CF(L^{e,0},L^{e,1};\Lambda) \to \bigotimes_{e \in \mE_+}
  CF(L^{e,0},L^{e,1};\Lambda) $$
by 
\begin{equation} \label{CPhi}
 C\Phi(E,F;\Lambda) \left( \otimes_{e \in \mE_-} \bra{x_e} \right) =
 \sum_{u \in \M(E,F;(x_e)_{e \in \mE})_0} o(u) q^{A(u)}
\left(
 \otimes_{e \in \mE_+} \bra{x_e} \right) \end{equation}
where $o(u) = \pm 1$ are orientations constructed in \cite{orient} and
in the dimension one case we allow a $ \{ \pm 1 \}$-valued local
system on $F$ and weight the contributions by the holonomies.  By
items \eqref{one} and \eqref{orient} of Theorem \ref{EF}, the maps
$C\Phi(E,F;\Lambda)$ are cochain maps.  Passing to cohomology (and
passing to rational coefficients in the case of more than one outgoing
end) one obtains a map
\begin{equation}\label{HPhi}
 \Phi(E,F;\Lambda): \bigotimes_{e \in \mE_-}
 HF(L^{e,0},L^{e,1};\Lambda) \to \bigotimes_{e \in \mE_+}
 HF(L^{e,0},L^{e,1};\Lambda) \end{equation}
\end{definition}  

\begin{remark} \label{depend}
 {\rm (Independence from almost complex structure and fiber-wise
   symplectic form)} The cohomology-level invariants
 $\Phi(E,F;\Lambda)$ are independent of the choice of compatible
 almost complex structure $J$ on $E$, by an argument using
 parametrized moduli spaces similar to that of Theorem \ref{EF}.
 
The invariants are independent of the choice of two-form $\omega_E \in
\Omega^2(E)$ in the following sense: Given any symplectic
Lefschetz-Bott fibration $(E, \omega_E,\pi)$ we may form a new
symplectic Lefschetz-Bott fibration $(E,\omega_E + \pi^* \omega_S,
\pi)$ by adding on the pull-back of non-negative, compactly supported
two-form $\omega_S \in \Omega^2_c(S)$ on the base.  Any almost complex
structure $J_0$ compatible with $\omega_E$ will also be compatible
with $\omega_E + \pi^* \omega_S$, although not necessarily vice-versa.
As a result, for any two such almost complex structures $J_k, k \in \{
0,1 \}$ compatible with $\omega_E + \lambda_k \pi^* \omega_S$ for some
scalars $\lambda_0,\lambda_1$, the moduli spaces 
$$\M(E,F,J_0;(x_e)_{e
  \in \mE})_0 \sim \M(E,F,J_1;(x_e)_{e \in \mE})_0 $$
are cobordant.  Since the area of each pseudoholomorphic section
changes by the integral $A(S) = \int_S \omega_S$, the invariant
$\Phi(E,F;\Lambda)$ is independent of this change up to an overall
power $q^{A(S)}$.
\end{remark}

The relative invariants of fibrations with ``non-negative curvature'',
in the following sense, have particularly nice properties.  Recall the
symplectic connection \eqref{symconn} on a Lefschetz fibration $\pi:E
\to S$.  The spaces $TE^h_e$ have canonical complex structures,
induced from the complex structure $j$ on the base $S$.  Define
$$ \omega_{E,e} | T^h E = f(e) \pi^* \omega_S $$
for some function $f: E \to \R$.  

\begin{definition}  A Lefschetz-Bott fibration $E$ with
two-form $\omega_E$ has {\em non-negative curvature} if
$\omega_E(v,jv) \ge 0 $ for all $v$ in the horizontal subspace $TE^h$,
that is, $f(e) \ge 0$ for all $e \in E$.
\end{definition} 

\begin{remark}  Non-negative curvature implies that a small perturbation 
of the two-form is symplectic: Recall that the total space of any
Lefschetz-Bott fibration $\pi:E \to S$ admits a canonical isotopy
class of symplectic structures given as follows. If $\omega_S \in
\Omega^2(S)$ is a sufficiently positive two-form then $\omega_E +
\pi^* \omega_S$ is a symplectic form on $TE_e$ for any $e \in E$.  If
$E$ is compact, then $\omega_E + \pi^* \omega_S$ is symplectic on $E$
for $\omega_S$ sufficiently positive.  If $E$ is non-negative, then
$\omega_E + \pi^* \omega_S$ is symplectic for {\em any } positive form
$\omega_S \in \Omega^2(S)$.
\end{remark} 

\begin{proposition} \label{EC} {\rm (Non-negative curvature of standard Lefschetz-Bott
fibrations)} If $C \subset M$ is a spherically fibered coisotropic,
  then the standard Lefschetz-Bott fibration $E_C$ of \ref{ECexist}
  has non-negative curvature.
\end{proposition}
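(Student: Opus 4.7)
The plan is to verify $\omega_{E_C}(v, jv) \geq 0$ on horizontal vectors $v \in T^h E_C$ by analyzing separately the two regions that compose $E_C$ in the proof of Proposition \ref{ECexist}: the model piece built from $P(V) = P \times_{SO(c+1)} V$ with $V = \C^{c+1}$ equipped with the minimally coupled form \eqref{mincoup} and the $SO(c+1)$-invariant holomorphic Morse map $\pi_V(z_0,\ldots,z_c) = \sum_i z_i^2$; and the outer region, symplectomorphic to an open subset of $\C \times M$ via the identification \cite[1.13, 1.17]{se:lo}.

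On the outer region the form is (a Hamiltonian perturbation of) the product form $p_\C^* \omega_\C + p_M^* \omega_M$, and the $\omega_{E_C}$-orthogonal complement of the vertical subspace $TM$ is $T\C$; the restriction $\omega_{E_C}|_{T\C}$ is a non-negative multiple of $\omega_\C$, so $f \geq 0$ there. On the model region, fix a regular point $[p, v]$ with $v \notin V^\crit$ and use the connection $\alpha$ to split $T_{[p,v]} P(V) \cong T_{\pi(p)} B \oplus T_v V$. The coupling correction $d\langle \alpha, \Phi_V \rangle$ in \eqref{mincoup} vanishes on $T_v V \times T_v V$ (since $\alpha$ is a one-form on $P$, with no $V$-component) and on $T_{\pi(p)}B \times T_v V$ (since horizontal lifts of $TB$ to $P$ lie in $\ker \alpha$). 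Hence the minimally coupled form restricts to $\omega_V$ on $T_v V$, with no mixing to $T_{\pi(p)} B$. The vertical tangent space for the fibration $P(V) \to \C$ at $[p, v]$ is $T_{\pi(p)}B \oplus \ker D\pi_V|_v$, so by the decoupling its $\omega$-orthogonal complement reduces to $H := (\ker D\pi_V|_v)^{\omega_V} \subset T_v V$. Since $\pi_V$ is holomorphic and $\omega_V$ is the standard Kähler form on $\C^{c+1}$, the subspace $H$ is a complex line and $\omega_V|_H$ is positive definite; thus $\omega_{E_C}(v', jv') = \omega_V(v', jv') > 0$ for nonzero $v' \in H$, giving $f > 0$ on this region.

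Combining the two estimates yields $f \geq 0$ on all of $E_C$, proving non-negative curvature. The key structural observation is the decoupling of $T_v V$ from $T_{\pi(p)}B$ in the minimally coupled form, which follows directly from \eqref{mincoup} together with $\alpha$ annihilating horizontal lifts of $TB$; this reduces the horizontal curvature computation on the model region to the standard Kähler positivity of $\omega_V$ on a complex subspace of $V = \C^{c+1}$, while on the outer region the analogous estimate is automatic from the product form structure. I expect the only delicate point to be bookkeeping at the transition region between the two pieces, but since both local forms already satisfy $f \geq 0$ and the gluing from Proposition \ref{ECexist} identifies the two forms as Hamiltonian $SO(c+1)$-manifolds, no negative horizontal contributions can appear.
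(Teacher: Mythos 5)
Your proof is correct and follows essentially the same approach as the paper's: both reduce to the model piece $P(V)$, use the fact that the minimally coupled form decouples $T_v V$ from the connection-induced copy of $T_b B$, identify the horizontal subspace at $[p,v]$ with the image of $H_v = (\ker D\pi_V|_v)^{\omega_V} \subset T_v V$, and conclude via K\"ahler positivity of $\omega_V$ on the complex line $H_v$. You make the vanishing of the coupling term $\d\langle \alpha,\Phi_V\rangle$ explicit and also treat the outer region, which the paper's proof leaves implicit, but the mathematical content and key observation are the same.
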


\begin{proof}  Let $v \in V$, the standard representation of $SO(c+1)$, and $(p,v) \in P \times
  V$.  The horizontal subspace $H_v \subset T_v V$ pairs trivially
  with $\ker(\alpha) \times T(\pi^{-1}_V(\pi_V(v)))$ under the pairing
  given by the two-form \eqref{mincoup}, where $\pi_V$ is the
  projection \eqref{piV}.  It follows that the image $[H_v]$ of $H_v$
  in $P(V)$ is the horizontal subspace at $[p,v] \in P(V) := (P \times
  V)/G$.  Let $J_V$ denote the standard complex structure on $V$, and
  $J_0$ the induced complex structure on $E$. Since $J_{V}$ is
  non-negative on $H_v$, $J_0$ is non-negative on $[H_v]$.
\end{proof}  

\begin{proposition} Let $E$ be a Lefschetz-Bott fibration with Lagrangian boundary
condition $F$ and relative spin structure.  If $E$ has non-negative
curvature, then the exponents of $q$ in the formula \eqref{CPhi}
are all non-negative.
\end{proposition}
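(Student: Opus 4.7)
The plan is to show pointwise non-negativity of $u^*\omega_E$ for any $(J,j)$-holomorphic section $u \in \M(E,F;(x_e)_{e\in\mE})_0$. Since $A(u) = \int_S u^* \omega_E$ appears as the exponent of $q$ in \eqref{CPhi}, pointwise non-negativity immediately implies non-negativity of the exponents. Energy at the strip-like ends is controlled by the prescribed limits, so finiteness is not at issue.

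First I would work on the smooth locus $S \setminus \pi(E^{\on{crit}})$ where the horizontal-vertical splitting $TE = T^hE \oplus T^vE$ with $T^hE = (T^vE)^{\omega_E}$ is defined. Fix a point $z \in S \setminus u^{-1}(E^{\on{crit}})$ and a local oriented coordinate chart with $j\partial_s = \partial_t$. Decompose
\[
du(\partial_s) = \xi^v + \xi^h, \qquad du(\partial_t) = \eta^v + \eta^h,
\]
with $\xi^v,\eta^v \in T^vE$ and $\xi^h,\eta^h \in T^hE$. By construction of the horizontal distribution as the symplectic complement, $\omega_E$ pairs trivially between the two summands, so
\[
u^*\omega_E(\partial_s,\partial_t) \;=\; \omega_E(\xi^v,\eta^v) + \omega_E(\xi^h,\eta^h).
\]
The holomorphicity condition $du \circ j = J \circ du$, combined with the fact that $J$ preserves the splitting (it preserves $T^vE$ by the compatibility (iii) of Definition \ref{compatac}, and $T^hE$ is characterized symplectically hence is likewise preserved), gives $\eta^v = J\xi^v$ and $\eta^h = J\xi^h$.

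Next I would treat the two summands separately. For the vertical part, compatibility of $J$ and $\omega_E$ on $T^vE$ gives
\[
\omega_E(\xi^v, J\xi^v) = |\xi^v|^2_{g_J} \;\geq\; 0.
\]
For the horizontal part, the induced complex structure on $T^hE$ agrees with the lift of $j$ through $d\pi$, so $d\pi(\xi^h) = \partial_s$ and $d\pi(J\xi^h) = \partial_t$. Choosing any positive reference area form $\omega_S$ on $S$ and writing $\omega_E|_{T^hE} = f \cdot \pi^*\omega_S$ as in the definition, we have
\[
\omega_E(\xi^h, J\xi^h) \;=\; f(u(z))\, \omega_S(\partial_s,\partial_t) \;\geq\; 0
\]
by the non-negative curvature hypothesis. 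Thus $(u^*\omega_E)(\partial_s,\partial_t) \geq 0$ on the smooth locus.

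The remaining issue, which I expect to be the main (minor) obstacle, is handling points $z \in S$ with $u(z) \in E^{\on{crit}}$ where the splitting $T^hE \oplus T^vE$ degenerates. Here I would invoke conditions (v) and (vi) of Definition \ref{fibdef}: on the neighborhood $\V$ of $E^{\on{crit}}$, the form $\omega_E$ is genuinely symplectic and compatible with $J_0$. Therefore on $u^{-1}(\V)$ we have the stronger pointwise inequality
\[
u^*\omega_E(\partial_s,\partial_t) \;=\; \omega_E(du(\partial_s), J\, du(\partial_s)) \;=\; |du(\partial_s)|^2 \;\geq\; 0.
\]
Since $u^*\omega_E$ is a smooth $2$-form on all of $S$ and is non-negative on each of the two overlapping regions just described, integrating over $S$ yields $A(u) \geq 0$, which completes the proof.
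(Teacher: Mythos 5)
Your approach is the same as the paper's --- establish pointwise non-negativity of $u^*\omega_E$ --- and supplies the detail the paper's one-sentence proof leaves implicit, including the useful observation that near $E^{\crit}$, where the splitting $T^hE\oplus T^vE$ degenerates, $\omega_E|_{\V}$ is a genuine symplectic form compatible with $J_0$, so $u^*\omega_E$ is manifestly non-negative there. The one step that needs tightening is the assertion that $J$ preserves the splitting. Invariance of $T^vE=\ker(d\pi)$ under $J$ follows from $\pi$ being $(J,j)$-holomorphic, not from the compatibility condition you cite; and invariance of $T^hE=(T^vE)^{\omega_E}$ is not automatic at all --- it is equivalent to the ``horizontality'' condition that $\omega_E(J\cdot,\cdot)$ be globally symmetric, which the paper singles out later as an additional hypothesis and which is not part of the definition of $\J(E)$. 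Without it, $J\xi^h$ acquires a vertical component, so in fact $\eta^v = J\xi^v + (J\xi^h)^v$, and a cross-term $\omega_E(\xi^v,(J\xi^h)^v)$ of unpredictable sign enters your decomposition. The paper's own one-line proof makes the same tacit assumption, so you have inherited the gap from the source; but since you made the splitting explicit you should also make the horizontality hypothesis on $J$ explicit, or alternatively note that $A(u)$ is topological (determined by the relative homotopy class of the section) so one may pass to a horizontal $J$ to compute the $q$-exponents.
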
 

\begin{proof}   Since the form $\omega_E(\cdot, J \cdot)$ is non-negative
for any $J \in \J(E)$, any pseudoholomorphic section has non-negative
area.  The $q$-exponents in \eqref{CPhi} are the areas, and so are
non-negative as well.
\end{proof}

We do not give formula for the degree of the relative invariant.  See
\cite{ww:quilts} for a formula for the degree in the case without
singularities.

\subsection{Invariants for quilted Lefschetz-Bott fibrations}

The main difference between the triangle for the fibered case and the
original Seidel triangle \cite{se:lo} is the appearance of invariants
associated to quilted surfaces.  The following definitions are taken
from \cite{ww:quilts}.

\begin{definition}
\begin{enumerate} 
\item {\rm (Quilted surfaces with strip-like ends)} A {\em quilted
  surface with strip-like ends} $\ul{S}$ consists of the following
  data:
\ben 
\item a collection $\ul{S} = (S_k)_{k=1,\ldots,m}$ of surfaces with
  strip-like ends, see \cite{se:lo}, \cite{ww:quilts}, called {\em
    patches}.  Each $S_k$ carries a complex structure $j_k$ and has
  strip-like ends $(\eps_{k,e})_{e\in \E(S_k)}$.  Denote the limits of
  these ends
$$\lim_{s \to \pm
    \infty} \eps_{k,e}(s,t) =: z_{k,e} \in \partial\overline{S}_k$$
and denote the boundary components 
$$\partial S_k = (I_{k,b})_{b\in
  \B(S_k)} ;$$
\item 
a collection $\S$ of {\em seams:} pairwise disjoint $2$-element
subsets
$$ \sigma \subset \bigcup_{k=1}^m \bigcup_{b \in \B(S_k)} I_{k,b} $$
and for each $\sigma = \{ I_{k,b}, I_{k',b'} \}$ a real analytic
isomorphism 
$$\varphi_\sigma: I_{k,b} \to I_{k',b'} ;$$
where the isomorphisms $\varphi_\sigma$ should be compatible with the
strip-like ends in the sense that on each end $\varphi_\sigma$ should
be a translation;
\item
Orderings of the boundary components $\B(S_k), k = 1,\ldots, m$ of
each patch; and
\item 
Orderings of the incoming and outgoing ends of $\ul{S}$
$$\E_-(\ul{S})=(\ul{e}^-_1,\ldots,\ul{e}^-_{N_-(\ul{S})}), \quad \E_+(\ul{S})=(\ul{e}^+_1,\ldots,\ul{e}^+_{N_+(\ul{S})}) .$$
\een
\item {\rm (Quilted symplectic Lefschetz-Bott fibrations)} A {\em
  (quilted) symplectic Lefschetz-Bott fibration} $\ul{E}$ over a
  quilted surface $\ul{S}$ with strip-like ends consists of a
  collection of Lefschetz-Bott fibrations $E_k \to S_k, k = 0,\ldots,
  m$.  A {\em Lagrangian boundary/seam condition} for $E$ consists of
  a collection $\ul{F}$ of submanifolds of the boundaries and seams
$$ F_{k,b} \subset E_{k,b} | I_{k,b}, \quad F_{(k_0,b_0),(k_1,b_1)}
\subset E_{k_0}| I_{k_0,b_0} \times \varphi_\sigma^* (
E_{k_1} | I_{k_1,b_1}) $$
where $I_{k,b}$ ranges over true boundary components
resp.\ $I_{k_0,b_0}, I_{k_1,b_1}$ range over identified boundary
components, such that
\begin{enumerate}  
\item each fiber 
$$F_{(k_0,b_0),(k_1,b_1),z} \subset E_{k_0,z} \times
  (\varphi_{(k_0,b_0),(k_1,b_1)}^* E_{k_1})_z $$ 
over $z \in \ul{S}$
is a Lagrangian submanifold; and
\item over the strip-like ends the the fibers
  $F_{(k_0,b_0),(k_1,b_1),z}$ over $z \in \ul{S}$ are given by fixed
  Lagrangians $L^{k_{e,i},b_{e,i}}$ on the strip-like ends, with the property
  that the composition
$$ L^{(k_{e,0},b_{e,0})}\circ L^{(k_{e,1},b_{e,1})} \ldots \circ
L^{(k_{e,l(e)},b_{e,l(e)})} $$ 
is transversal, where $l(e)$ is the number of patches on the end $e$.
\end{enumerate} 
\end{enumerate} 
\end{definition} 

We say that a quilted Lefschetz-Bott fibration is {\em monotone} if
sections satisfy an area-index relation similar that for
pseudoholomorphic maps.  In the case without singularities (that is,
fibrations) admissibility for the Lagrangians guarantees monotonicity
in the quilted setting, see \cite[Remark 3.7]{ww:quilts}.  For
Lefschetz-Bott fibrations, admissibility together with the codimension
conditions in Lemma \ref{monotonebound} guarantee monotonicity, by the
same arguments.

\begin{definition} \label{relinv} {\rm (Relative invariants for quilted Lefschetz-Bott fibrations)} 
Theorem \ref{EF} generalizes to the quilted setting: associated to a
monotone quilted symplectic Lefschetz-Bott fibration $(\ul{E},\ul{F})$
we have (ungraded) relative invariants which (working with rational
coefficients in the case of more than one outgoing end) maps
\begin{multline}
 \bigotimes_{e \in \mE_-}
 HF(L^{(k_{e,0},b_{e,0})},L^{(k_{e,1},b_{e,1})}, \ldots ,
 L^{(k_{e,l_-(e)},b_{e,l_-(e)})};\Lambda) \\ \to \bigotimes_{e \in
   \mE_+} HF(L^{(k_{e,0},b_{e,0})}, L^{(k_{e,1},b_{e,1})},\ldots ,
 L^{(k_{e,l_+(e)},b_{e,l_+(e)})};\Lambda) .
\end{multline}
These invariants satisfy a composition relation for gluing along
strip-like ends by a gluing argument spelled out in \cite{ww:quilts}.
That is, if $\ul{E},\ul{F}$ is a quilted Lefschetz-Bott fibration
obtained from fibrations $\ul{E}_1,\ul{F}_1$ and $\ul{E}_2,\ul{F}_2$
by gluing the outgoing ends of $\ul{E}_1,\ul{F}_1$ to the incoming
ends of $\ul{E}_2,\ul{F}_2$ then
\begin{equation} \label{gluealongends} \Phi(\ul{E},\ul{F}) = \Phi(\ul{E}_1,\ul{F}_1) \circ
\Phi(\ul{E}_2,\ul{F}_2) .\end{equation}
\end{definition} 

As in the unquilted case in Remark \ref{depend}, these invariants
are independent of the choice of almost complex structure and
deformation of the two-form on the total space.

\subsection{Vanishing theorem} 

In this section we glue along a seam to obtain a vanishing theorem
analogous to \cite[Section 2.3]{se:lo} for the invariants associated
to standard fibrations associated to a fibered Dehn twist.

\begin{remark} {\rm (Gluing along a seam for quilted Lefschetz-Bott fibrations)} 
\begin{enumerate} 
\item
{\rm (Glued surface)} For $k =0,1$ let $\ul{S}_k$ be quilted surfaces
with $d_k + 1$ strip-like ends, and $z_k$ a seam point in $\ul{S}_k$.
Let $\rho > 0$ be a {\em gluing parameter}.  Let $\ul{S}^\rho$ be the
quilted surface with $d_0 + d_1 + 2$ strip-like ends formed by gluing
together quilted disks $D_0,D_1$ around $z_0,z_1$ using the map $z
\mapsto \rho/z$.  See Figure \ref{gseam}.

\begin{figure}[ht]
\includegraphics[width=5in]{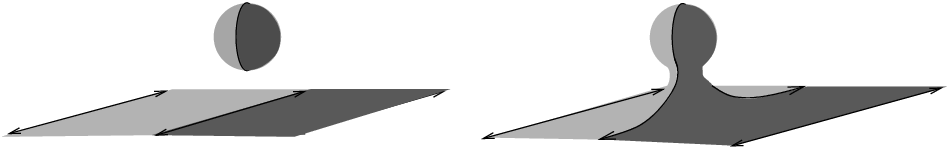}
\caption{Gluing along a seam} 
\label{gseam}
\end{figure} 

\item {\rm (Glued bundles)} Let $(\ul{E}_k,\ul{F}_k)$ be
  Lefschetz-Bott fibrations over $\ul{S}_k$, equipped with a
  trivialization of $\ul{E}_k,\ul{F}_k$ in a neighborhood of $z_k$,
  and a symplectomorphism of $(\ul{E}_{k,z_k},\ul{F}_{k,z_k})$ for $k
  = 0,1$.  The {\em seam connect sum} $\ul{E}^\rho \to \ul{S}^\rho$ is
  formed by patching $\ul{E}_0$ and $\ul{E}_1$, and similarly for the
  boundary and seam conditions $\ul{F}^\rho$.
\item {\rm (Glued complex structures)} Suppose that the following are
  given: a $(\pi_k,j_k)$-compatible almost complex structure
  $\ul{J}_k$ for $\pi_k:\ul{E}_k \to \ul{S}_k$ that is constant in a
  neighborhood of $z_k$ (with respect to the given trivialization) for
  $k \in \{ 0,1 \}$ and such that $\ul{J}_0$ agrees with $\ul{J}_1$ on
  the glued fiber.  One can patch together these almost complex
  structures to obtain a compatible almost complex structure $\ul{J}$
  for $\ul{E}\to S$.
\end{enumerate} 
\end{remark} 

We next investigate the behavior of moduli spaces of sections under
gluing.  Suppose that $(\ul{E}^\rho,\ul{F}^\rho)$ is obtained from
gluing $(\ul{E}_0,\ul{F}_0) $ and $(\ul{E}_1,\ul{F}_1)$ and all three
are monotone symplectic Lefschetz-Bott fibrations.  Let
$$ \ev_k: \ \M(\ul{E}_k,\ul{F}_k) \to
\ul{F}_{k,z_k} $$
denote the evaluation maps at the nodal points $z_k,k \in \{ 0,1 \}$.

\begin{theorem}  \label{gluing} {\rm (Behavior of moduli spaces under gluing along a seam)} 
For sufficiently small $\rho$, there exists a comeager subset
$$(\J(\ul{E}_0,\ul{F}_0) \times \J(\ul{E}_1,\ul{F}_1))^\reg \subset
\J(\ul{E}_0,\ul{F}_0)^\reg \times \J(\ul{E}_1,\ul{F}_1)^\reg$$ 
such that
\begin{enumerate} 
\item the evaluation map $\ev_0 \times \ev_1$ is transverse to the
  diagonal;
\item for any pair $(u_0,u_1)$ there exists a {\em gluing map} on a
  neighborhood $U(u_0,u_1)$ of $(u_0,u_1)$ given by
$$ \Theta^\rho: \M(\ul{E}_0,\ul{F}_0) \times_{\ev_0,\ev_1}
  \M(\ul{E}_1,\ul{F}_1) \supseteq U(u_0,u_1) \to
  \M(\ul{E}^\rho,\ul{F}^\rho) ;$$
\item as $(u_0,u_1)$ varies over points in the zero-dimensional
  component of the left-hand-side, $\Theta^\rho$ is surjective onto
  the zero-dimensional component of $\M(\ul{E}^\rho,\ul{F}^\rho)$; and
\item for any $u \in \M(\ul{E}_0,\ul{F}_0) \times_{\ev_0,\ev_1}
  \M(\ul{E}_1,\ul{F}_1)$, the sequence $\Theta^\rho(u)$ Gromov
  converges to $u$ as $\rho \to 0$, that is, converges up to sphere
  bubbling, disk bubbling, and bubbling off of Floer trajectories on
  the strip-like ends.
\end{enumerate} 
\end{theorem}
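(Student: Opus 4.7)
The plan is to follow the standard pseudoholomorphic gluing framework from Floer theory, adapted to the quilted Lefschetz-Bott setting using the seam-gluing analysis already developed in \cite{ww:quilts}. The four parts of the theorem correspond to transversality, a quantitative implicit function theorem, surjectivity via compactness, and Gromov compactness. I would address them in that order.

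For part (a), I would set up the universal moduli space fibered over pairs $(\ul{J}_0, \ul{J}_1) \in \J(\ul{E}_0,\ul{F}_0) \times \J(\ul{E}_1,\ul{F}_1)$ of compatible almost complex structures. Because the almost complex structures are prescribed only near $E^{\crit}_k$ and along the strip-like ends, there is ample freedom near the seam points $z_0, z_1$, and at any regular section $u_k$ the linearized operator $D_{u_k}$ is surjective. A now-standard argument (Sard-Smale applied to the projection from the universal moduli space to the parameter space) shows that the evaluation map $\ev_0 \times \ev_1$ is transverse to the diagonal for a comeager subset of parameters. By the implicit function theorem, the fiber product $\M(\ul{E}_0,\ul{F}_0) \times_{\ev_0,\ev_1} \M(\ul{E}_1,\ul{F}_1)$ is then a smooth manifold of the expected dimension.

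For part (b), given $(u_0, u_1)$ with $\ev_0(u_0) = \ev_1(u_1)$, I would use the fixed trivializations of $\ul{E}_k$ near $z_k$ to pre-glue the two sections: on the annular neck $\{ \rho^{1/2} \le |z| \le \rho^{-1/2} \}$ in the glued surface $\ul{S}^\rho$, interpolate between $u_0$ and $u_1$ (in the locally trivial coordinates on $\ul{E}^\rho$) using a logarithmic cutoff function, obtaining a pregluing $u^\rho = u_0 \#_\rho u_1$. Standard estimates give $\|\olp u^\rho\|_{L^p} = O(\rho^{\alpha})$ for some $\alpha>0$. The key analytic step is to assemble right inverses $Q_0, Q_1$ for $D_{u_0}, D_{u_1}$ into a uniformly bounded right inverse $Q^\rho$ for $D_{u^\rho}$ on appropriately weighted Sobolev spaces over $\ul{S}^\rho$ (Donaldson's cutoff construction applied to the matching condition from the fiber product). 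Then the quantitative Picard lemma (Newton iteration) produces a unique correction $\xi^\rho \in \ker(Q^\rho)^\perp$ small in $W^{1,p}$ with $\olp \exp_{u^\rho}(\xi^\rho)=0$, and we set $\Theta^\rho(u_0,u_1) := \exp_{u^\rho}(\xi^\rho)$.

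For parts (c) and (d), I would argue by compactness. Take any sequence $\rho_\nu \to 0$ and $v_\nu \in \M(\ul{E}^{\rho_\nu}, \ul{F}^{\rho_\nu})_0$. The monotonicity of the quilted Lefschetz-Bott fibration gives a uniform energy bound. Using the Gromov compactness for quilted pseudoholomorphic maps from \cite{ww:quilts}, together with the energy bound, a subsequence Gromov converges to a stable nodal limit with components in $\M(\ul{E}_0,\ul{F}_0)$ and $\M(\ul{E}_1,\ul{F}_1)$ together with possible sphere, disk, and Floer trajectory bubbles. Admissibility (minimal Maslov number $\geq 3$, fundamental-group-torsion condition) and the codimension $\ge 2$ assumption on vanishing cycles force the zero-dimensional stratum to have no such bubbles, so the limit is a pair $(u_0, u_1)$ in the fiber product. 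This gives part (d), and combined with local uniqueness of the gluing construction from part (b), gives the surjectivity in part (c) (any $v_\nu$ close to a limit must equal $\Theta^{\rho_\nu}(u_0,u_1)$ for $\nu$ large).

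The main technical obstacle is part (b): establishing the uniform bound on the right inverse $Q^\rho$, independent of the gluing parameter $\rho$. The presence of the Lefschetz-Bott critical locus in the fibers does not enter the gluing analysis at the seam (since the sections $u_0, u_1$ avoid $E^{\crit}_k$), so the construction reduces to the standard seam-gluing for quilted maps into symplectic fiber bundles, treated in \cite{ww:quilts}. The only quilted-fibration-specific input is the monotonicity estimate (Proposition \ref{monotoneends}), which is what prevents bubbles from appearing in the compactness step.
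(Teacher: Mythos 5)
Your proposal outlines the standard gluing analysis; the paper itself offers no proof at all, merely the two citations (McDuff--Salamon \cite[Chapter 10]{ms:jh} for interior-point gluing and Abouzaid \cite{ab:ex} for boundary-point gluing) that your sketch fleshes out. The four steps you identify --- Sard--Smale transversality for the evaluation map, pregluing with logarithmic cutoffs plus a uniformly bounded right inverse $Q^\rho$ plus the Newton--Picard iteration, Gromov compactness of the zero-dimensional stratum to establish surjectivity, and convergence of the glued curves back to $(u_0,u_1)$ --- are exactly the content of those references, adapted here to a seam node (a boundary-type node in each patch). You correctly isolate the genuine analytic difficulty (the uniform bound on $Q^\rho$ in weighted Sobolev spaces on the necked surfaces) and correctly observe that the Lefschetz--Bott critical locus plays no role in the gluing itself since the sections stay in the smooth locus near the seam. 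In short, your argument is the proof the authors chose not to write; it agrees with the paper's (implicit) approach and would become a complete proof upon importing the quantitative estimates from the cited sources.
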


See McDuff-Salamon \cite[Chapter 10]{ms:jh} for the case of gluing at
an interior point, and Abouzaid \cite{ab:ex} for the details of gluing
along a point in the boundary.  

Next we give a formula for the dimension for the pseudoholomorphic
sections of the standard fibration studied in Propositions
\ref{ECexist} and \ref{EC}.

\begin{definition} \label{smallstandard} {\rm (Standard fibrations on small balls)} 
Let $C \subset M$ be a coisotropic submanifold of codimension $c$
spherically fibered over $B$ and $E_C \to D$ the Lefschetz-Bott
fibration over the disk $D$ of radius $1$ with generic fiber $M \times
B^-$ and monodromy $\tau_C \times 1$ from Lemma \ref{ECexist}.  Equip
$E_C$ with the Lagrangian boundary condition given by the fiber
product
$$F_C := P(T) \, | \, \partial D$$ 
where $T$ is the union of vanishing cycles in the local model $P(V)$
as in \eqref{Sig}.  As in \cite{se:lo} this boundary value problem
fits into a family of problems $E_{C,r},F_{C,r}$, the standard
fibrations of Section \ref{standard} over a disk $D_r$ of radius $r$.
Each member of the family is formed by patching together $C
\times_{SO(c+1)} V$ with $(M \times B) \ssm (i(C) \times p(C))$ with 
boundary condition 
$$F_{C,r} = \bigcup_{z \in \partial D_r } \left( P \times_{SO(c+1)}
\sqrt{z} S^c \right) $$
in the local model $ P \times_{SO(c+1)} V$.
\end{definition}  

\begin{lemma} \label{above} 
{\rm (Dimension formula for pseudoholomorphic sections of a standard
  fibration)} Let $C,E_C,F_C$ be the standard Lefschetz-Bott fibration
associated to $C$ in Lemma \ref{ECexist}, and $c$ the codimension of
$C$.  If $(E_C,F_C)$ is monotone then for $r$ sufficiently small, $J
\in \J^\reg(E_C)$ and all $u \in \M(E_{C,r},F_{C,r})$ we have
$$\dim T_u \M(E_{C,r},F_{C_r})
 \ge \dim(C) + (c-1).$$
\end{lemma}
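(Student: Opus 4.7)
The plan is to reduce to the local model $P \times_{SO(c+1)} V \to D_r$ near the critical locus, compute the Fredholm index $\Ind(D_u) = \dim(C) + (c-1)$ for any section $u$, and conclude via regularity of $J$ that $\dim T_u \M = \Ind(D_u)$.  For the reduction: for $r$ sufficiently small, the non-negative curvature of $E_C$ established in Proposition \ref{EC} bounds the area of any section by a quantity tending to $0$ with $r$, and Lemma \ref{smalllength} then forces the image to lie in a small neighborhood of the critical fiber, hence in the local model.

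Within the local model I would compute the index at a convenient section $u_{p_0, a}(z) := [p_0, r\bar{a} + az]$, where $p_0 \in P$ and $a \in \C^{c+1}$ satisfy $\sum_i a_i^2 = 0$ and $\|a\|^2 = 1/(2r)$. One checks
$$\pi_V(r\bar{a} + az) = r^2 \sum_i \bar{a}_i^2 + 2rz\|a\|^2 + z^2 \sum_i a_i^2 = z,$$
and the identity $r\bar{a} + are^{i\theta} = \sqrt{z}\,\sqrt{r}(\bar{a}e^{-i\theta/2} + ae^{i\theta/2})$ with the second factor a real unit vector of norm squared $2\|a\|^2 = 1/r$ shows $u_{p_0,a}(z) \in \sqrt{z} S^c \subset F_{C,r}|_z$.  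Using a global holomorphic trivialization of $u^* T^{\on{vert}} E_C$ (complex rank $\dim_\C M = \dim_\C B + c$) via a frame perpendicular to $u$, the Lagrangian boundary condition has Maslov index $\dim_\C B + (c-1)$, so Riemann-Roch yields
$$\Ind(D_u) = (\dim_\C B + c) + (\dim_\C B + c - 1) = 2\dim_\C B + 2c - 1 = \dim(C) + (c-1).$$
As a consistency check, the family $u_{p_0,a}$, parameterized modulo the diagonal $SO(c+1)$-action by a bundle over $B$ with fiber the Stiefel manifold $V_2(\R^{c+1})$ of orthonormal $2$-frames (via $a = \alpha + i\beta$ with $\|\alpha\| = \|\beta\| = (2\sqrt{r})^{-1}$ and $\alpha \perp \beta$), has precisely dimension $\dim(B) + (2c-1) = \dim(C) + (c-1)$.

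For $J \in \J^{\reg}(E_C)$, the moduli is smooth with $\dim T_u \M = \Ind(D_u)$ at every $u$, yielding the bound $\dim T_u \M \geq \dim(C) + (c-1)$ (in fact as an equality).  The main obstacle lies in the Maslov-index calculation: the natural ``thimble'' trivialization $\sqrt{z} v_0$ gives the naive Maslov value $c$ (which would produce an off-by-one index $\dim(C) + c$), but this thimble is not a single-valued section of $V \to D_r$ since it has a branch point at $z = 0$.  The correct computation must use a global single-valued holomorphic frame, in which the sphere direction contributes only $c-1$ to the Maslov rather than $c$, the ``extra'' $-1$ encoding the branching obstruction that prevents the multi-valued thimble from being promoted to an honest holomorphic section.
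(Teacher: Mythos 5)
Your overall strategy matches the paper's (construct explicit sections, compute their Fredholm index, appeal to regularity) but several steps do not hold up, and the missing ingredient is the monotonicity argument. You claim non-negative curvature bounds section area from above as $r \to 0$; in fact it gives the opposite bound $A(u) = \mathcal{E}^v(u) + \int_S f(u)\,\omega_S \ge 0$, so arbitrary sections are not confined to the local model. More importantly, the Fredholm index $\Ind(D_u)$ varies with the homotopy class of $u$, so regularity alone does not produce a single value of $\dim T_u \M$; the assertion is an inequality, not the equality you claim. The paper closes this by showing the explicit sections $w_{r,a,b}$ realize the \emph{minimal} Maslov index: their area tends to $0$ as $r \to 0$, every pseudoholomorphic section has $A(u) \ge 0$ by non-negative curvature, and the area-index monotonicity relation then forces $I(u) \ge I(w_{r,a,b}) = c - 1$.

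Your Riemann-Roch inputs are also individually wrong even though their sum accidentally lands on the right total. The fiber of $E_{C,r}$ is $M \times B^-$ (Definition \ref{smallstandard}), not $M$, so the vertical tangent bundle has complex rank $\dim_\C M + \dim_\C B = 2\dim_\C B + c$, not $\dim_\C B + c$. And the vertical Maslov index is $c - 1$, not $\dim_\C B + (c - 1)$: in the trivialization $T_{w(z)}E_{C,r} \cong (T_b B)^2 \oplus \C^{c+1}$ the boundary condition is $\Delta_{T_b B} \oplus \sqrt{z}\R^{c+1}$, the constant diagonal contributes $0$ to the Maslov index, $\sqrt{z}\R^{c+1}$ contributes $c + 1$, and subtracting $2$ for $(TD, T\partial D)$ leaves $c - 1$. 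Working with fiber $M$ alone, as your numbers suggest, the vanishing cycle $C$ is coisotropic rather than Lagrangian, so that model is not a Lagrangian boundary value problem. Your explicit sections and the Stiefel manifold parameter count are correct and match the paper's family $w_{r,a,b}$.
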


\begin{proof} 
We explicitly describe a family of pseudoholomorphic sections as
follows.  Suppose that the almost complex structure on $E_{C,r}$ is
induced from an almost complex structure on $B$ and the standard
almost complex structure on $V$.  For each $a \in V, b \in B$ fix a
local trivialization of $P$ and define in the corresponding local
trivialization $E_{C,r}$
$$
w_{r,a,b}: D_r \to E_{C,r}, \quad z \mapsto 
r^{-1/2} az + r^{1/2}
\ol{a} .
$$
The condition for $w_{r,a,b}$ to be a section is by \eqref{piV} 
\begin{eqnarray*}
 z &=& w_{r,a,b}(z) \cdot w_{r,a,b}(z) \\
&=& (r^{-1/2} az + r^{1/2} \ol{a})
 \cdot (r^{-1/2} az + r^{1/2} \ol{a}) \\
&=& r^{-1} a \cdot a z^2 + 2 a
 \cdot \ol{a} z + r \ol{a} \cdot \ol{a} .\end{eqnarray*}
Hence 
\begin{equation} \label{relations}  2 a \cdot \ol{a} =  1, \quad a \cdot a = \ol{a} \cdot \ol{a} =
0 .\end{equation}
Given this, the section $w_{r,a,b}$ takes values in $F_{C,r}$.  We
claim that the Maslov index satisfies the following formula:
\begin{equation} \label{wind} I(w_{r,a,b}) = c-1 .\end{equation} 
Indeed, by definition the Maslov index of $w := w_{r,a,b}$ is the
index of the pair $(w^* T^{\on{vert}} E_C, (\partial w)^*
T^{\on{vert}} F_C)$.  This pair fits into the exact sequence
$$ 0 \to (w^* T^{\on{vert}} E_{C,r}, (\partial w)^* T^{\on{vert}}
F_{C,r}) \to (w^* TE_{C,r}, (\partial w)^* TF_{C,r}) \to
(TD,T(\partial D)) \to 0 .
$$
Now $ T_{w(z)} E_{C,r} = (T_b B_{C,r})^2 \oplus \C^{c+1}$ is trivial,
and the boundary condition has vertical part
\begin{equation} \label{vertpart}
 T_{w(z)}^{\on{vert}} F_{C,r} \cong \Delta_{T_b B} \oplus
 T_{w(z)}(\sqrt{z} S^c) \cong \Delta_{T_b B} \oplus \R^{c+1}/\R
 w(z) \end{equation}
where $\Delta_{T_b B}$ is the diagonal.  The horizontal part of the
boundary value problem maps isomorphically onto $(TD, T(\partial D))$
via $D\pi$.  Using \eqref{vertpart} we have
$$ I(w^* TE_{C,r}, (\partial w)^* TF_{C,r}) = I(\C^{c+1},\sqrt{z}
\R^{c+1}) .$$
Hence
\begin{eqnarray*} I(w^* T^{\on{vert}} E, (\partial w)^* T^{\on{vert}} F) &=& 
I(w^* TE, (\partial w)^* TF)- (TD,T(\partial D)) \\
&=& (c+1) - 2 = c-1 \end{eqnarray*}
which proves the claim \eqref{wind}.

The given sections are the sections of lowest index for sufficiently
small radius.  Indeed, the area $A(w_{r,a,b})$ approaches zero as $r
\to 0$, for all $a$; this fact holds even after the adjustment
\cite[(1.17)]{se:lo} since the adjustment is by the pullback of the
differential of a bounded one-form.  Choose $r$ sufficiently small so
that $A(w_{r,a,b}) \leq 1/2\lambda$.  The area-index monotonicity
relation and non-negativity of the curvature in Proposition \ref{EC}
imply that any other section $u$ has positive area.  So the index
$I(u)$ is at least the index $I(w_{r,a,b})$ of $w_{r,a,b}$.
\end{proof}

In order to define relative invariants for Lefschetz-Bott fibrations
with codimension one vanishing cycles, a stronger monotonicity
assumption must be assumed. 

\begin{definition} \label{stronglymonotone}  {\rm (Strong monotonicity)}  
In the case that $C \subset M$ is a fibered coisotropic of codimension
one we denote by $M_C$ the {\em cut space} as in Lerman \cite{le:sy2}.
The cut space $M_C$ is the space obtained by cutting $M$ along $C$ and
collapsing the resulting manifold (whose boundary is two copies of
$C$) by the circle action on the boundary.  Thus $M_C$ contains two
copies $B_\pm$ of $B$.  Denote by $[B_\pm] \in H^2(M_C)$ the dual
classes of $B_\pm$ and by $[\omega_C] \in H^2(M_C)$ the symplectic
class.  A monotone fibered coisotropic $C$ is {\em strongly monotone}
if either $\codim(C) \ge 2 $ or both $\codim(C) = 1$ and
$c_1(M_C)-[B_+]-[B_-]$ is a positive multiple of $[\omega_C]$.
\end{definition} 

\begin{proposition}  Suppose that the vanishing cycles of $E$
are strongly monotone.  Then the sections of $(E,F)$ satisfy a
monotonicity relation, and counting pseudoholomorphic sections defines
a relative invariant $\Phi(E,F;\Lambda)$ as in \eqref{HPhi}.
\end{proposition}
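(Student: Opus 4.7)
The plan is to establish the area-index relation \eqref{IA2} for sections of $(E,F)$ under the strong monotonicity hypothesis, then invoke the construction of Theorem \ref{EF} and the definition of the relative invariant \eqref{HPhi} verbatim. By the gluing reduction used at the end of Proposition \ref{monotonebound}, it suffices to treat the case where $S$ is closed and establish the monotonicity relation $\lambda I(u) = 2 A(u) + c(E,F)$ for $u \in \Gamma(E,F)$, since the Fredholm index of the linearized operator with fixed limits differs from the topological index of a section by a constant depending only on the limits.

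The first step is to reduce, exactly as in Proposition \ref{monotone}, to checking the area-index relation on a generating set for the space of section classes modulo constants. The Leray--F\'ary spectral sequence for $\pi: E \to S$ identifies the degree-two part of $\ker(\pi_*) \subset H_2(E)$ with the span of the images of $H_2(M)$ (via any generic fiber) and $H_2(E_{s_i})$ (via each singular fiber). For the generic fiber the relation holds by the given monotonicity of $M$; the content of the proof is therefore to verify it on homology classes in the singular fibers $E_{s_i}$.

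In the codimension $c \ge 2$ case, the argument of Proposition \ref{monotone} already shows that $H_2(E_{s_i})$ is generated by the image of $H_2(M)$ under the parallel transport map $\rho_i$, since $H_2(\Cone(p_i)) = 0$ when $p_i: P_i \times_{SO(c+1)} S^c \to B_i$ has fiber $S^c$ with $c \ge 2$. The essentially new input is the codimension one case: here the parallel transport map collapses the $S^1$-fibers of $C_i \to B_i$, so $\Cone(p_i)$ retracts onto the Thom space of the associated disk bundle, whose $H_2$ need not vanish. The singular fiber $E_{s_i}$ in this case is modeled (up to deformation) by the Lerman cut space $M_{C_i}$ with its two distinguished divisors $B_{i,\pm}$, and any new degree-two class in $E_{s_i}$ beyond $H_2(M)$ is represented by a small section of the normal $\P^1$-bundle to $B_{i,+} \cap B_{i,-}$ picked up across the singularity. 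On such a class $\beta$ the area pairs as $\langle [\omega_C], \beta\rangle$ and the Chern number pairs as $\langle c_1(M_C), \beta \rangle - \langle [B_+], \beta \rangle - \langle [B_-], \beta \rangle$ after subtracting the contributions of the two divisors in which the class sits; the strong monotonicity condition in Definition \ref{stronglymonotone}, $c_1(M_C) - [B_+] - [B_-] = \kappa[\omega_C]$, is precisely the statement that these pair proportionally, with ratio matching $\lambda$ after normalizing $\kappa$.

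Given monotonicity, the remainder of the proof is verbatim the construction of \eqref{CPhi} and \eqref{HPhi}: choose $J \in \J^{\reg}(E)$ as in Theorem \ref{EF}, use the area-index relation to exclude sphere and disk bubbling in the zero- and one-dimensional moduli spaces, orient via the relative spin structure following Remark \ref{sketch}, and define $C\Phi(E,F;\Lambda)$ by the signed, $q$-weighted count of index-zero sections. Monotonicity also ensures that for each fixed power of $q$ only finitely many homotopy classes of sections contribute, so $C\Phi(E,F;\Lambda)$ is a well-defined $\Lambda$-linear map and becomes a cochain map by items \eqref{one} and \eqref{orient} of Theorem \ref{EF}, inducing the map $\Phi(E,F;\Lambda)$ of \eqref{HPhi}. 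The main obstacle is the identification of $H_2$ of the codimension-one singular fiber together with the matching of the relevant pairings against $[\omega_C]$ and $c_1(M_C) - [B_+] - [B_-]$; once this is pinned down the rest of the argument is the same as the codimension $\ge 2$ case already handled in Proposition \ref{monotoneends}.
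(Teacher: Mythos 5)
Your overall strategy matches the paper's: reduce to fiber homology classes via the Leray--F\'ary spectral sequence, note that the codimension $\ge 2$ case is already handled by the mapping-cone argument of Proposition \ref{monotone}/\ref{monotonebound}, and in codimension one relate the Chern class of $E$ on the singular fiber to $c_1(M_C)-[B_+]-[B_-]$ via the normalization map $\nu_C: M_C \to E_{s_0}$ from the Lerman cut space, so that strong monotonicity closes the argument. Two points need repair, however.

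First, the phrase ``normal $\P^1$-bundle to $B_{i,+} \cap B_{i,-}$'' is not meaningful: the divisors $B_{i,\pm} \subset M_{C_i}$ are disjoint (they are the two boundary copies of $B_i$ created by cutting; only their images under $\nu_{C_i}$ coincide, giving the singular locus $B_i \subset E_{s_i}$). The correct way to see where the new degree-two classes come from is the mapping-cone computation already set up in Proposition \ref{monotone}: for $c=1$ the cone $\Cone(p_i)$ on the circle bundle $p_i: C_i \to B_i$ has nonzero $H_2$, which is exactly what fails when $c \ge 2$. Second, the identity $\nu_C^* c_1(E) = c_1(M_C) - [B_+] - [B_-]$ is asserted but not derived; writing that the Chern number ``pairs as $\langle c_1(M_C),\beta\rangle - \langle [B_+],\beta\rangle - \langle [B_-],\beta\rangle$ after subtracting the contributions of the two divisors'' merely restates the formula. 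The paper's justification is a short but genuine argument: pulling back a local coordinate $z$ near the critical value, $\pi^*\d z$ gives a nonvanishing section of the relevant quotient line bundle on the regular part of the singular fiber with a simple zero along the singular locus $B$, which after normalization yields exactly the twist by $-[B_+]-[B_-]$. This step is precisely where the strong monotonicity hypothesis is cashed out, so without some version of it the codimension-one case does not close.
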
 

\begin{proof}   The codimension two case is already covered
in Proposition \ref{monotonebound}.  In the case that the fibered
coisotropic is codimension one, there are degree two homology classes
in the special fibers that are not equivalent to homology classes in
the special fiber.  Let $\nu_C: M_C\to E_{s_0} \subset E$ denote the
normalization map, mapping the two copies $B_\pm$ of $B$ created by
cutting to the singular locus $B \subset E_{s_0}$.  Then
$$ \nu_C^* c_1(E) = c_1(M_C) - [B_+] - [B_-] .$$
Indeed, let $z$ be a local coordinate on the base near a critical
value.  The one-form $\pi^* \d z$ is non-vanishing everyone except at
the singular locus in the fiber where $\pi^* \d z$ has a simple zero.
It follows that if the vanishing cycles are strongly monotone, then
the monotonicity relation holds on fiber classes.  It follows that
monotonicity also holds on sections.  Counting pseudoholomorphic
sections defines an invariant $\Phi(E,F)$, as in the case of higher
codimension.
\end{proof}

\begin{remark} \label{codimone}  {\rm (Double cover in 
the codimension one case)} In the case that $c = 1$, $r = 1$, the
  evaluation map on the moduli space $\M(E_{C,r},F_{C,r})_0$ of
  zero-index pseudoholomorphic sections of $(E_{C,r},F_{C,r})$ at $z =
  1$ induces a double cover
\begin{equation}  \label{dc}  \ev_1: \M(E_{c,r},F_{C,r})_0 \to C, \ w_{r,a,b} \mapsto
  w_{r,a,b}(1) = a + \ol{a} \end{equation}
of the fiber of the vanishing cycle $C$.  The relations
\eqref{relations} on $a = (a_1,a_2)$ become
\begin{equation} \label{become}
 a_2 = \pm i a_1 , \quad a_1 \ol{a_1} = 1/2 .\end{equation}
\end{remark} 

\begin{corollary} \label{pinch} {\rm (Vanishing of the relative
    invariant associated to a standard fibration)} Suppose $E$ is the
  Lefschetz fibration with the Lagrangian boundary condition $F$
  obtained by a seam connect sum from a Lefschetz-Bott fibration
  $(E_{C,r},F_{C,r})$ over the disk $D_r$ corresponding to a
  spherically fibered coisotropic submanifold $C$, with an arbitrary
  quilted Lefschetz-Bott fibration $E_0 \to S_0$ with boundary
  condition $F_0$.  Suppose that all these fibrations with boundary
  conditions are monotone and equipped with relative spin structures,
  so that in particular the relative invariant $\Phi(E,F)$ is defined.
  Suppose furthermore that in the codimension one case $\codim(C) = 1$
  the coisotropic is equipped with a local system with holonomy $-1$
  around the fibers.  Then $\Phi(E,F)= 0 $.  More precisely, there
  exists a null-homotopy of the chain-level operator $C \Phi(E,F)$
  that, if $r$ is sufficiently small, has positive $q$-exponents.
\end{corollary}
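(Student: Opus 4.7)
The plan is to combine the gluing theorem for seam connect sums (Theorem \ref{gluing}) with the dimension formula for sections of the standard fibration (Lemma \ref{above}) to force the count of zero-index sections of $(E,F)$ to vanish, and then use a parametric moduli space over shrinking $r$ to produce the null-homotopy with the stated positivity property.

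First, I would invoke Theorem \ref{gluing}: for sufficiently small gluing parameter the moduli space $\M(E,F)$ is identified, up to Gromov convergence, with a fiber product $\M(E_0, F_0) \;{}_{\ev_0}{\times}_{\ev_1}\; \M(E_{C,r}, F_{C,r})$ in which the evaluation maps at the seam land in the Lagrangian cut out by the seam condition on the $E_{C,r}$ side, which is naturally built from $C$ and has dimension $\dim(C)$. Second, I would apply Lemma \ref{above}: for $r$ sufficiently small and $J \in \J^{\reg}(E_{C,r})$, every section $u \in \M(E_{C,r}, F_{C,r})$ satisfies $\dim T_u \M \ge \dim(C) + (c-1)$, so the formal index of any section in $\M(E_{C,r}, F_{C,r})$ is at least $\dim(C) + (c-1)$.

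Third comes the dimension count. A glued section of $(E,F)$ of index zero corresponds to a pair $(u_0, u_1)$ with formal indices satisfying $\ind(u_0) + \ind(u_1) = \dim(C)$, since the fiber product over the $\dim(C)$-dimensional Lagrangian at the seam reduces the dimension by $\dim(C)$. Combined with $\ind(u_1) \ge \dim(C) + (c-1)$ this forces $\ind(u_0) \le -(c-1)$. For $c \ge 2$ this is strictly negative, so $\M(E_0, F_0)_{\ind(u_0)}$ is empty and no index-zero sections of $(E, F)$ contribute. In the codimension-one case $c = 1$, Remark \ref{codimone} identifies the relevant moduli space $\M(E_{C,r}, F_{C,r})_{c-1}$ with a double cover of $C$ whose two sheets are related by complex conjugation $a \mapsto \ol{a}$ in the explicit family $w_{r,a,b}$; these two sheets carry opposite induced orientations, and their contributions to the signed count cancel.

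For the null-homotopy with positive $q$-exponents, I would consider the one-parameter family $(E_{C,r}, F_{C,r})$ with $r$ varying over an interval $[r_1, r_0]$, where $r_1$ is small enough that the vanishing above applies at $r = r_1$. The one-dimensional component of the parametric moduli space yields a chain homotopy $H$ between the chain-level operators $C\Phi(E,F)$ computed at $r_0$ and $r_1$, and hence (composing with the already-vanishing endpoint) a null-homotopy of $C\Phi(E,F)$. The $q$-exponents of $H$ are the symplectic areas of the counted sections: since the area absorbed into the $E_{C,r}$ piece tends to zero with $r$ (by the non-negative curvature of Proposition \ref{EC} and Lemma \ref{above}) while the $E_0$ contribution is bounded below by a fixed positive constant, these $q$-exponents are uniformly positive when $r$ is sufficiently small. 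The hardest step will be controlling signs on the double cover in the codimension-one case, which amounts to an orientation computation for the explicit sections $w_{r,a,b}$ paralleling \cite{se:lo}, together with verifying transversality of the evaluation maps on the fiber product so that the gluing in Theorem \ref{gluing} produces the expected formal dimensions.
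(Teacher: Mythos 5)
Your overall strategy matches the paper's almost step for step: invoke Theorem \ref{gluing} to identify index-zero sections of the glued fibration with fiber products $(u_0,u_1)$, use Lemma \ref{above} to bound $\ind(u_1)\ge\dim(C)+(c-1)$, combine with the index identity $\ind(u_0)+\ind(u_1)=\dim(C)$ to force $\ind(u_0)<0$ (hence emptiness) when $c\ge 2$, handle $c=1$ by the double-cover cancellation of Remark \ref{codimone}, and then extract the null-homotopy from a parametric moduli space (the paper stretches the neck/varies $r$, you vary $r$ over $[r_1,r_0]$ — these are effectively the same deformation up to Remark \ref{indepchoice}). Two points deserve correction, though neither affects the skeleton of the argument.

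First, in the codimension-one case the two sheets of the double cover $\ev_1:\M(E_{C,r},F_{C,r})_0\to C$ are not exchanged by complex conjugation $a\mapsto\ol a$. From the relations $a_2=\pm ia_1$, $a_1\ol a_1=1/2$ in \eqref{become}, the two branches are exchanged by the holomorphic involution $\iota\colon(z_1,z_2)\mapsto(z_1,-z_2)$, which is what the paper uses; the key property is that $\iota$ is orientation-reversing on exactly one factor of $F$, so the two contributions carry opposite signs. Complex conjugation would be a different, antiholomorphic map and would not sit cleanly in the discussion of coherent orientations.

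Second, your positivity argument for the $q$-exponents of the homotopy operator is not justified as stated. You claim the $E_0$-contribution to the symplectic area ``is bounded below by a fixed positive constant,'' but $E_0$ is an arbitrary quilted Lefschetz-Bott fibration and in the intended applications (e.g.\ the pair-of-pants surface in Lemma \ref{vanishes}) it has trivial curvature, so there is no such lower bound: horizontal sections of $E_0$ can have zero area. The paper's argument is different and more robust: the standard fibration $(E_{C,r},F_{C,r})$ has non-negative curvature (Proposition \ref{EC}), zero-area sections are horizontal and hence have even index, and the homotopy operator only counts parametrized sections of odd index, so their areas are strictly positive. In fact, once you have $\ind(u_1)\ge\dim(C)+(c-1)$ for all $r$ in your interval (which is what ``$r$ sufficiently small'' buys you via Lemma \ref{above}), the parametrized index-$(-1)$ moduli space of pairs is already empty for $c\ge 2$, so the positivity statement is essentially about the behavior of the deformation in the gluing parameter $\rho$ and is best handled through the non-negative-curvature/parity argument rather than the area-splitting estimate you propose.
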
 

\begin{proof}   Suppose that $E,F$ are as in the statement of 
the Corollary.  Consider the family of surfaces obtained by stretching
the neck so that a standard fibration $(E_{C,r},F_{C,r})$ bubbles off.
The chain level invariants $C\Phi(E_{C,r}, F_{C,r})$ are
chain-homotopic by a chain-homotopy corresponding to isolated points
in the parametrized moduli space for the deformation.  
Since the curvature is non-negative and zero-area sections are of even
index, the $q$-exponents in the homotopy operator are positive.  The
gluing theorem \ref{gluing} gives a bijection between
pseudoholomorphic sections of $E_\rho, F_\rho$, for small $\rho$, and
$(u_0,u_1)$ in the zero-dimensional component
$(\ev_0,\ev_1)^{-1}(\Delta)$, where $\Delta$ is the diagonal in the
gluing fiber $F_{0,z_0} \cong F_{C,z_1}$.  Equality of indices gives
$$ \dim T_{u_0} \M(E_0,F_0)
 + \dim T_{u_1} \M(E_C,F_C) = \dim(C)
$$
for $k \in \{ 0,1 \}$.  In the codimension two case Corollary
\ref{pinch} now follows from Lemma \ref{above} which implies that the
moduli spaces for small $r$ and large $\rho$ are empty.  The
chain-level invariants are independent of the choice of $r$ up to
chain-homotopy.

The codimension one case depends on the description of the double
cover in \ref{codimone}.  Indeed, in this case the limiting moduli
space is not empty, but has two components corresponding to the
choices of sign in \ref{codimone}.  Each component is isomorphic to
the Lagrangian.  Furthermore, each has the induced orientation given
by deforming the boundary value problem to a trivial one and using the
orientation from the Lagrangian as in Remark \ref{sketch}.  The two
sections $u_\pm$ corresponding to a given $(a_1,a_2)$ in
\eqref{become} are related by the involution
$$\iota: \C^2 \to \C^2, \quad (z_1,z_2) \mapsto (z_1,-z_2) .$$
It follows that any deformation of $u_+^* F$ to a trivial boundary
condition induces a deformation of $u_-^* F$ and vice versa, giving 
a commutative diagram
$$ \begin{diagram} \node{ \det(D_{u_+^* E, u_+^* F})} \arrow{e}
  \arrow{s} \node{ \det( (u_+^* F)_1)} \arrow{s} \\ \node{
    \det(D_{u_-^* E, u_-^* F})} \arrow{e} \node{ \det( (u_-^* F)_1).}
  \end{diagram} $$
The evaluation maps for $u_\pm$
$$ \ker D_{u_+^* E, u_+^* F}
 \cong u_+^* F_1
, \quad \ker D_{u_-^* E, u_-^* F} \cong u_-^* F_1 $$
given by linearizing \eqref{relations} are by definition orientation
preserving.  Since the right-hand-arrow is orientation-reversing
(exactly one factor of $F$ is reversed) the left-hand-arrow is also
orientation-reversing.  Hence the involution induces an orientation
reversing involution of the moduli space $\M(E_{C,r}, F_{C,r})$ of
disks with boundary on the Lagrangian $F_{C,r}$.  Since the involution
also reverses the orientation on the Lagrangian, the induced
involution on the limiting moduli space $\M(E_\rho,F_\rho)$ is
orientation preserving.  These contributions cancel in the weighted
disk count \eqref{CPhi}.
\end{proof}  

\begin{remark} \label{indepchoice} {\rm (Independence of choices)}  
Let $S_0,E_0,F_0,E_C,F_C,D_r$ be as above.  Define an invariant
associated to the nodal surface $S_r$ by identifying a point on the
seam $S_0$ and disk $D$.  Define a nodal Lefschetz fibration $E_r$ by
identifying a fiber of $E_0$ with a fiber of $E_C$ with boundary
condition $F_r \subset E_r$.  By counting pseudoholomorphic sections
with matching condition at the node one obtains an invariant
$\Phi(E_r,F_r)$ for the nodal fibration, equal to the invariant for
the glued fibration $\Phi(E,F)$.  In particular, vanishing of
$\Phi(E_r,F_r)$ for $r$ small implies vanishing for any $r$.
\end{remark} 

\subsection{Horizontal invariants}

As in Seidel \cite{se:lo}, the computation of the relative invariants
in the special cases needed for the exact triangle uses only
horizontal sections, defined as follows.

\begin{definition}   Let $\ul{S}$ be a quilted surface with
strip-like ends, $\ul{E}$ a quilted Lefschetz-Bott fibration, and
$\ul{F}$ a collection of Lagrangian seam/boundary conditions.
\begin{enumerate} 
\item {\rm (Horizontal sections)} A section ${u}: \ul{S} \to \ul{E}$
  (that is, a collection of sections $u_k: S_k \to E_k,\  k = 1,\ldots,
  m$) is {\em horizontal} if
$$\Im Du_k(s) = TE^h_{k,u_k(s)}$$
for all $s \in S_k, k = 1,\ldots, m$.  Let $ \M^h(\ul{E},\ul{F}) $
denote the space of horizontal sections.
\item {\rm (Horizontal almost complex structures)} A collection of
  compatible almost complex structures $\ul{J} \in \J(\ul{E})$ is {\em
    horizontal} if
$$J_k(e) T_e^hE_k = T^h_e E_k, \quad \forall e \in E_k
  \ssm E_k^\crit, k = 1,\ldots, m $$  
or equivalently if $\omega_{E_k}(J \cdot, \cdot)$ is symmetric for $k
= 1,\ldots, m$.  Let $\J^h(\ul{E})$ denote the set of horizontal
compatible almost complex structures.  If $J \in \J^h(\ul{E})$, then
any horizontal section is $J$-holomorphic, that is, 
$$ (J \in \J^h(\ul{E})) \implies (\M^h(\ul{E},\ul{F}) \subset
\M(\ul{E},\ul{F})).$$
\item {\rm (Clean moduli spaces)} $\M^h(\ul{E},\ul{F})$ is {\em clean}
  if $\M^h(\ul{E},\ul{F})$ is a smooth submanifold of the Banach
  manifold of sections of a sufficiently differentiable Sobolev class
  of $\ul{E}$ with tangent space
$$ T_{u} \M^h(\ul{E},\ul{F}) = \{ \xi \in \Omega^0(u^*
  T^{\on{vert}}\ul{E}, ( u | \partial \ul{S})^* T^{\on{vert}} \ul{F})
  \ | \ \nabla_u \xi = 0 \} $$
the set of horizontal sections of $u^* T^{\on{vert}}\ul{E}$ with
boundary/seams in $( u | \partial \ul{S})^* T^{\on{vert}} \ul{F}$.
\end{enumerate} 
\end{definition}

\begin{remark} {\rm (Horizontal and vertical energy)}  
The energy of a section of a Lefschetz-Bott fibration may be broken up
into horizontal and vertical parts.  We suppose that we choose
$\omega_S$ so that $\omega_E + \pi^* \omega_S$ is symplectic.  This
two-form together with the almost complex structure defines a metric
on the total space $E$.  Fix a metric on $S$, giving rise to an area
form $\d \Vol_S$, and define
$$ {\mathcal{E}}(u) = \int_S \Vert \d u \Vert^2 \d \Vol_S .$$
The energy $\mathcal{E}(u)$ splits into vertical and horizontal
contributions
$$ {\mathcal{E}}(u) = {\mathcal{E}}^v(u) + {\mathcal{E}}^h(u), \quad {\mathcal{E}}^{v,h}(u) = \int_S \Vert \d^{v,h} u
\Vert^2 \d \Vol_S 
.$$
As in Seidel \cite[Equation (2.10)]{se:lo} one has an energy-area
relation
$$ {\mathcal{E}}(u) = \int_S u^* (\omega_E + \pi^* \omega_S) + (1/2) \int_S \Vert
\olp_J u \Vert^2 \d \Vol_S .$$
This implies that for pseudoholomorphic sections one has an
energy-area relation
\begin{equation} \label{minim}
 {\mathcal{E}}^v(u)
 + \int_S f(u) \omega_S = \int_S u^* \omega_E .\end{equation}
A similar relation holds in the quilted case.
\end{remark} 

\begin{proposition} \label{horizprop} {\rm (Criterion for horizontal moduli spaces
to be a component)} Suppose that $(\ul{E},\ul{F})$ is a quilted
  symplectic Lefschetz-Bott fibration with Lagrangian boundary
  condition.  If $\ul{E}$ is non-negatively curved and equipped with a
  horizontal almost complex structure such that $\M^h(\ul{E},\ul{F})$
  is clean, of dimension $\Ind(D_u)$, and consists of sections with
  vanishing $\omega_E$-integral,
$$ \forall u \in \M^h(\ul{E},\ul{F}), \quad \int_S u^* \omega_E = 0
  $$
then $\M^h(\ul{E},\ul{F})$ is a path component of $\M(\ul{E},\ul{F})$
and consists of regular sections.
\end{proposition}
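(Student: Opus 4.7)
The plan is to show that $\M^h$ is both open and closed in $\M$, so that it forms a union of path components, and then to deduce regularity from the dimension hypothesis via a Kuranishi-type argument. The inclusion $\M^h \subseteq \M$ is automatic: since $J$ is horizontal and $u$ is horizontal, $Du$ takes values in $TE^h$ which is $J$-invariant and identified with $TS$ via $D\pi$, so $\overline{\partial}_J u = 0$.

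For openness, I would fix $u \in \M^h$ and take any $u' \in \M$ in a $C^0$-neighborhood. Since $u'$ is homotopic to $u$ rel boundary and asymptotic data, closedness of $\omega_E$ and the Lagrangian condition on $\ul{F}$ yield $\int_{\ul{S}} (u')^* \omega_E = \int_{\ul{S}} u^* \omega_E = 0$ by Stokes. The area-energy identity \eqref{minim} applied to the pseudoholomorphic section $u'$ then reads
$$\mathcal{E}^v(u') + \int_{\ul{S}} f(u') \omega_{\ul{S}} = 0,$$
with both terms non-negative (the first as a vertical Dirichlet energy, the second by non-negative curvature $f \geq 0$). Hence $\mathcal{E}^v(u') = 0$, $du'$ lies in the horizontal distribution, and $u' \in \M^h$.

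For closedness, a sequence $u_n \in \M^h$ has fixed $\omega_E$-area zero, and Gromov compactness gives convergence to some $u_\infty$ with possible bubble trees. Every non-trivial bubble — sphere or disk bubble in a fiber, or Floer trajectory bubbling off a strip-like end — carries strictly positive $\omega_E$-area by $J$-compatibility of $\omega_E$ on the fibers and by Lemma \ref{smalllength}. Since total area is preserved in the Gromov limit and equals zero, no bubbles can form. The principal limit $u_\infty$ is then a smooth section of $\omega_E$-area zero, and \eqref{minim} again forces $\mathcal{E}^v(u_\infty) = 0$, so $u_\infty \in \M^h$.

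Combining openness and closedness, $\M^h$ is a union of path components of $\M$. For regularity, cleanness identifies $T_u \M^h$ with the parallel sections of $u^* T^{\mathrm{vert}} \ul{E}$ with boundary conditions in $(u|_{\partial \ul{S}})^* T^{\mathrm{vert}} \ul{F}$, which are automatically annihilated by $D_u$, so $T_u \M^h \subseteq \ker(D_u)$, with $\dim T_u \M^h = \Ind(D_u)$ by hypothesis. Since locally $\M = \M^h$ is a smooth manifold of dimension $\Ind(D_u)$, a Kuranishi-model argument forces the inclusion $T_u \M^h \subseteq \ker(D_u)$ to be an equality: otherwise the obstruction map $\kappa : \ker(D_u) \to \coker(D_u)$ would cut out a subset strictly smaller than $\ker(D_u)$, yet still equal to $\M^h$ of dimension $\Ind(D_u)$, which is inconsistent with the expected-dimension count. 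Hence $\coker(D_u) = 0$ and $u$ is regular. The main obstacle is precisely this last step: one must carefully use cleanness together with the openness-closedness conclusion to identify the full Fredholm kernel with the geometric tangent space, ruling out ``virtual'' infinitesimal deformations unaccounted for by parallel sections.
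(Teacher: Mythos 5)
Your openness-closedness argument for the first claim is sound and reaches the same place as the paper's, which goes more directly: any pseudoholomorphic section $\tilde u$ homotopic to a zero-area horizontal $u$ satisfies $\mathcal{E}^v(\tilde u) + \int_S f(\tilde u)\,\omega_S = 0$ by the area-energy identity together with Stokes, and both summands are non-negative, so $\tilde u$ is horizontal. The Gromov-compactness detour for closedness is not wrong, but it is unnecessary.

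The regularity step, however, has a genuine gap, and it sits exactly where you flag the ``main obstacle.'' The inclusion you establish, $T_u\M^h \subseteq \ker(D_u)$, runs in the \emph{wrong direction}: it is automatic, and combined with $\dim T_u\M^h = \operatorname{Ind}(D_u) = \dim\ker(D_u) - \dim\coker(D_u)$ it yields only $\dim\coker(D_u) \ge 0$, which is vacuous. Your Kuranishi argument does not close the gap: a Kuranishi map $\kappa : \ker(D_u) \to \coker(D_u)$ with $\kappa(0)=0$, $D\kappa(0)=0$ can have smooth zero set of dimension exactly $\operatorname{Ind}(D_u)$ while $\coker(D_u)\neq 0$. (Take $\ker(D_u) \cong \R^n$, $\coker(D_u) \cong \R$, $\kappa(x)=x_1^2$: the zero set is the hyperplane $\{x_1=0\}$ of dimension $n-1=\operatorname{Ind}(D_u)$.) So ``$\M^h$ is smooth of dimension $\operatorname{Ind}(D_u)$'' is entirely consistent with non-surjectivity of $D_u$, and the ``inconsistency with the expected-dimension count'' you invoke is not an inconsistency. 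What is actually needed is the \emph{reverse} inclusion $\ker(D_u)\subseteq T_u\M^h$, and the paper (following Seidel) obtains it by differentiating the energy-area identity twice along $u$:
\[ \Vert\nabla_u\xi\Vert^2 + \int_S \operatorname{Hess}(f)(u)\,\omega_S = \Vert D_u\xi\Vert^2 . \]
Since $f\ge 0$ attains its minimum $0$ along horizontal sections, the Hessian term is non-negative, so $D_u\xi=0$ forces $\nabla_u\xi=0$, i.e.\ $\ker(D_u)\subseteq\ker(\nabla_u)=T_u\M^h$. Now $\dim\ker(D_u)\le\operatorname{Ind}(D_u)$ does force $\coker(D_u)=0$.
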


\begin{proof} 
As in \cite[Lemma 2.11]{se:lo}, suppose that $u$ is a horizontal
section with $\int_S u^* \omega_E =0 $.  Any other pseudoholomorphic
section $\ti{u}$ homotopic to $u$ satisfies by \eqref{minim} the
equation
$${\mathcal{E}}^v(\ti{u})
 + \int_S f(\ti{u})
\omega_S = 0 .$$  
By the non-negative curvature assumption $\ti{u}$ must satisfy 
$$ {\mathcal{E}}^v(\ti{u}) = 0, \quad \on{im}(\d \ti{u}) \subset T^h
E $$
as claimed.  The regularity of $\M^h(\ul{E},\ul{F})$ follows as in
Seidel \cite[Lemma 2.12]{se:lo}: Taking the second derivative of the 
index-area relation \eqref{minim} gives 
$$ \Vert \nabla_u \xi \Vert^2 + \int_S \on{Hess}(f)(u) \omega_S = \Vert 
D_u \xi \Vert^2 $$
where $on{Hess}$ is the Hessian of $f$.  Non-negativity of the
curvature implies that the Hessian is positive, and so $ \ker(D_u)
\subset \ker(\nabla_u)$.  Then $\dim \M^h(\ul{E},\ul{F}) = \Ind(D_u)$
implies $\coker(D_u) = 0$.
\end{proof} 

We give a criterion for the zero-dimensional component
$\M(\ul{E},\ul{F})_0$ to consist entirely of horizontal sections in
the monotone case; the exact case was discussed in Seidel
\cite{se:lo}.  Let $\ul{E} \to \ul{S}$ be a monotone Lefschetz-Bott
fibration with Lagrangian boundary/seam conditions $\ul{F}$.  For each
collection of intersection points $( (x_e)_{e \in \E})$ define
$c((x_e)_{e \in \E}) $ by
$$ \lambda I(u) = 2 ( A(u) - c((x_e)_{e \in \E})) $$
for any collection of sections $u$ with limits $ ( x_e )_{ e \in \mE
}$.

\begin{proposition}  \label{filter} {\rm (Criterion for the existence of only horizontal sections)} 
Suppose that $\ul{E}$ is a quilted Lefschetz-Bott fibration with
non-negative curvature and is equipped with a horizontal almost
complex structure.  If $c((x_e)_{e \in \mE}) = 0 $ then
$\M(\ul{E},\ul{F}; (x_e)_{e \in \mE} )_0$ consists of horizontal
sections.
\end{proposition}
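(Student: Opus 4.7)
The plan is a direct energy-area computation, parallel to the one used in Proposition \ref{horizprop} but now run backwards: rather than starting from a horizontal section and showing that nearby pseudoholomorphic sections must also be horizontal, I will start from any pseudoholomorphic section in the zero-dimensional stratum and show that the monotonicity constant vanishing forces vertical energy to be zero.

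First I would observe that on the zero-dimensional component $\M(\ul{E},\ul{F};(x_e)_{e\in\mE})_0$ the Fredholm index satisfies $\Ind(D_u) = 0$ (up to the translation freedom, which is not present here since the surface is a quilt with strip-like ends). Combined with the monotonicity relation
$$\lambda I(u) = 2\bigl(A(u) - c((x_e)_{e\in\mE})\bigr)$$
and the hypothesis $c((x_e)_{e\in\mE})=0$, this gives $A(u) = \int_S u^*\omega_E = 0$ on each patch, and hence $\sum_k \int_{S_k} u_k^*\omega_{E_k} = 0$.

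Next I would invoke the energy-area identity \eqref{minim} patch by patch. For a pseudoholomorphic section $u_k$ of $(E_k,\omega_{E_k},\pi_k)$ with horizontal compatible almost complex structure one has
$$\mathcal{E}^v(u_k) + \int_{S_k} f_k(u_k)\,\omega_{S_k} = \int_{S_k} u_k^*\omega_{E_k},$$
where $f_k\geq 0$ encodes the curvature of the symplectic connection via $\omega_{E_k}|T^hE_k = f_k\,\pi_k^*\omega_{S_k}$. Summing over patches and using that the horizontal and vertical parts split cleanly because $J_k$ is horizontal (so there are no cross terms in $\|du_k\|^2$), one obtains
$$\sum_k \mathcal{E}^v(u_k) + \sum_k \int_{S_k} f_k(u_k)\,\omega_{S_k} = 0.$$

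The final step is to use the non-negative curvature assumption $f_k\geq 0$ and non-negativity of vertical energy to conclude that both summands vanish. In particular $\mathcal{E}^v(u_k) = \int_{S_k}\|d^v u_k\|^2\,d\Vol_{S_k} = 0$ on every patch, so $d^v u_k \equiv 0$ and $\Im\,Du_k(s)\subset T^hE_k$ at every point. Thus $u$ is horizontal as claimed. The seam conditions $\ul{F}$ play no role in the estimate itself; they only enter through the choice of monotonicity constant $c((x_e)_{e\in\mE})$, which was already absorbed in the first step.

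The only point requiring care is the choice of auxiliary area form $\omega_{S_k}$ used to make $\omega_{E_k}+\pi_k^*\omega_{S_k}$ symplectic and hence to define $\mathcal{E}^v(u_k)$. Since the identity \eqref{minim} holds for any such choice and the non-negativity $f_k\geq 0$ is intrinsic to $\omega_{E_k}$ and the horizontal distribution, the conclusion is independent of this auxiliary choice; this is the only place where I expect a small bookkeeping subtlety, but no genuine obstacle.
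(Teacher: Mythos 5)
Your argument is correct and is essentially the paper's own: index zero together with the monotonicity relation and $c((x_e)_{e\in\mE})=0$ forces $A(u)=0$, and then the energy--area identity \eqref{minim} together with non-negative curvature forces $\mathcal{E}^v(u)=0$, hence horizontality. The only added value in your write-up is making the patch-by-patch bookkeeping for the quilt explicit, which the paper leaves implicit.
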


\begin{proof}   If $u$ is a pseudoholomorphic section with 
index $0$, then the monotonicity relation \eqref{IA2} implies that $u$
has non-negative symplectic area equal to $c((x_e)_{e \in \E}) $.  If
this constant vanishes, then all such sections must have vanishing
symplectic area.  By the non-negativity of the curvature and
\eqref{minim}, any such section has vanishing vertical energy and so
is horizontal.
\end{proof} 

\section{Floer versions of the exact triangle}
\label{floerversion} 

The proofs of the exact triangles described in the introduction follow
the lines of the proof of Floer's exact triangle \cite{br:fl}, in
Seidel \cite{se:lo} and Perutz \cite{per:gys}. Namely, one first
constructs a short sequence of cochain groups that is exact {\em up to
  leading order}, and then uses a spectral sequence argument to deduce
the existence of a long exact sequence of cohomology groups.  In this
section we also describe various extensions, to the case of minimal
Maslov number two and the case of periodic Floer cohomology.

\subsection{Fibered Picard-Lefschetz formula}

In this section we prove the exact triangle on the level of vector
spaces; this is essentially equivalent to the fibered Picard-Lefschetz
formula in Theorem \ref{fpl}. 

\begin{definition} {\rm (Angle functions)}  Recall from Section \ref{twists} that a
Dehn twist in a local model $T^\dual S^c$ is defined by rotating a
vector in $T^\dual S^c$ with norm $t$ by an {\em angle function}
$$\theta \in C^\infty([0,\infty)), \quad \theta(0) = \pi, \quad
  \theta(t) = 0, t \gg 0 .$$
The angle function $\theta(t)$ is related to the choice of $\zeta(t)$
of \eqref{rt} by $\theta(t) = \zeta'(t)$.  For the rest of the paper
we assume that the function $\theta(t)$ is decreasing with $t$.  We
wish to use angle functions 
that go to $0$ sufficiently quickly.  In particular, given an angle
function $\theta$ we consider the family of angle functions defined by
\begin{equation} \label{theta} 
\theta^{\delta}(t) := \theta({\delta} t) .\end{equation}
\end{definition}

\begin{lemma} \label{bijection} {\rm (Intersection points for fibered Dehn twists)}  
Let $C \subset M$ be a compact spherically fibered coisotropic in a
symplectic manifold $M$ over a symplectic manifold $B$, and $L^0,L^1
\subset M$ Lagrangian submanifolds.  Suppose that the intersection
$$(L^0 \times C) \pitchfork (C^t \times L^1) \subset M \times B \times
M $$
is transverse while the intersection
$$ L^0 \cap L^1 \cap C = \emptyset $$
is empty.  For ${\delta} > 0 $ let $\tau_{C,{\delta}} \in
\Diff(M,\omega)$ be a fibered Dehn twist using a given local model and
the angle function $\theta^{\delta}(t)$.  There exists a constant
${\delta}_0$ such that if ${\delta} > {\delta}_0$ then there exists a
bijection
\begin{equation} \label{bij}
(i_1 \cup i_2): (L^0 \cap L^1 ) \cup \left( 
(L^0 \times C) \cap (C^t
  \times L^1) \right) \to L^0 \cap \tau_{C,{\delta}}^{-1} L^1
  .\end{equation}
\end{lemma}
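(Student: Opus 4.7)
Plan: The bijection decomposes naturally according to whether an intersection point lies far from or near the coisotropic $C$; the two summands on the left will correspond to these two regions, and the assertion of the lemma is essentially a compactness+implicit function theorem statement in a local model around $C$.

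First I would handle the ``far from $C$'' part giving $i_1$. Since $L^0 \cap L^1$ is compact and disjoint from $C$ by hypothesis, there is an open neighborhood $U$ of $C$ with $\overline{U} \cap L^0 \cap L^1 = \emptyset$. In the local model $P(T^\dual S^c)_\eps$ the support of $\tau_{C,\delta} - \mathrm{id}$ is contained in $\{|v| \leq \eps/\delta\}$, so for $\delta \geq \delta_0$ large enough $\mathrm{supp}(\tau_{C,\delta}-\mathrm{id}) \subset U$. Outside $U$ the twist is the identity, so $i_1 := \mathrm{id} : L^0 \cap L^1 \to (L^0 \cap \tau_{C,\delta}^{-1}L^1) \setminus U$ is a bijection onto the ``outside'' part of the right hand side.

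Next I would construct $i_2$ inside $U$ using the local model $P(T^\dual S^c)_\eps$ and rescaling. Substitute $\tilde{v} := \delta v$; in rescaled coordinates $\tau_{C,\delta}$ becomes literally the standard model twist (angle function $\theta$), independent of $\delta$. A pair $(m_0,b,m_1) \in (L^0 \times C) \cap (C^t \times L^1)$ consists of $b \in L^0_B \cap L^1_B$ (where $L^i_B := \pi(L^i \cap C) \subset B$ is Lagrangian) together with unique points $m_0, m_1$ on the sphere fiber over $b$. Given such a triple, the rescaled equations $p \in L^0$, $\tau_C(p) \in L^1$ near $(m_0, m_1)$ admit an explicit limiting solution $(m_0, v_*)$ where $v_* \in T^\dual_{m_0} S^c_b$ is the unique cotangent vector whose direction is along the geodesic from $m_0$ to $m_1$ in $S^c_b$ and whose magnitude satisfies $\theta(|v_*|) = d(m_0,m_1)$ (with $v_* = 0$ if $m_1$ is antipodal to $m_0$); this is well-defined because $\theta$ decreases monotonically from $\pi$ to $0$. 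The hypothesized transversality $(L^0 \times C) \pitchfork (C^t \times L^1)$ in $M \times B \times M$ translates directly into invertibility of the linearization of these rescaled equations at $(m_0, v_*)$. By the implicit function theorem the solution persists for all large $\delta$, giving a unique $p_\delta \in L^0 \cap \tau_{C,\delta}^{-1}L^1$ near $m_0$, and this defines $i_2$.

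Finally I would prove surjectivity and injectivity inside $U$. For any sequence $\delta_k \to \infty$ and $p_k \in (L^0 \cap \tau_{C,\delta_k}^{-1}L^1) \cap U$, necessarily $p_k \in \mathrm{supp}(\tau_{C,\delta_k} - \mathrm{id})$ since otherwise $p_k \in L^0 \cap L^1 \cap \overline{U} = \emptyset$. Hence $d(p_k, C) \to 0$, and using Lemma~\ref{approxint} applied to $L^0$ and $L^1$ (the latter via $\tau_{C,\delta_k}(p_k) \in L^1$ together with $d(\tau_{C,\delta_k}(p_k), C) \to 0$) one extracts a subsequential limit $(p_\infty, q_\infty) \in (L^0 \cap C) \times (L^1 \cap C)$ in the same fiber of $C \to B$. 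Thus $(p_\infty, \pi(p_\infty), q_\infty) \in (L^0 \times C) \cap (C^t \times L^1)$, and by the local uniqueness from the previous step $p_k = i_2(p_\infty, q_\infty)$ for $k$ sufficiently large. Combined with $i_1$ this gives the bijection.

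The main obstacle is the implicit function theorem step: one must verify carefully that the linearized rescaled intersection equations are governed precisely by the tangent-space conditions coming from transversality of $L^0 \times C$ and $C^t \times L^1$ in $M \times B \times M$, and that the $\delta \to \infty$ limit is regular enough (despite the apparent singularity of $\theta^\delta$ at $|v|=0$ when rescaled back to $v$-coordinates) for the implicit function theorem to apply uniformly on the finite set of limit solutions.
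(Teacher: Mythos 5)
Your proof is correct and follows essentially the same approach as the paper: shrink the support of $\tau_{C,\delta}$ to separate $L^0\cap L^1$ from $C$ (giving $i_1$), then rescale $\tilde{v}=\delta v$ near $C$ so that $\tau_{C,\delta}$ becomes a fixed model twist and the rescaled Lagrangians $\delta L^k$ converge as $\delta\to\infty$ to a smooth limiting family, where the transversality hypothesis and the implicit function theorem produce $i_2$. Your explicit compactness argument for surjectivity via Lemma~\ref{approxint} spells out a step the paper states more tersely, and the obstacle you flag (relating the linearized rescaled equations to the transversality of $(L^0\times C)\pitchfork(C^t\times L^1)$) is handled at a comparable level of detail in the paper.
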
 

\begin{proof}  The intersection points
of the untwisted Lagrangians naturally include in the intersection
points of the twisted Lagrangians for ${\delta}$ sufficiently large:
The transversality assumptions imply that the intersection $L^0 \cap
L^1 \cap C$ is empty.  By taking ${\delta}_0$ sufficiently large we may
suppose that $\tau_{C,{\delta}}$ is supported in a neighborhood $U_C$
disjoint from $L^0 \cap L^1$.  Thus there is an inclusion
$$ i_1: L^0 \cap L^1 \to L^0 \cap \tau_{C,{\delta}}^{-1} L^1 .$$
We wish to identify the remaining intersection points with $(L^0
\times C) \cap (C^t \times L^1) $.

We first recall the unfibered case in Seidel \cite[Lemma 3.2]{se:lo}.
Suppose that the base $B$ is a point.  Choose a local model for the
Lagrangian $C$ given by a symplectomorphism of a neighborhood of the
zero section in $T^\dual S^c$ with a neighborhood of $C$ in $M$.  The
local model $\phi: T^\dual S^c \supset U \to M$ may be chosen so that
the Lagrangians $L^0,L^1$ are fibers of $\pi: T^\dual S^c \to S^c$:
$$ L^k \cap \phi(U) = \phi( U \cap \pi^{-1}(v_k)), \quad v_k \in S^c.
$$
What this amounts to is choosing the initial diffeomorphism with this
property and then choosing the Moser isotopy to preserve the
Lagrangians.  The Dehn twist $\tau_C^{-1}$ acts at $[v] \in P(T^\dual
S^c)$ by normalized geodesic flow by time $\theta^{\delta}(|v|)$.
There exists a unique $v \in L^1$ of norm less than $\pi$ such that
its time $\pi - |v|$ normalized geodesic flow lies in $L^0$.  The
unique point $w \in \R_{> 0} v$ with $ \theta^{\delta}(|w|) = \pi -
|v|$ gives the desired intersection point.

We reduce to the case of trivial base by the use of suitable local
models as follows.  Let
$$l_0 \in L^0 \cap C, l_1 \in L^1 \cap C, \quad \pi(l_0) = \pi(L_1) =: b$$
be points with the same projection $b \in B$.  Consider the action of
scalar multiplication on the vector bundle $P(\pi_{T^\dual S^c}):
P(T^\dual S^c) \to P(S^c)$.  This action induces an action of
sufficiently small scalars on $M$ via the coisotropic embedding.  By
\eqref{theta} $\tau_{C,{\delta}} = {\delta} \tau_C {\delta}^{-1} $
wherever the right-hand-side is defined.  For $k \in \{ 0,1 \}$, the
submanifolds $({\delta} L^k \cap U_C)_{{\delta} > 0 }$ fit together with
$P(\pi_{T^\dual S^c})^{-1} (L^k \cap C)$ in the limit ${\delta} \to
\infty$ to a smooth family at ${\delta} = \infty$.  Indeed, if $L^k$ is
given locally by $\{ f(z,x,y) = 0 \}$ in local coordinates $z$ on $B$
and cotangent coordinates $(x,y)$ on $T^\dual S^c$ then
\begin{equation} \label{cutsout}
{\delta} L^k = \{ f(z,x,{\delta}^{-1} y) = 0 \} .\end{equation}
By assumption $L^k$ is transversal to the zero section in $T^\dual
S^c$.  This implies that the equation \eqref{cutsout} cuts out a
smooth family at ${\delta} = \infty$.  By the case $B$ trivial, the
intersection at ${\delta} = \infty$ is given by
$$ P(\pi_{T^\dual S^c})^{-1} (L^0 \cap C) \cap \tau_{C}^{-1}
P(\pi_{T^\dual S^c})^{-1} (L^1 \cap C) $$
and is transverse.  By the implicit function theorem, the set of
intersection points $ L^0 \cap \tau_{C,{\delta}}^{-1} L^1$ forms a
smooth manifold parametrized by ${\delta} \gg 0 $ and intersection
points $(L^0 \times C) \cap (C^t \times L^1)$ as desired.

Alternatively, one can argue as follows via the limit of the twisted
Lagrangian.  After restricting to connected component in an open
neighborhood of $p^{-1}(b)$, the projection $L^1 \cap C \to B$ is an
embedding.  For any point $l \in L^1$ with $\tau_C(l) \neq l$, the
images of the derivatives of $\tau_{C,{\delta}}^{-1}$ converge to the
tangent space of $p^{-1}(C \cap L^1)$: If the projection of
$\tau_C^{-1}(l)$ to $C$ is $c$ in the local model then
$$ \on{Im}(D \tau_{C,{\delta}}^{-1}({\delta}^{-1} l)) =
\on{Im}({\delta}^{-1} D \tau_C^{-1}(l) {\delta}) 
\to D_c p^{-1}(T_{p(l)}
(C \cap L^1)), \quad {\delta} \to \infty $$
since ${\delta}^{-1} D \tau_C^{-1}(l) {\delta}$ converges to the
composition of $D \tau_C^{-1}(l)$ with projection to the tangent space
to $C$.  It follows that the sequence of submanifolds
$\tau_{C,{\delta}}^{-1}(L^1) - L^1$ converges, as an embedded
submanifold, to $p^{-1}(p(L^1 \cap C)) - L^1$.  Again by the implicit
function theorem, the intersections $L^0 \cap
\tau_{C,{\delta}}^{-1}(L^1)$ correspond to intersection points of $L^0
\cap p^{-1}(p(L^1 \cap C)) \cong (L^0 \times C) \cap (C^t \times
L^1)$.
\end{proof} 

\subsection{Lagrangian Floer version}
\label{exact}

We now prove the exact triangle Theorem \ref{main}.  Since $\tau_C$ is
a symplectomorphism, it suffices to prove the theorem with $L^1$
replaced by $\tau_C^{-1} L^1$.  That is, we construct a long exact
sequence
$$ \ldots HF(L^0,C^t,C,\tau_C^{-1}L^1) \to HF(L^0,\tau_C^{-1}L^1) \to
HF(L^0,L^1) \ldots .$$
More generally in the quilted case, it suffices to prove the theorem
with $\ul{L}^1$ replaced by $\ul{L}^1 \# \on{graph}(\tau_C)$.

\subsubsection{Definition of the maps}

Let $M$ be a monotone symplectic background and $C \subset M$ a
spherically fibered coisotropic submanifold of codimension $c \ge 2$.
Let $L^0,L^1 \subset M$ be admissible Lagrangian branes, and suppose
that $C$ is equipped with an admissible brane structure as a
Lagrangian submanifold of $M^- \times B$.  These conditions imply that
all Lefschetz-Bott fibrations discussed below are monotone as in Lemma
\ref{monotoneends}.  We may assume, after Hamiltonian perturbation,
that $C,L^0,L^1$ all intersect transversally.

\begin{definition} 
\begin{enumerate} 
\item {\rm (Chaps map)} The first map in the exact sequence is defined
  as the relative invariant associated to a ``quilted pair of pants'',
  or more accurately, ``quilted chaps'' in American dialect.  Let
  $\ul{S}_1$ denote the quilted surface shown in Figure \ref{pants}:
\begin{figure}[ht]
\begin{picture}(0,0)%
\includegraphics{pants.pstex}%
\end{picture}%
\setlength{\unitlength}{4144sp}%
\begingroup\makeatletter\ifx\SetFigFont\undefined%
\gdef\SetFigFont#1#2#3#4#5{%
  \reset@font\fontsize{#1}{#2pt}%
  \fontfamily{#3}\fontseries{#4}\fontshape{#5}%
  \selectfont}%
\fi\endgroup%
\begin{picture}(1613,2067)(2326,-2113)
\put(3601,-961){\makebox(0,0)[lb]{\smash{{\SetFigFont{10}{12.0}{\rmdefault}{\mddefault}{\updefault}{$\tau_C^{-1} L^1$}%
}}}}
\put(3016,-1366){\makebox(0,0)[lb]{\smash{{\SetFigFont{10}{12.0}{\rmdefault}{\mddefault}{\updefault}{$C$}%
}}}}
\put(3106,-736){\makebox(0,0)[lb]{\smash{{\SetFigFont{10}{12.0}{\rmdefault}{\mddefault}{\updefault}{M}%
}}}}
\put(3151,-1726){\makebox(0,0)[lb]{\smash{{\SetFigFont{10}{12.0}{\rmdefault}{\mddefault}{\updefault}{B}%
}}}}
\put(2341,-961){\makebox(0,0)[lb]{\smash{{\SetFigFont{10}{12.0}{\rmdefault}{\mddefault}{\updefault}{$L^0$}%
}}}}
\end{picture}%

\caption{Quilted surface $\ul{S}_1$ defining $HF(L^0,C^t,C,\tau_C^{-1}
  L^1;\Lambda)\to HF(L^0,\tau_C^{-1} L^1;\Lambda)$}
\label{pants}
\end{figure}
Let $(S_B,S_M)$ denote the patches of $\ul{S}_1$, and $\ul{E} = (S_M
\times M,S_B \times B)$, where $B$ is the base of the fibration $p: C
\to B$.  We identify $C$ with its image in $M \times B$.  Let $\ul{F}$
denote the Lagrangian seam/boundary condition for $\ul{E}$ given by
$L^0,L^1,C$ and consider the relative invariant
\begin{equation} \label{phi1} \Phi_1: HF(L^0 ,C, C^t, \tau_C^{-1} L^1;\Lambda)[\dim(B)] \to
HF(L^0, \tau_C^{-1} L^1;\Lambda) .\end{equation} 
This invariant was defined in Definition \ref{relinv} by counting
points in the zero-dimensional component of the moduli space $\M_1$ of
pseudoholomorphic quilts on $\ul{S}_1$.
\item {\rm (Lefschetz-Bott map)} The second map in the exact sequence
  is a relative invariant associated to a Lefschetz-Bott fibration
  with monodromy given by the Dehn twist.  Namely let $E_{C,r} \to
  D_r$ denote a standard Lefschetz-Bott fibration with monodromy
  $\tau_C$ from Lemma \ref{ECexist} and Definition
  \ref{smallstandard}.  Gluing in $E_{C,r}$ with the trivial fibration
  over a strip (using the identity as transition map to the left of
  the disk, and $\tau_C$ as transition map to the right) as in Seidel
  \cite[p. 7]{se:lo} gives a Lefschetz-Bott fibration $({E}_2,{F}_2)$
  over the infinite strip shown in Figure \ref{tauC}.
\begin{figure}[ht]
\begin{picture}(0,0)%
\includegraphics{tauC.pstex}%
\end{picture}%
\setlength{\unitlength}{4144sp}%
\begingroup\makeatletter\ifx\SetFigFont\undefined%
\gdef\SetFigFont#1#2#3#4#5{%
  \reset@font\fontsize{#1}{#2pt}%
  \fontfamily{#3}\fontseries{#4}\fontshape{#5}%
  \selectfont}%
\fi\endgroup%
\begin{picture}(1271,2398)(840,-2672)
\put(855,-1686){\makebox(0,0)[lb]{\smash{{\SetFigFont{9}{10.8}{\rmdefault}{\mddefault}{\updefault}{$L^0$}%
}}}}
\put(2010,-796){\makebox(0,0)[lb]{\smash{{\SetFigFont{9}{10.8}{\rmdefault}{\mddefault}{\updefault}{$ L^1$}%
}}}}
\put(1945,-2203){\makebox(0,0)[lb]{\smash{{\SetFigFont{9}{10.8}{\rmdefault}{\mddefault}{\updefault}{$\tau_C^{-1}
L^1$}%
}}}}
\end{picture}%
\caption{Lefschetz-Bott fibration $E_2 \to S_2$ defining the map
  $HF(L^0,\tau_C^{-1} L^1;\Lambda) \to HF(L^0, L^1;\Lambda)$}
\label{tauC}
\end{figure}
Let 
\begin{equation} \label{phi2}
 \Phi_2: \ HF(L^0, \tau_C^{-1} L^1;\Lambda) \to
 HF(L^0,L^1;\Lambda) \end{equation} 
denote the associated relative invariant.  Relative invariants were
defined in \eqref{HPhi} by counting points in the zero-dimensional
component of the moduli space $\M_2$ of pseudoholomorphic sections of
$E_2 \to S_2$ with boundary in $F_2$.  (It follows from Theorem
\ref{zeroindex} below that $\Phi_2$ has degree zero.)
\end{enumerate} 
\end{definition} 

The first step in the proof of the exact sequence is to show that the
composition of the chaps and Lefschetz-Bott maps vanishes:

\begin{lemma} \label{vanishes} {\rm (Exactness at the middle term)}  
The composition $\Phi = \Phi_2 \circ \Phi_1$ (the relative invariant
associated to picture on the left in Figure \ref{deform}) vanishes;
more precisely there exists a null-homotopy of the chain level maps $C
\Phi_2 \circ C \Phi_1$ whose terms have positive $q$-exponent for $r$
sufficiently small.
\end{lemma}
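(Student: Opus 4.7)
The plan is to write the composition $\Phi_2 \circ \Phi_1$ as a single relative invariant associated to a glued quilted Lefschetz-Bott fibration, and then to deform it so that a standard Lefschetz-Bott fibration $(E_{C,r}, F_{C,r})$ bubbles off along a seam. The vanishing result Corollary~\ref{pinch} then forces the whole invariant to be null-homotopic with positive $q$-exponents.

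More concretely, first I would invoke the composition-under-gluing formula \eqref{gluealongends} for relative invariants: gluing the outgoing end of $\ul{S}_1$ (carrying $L^0,\tau_C^{-1}L^1$) to the incoming end of $S_2$ produces a single quilted Lefschetz-Bott fibration $(\ul{E},\ul{F})$ over a glued quilted surface $\ul{S}$, with $\Phi(\ul{E},\ul{F}) = \Phi_2 \circ \Phi_1$. The surface $\ul{S}$ has two patches, labelled by $M$ and $B$, with seam $C$; the $B$-patch is unchanged from $\ul{S}_1$, while the $M$-patch carries the Lefschetz-Bott singular fiber that was present in $E_2$. The boundary/seam labels on the three remaining strip-like ends read off as $HF(L^0, C, C^t, \tau_C^{-1} L^1;\Lambda) \to HF(L^0,L^1;\Lambda)$.

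Next I would exhibit $(\ul{E},\ul{F})$ as a seam connect sum, in the sense preceding Theorem~\ref{gluing}, of a small standard Lefschetz-Bott fibration $(E_{C,r},F_{C,r})$ of Definition~\ref{smallstandard} with a complementary quilted Lefschetz-Bott fibration $(\ul{E}_0,\ul{F}_0)$. Because the singular fiber in $E_2$ is the standard local model $P(V)$ for $C$ with boundary condition given by the vanishing-cycle family $P(T)$, one may excise a small disk $D_r$ around the critical value and recognize the excised piece as exactly $(E_{C,r},F_{C,r})$, glued back along the nearby fiber $M \times B^-$ to the ``outer'' quilted fibration $(\ul{E}_0,\ul{F}_0)$. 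This identification uses the seam-matching of $C$ on the $B$-patch side with the Lagrangian determining the vanishing cycles in $P(V)$.

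Once this picture is established, Corollary~\ref{pinch} applies directly: for the standard fibration with $r$ sufficiently small, the relative invariant $\Phi(\ul{E},\ul{F})$ admits an explicit null-homotopy, constructed from the stretching-the-neck family, whose non-trivial terms correspond to isolated points in the parametrized moduli space and are $q$-weighted by their (strictly positive) symplectic areas. Combining with Remark~\ref{indepchoice}, which guarantees that the chain-level invariants are independent of $r$ up to chain-homotopy with positive $q$-exponents, gives the null-homotopy of $C\Phi_2 \circ C\Phi_1$ claimed in the statement. The main subtlety will be verifying that the monotonicity and admissibility hypotheses on $L^0,L^1,C$ ensure that both $(\ul{E}_0,\ul{F}_0)$ and $(E_{C,r},F_{C,r})$ are monotone, so that the gluing theorem \ref{gluing} and the vanishing Corollary~\ref{pinch} apply with well-defined orientations; this follows from Proposition~\ref{monotonebound} together with the codimension $c\geq 2$ hypothesis, with the codimension one case deferred to Remark~\ref{codimonemain} where the double-cover cancellation in Remark~\ref{codimone} takes over.
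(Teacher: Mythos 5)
Your overall strategy is the right one and is the same as the paper's: express $\Phi_2\circ\Phi_1$ as a single relative invariant via the gluing formula \eqref{gluealongends}, recognize the result as (homotopic to) a seam connect sum with a small standard fibration $(E_{C,r},F_{C,r})$, and then apply Corollary~\ref{pinch}. But the middle step as you have written it contains a genuine error, and the deformation that the paper performs is not an optional refinement but the substance of the argument.

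The problem is with the claim that one can ``excise a small disk $D_r$ around the critical value and recognize the excised piece as exactly $(E_{C,r},F_{C,r})$, glued back along the nearby fiber $M\times B^-$.'' In the composed quilted fibration $\ul{E}\to\ul{S}_1\sharp\ul{S}_2$, the Lefschetz--Bott critical value lives in the interior of the $M$-patch, far from the $C$-seam. A small disk around it therefore lies entirely inside the $M$-patch: the nearby fiber there is $M$, not $M\times B^-$, the disk touches no seam, and there is no Lagrangian boundary condition at all on its boundary circle, let alone the vanishing-cycle boundary condition $F_{C,r}$. So this excised piece is not $(E_{C,r},F_{C,r})$ in the sense of Definition~\ref{smallstandard}, and Corollary~\ref{pinch} does not apply to the undeformed $(\ul{E},\ul{F})$. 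What is needed, and what the paper does, is to first deform $\ul{S}$ through a family $\ul{S}_t$, moving the critical value towards the $C$-seam and pinching off a quilted disk there, so that for $t\gg 0$ the pinched-off piece genuinely contains both $M$- and $B$-patches separated by the $C$-seam, with the Lefschetz--Bott singularity inside and boundary given by the vanishing-cycle family; this is $(E_{C,r},F_{C,r})$, and only at that stage does Corollary~\ref{pinch} give the null-homotopy. The chain-level independence under this deformation (Remark~\ref{depend}/\ref{indepchoice}) is then what lets one transfer the null-homotopy back to the original $C\Phi_2\circ C\Phi_1$, keeping the positive $q$-exponents for $r$ small. Your last paragraph on monotonicity and the codimension one caveat is fine; the gap is confined to the claimed direct identification before deforming.
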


\begin{proof}   The composition of the two relative invariants
is the relative invariant associated to a Lefschetz-Bott fibration
over the glued surface $\ul{S} = \ul{S}_1 {\sharp} \ul{S}_2$ by
\eqref{gluealongends}.  Consider the deformation $\ul{S}_t$ of
$\ul{S}$ obtained by moving the critical value of the Lefschetz-Bott
fibration towards the boundary marked $C$ and pinching off a disk in
$M \times B$ with boundary values in $C$.  This process is shown in
the right-most two pictures in Figure \ref{deform}.  The bundles
$\ul{E}$ and Lagrangian boundary/seam conditions $\ul{F}$ naturally
extend to families $\ul{E}_t,\ul{F}_t$ that are obtained from gluing
for $t \gg 0$.  It follows from Corollary \ref{pinch} that the
relative invariant $C\Phi$ is null-homotopic and, for $r$ sufficiently
small, has positive $q$-exponents.
\end{proof}
\begin{figure}[h]

\begin{picture}(0,0)%
\includegraphics{deform.pstex}%
\end{picture}%
\setlength{\unitlength}{4144sp}%
\begingroup\makeatletter\ifx\SetFigFont\undefined%
\gdef\SetFigFont#1#2#3#4#5{%
  \reset@font\fontsize{#1}{#2pt}%
  \fontfamily{#3}\fontseries{#4}\fontshape{#5}%
  \selectfont}%
\fi\endgroup%
\begin{picture}(4559,2413)(628,-2687)
\put(1577,-489){\makebox(0,0)[lb]{\smash{{\SetFigFont{10}{7.2}{\rmdefault}{\mddefault}{\updefault}{\color[rgb]{0,0,0}$\tau_C^{-1} L_1$}%
}}}}
\put(676,-1706){\makebox(0,0)[lb]{\smash{{\SetFigFont{10}{7.2}{\rmdefault}{\mddefault}{\updefault}{\color[rgb]{0,0,0}$L_0$}%
}}}}
\put(1577,-1834){\makebox(0,0)[lb]{\smash{{\SetFigFont{10}{7.2}{\rmdefault}{\mddefault}{\updefault}{\color[rgb]{0,0,0}$L_1$}%
}}}}
\put(1164,-2427){\makebox(0,0)[lb]{\smash{{\SetFigFont{10}{7.2}{\rmdefault}{\mddefault}{\updefault}{\color[rgb]{0,0,0}$B$}%
}}}}
\put(1194,-2089){\makebox(0,0)[lb]{\smash{{\SetFigFont{10}{7.2}{\rmdefault}{\mddefault}{\updefault}{\color[rgb]{0,0,0}$C$}%
}}}}
\put(2804,-2340){\makebox(0,0)[lb]{\smash{{\SetFigFont{10}{7.2}{\rmdefault}{\mddefault}{\updefault}{\color[rgb]{0,0,0}$B$}%
}}}}
\put(2987,-1692){\makebox(0,0)[lb]{\smash{{\SetFigFont{10}{7.2}{\rmdefault}{\mddefault}{\updefault}{\color[rgb]{0,0,0}$C$}%
}}}}
\put(3036,-676){\makebox(0,0)[lb]{\smash{{\SetFigFont{10}{7.2}{\rmdefault}{\mddefault}{\updefault}{\color[rgb]{0,0,0}$\tau_C^{-1}L_1$}%
}}}}
\put(2332,-1285){\makebox(0,0)[lb]{\smash{{\SetFigFont{10}{7.2}{\rmdefault}{\mddefault}{\updefault}{\color[rgb]{0,0,0}$L_0$}%
}}}}
\put(4644,-566){\makebox(0,0)[lb]{\smash{{\SetFigFont{10}{7.2}{\rmdefault}{\mddefault}{\updefault}{\color[rgb]{0,0,0}$\tau_C^{-1}L_1$}%
}}}}
\put(3869,-1185){\makebox(0,0)[lb]{\smash{{\SetFigFont{10}{7.2}{\rmdefault}{\mddefault}{\updefault}{\color[rgb]{0,0,0}$L_0$}%
}}}}
\put(4298,-1770){\makebox(0,0)[lb]{\smash{{\SetFigFont{10}{7.2}{\rmdefault}{\mddefault}{\updefault}{\color[rgb]{0,0,0}$C$}%
}}}}
\put(4210,-2355){\makebox(0,0)[lb]{\smash{{\SetFigFont{10}{7.2}{\rmdefault}{\mddefault}{\updefault}{\color[rgb]{0,0,0}$B$}%
}}}}
\end{picture}%

\caption{Pinching off a disk at the seam}
\label{deform} 
\end{figure}

\subsubsection{Exactness to leading order}
\label{leading}

The proof that the maps $\Phi_1,\Phi_2$ of \eqref{phi1}, \eqref{phi2}
fit into a long exact sequence follows a standard argument, familiar
from Floer's exact triangle \cite{br:fl}.  In this argument one first
proves that the ``leading order terms'' in the cochain-level map
define a short exact sequence and then applies a spectral sequence to
deduce the triangle.  Recall that $ L^0 \cap \tau_C^{-1} L^1$ is the
disjoint union of the images of $i_1$ and $i_2$ of the map in Proposition
\ref{bijection}.
 
\begin{theorem} \label{leadingthm} \label{leadingone}    \label{leadingtwo}  \label{zeroindex} 
{\rm (Exactness of the short sequence to leading order)} Let
$C,L^0,L^1$ be as in Theorem \ref{main}.  There exists $\eps > 0$ such
that for any fibered Dehn twist $\tau_C$ defined using \eqref{theta}
for $\lambda$ sufficiently large, the following properties hold:
\begin{enumerate} 
\item \label{smalltri} {\rm (Small triangles as leading order
  contributions to $C\Phi_1$)} If $x \in ((L^0 \times C) \cap (C^t
  \times \tau_C^{-1} L^1))$ then $C\Phi_1(\bra{x})$ is equal to $q^\nu
  \bra{i_1(x)}$ for some $\nu < \eps/2 $, plus terms of the form
  $q^\mu \bra{z}$ with $\mu > \eps$ and $z \in L^0 \cap \tau_C^{-1}
  L^1$.
\item \label{horiz} {\rm (Horizontal sections as leading order
  contributions to $C\Phi_2$)} 
\begin{enumerate} 
\item For any $x \in L^0 \cap L^1$, if $y=i_2(x)$ then $C\Phi_2(
  \bra{y})$ is equal to $\bra{x}$ plus terms of the form $q^\nu
  \bra{z}$ with $z \in \cI(L^0, L^1)$ and $\nu > \eps$.

\item If $y \neq i_2(x)$ for any $x$, then $C\Phi_2( \bra{y})$ is a
  sum of terms of the form $q^\nu \bra{z}$ for $z \in \cI(L^0, L^1)$
  and $\nu > \eps$.
\end{enumerate}
\end{enumerate} 
Furthermore, $C\Phi_2$ has zero degree.
\end{theorem}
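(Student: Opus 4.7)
The plan is to identify explicit \emph{leading order} pseudoholomorphic quilts/sections contributing to each cochain-level map and to combine monotonicity with non-negative curvature of the standard Lefschetz-Bott fibrations (Propositions~\ref{EC}, \ref{filter}) to ensure all remaining matrix elements carry $q$-exponents uniformly bounded below. I would fix $\eps > 0$ strictly less than the minimum positive value of the monotonicity constants $c(\ul{x})/\lambda$ over the finite set of pairs of generators entering $C\Phi_1$ and $C\Phi_2$; this choice pins down the $\eps$ in the statement and is the only place the discreteness of the intersection set is used.

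For part~\ref{horiz} concerning $C\Phi_2$, I would equip $(E_2,F_2)$ with a horizontal almost complex structure on the complement of the vanishing thimble. Since $E_2$ has non-negative curvature by Proposition~\ref{EC}, Proposition~\ref{filter} applies: the index-zero sections with vanishing constant $c(y,x)=0$ are precisely horizontal. On the two components of $S_2$ obtained by removing the critical value the bundle is trivial, so horizontal sections are constant in the local trivializations, and passage across the critical fiber introduces the monodromy $\tau_C$. The boundary conditions therefore force a unique horizontal section for each $x \in L^0 \cap L^1$, whose incoming limit is exactly $i_2(x) \in L^0 \cap \tau_C^{-1} L^1$, of zero symplectic area. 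By Proposition~\ref{horizprop} these form an isolated regular path component of $\M(E_2,F_2)$ contributing $\langle x \rangle$ to $C\Phi_2(\langle i_2(x) \rangle)$ with unit coefficient, while all remaining matrix elements come from pairs with $c(y',x') > 0$ and thus involve $q$-exponents $\geq \eps$. The degree of $\Phi_2$ is zero by the same analysis: the Maslov index of a horizontal constant section reduces, via the splitting of $D_u$ into vertical and horizontal parts, to that of a constant strip with boundary conditions $(L^0, L^1)$, which vanishes.

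For part~\ref{smalltri} concerning the chaps map $C\Phi_1$, I would construct explicit small-triangle pseudoholomorphic quilts in the local model of the Dehn twist, building on the analysis in Lemma~\ref{bijection}. Each new intersection point $i_1(x) \in L^0 \cap \tau_C^{-1} L^1$ introduced by the twist is produced by rotating a cotangent vector at a point of $L^1 \cap C$ through an angle determined by $\theta^\delta$, and the corresponding small pseudoholomorphic triangle in the quilted surface $\ul{S}_1$ is the image of that rotation, obtained by coupling with a constant section on the $B$-patch. Its symplectic area is $O(1/\delta)$ since $\theta^\delta$ is supported in a ball of radius $O(1/\delta)$, so for $\delta$ large enough the leading contribution is a single term $q^\nu \langle i_1(x) \rangle$ with $\nu < \eps/2$. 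Monotonicity bounds all other contributions below by $q$-exponent $\eps$.

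The hard part will be ruling out, in both arguments, any additional pseudoholomorphic solution of comparable or smaller area beyond the explicitly identified ones. This relies on the discreteness of the finite set of monotonicity constants $c(\ul{x})$ together with the strict positivity of the nonzero ones, and on excluding bubbling at small energy via Lemma~\ref{smalllength} and removal of singularities; these together guarantee a uniform spectral gap separating the explicit leading contributions from all others.
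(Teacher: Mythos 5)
Your treatment of $C\Phi_2$ (part (b)) is essentially the paper's: non-negative curvature of the standard fibration plus Proposition~\ref{filter} and Proposition~\ref{horizprop} isolate the horizontal constant sections of index zero, and the vertical-horizontal splitting of $D_u$ gives both regularity and the degree-zero statement; the mean-value-inequality argument then supplies the energy gap for the non-horizontal contributions. This part of your proposal is sound.

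For $C\Phi_1$ (part (a)) there is a genuine gap. You assert that the small holomorphic quilt through $i_1(x)$ is ``the image of that rotation, obtained by coupling with a constant section on the $B$-patch.'' This does not describe a pseudoholomorphic quilt: the rotation of a cotangent vector by $\theta^\delta$ is a one-parameter path in $T^*S^c$, not a map from the two-dimensional quilted domain $\ul{S}_1$ solving the Cauchy-Riemann equation with the given seam condition $C$ and boundary conditions $L^0$, $\tau_C^{-1}L^1$. You have in effect named the asymptotic data of the triangle rather than produced the triangle, and in particular you have not shown that the signed count of index-zero quilts with limits $x$ and $i_1(x)$ equals $\pm 1$. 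The paper handles this with a two-step argument: first, when the two points $m,m'\in C_b$ happen to be antipodal, the horizontal almost complex structure makes the constant section the unique contribution and its regularity is checked by the same vertical-horizontal index splitting you used for $\Phi_2$; second, when $m,m'$ are not antipodal, one deforms the fiberwise identification $C_b\cong S^c$ (and thus the principal $SO(c+1)$-bundle $P_t$ and the twist $\tau_C^t$) over $t\in[0,1]$ so that they become antipodal at $t=1$, and a parametrized cobordism $\widetilde{\M}(i_1(x),x)$ transports the count back. This deformation step is the essential ingredient your proposal omits, and without it the existence and uniqueness of the leading contribution $q^\nu\langle i_1(x)\rangle$ is unestablished. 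The rest of your outline --- the $O(1/\delta)$ area estimate and the use of Lemma~\ref{smalllength} and monotonicity to push all other matrix coefficients above the gap $\eps$ --- is consistent with the paper's Step 3, but it sits on top of a leading-order computation you have not actually carried out.
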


\begin{proof}
\eqref{smalltri} We aim to reduce to the exact, unfibered case
considered by Seidel \cite{se:lo}.

\vskip .1in
\noindent {\em Step 1: The antipodal case.}  The simplest case is that
in which the points on the spherical fiber are antipodes.  That is,
let $x = (m,b,m')$ be as in the statement of the Theorem, so that
\begin{equation} \label{pis}
m \in L^0 \cap C, \ m' \in L^1 \cap C, \ \pi(m) = \pi(m') \in B
.\end{equation}
Suppose that $m,m'$ are antipodes: 
$$ b = \pi(m) = \pi(m'), \quad L^0 \cap \tau_C^{-1}L^1 = \{ m \} .$$
Let 
$$u: \ul{S}_1 \to \ul{M}, \quad u|S_M \equiv m, 
\quad u| S_B \equiv b $$
denote the constant section with value $m$ on the part mapping to $M$
and value $b$ on the part mapping to $B$.  This section satisfies the
boundary conditions $L^0, \tau_C^{-1} L^1,C$, by \eqref{pis}.  

We claim that these constant sections give the only contributions to
the relative invariant in the antipodal case.  Let $J \in
\J^h(\ul{E})$ be any horizontal almost complex structure, for which
$u$ is a pseudoholomorphic section with zero area:
$$ A(u) = \int_{\ul{S}} u^* \ul{\omega} = 
A(u|S_M) + A(u|S_B) = 0 .$$
The map $u$ is $J$-regular.  Indeed, by non-negativity of the
curvature and Proposition \ref{horizprop}, it suffices to show that
the index of the linearized operator $D_{u,J}$ for $u$ the constant
section is zero.  Since the boundary conditions are constant, we may
decompose the boundary conditions into vertical and horizontal parts,
$$T_m M \cong T_b B \oplus T_v T^\dual S^c, \quad T_{m} C 
\cong T_b B \oplus T_v S^c $$
for some vector $v \in S^c$ representing $m$ in the trivialization of
the fiber at $b$.  The linearized operator $D_u$ breaks into the sum
of linearized operators $D_u^h, D_u^v$ for the horizontal and vertical
pieces.  The linearized operator $D_u^h$ for the horizontal piece $T_b
B$ may be identified with a linearized operator on a strip with
constant, transverse boundary conditions $T_b (L_0 \circ C)$ and $T_b
(L_1 \circ C)$.  The kernel and cokernel of $D_u^h$ are trivial, hence
the horizontal piece $D_u^h$ has index zero.  The linearized operator
for the vertical piece $D_u^v$ is equivalent to that for the unfibered
case covered in \cite[Proof of Lemma 3.3]{se:lo}.  Since the kernel
and cokernel in this case is also trivial, the cokernel of the
linearized operator for $u$ is trivial.  Hence $J$ is regular for
horizontal $u$.  This implies that we may use $J$ to compute the
coefficient of $\bra{i_1(x)}$ in $C \Phi_1(\bra{x})$, and the fact
that the required sections are constant in this case proves the claim.

\vskip .1in
\noindent {\em Step 2: Deformation to the antipodal case.}  We reduce
to the antipodal case of the previous paragraphs by a deformation
argument.  Suppose that $(m,b,m')$ gives a generalized intersection
point of $(L^0,C^t,C,L^1)$.  Choose a family of identifications $C_b
\cong S^c$ depending on $t \in [0,1]$ such that for $t = 1$ the points
$m,m'$ are antipodes in $C_b$.  We extend the pull-back metrics on
$C_b$ to a family of fiber-wise metrics on $C$.  The family of metrics
defines a reduction of the structure group depending on $t$, that is,
a family of principal $SO(c+1)$ bundles $P_t$ together with a family
of diffeomorphisms
$$P_t \times_{SO(c+1)} S^c \to C .$$
Let $\tau_C^t$ denote the resulting family of fibered Dehn twists, and
consider the family of boundary conditions given by $\tau_C^t L^0,
L^1$.  The intersection points $\cI(\tau_C^t L^0, C^t,C, L^1)$ fit
into smooth families depending on $t$ as in the proof of Proposition
\ref{bijection}.  Consider the parametrized moduli space
\begin{equation} 
\label{parM} \widetilde{\M}(i_1(x),x) = \bigcup_{\rho \in [0,1]}
\{ \rho \} \times \M^\rho(i_1(x),x) \end{equation}
of pseudoholomorphic curves for this deformation.  Standard arguments
show that there is a parametrized regular family of almost complex
structures for the deformation, in the sense that the moduli space
\eqref{parM} is a smooth, finite dimensional manifold with boundary.
The moduli space is also compact, as long as ${\delta}$ is sufficiently
large.  In this case bubbling off trajectories in the deformation
\eqref{parM} is impossible by Remark \ref{impossible} below.  On the
other hand, bubbling off holomorphic disks and spheres is impossible
because of the monotonicity conditions.  It follows from compactness
that the count of holomorphic quilts is invariant under the
deformation \eqref{parM}.

We show that the sections contributing to the coefficient of
$\bra{i_1(x)}$ in $C\Phi_1(\bra{x})$ are of small area.  Suppose $u:
\ul{S}_1 \to \ul{M}$ is a quilt contained in a neighborhood of $C_b$
with Lagrangian boundary conditions $L^0, \tau_C^{-1} L^1, C$ and
limits $x,i_1(x)$.  Near $C_b$ all Lagrangians are exact and path
connected.  The area computation reduces to the computation of action
differences in \cite{se:lo}.  In particular, for these maps, for
${\delta}$ sufficiently large, all such $u$ of index $0$ (parametrized
index $1$) connecting $i_1(x)$ with $x$ have ${\mathcal{E}}(u)$ at
most $\eps/2$:
$$ u \in \M(x,i_1(x)) \implies \mathcal{E}(u) \leq \eps/2 .$$
This argument shows that the area of small index zero quilts is at
most $\eps/2$ for ${\delta}$ sufficiently large.  By monotonicity, any
holomorphic quilt of index zero with boundary/seam conditions $L^0,
\tau_C^{-1} L^1, C$ has the same area as one of the index zero quilts
above.

It follows in particular that sphere and disk bubbling does not occur
in these moduli spaces.  So the component of the moduli space
$\widetilde{\M}_1( i_1(x),x)$ of formal dimension one is compact.
Counting boundary components gives
\begin{equation} \label{twosums} \sum_{u \in \M^{\rho = 0}(i_1(x),x )_0} o(u) = \sum_{ u \in
    \M^{\rho = 1}(i_1(x),x)_0} o(u) \end{equation}
where $o(u) = \pm 1$ are the orientations.  The second sum in
\eqref{twosums} is equal to $1$ if the almost complex structure is
horizontal.  For the unique contribution then comes from the
horizontal section with value $i_1(x) = (m,b,m')$.  This completes the
deformation argument.

\vskip .1in
\noindent {\em Step 3: The higher order terms.} We next show that the
pseudoholomorphic sections with limits other than those corresponding
to $i_1,i_2$ have higher energy, using the mean value inequality.
Suppose that $u = (u_0,u_1)$ is a quilt from $\ul{S}_1$ connecting $x
= (m,b,m')$ with $y$ with energy at most $\eps$ with $u_1$ resp. $u_0$
mapping to $M$ resp. $B$.  For any neighborhood $U$ of $C_b$, there
exists $\eps> 0$ sufficiently small so that the image of $u_1$ is
contained in $U$:
$$ \mathcal{E}(u) < \eps \implies u_1(S_M) \subset U .$$  
Indeed let $(s_0,t_0)$ denote the coordinate of the top-most point in
the seam.  For $s > s_0 + 1$, integrating the mean value inequality
\eqref{mvt} as in Lemma \ref{smalllength} over a path $\gamma_s(t) =
(s,t)$ shows that the distance between $L^0$ and $C$ and between
$\tau_C^{-1} L^1$ and $C$ are at most $c \eps$ for some constant $c$.
In fact these estimates are independent of the choice of $\tau_C$.
Indeed, the image $\tau_C^{-1}(L^1)$ is independent of $\tau_C$ in a
neighborhood of $L^0 \cap L^1$.  On the other hand, $\tau_C^{-1}(L^1)$
converges to $\pi^{-1} \pi(L^1 \cap B)$ as $\zeta(0) \to 0$.  As in
Lemma \ref{approxint}, let $V_\eps$ be the set of points in $X$ within
distance $c\eps$ of both $L^0$ and $\tau_C^{-1} L^1$.  Since
$L^0,\tau_C^{-1} L^1$ are compact, for $\eps$ sufficiently small,
$V_\eps$ is a collection of disjoint open neighborhoods of the points
in $L^0 \cap \tau_C^{-1} L^1$:
$$ V_{\eps} = \bigcup_{z \in L^0 \cap \tau_C^{-1} L^1} V_\eps(z) .$$
It follows that if $u$ has sufficiently small energy then every point
with $s > s_0 + 1 $ maps to $V_\eps$, hence $V_\eps(x)$.  Similarly,
for $s < s_0 - 1$, one obtains by ``folding'' a strip with values in
$M \times B \times M$ with boundary values $L^0 \times C$ and $C^t
\times \tau_C^{-1} L^1$.  Integrating the mean value inequality
\eqref{mvt} over paths to the boundary $\gamma_s(\tau) = (s,t + \tau)$
or $\gamma_s(t) = (s,t- \tau)$ shows that
$$d(\pi_1 u(s,t),L^0) < c \eps, \quad d(\pi_2 u(s,t), C) < c \eps,
\quad d(\pi_3 u(s,t), \tau_C^{-1} L^1) < c \eps .$$ 
In particular note that the intersection $ (L^0 \circ C) \cap
(\tau_C^{-1} L^1 \circ C)$ is transverse and independent of $\tau_C$.
Hence the projections $\pi_1(u(s,t))$ and $\pi_3(u(s,t))$ are
contained a small neighborhood $W$ of $\pi^{-1}( (C \circ L^0) \cap (C
\circ \tau_C^{-1} L^1))$ in $M$.  It follows that $u$ takes values in
$W$ for all $s < s_0 - 1$.  Applying Lemma \ref{smalllength} again,
this time for a path of the form $\gamma(t) = [s_0 + 2t, k]$ for $k
\in \{ 0,1\}$, shows the same holds for points in the boundary of the
intermediate region.  Now $i_1(x)$ is the unique intersection point of
$L^0 \cap \tau_C^{-1}L^1$ connected to $L^0 \cap C_b$ by path from $m$
in $L^0 \cap W$, and a path from $m'$ in $\tau_C^{-1} L^1 \cap W$, as
in the proof of Proposition \ref{bijection}.  Thus $y = i_1(x)$.

\eqref{horiz} The second part of the Lemma is somewhat easier, since
the leading order terms have order exactly zero arising from the
horizontal sections.

\vskip .1in
\noindent {\em Step 1: We show that the degree zero terms arise from
  horizontal sections.}  Let $u$ be the horizontal section of $E_2$ on
the infinite strip with value $x$. Then $\Ind(D_{u,J}) = 0$, since the
boundary conditions are constant.  Hence $u$ is regular for horizontal
$J$, and the count for $y = i_1(x)$ follows by Proposition
\ref{filter}.  The map $\Phi_2$ has degree $0$, since the horizontal
sections have zero index.  Because of the non-negative curvature of
the standard Lefschetz-Bott fibration in \ref{EC}, any non-horizontal
section has positive area.  Thus the $q$-exponent-zero terms arise
only from horizontal sections.

\vskip .1in
\noindent {\em Step 2: The non-horizontal sections satisfy a uniform
  energy gap condition as in the Lemma.}  For an almost complex
structure pulled back from one for $(L^0,C^t,C,L^1)$ under $\Id_M
\times \Id_B \times \tau_C^{-1}$; this is an immediate consequence of
Gromov compactness and preservation of area and index under the
pull-back.  The following alternative argument using the mean value
inequality holds for any almost complex structure used to define the
relative invariant: Suppose $u$ is a section of energy less than
$\eps$. By Lemma \ref{smalllength}, for $\eps$ small $u$ takes values
in the set $U$ of points in $M$ within distance $c\eps$ of both $L^0$
and $L^1$.  By Lemma \ref{approxint}, $U_\eps$ is a collection of
disjoint open neighborhoods of the points in $L^0 \cap L^1$:
$$ U_{\eps}  = \bigcup_{z \in L^0 \cap L^1}  U_\eps(z) .$$
Since $u(s,t)$ lies in $U_\eps(x)$ for $s$ sufficiently large, this
implies that 
$$u(s,t) \in U_\eps(x), \quad s \ge 1 .$$  
For the points in $[-1,1] \times \{ 0, 1 \}$ integration applied to
the path from $(s,0)$ to $(1,0)$ to $(-1,1)$ shows that (again for
$\eps$ sufficiently small)
$$u(s,0) \in U_\eps(x), \quad s \in [-1,1] .$$  
Then another application of integration shows that 
$$u(s,t) \in U_\eps(x), \quad s \leq -1 $$
as well.  Thus $u$ has the same limit $x$ at $s = -\infty$ as $s =
\infty$.  On the intermediate region $s \in [-1,1], t \in [0,1]$ on a
neighborhood of $x$ the section corresponds to a map to $M$.  An
argument using Lemmas \ref{approxint} and \ref{smalllength} shows that
$u$ takes values in $U_\eps(x)$ everywhere.  In this neighborhood
$L^0,L^1$ may be assumed to be exact.  Thus case $u$ has zero area
and, by the vanishing of the curvature in this region, must be
horizontal.
\end{proof}

\begin{remark} \label{impossible} {\rm (Energy gap for Floer trajectories)} 
Let $C,L^0,L^1$ be as in the previous two Lemmas.  We claim that there
exists ${\delta}$ sufficiently large such that any non-constant Floer
trajectory for $(L^0,C^t,C,\tau_C^{-1}L^1)$ or $(L^0,\tau_C^{-1}L^1)$
has symplectic area at least $\eps$.  We consider only the case of
trajectories $u: \R \times [0,1] \to M$ for $(L^0,\tau_C^{-1}L^1)$;
the case of trajectories for $(L^0,C^t,C,\tau_C^{-1}L^1)$ is similar.
Choose an open neighborhood $U$ of $m$ disjoint from $L^1$.  Each
component of $\tau_C^{-1}L^1 \cap U $ converges to $p^{-1}(p(C \cap
L^1))\cap U$ as ${\delta} \to \infty$ as smooth submanifolds.  The
discussion above, again using Lemma \ref{smalllength}, shows that if
$u$ has sufficiently small area $A(u)$ then the image $u(\R \times
[0,1])$ is contained in a small neighborhood of either some
intersection point $m \in L^0 \cap L^1$ or an intersection point in $m
\in L^0 \cap \tau_C^{-1}L^1$.  Since each component of $\tau_C^{-1}L^1
\cap U $ contains at most one intersection point with $m$ and the
image of the $\R \times \{ 1 \}$ under $u$ is connected, this implies
that any index one trajectory $u$ connects an intersection point $m$
to itself and is homotopic to the trivial trajectory.  Hence $u$ has
vanishing area.
\end{remark} 

\subsubsection{Isomorphism with the mapping cone}

Every mapping cone of cochain complexes gives rise to an exact
triangle.  To construct an exact triangle it suffices to prove an
isomorphism of a third complex with a mapping cone.  So to prove
Theorem \ref{main} in the unquilted case (of simple Lagrangians
$L^0,L^1$) it suffices to show the following:

\begin{theorem} {\rm (Isomorphism with the mapping cone)}   Let $L^0,L^1,\eps,\tau_C$ as in 
Theorem \ref{main}.  Then
\label{cone} the map $C\Phi_{2}$ induces
an isomorphism of $CF(L^0,\tau_C^{-1} L^1)$ with the mapping
cone on $C\Phi_{1}$.
\end{theorem}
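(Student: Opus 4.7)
The plan is to use the nullhomotopy of $C\Phi_2 \circ C\Phi_1$ provided by Lemma \ref{vanishes} to promote $C\Phi_2$ to a chain map out of the mapping cone on $C\Phi_1$, and then verify that this extension is a quasi-isomorphism by an action filtration spectral sequence argument controlled by the leading-order analysis of Theorem \ref{leadingthm}.

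Concretely, I would first extract from Lemma \ref{vanishes} a cochain nullhomotopy
\[
h: CF(L^0,C^t,C,\tau_C^{-1}L^1;\Lambda)[\dim B] \longrightarrow CF(L^0,L^1;\Lambda),\quad \partial h + h \partial = C\Phi_2 \circ C\Phi_1,
\]
whose matrix entries all carry strictly positive $q$-exponent, and pair $h$ with $C\Phi_2$ to form the standard cone extension
\[
\widetilde{C\Phi_2}: \Cone(C\Phi_1) \longrightarrow CF(L^0,L^1;\Lambda),\quad (a,b) \longmapsto h(a) + C\Phi_2(b),
\]
which is automatically a chain map. Lemma \ref{bijection} identifies the generators of $CF(L^0,\tau_C^{-1}L^1;\Lambda)$ (and, via the canonical splitting, of $\Cone(C\Phi_1)$) with the disjoint union $(L^0 \cap L^1) \sqcup ((L^0 \times C) \cap (C^t \times L^1))$, so the underlying graded $\Lambda$-modules of the two sides are canonically identified; the theorem amounts to the assertion that $\widetilde{C\Phi_2}$ realizes this identification on the level of complexes. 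To verify this, filter both complexes by the $q$-valuation on $\Lambda$, combined with the splitting of generators from Lemma \ref{bijection}. By Theorem \ref{leadingthm}(\ref{smalltri}) the coefficient of $\langle i_1(x)\rangle$ in $C\Phi_1\langle x\rangle$ is $q^{\nu}$ with $\nu<\eps/2$ and all remaining coefficients lie in $q^{>\eps}$; symmetrically Theorem \ref{leadingthm}(\ref{horiz}) says that $C\Phi_2$ sends each horizontal generator to the corresponding $L^0 \cap L^1$ generator modulo terms of $q$-exponent greater than $\eps$. Together these identify the $E_1$-page contribution of $\widetilde{C\Phi_2}$ with a diagonal $\Lambda$-linear isomorphism sending each generator to itself times an invertible power of $q$.

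Since there are only finitely many generators and Remark \ref{impossible} provides a uniform energy gap for the Floer differentials on both ends, the filtration is bounded on each generator and the associated spectral sequence converges, so the $E_1$-isomorphism propagates to a filtered isomorphism of the total complexes, which is the required chain-level isomorphism. The principal technical obstacle will be arranging uniform control over the two perturbation parameters: the Dehn-twist scaling $\delta$ (from the family $\theta^\delta$) must be large enough for Lemma \ref{bijection} and Theorem \ref{leadingthm}, and simultaneously the Lefschetz-Bott disk radius $r$ from Definition \ref{smallstandard} must be small enough for the positive $q$-exponent property of $h$ in Lemma \ref{vanishes}, so that all of the required energy-gap estimates hold with a common constant $\eps$. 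Once this uniform choice is fixed, the spectral-sequence comparison is formal.
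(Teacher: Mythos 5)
Your proposal matches the paper's proof: both extend $C\Phi_2$ by the nullhomotopy $h$ produced in Lemma \ref{vanishes} to a cochain map out of $\Cone(C\Phi_1)$, identify the generators of all three complexes via Lemma \ref{bijection}, use Theorem \ref{leadingthm} to split the differential and the comparison map into an order-zero acyclic part plus terms of $q$-order $>\eps$, and then conclude by a convergent energy filtration (finitely many generators, uniform energy gap from Remark \ref{impossible}). The only stylistic difference is that the paper packages the filtration argument abstractly as the ``double mapping cone'' Lemma \ref{doublecone} rather than writing out the spectral sequence directly, and both proofs must make the same coordinated choice of $r$ small and $\delta$ large that you correctly flag as the delicate point.
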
 

Before we give the proof of the theorem we recall a bit of homological
algebra, explained for example in \cite{gm:ho}.

\begin{remark} \label{qi}
\begin{enumerate}
\item {\rm (Mapping cone)} If $C_j= (C_j,\partial_j), j = 0,1$ are
  cochain complexes and $f:C_0 \to C_1$ is a cochain map then the
  mapping cone on $f$ is the complex
$$ \Cone(f) := C_0[1] \oplus C_1, \ \ \partial(c_0,c_1)
= (-\partial_0 c_0,\partial_1 c_1 + f(c_0)) .$$
\item \label{qitwo} {\rm (Quasiisomorphisms from mapping cones)} A
  cochain map from $\Cone(f)$ to a complex $C_2$ is equivalent to pair
  $(k,h)$ consisting of a cochain map $k:C_1 \to C_2$ together with a
  cochain homotopy 
$$h: C_0 \to C_2, \quad k \circ f = h \partial_0 + \partial_2 h .$$
Such a map induces a quasi-isomorphism if and only if
$\Cone(k[1]\oplus h)$ is acyclic.
\end{enumerate} 
\end{remark} 

We will need the following criterion for a cochain map $(k,h)$ as in
Remark \ref{qi} \eqref{qitwo} to induce a quasi-isomorphism, similar
to that in Seidel \cite{se:lo} and Perutz \cite[Lemma 5.4]{per:gys}.
By an {\em $\R$-graded ${\Lambda}$-cochain complex} we mean a
$\Lambda$-linear cochain complex equipped with an $\R$-grading so that
multiplication by $q^\alpha$ shifts the grading by $\alpha$.

\begin{lemma}  \label{doublecone} 
{\rm (Double mapping cone lemma)} Suppose that $C_0,C_1$ and $C_2$ are
free, finitely-generated $\R$-graded ${\Lambda}$-cochain complexes
with differentials $\delta_0,\delta_1,\delta_2$.  Suppose that
$$ C_0 \stackrel{f}{\rightarrow} C_1 \stackrel{k}{\rightarrow} C_2 $$ 
is a sequence of cochain maps (not necessarily exact at the middle
term) and $h: C_0 \to C_2$ a nullhomotopy of $k \circ f$. Assume
\begin{enumerate} 
\item The differentials $\delta_0, \delta_1, \delta_2$ each have
  positive order while $h$ has non-negative order.
\item We have $f = f_0 + f_1$, $k = k_0 +k_1$, where $f_0 , k_0$ have
  order zero while $f_1, k_1$ have positive order.
\item The leading order terms $f_0,k_0$ give a short exact sequence of
  abelian groups (not necessarily cochain complexes)
$$ 0 \to C_0 \stackrel{f_0}{\rightarrow} C_1
  \stackrel{k_0}{\rightarrow} C_2 \to 0 $$
\end{enumerate} 
Then the induced map $(h, k) : \Cone(f) \to C_2$ is a
quasiisomorphism.
\end{lemma}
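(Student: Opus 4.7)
The plan is to apply Remark~\ref{qi}, which reduces the statement to showing that the iterated cone $\Cone((h,k))$ is acyclic. As a graded $\Lambda$-module this cone is $C_0[2]\oplus C_1[1] \oplus C_2$, and a direct unwinding of the cone-of-cone construction gives the differential
\[
\partial(c_0, c_1, c_2) = \bigl(\partial_0 c_0,\; -f c_0 - \partial_1 c_1,\; hc_0 + k c_1 + \partial_2 c_2\bigr).
\]
The identity $\partial^2 = 0$ encodes $\partial_i^2 = 0$, the chain map property of $f$ and $k$, and the nullhomotopy relation $\partial_2 h + h \partial_0 = k f$.

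My next step is to introduce the decreasing filtration $F^p \subseteq \Cone((h,k))$ by $q$-order: $F^p$ consists of elements all of whose $q$-exponents are $\geq p$. By the positive-order hypothesis on $\partial_0,\partial_1,\partial_2$ and on the higher-order pieces $f_1, k_1$, all of these operators strictly raise the filtration, so the induced differential $\partial^{(0)}$ on the associated graded retains only $f_0$, $k_0$ (and at most a leading piece of $h$). I would then compute the cohomology of the associated graded directly from the hypothesis that $0 \to C_0 \xrightarrow{f_0} C_1 \xrightarrow{k_0} C_2 \to 0$ is short exact: injectivity of $f_0$ forces the $C_0$-component of any $\partial^{(0)}$-cycle to vanish, $\ker(k_0) = \on{im}(f_0)$ pins down the allowed $C_1$-components up to boundaries, and surjectivity of $k_0$ lets us realize any $C_2$-component as a $\partial^{(0)}$-boundary. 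This shows the associated graded is acyclic.

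The remaining task is to lift acyclicity from the associated graded to $\Cone((h,k))$ itself. The approach is inductive: given a cycle $c$ of minimum $q$-order $p_0$, the explicit formulas above produce an element $d$ supported in $q$-orders $\geq p_0$ so that $c - \partial d$ has strictly higher leading order, and one iterates. The step I expect to be the main obstacle is showing that this iteration actually produces a primitive inside the \emph{non-completed} ring $\Lambda$, rather than only a formal sum. The key input is the combination of free finite generation with the $\R$-grading: in each $\R$-degree the complex is a finitely generated abelian group meeting only finitely many $q$-orders, and the strict positivity of $\on{ord}(\partial_i)$ together with this locally finite structure forces the cancellation procedure to terminate after finitely many steps in each $\R$-degree, yielding the required primitive and hence the quasi-isomorphism $(h,k)$.
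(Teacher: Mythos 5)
Your overall structure — filter by $\R$-degree, show the associated graded complex is acyclic using the short exact sequence, then lift acyclicity to the total complex — is the same route the paper takes: the paper verifies acyclicity of the leading-order differential by exactly the diagram chase you outline, then invokes the spectral sequence of the filtration $C^{\geq n}$ by order in $[n\eps,\infty)$. So you have the right skeleton, and the step you flag as ``the main obstacle'' is indeed where all the work is.

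However, the resolution you propose does not hold up as stated. The cancellation procedure replaces a cycle $c$ of lowest $\R$-degree $s$ by $c - \partial d$, whose lowest $\R$-degree is at least $s+\eps$ but which acquires new terms at strictly higher $\R$-degrees; nothing confines the procedure to a single $\R$-degree, so finiteness of each graded piece does not by itself bound the number of iterations. To see the danger concretely, take $C_0 = 0$, $\delta_1 = \delta_2 = 0$, $C_1 = C_2 = \Lambda$ with generators in $\R$-degree zero, and $k = (1+q)\cdot\on{id}$, so $k_0 = \on{id}$ and $k_1 = q\cdot\on{id}$. All the stated hypotheses are met and the leading-order sequence is short exact, yet $1+q$ is not a unit in the uncompleted group ring $\Lambda$, so $\Cone(k)$ has $H^0 \cong \Lambda/(1+q) \neq 0$; the primitive your iteration tries to build is the non-terminating series $1 - q + q^2 - \cdots$. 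Some extra ingredient is needed: either one works over the Novikov completion (where the iteration converges and the spectral sequence comparison is unproblematic) and then descends, or one exploits a specific feature of the Floer $\R$-grading that is not visible in the abstract hypotheses — for instance, after the renormalization $\iota$ of Remark~\ref{spec} the monotone Floer differential shifts $\R$-degree by the single constant $\lambda/2$ rather than by a mix of positive amounts, and a uniform shift supplies the nilpotence that the iteration needs. To be fair, the paper's own proof is equally terse here — it concludes acyclicity of $C$ directly from vanishing of the first page, without addressing convergence of the spectral sequence for a filtration that is neither bounded nor complete — so you have correctly located the crux; but your proposed justification via ``termination in each $\R$-degree'' should be replaced by an argument of this more explicit kind.
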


\begin{proof}  The proof is similar to that in Perutz \cite[Lemma 5.4]{per:gys}.
  The leading order differential in $C := \Cone(\Cone(f),C_2)$ is
  acyclic by standard homological algebra: Given $(c_0,c_1,c_2)$ with
  leading order coboundary $( 0, f_0(c_0), k_0(c_1) + h_0(c_0)) = 0$,
  we have $c_0 =0 $ since $f_0$ is injective, and so $k_0(c_1) = 0$.
  Hence $c_1 = f_0(b_0)$ for some $b_0 \in C_0$.  Now $h_0(b_0) = -
  b_2$ for some $b_2 \in C_2$ and $c_2 + b_2= k_0(b_1)$ for some $b_1
  \in C_1$.  So the coboundary of $(b_0,b_1,b_2)$ is $(0,c_1,c_2+ b_2
  - b_2) = (c_0,c_1,c_2)$ as desired.  Since all maps $f,h,k$ have
  finitely many terms, there exists an $\eps > 0$ such that, if
  $C^{\ge n} \subset C$ is the sub-complex of terms with order in $[n
    \eps,\infty)$, the first page of the associated spectral sequence
    has vanishing cohomology. It follows that $C$ is itself acyclic.
\end{proof}

\begin{proof}[Proof of Theorem \ref{cone}] 
We apply Lemma \ref{doublecone} to the Floer complexes
\begin{equation} \label{c0c1c2} \begin{array}{l} 
C_0 = CF(L^0 , C^t, C, \tau_C^{-1}    L^1;{\Lambda}) \\ 
C_1 = CF(L^0,\tau_C^{-1}
    L^1;{\Lambda}) \\ C_2 = CF(L^0, L^1,{\Lambda}) \end{array}
  .\end{equation}
Let $k$ denote the cochain level map $C \Phi_{1,\Lambda}$ defined by
the quilted surface in Figure \ref{pants}.  For any $x \in \cI(L^0
,C^t,C,\tau_C^{-1} L^1)$, $z \in \cI(L^0, L^1)$ consider the
parametrized moduli space $\widetilde{\M}(x,z)$ for the one-parameter
family of deformations $(\ul{E}_\rho,\ul{F}_\rho)$ (with $\rho$
representing the length of the neck) connecting the pair obtained by
gluing $(\ul{E}_1,\ul{F}_1)$ and $({E}_2,{F}_2)$ along a strip-like
end to the nodal surface equipped with bundles $(\ul{E},\ul{F}),
(\ul{E}_{C,r},\ul{F}_{C,r})$, as shown in Figure \ref{deform}.  An
element of $\widetilde{\M}(x,z)$ consists of a quilted surface
$\ul{S}^\rho$ in the family, together with a pseudoholomorphic quilt
$u^\rho$ with the given boundary and seam conditions.

As in the proof of Corollary \ref{pinch}, for $r$ sufficiently small
and $\rho$ sufficiently large the parametrized moduli space is empty.
The boundary of $\widetilde{\M}(x,z)_1$ is therefore
\begin{multline} \label{consists}
 \partial \widetilde{\M}(x,z)_1 = \bigcup_y \left( \M_0(x,y)_0 \times
 \M_1(y,z)_0 \right) \cup \bigcup_y \left( \widetilde{\M}(x,y)_0 \times \M
 (y,z)_0 \right) \\ \cup \bigcup_y \left( \widetilde{\M}(y,z)_0 \times
 \M(x,y)_0 \right);
 \end{multline}
where 
\begin{itemize}
\item the first union consists of pairs of pseudoholomorphic sections
  of $\ul{E}_1$ and $\ul{E}_2$, and 
\item the second two unions correspond to bubbling off Floer
  trajectories $[u] \in \M(y,z)_0$ in $M$ or a Floer trajectory $ [u]
  \in \M(x,y)_0$ in $M \times B \times M$.  
\end{itemize}
Define a map
$$ h: CF(L^0 ,C^t,C,\tau_C^{-1} L^1;{\Lambda}) \to
CF(L^0,L^1;{\Lambda})
$$
by 
$$ h( \bra{x} ) = \sum_{[u] \in \widetilde{\M}(x,y)_0} o(u) q^{A(u)} \bra{y} $$
where $o(u) = \pm 1$ are the orientations constructed in
\cite{orient}.  The description of the boundary components of
$\widetilde{\M}(x,z)_1$ in \eqref{consists} gives the relation $
\partial h + h \partial = k \circ f $.  By Remark \ref{qi} the pair
$(k,h)$ define a morphism $ \Cone(f) \to C_2 .$ We claim that the
mapping cone
\begin{multline} \label{conekh}
 \Cone(k,h) = C_0[2] \oplus C_1[1] \oplus C_2, \\ \partial(c_0,c_1,c_2) =
(\partial_0 c_0, \partial_1 c_1 + f(c_0), \partial_2 c_2 + k(c_1) +
h(c_0))
\end{multline}
is acyclic.  Theorem \ref{leadingthm} shows that for Dehn twists
satisfying the conditions in the Theorem, the differential $\partial$
splits into the sum of an operator $\partial_0$ whose $q$-exponents
lie in $[0,\eps/2)$ and a term $\partial - \partial_0$ whose
  $q$-exponents lie in $(\eps,\infty)$.  Because of the bijection
  \eqref{bij} there exists an $\R$-grading on $C_0,C_1,C_2$ so that
  the contributions to $\partial$ separates into an acyclic leading
  order part $\partial_0$ with $\R$-degree zero and a remaining part
  with $\R$-degree at least $\eps/2$.  By Lemma \ref{vanishes}, there
  exists a null-homotopy of $k \circ f$ to the identity.  By Remark
  \ref{impossible}, for $\delta$ sufficiently small the conditions of
  Lemma \ref{doublecone} hold.  The result for ${\Lambda}$
  coefficients follows.  Note that here in contrast to the case in
  Perutz \cite{per:gys} the differentials have finitely many terms, so
  the formal completion is not necessary.  The result for $q = 1$
  follows by specialization as in Remark \ref{spec}.
\end{proof}

The quilted version of Theorem \ref{main}, where $\ul{L}^0,\ul{L}^1$
are generalized Lagrangian branes, is proved similarly, but replacing
the boundary labelled $L^0,L^1$ with collections of strips
corresponding to the symplectic manifolds appearing in the generalized
Lagrangian branes $\ul{L}^0,\ul{L}^1$.  The details are left to the
reader.

\begin{remark} \label{codimonemain} Under strong monotonicity assumptions as in Definition \ref{stronglymonotone}, 
the statement of Theorem \ref{main} also holds in the case of
codimension one coisotropics.  This case is proved similarly but now
using the cancellation discussed in Remark \ref{codimone}.
\end{remark} 

\subsection{Minimal Maslov two case} 
\label{maslovtwo} 

In general, Lagrangian Floer cohomology is defined only the case that
certain holomorphic disk counts vanish.  In the case that one of the
Lagrangians has minimal Maslov number two, the relevant disk count is
that of Maslov index two holomorphic disks.  First we recall some
basics of the derived category of matrix factorizations from
\cite{fieldb}.

\begin{definition} \label{fact}
\begin{enumerate} 
\item {\rm (Category of matrix factorizations)} For any $w \in \Z$,
  let $\Fact(w)$ denote the category of factorizations of $w\Id$.
\begin{enumerate}
\item 
The objects of $\Fact(w)$ consist of pairs $(C,\partial)$, where%
$$C
= C = C^0 \oplus C^1$$
is a $\Z_2$-graded free abelian group and $\partial$ is a group
homomorphism squaring to a multiple of the identity:
$$\partial : C^\bullet \to C^{\bullet + 1} , \quad \partial^2 = w\Id
.$$
\item For objects $(C,\partial),(C',\partial')$, the space of
  morphisms 
$$\Hom_{\Fact}((C,\partial),(C',\partial'))$$ 
is the space of grading preserving maps intertwining the ``differentials''
$$f: C^\bullet \to (C')^\bullet, \quad f \partial = \partial' f .$$
\end{enumerate}
\item {\rm (Cohomology)} For any matrix factorization $(C,\partial)$
  let $H((C,\partial)\otimes_\Z \Z_w)$ denote the cohomology of the
  differential $\partial \otimes_\Z {\rm Id} : C \otimes_\Z \Z_w \to
  C\otimes_\Z \Z_w $ obtained from $\partial$ by tensoring with
  $\Z_w$.  Any morphism in $\Fact(w)$ defines a homomorphism of the
  corresponding cohomology groups. The {\em cohomology with
    coefficients} functor has target the category $\Ab$ of
  $\Z_2$-graded abelian groups,
$$
 \Fact(w) \to \Ab, \ \ (C,\partial) \mapsto H( (C,\partial) \otimes_\Z
\Z_w) .$$
\end{enumerate} 
\end{definition} 

We recall the definition of the Maslov index two disk count studied in
Oh \cite{oh:fl1}.  Let $M$ be a symplectic background and $L \subset
M$ be a compact Lagrangian submanifold with minimal Maslov number
equal to $2$.  For any $\ell \in L$, consider the moduli space
$\M^1_2(L,J,\ell)$ of $J$-holomorphic disks $u:(D,\partial D) \to
(M,L)$ with Maslov number $2$, mapping a point $1 \in \partial D$ to
$\ell$, modulo automorphisms preserving $1$.  By results of Kwon-Oh
\cite{ohkw:st} and Lazzarini \cite{la:ex}, for $J$ in a comeager
subset $\J^{\reg}(M,L) \subset \J(M)$ the moduli space
$\M^1_2(L,J,\ell)$ is a finite set.  Suppose $L$ is equipped with a
relative spin structure.  By \cite{fooo}, see also \cite{orient}, this
structure induces an orientation on the moduli space
$\M^1_2(L,J,\ell)$.  Let
$$o: \M^1_2(L,J,\ell) \to \{ \pm 1 \} $$ 
denote the map comparing the given orientation to the canonical
orientation of a point.

\begin{definition} {\rm (Disk invariant of a Lagrangian)}  
Let $M$ be a monotone symplectic background and $L \subset M$ a
compact monotone Lagrangian brane with minimal Maslov number equal to
$2$.  The {\em disk invariant} of $L$ is the sum
$$ w(L) = \sum_{[u] \in \M^1_2(L,J,\ell)} o(u) q^{A(u)} \in \Lambda
.$$ 
The element $w(L)$ is independent of $J \in \J^\reg(M,L)$ and
$\ell \in L$.
\end{definition}  

Denote by $\J_t(M,L^0,L^1) \subset \J_t(M)$ the subset of
$t$-dependent almost complex structures whose restriction to a fixed
small neighborhood of $ t = 0$ resp. $t = 1$ lies in $\J^\reg(M,L^0)$
resp. $\J^\reg(M,L^1)$.

\begin{proposition}  {\rm (Floer cohomology)} Let $L^0,L^1$ compact Lagrangian branes in $M$.   There exists a comeager subset
  $\J^\reg_t(M,\omega,L^0,L^1) \subset \J_t(M,\omega,L^0,L^1) $ such
  that
\begin{enumerate} 
\item $\partial^2 = (w(L^0) - w(L^1))\Id$.
\item $(CF(L^0,L^1;\Lambda), \partial)$ is independent of the choice
  of $J,H$ up to cochain homotopy.
\end{enumerate}  
\end{proposition}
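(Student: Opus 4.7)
The plan is to follow the standard analysis of Floer moduli spaces in the monotone setting with minimal Maslov number two, due to Oh \cite{oh:fl1} and Fukaya-Oh-Ohta-Ono \cite{fooo}, modified only in the sign computation by the relative spin orientations of \cite{orient}. The essential new feature compared to the minimal Maslov $\ge 3$ case is that Maslov-index-two disk bubbles can appear on the boundary of one-dimensional moduli spaces of strips, and these contributions must be matched with the disk invariants $w(L^0),w(L^1)$.

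For part (a), first I would establish by a standard transversality argument that there is a comeager subset $\J^\reg_t(M,\omega,L^0,L^1)\subset \J_t(M,\omega,L^0,L^1)$ for which the moduli spaces $\M(x_-,x_+)$ of Floer trajectories are smooth of the expected dimension and the evaluation maps $\ev_k:\M_2^1(L^k,J_k,\ell)\to L^k$ are transverse to the endpoints of Floer trajectories (here $J_k$ is the restriction near $t=k$). Next I would analyze the compactification of the one-dimensional component $\M(x_-,x_+)_1$: by Gromov compactness together with the fact that sphere bubbles and index $\ge 4$ disk bubbles are excluded by monotonicity and the minimal Maslov assumption, the boundary consists of (i) broken trajectories from $\M(x_-,y)_0 \times \M(y,x_+)_0$, which contribute to $\partial^2$, and (ii) strips carrying an attached index-two disk bubble on either the $L^0$- or $L^1$-boundary, attached at an evaluation-matched interior point.

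The key identity is that, after fixing $x_-=x_+$, summing over configurations of type (ii) produces exactly $w(L^0)\bra{x_-}$ and $w(L^1)\bra{x_-}$ respectively; the constant strip at $x_-$ together with a disk through $x_-$ accounts for the count. Here the evaluation-transversality and a cobordism argument identifies the disk-bubble contribution with $w(L^k)$ independently of the strip. The sign difference between the two types of bubble is the content of \cite[orient]{orient}: a disk attached at $t=1$ carries the opposite sign to one attached at $t=0$, because the boundary orientation on $\partial(\R\times[0,1])$ is opposite at the two components. Adding boundary contributions to zero gives
$$
\partial^2 \bra{x_-} \;=\; \bigl(w(L^0)-w(L^1)\bigr)\bra{x_-},
$$
proving (a), with the factor weighted by $q^{A(u)}$ already built into $w(L^k)\in\Lambda$.

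For part (b), given two regular data $(H^0,J^0),(H^1,J^1)$, I would construct a continuation map $\Phi:CF(L^0,L^1;\Lambda)^{(0)}\to CF(L^0,L^1;\Lambda)^{(1)}$ by counting index-zero solutions of a parametrized Floer equation along a path $(H^s,J^s)$. The same disk-bubbling analysis applied to the one-dimensional parametrized moduli space yields $\Phi\partial^0-\partial^1\Phi = (w(L^0)-w(L^1))\Phi - (w(L^0)-w(L^1))\Phi = 0$, since $w(L^k)$ is independent of the auxiliary data; thus $\Phi$ is a morphism of matrix factorizations in $\Fact(w(L^0)-w(L^1))$. A two-parameter argument over a homotopy of homotopies produces a chain-level nullhomotopy showing that $\Phi$ is canonical up to cochain homotopy, and composing $\Phi^{01}\circ\Phi^{10}$ with the constant homotopy gives the inverse up to homotopy.

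The main obstacle is twofold. First, one must achieve sufficient transversality so that in one-parameter families the only codimension-one strata are the four listed above; in particular one must prevent disk bubbles from appearing at the limit points $x_\pm$ (forcing the use of $\omega$-compatible $J$ whose restriction near $t=0,1$ lies in $\J^\reg(M,L^k)$, which is exactly the constraint built into $\J_t(M,\omega,L^0,L^1)$). Second, and more delicate, is the orientation computation producing the signed difference $w(L^0)-w(L^1)$; this requires tracking the relative spin structures under the nodal degeneration of a strip with a disk bubble, for which I would invoke the gluing-sign analysis of \cite{orient} recalled in Remark \ref{sketch}.
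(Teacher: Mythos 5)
The paper does not supply a proof of this Proposition; it is stated as standard, with the ingredients (transversality, disk bubbling, orientations) to be drawn from Oh \cite{oh:fl1}, Fukaya--Oh--Ohta--Ono \cite{fooo}, Kwon--Oh \cite{ohkw:st}, Lazzarini \cite{la:ex}, and the orientation analysis in \cite{orient}. Your sketch follows this same standard route. For part (a) it is essentially correct: a Maslov-two bubble on the boundary of the one-dimensional component of $\M(x_-,x_+)$ forces the residual Floer strip to have index $0$, hence (modulo the $\R$-action) to be constant, so $x_-=x_+$ and the bubble point lies on the constant strip; counting these against the constant strip at $x_-$ with the orientation conventions of \cite{orient} yields $w(L^0)-w(L^1)$ on the diagonal, and the broken-strip count is forced to equal the negative of this.

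For part (b), however, the displayed formula
\[
\Phi\partial^0-\partial^1\Phi \;=\; (w(L^0)-w(L^1))\Phi - (w(L^0)-w(L^1))\Phi
\]
does not describe any actual boundary stratum. The continuation equation has no $\R$-translation symmetry, so the one-dimensional moduli space consists of index-$1$ solutions and its Gromov boundary contains only broken configurations; a Maslov-two disk bubble would leave a residual continuation solution of Fredholm index $-1$, which is generically absent (there is no translation to absorb the deficit, as there is for the constant strip in part (a)). So the disk-bubble stratum is simply empty, and one reads off $\Phi\partial^0=\partial^1\Phi$ directly, with no cancellation to invoke. The same index count (residual index $-2$ in the one-parameter family) rules out disk bubbles in the homotopy-of-homotopies argument. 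The genuinely relevant observation is the one you make in passing: that $w(L^0)-w(L^1)$ depends only on $(M,L^0,L^1)$ and not on $(H,J)$, which is what places all the complexes in the same category $\Fact(w(L^0)-w(L^1))$ and makes ``cochain homotopy'' meaningful. Your overall conclusion is correct; only the mechanism by which disk bubbles disappear from the continuation-map boundary is misstated.
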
  

If $L^k \subset M, k= 0,1$ are monotone Lagrangian branes with the
same disk invariant, then the exact triangle in Theorem \ref{main}
holds, with the same proof.  More generally, the disk invariant for a
generalized Lagrangian brane $\ul{L} = (L_1,\ldots, L_k)$ is the sum
of the disk invariants for the components $L_1, \ldots, L_k$.
Furthermore, The disk invariant for a correspondences $L_{01} \subset
M_0^- \times M_1$ is the {\em opposite} of the disk invariant for its
transpose $L_{01}^t \subset M_1^- \times M_0$, because the change in
complex structure reverses orientations on Maslov index two disks
\cite{orient}.  With these conventions, the quilted version of Theorem
\ref{main} also holds provided that the disk invariants of the
generalized correspondences $\ul{L}^0,\ul{L}^1$ are equal.

\subsection{Periodic Floer version}

One can also formulate a version of the exact triangle for
symplectomorphisms, that is, in periodic Floer theory.  In this
formulation, the exact triangle relates the symplectic Floer
cohomology of the Dehn twist with the Lagrangian Floer cohomology of
the vanishing cycle and the identity:

\begin{theorem}\label{period}  {\rm (Exact triangle in periodic quilted Floer theory)}  
Let $M,C,B,\tau_C$ be as in Theorem \ref{main}.  There exists a long
exact sequence of Floer cohomology groups
$$ \tri{\hskip -.2in$HF(\on{id})$}{
$HF(C^t,C)[\dim(B)]$.}{$HF(\tau_C)$} $$
\end{theorem}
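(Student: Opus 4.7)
The plan is to deduce Theorem \ref{period} from the already established Lagrangian version Theorem \ref{main} by specializing to generalized Lagrangian correspondences given by the diagonal. Take $\ul{L}^0 = \ul{L}^1$ to be the diagonal $\Delta_M \subset M^- \times M$, regarded as a Lagrangian correspondence from $M$ to $M$ (or equivalently as a generalized Lagrangian from a point to $M$ with appropriate endpoint identification). Theorem \ref{main} then yields an exact triangle involving the three quilted Floer cohomologies $HF(\Delta_M, \Delta_M)$, $HF(\Delta_M, C^t, C, \Delta_M)[\dim(B)]$ and $HF(\Delta_M, \on{graph}(\tau_C), \Delta_M)$.

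The second step is to identify each of these quilted Floer groups with the corresponding periodic Floer group in the statement. For any symplectomorphism $\phi: M \to M$, fixed points of $\phi$ correspond bijectively with intersection points of $\Delta_M$ with $\on{graph}(\phi)$ in $M^- \times M$, and $(J,J)$-holomorphic cylinders defining the differential of $HF(\phi)$ correspond bijectively with pseudoholomorphic quilted strips for the cyclic correspondence $(\Delta_M, \on{graph}(\phi), \Delta_M)$. This standard identification is an instance of the geometric composition isomorphism of \cite{ww:isom}, using the tautological embedded composition $\Delta_M \circ L \cong L$ for any Lagrangian correspondence $L$ ending at $M$. Applying this identification to each of the three terms produces the groups $HF(\on{id})$, $HF(C^t, C)[\dim(B)]$, and $HF(\tau_C)$ of the statement.

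The main technical obstacle is verifying admissibility of $\Delta_M$ as a Lagrangian in $M^- \times M$, since Theorem \ref{main} is stated for admissible branes. The monotonicity of $\Delta_M$ is inherited from that of $M$ because $\omega_{M^-\times M}$ restricts to zero on $\Delta_M$ and disks $(D,\partial D) \to (M^-\times M, \Delta_M)$ double to spheres $S^2 \to M$; this doubling identifies the minimal Maslov number of $\Delta_M$ with twice the minimal Chern number of $M$, so the Maslov-index-at-least-three hypothesis holds automatically when the minimal Chern number of $M$ is at least two (the standing assumption in the paper). The $\pi_1$-torsion condition on $\pi_1(\Delta_M) \to \pi_1(M^-\times M)$ requires more care: while the image is the diagonal copy of $\pi_1(M)$, this is torsion in $\pi_1(M^-\times M) \cong \pi_1(M)^2$ exactly when $\pi_1(M)$ is itself torsion, so one must assume this (which holds in the motivating moduli-space examples of Section \ref{flat}).

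If direct application of Theorem \ref{main} is not available because $\Delta_M$ fails one of the admissibility conditions, the plan is to repeat the proof of Theorem \ref{main} verbatim but with the domain surfaces $\ul{S}_1$ of Figure \ref{pants} and $S_2$ of Figure \ref{tauC} replaced by their periodic (cylindrical) analogs: a quilted cylinder with a $B$-patch bounded by two $C$-seams for the chaps map, and a cylinder carrying the standard Lefschetz-Bott fibration $E_C$ glued into a trivial Hamiltonian fibration for the Lefschetz-Bott map. The bijection Lemma \ref{bijection}, the leading order computation of Theorem \ref{leadingthm}, the vanishing Corollary \ref{pinch}, and the double mapping cone Lemma \ref{doublecone} all apply in this setting without modification, since the monotonicity, regularity, and compactness arguments only use the local structure near the coisotropic $C$.
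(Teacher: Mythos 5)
Your central idea---reduce to the Lagrangian version by taking $L^0 = L^1$ to be the diagonal $\Delta_M \subset M^-\times M$ and then identify the three terms with periodic Floer groups---is the same one the paper uses. But the way you first try to implement it does not go through, and you miss the subtlety the paper actually flags.

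The ``direct application of Theorem \ref{main}'' route is not available, for a reason that is more basic than admissibility: Theorem \ref{main} requires $\ul{L}^0$ and $\ul{L}^1$ to be generalized Lagrangian branes \emph{in $M$}, meaning generalized correspondences from a point to $M$, with $L^0 \subset N_0^-\times M$. The diagonal $\Delta_M \subset M^-\times M$ is a correspondence \emph{from $M$ to $M$}, and your parenthetical ``equivalently as a generalized Lagrangian from a point to $M$ with appropriate endpoint identification'' does not describe an object in the category Theorem \ref{main} quantifies over; what you are implicitly reaching for is a \emph{cyclic} generalized correspondence, which is a different setting (cylinders, not strips). Because of this, your admissibility analysis (doubling, minimal Maslov $\geq 3$, requiring $\pi_1(M)$ torsion) is a red herring: if one tries to force the theorem to apply it does impose an extra hypothesis not in the statement of Theorem \ref{period}, but once one passes to the correct cylindrical setting the diagonal seams are erased and that admissibility is never needed. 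So the fallback you describe in your last paragraph---rerun the proof of Theorem \ref{main} verbatim with the domains $\ul{S}_1$ and $S_2$ replaced by cylinders---is not a fallback; it is the proof, and it is what the paper does (the bijection of intersection points, the leading-order computation, the vanishing of the pinched-off disk invariant, and the double mapping cone lemma all carry over).

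The other gap is orientations. The paper explicitly notes that, with $\Z$ coefficients, the diagonal does not carry a canonical relative spin structure, so the usual construction of signs for moduli spaces with a $\Delta$-labeled boundary breaks down. The resolution is again to treat $\Delta$ as the \emph{empty correspondence} (a seam labeled $\Delta$ is removed rather than oriented), which is the point of passing to quilted cylinders as in Section \ref{closedfukaya}. Your proposal does not address this at all; as written it would only yield the result over $\Z_2$.
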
 

The ideas are similar to the case of Lagrangian Floer cohomology in
Theorem \ref{main} and we only sketch the proof.  The map $HF(\on{id})
\to HF(\tau_C)$ is the relative invariant corresponding to a
Lefschetz-Bott fibration over the cylinder in Figure \ref{quiltcyl3}
below.  In the Figure the outer and inner boundary represent
cylindrical ends, where the former has monodromy $\tau_C$.
\begin{figure}[h]
\begin{picture}(0,0)%
\includegraphics{quiltcyl3.pstex}%
\end{picture}%
\setlength{\unitlength}{3947sp}%
\begingroup\makeatletter\ifx\SetFigFont\undefined%
\gdef\SetFigFont#1#2#3#4#5{%
  \reset@font\fontsize{#1}{#2pt}%
  \fontfamily{#3}\fontseries{#4}\fontshape{#5}%
  \selectfont}%
\fi\endgroup%
\begin{picture}(2877,2698)(286,-3651)
\put(301,-2002){\makebox(0,0)[lb]{$\tau_C$}%
}
\put(1390,-2402){\makebox(0,0)[lb]{$\on{id}$}%
}
\end{picture}%

\caption{Lefschetz-Bott fibration defining
  $HF(\on{id}) \to HF(\tau_C)$}
\label{quiltcyl3}
\end{figure}
The map $HF(C^t,C) \to HF(\on{id})$ is the relative invariant
associated to the quilted cylinder in Figure \ref{quiltcyl4}.  In the
Figure, the outer boundary represents a cylindrical end while the
inner boundary represents a quilted cylindrical end with seams
labelled $C^t,C$.
\begin{figure}[h]
\begin{picture}(0,0)%
\includegraphics{quiltcyl4.pstex}%
\end{picture}%
\setlength{\unitlength}{3947sp}%
\begingroup\makeatletter\ifx\SetFigFont\undefined%
\gdef\SetFigFont#1#2#3#4#5{%
  \reset@font\fontsize{#1}{#2pt}%
  \fontfamily{#3}\fontseries{#4}\fontshape{#5}%
  \selectfont}%
\fi\endgroup%
\begin{picture}(2873,2698)(286,-3651)
\put(301,-2302){\makebox(0,0)[lb]{$$}%
}
\put(1392,-2302){\makebox(0,0)[lb]{$C^t{\sharp} C$}%
}
\end{picture}%
\caption{Quilted surface defining $HF(C^t,C)[\dim(B)] \to HF(\on{id})$}
\label{quiltcyl4}
\end{figure} 
The proof is similar to that of Theorem \ref{main}. Namely, we have identifications
\begin{equation} \label{ident} HF(\on{id}) \cong HF(L^0,L^1), 
\quad HF(\tau_C^{-1} ) = HF(L^0, (\tau_C^{-1} \times 1)L^1) \end{equation}
where $L^0 = L^1 = \Delta$ is the diagonal in $M^2$. There is a
natural bijection
$$ \cI(L^0,(\tau_C^{-1} \times 1)L^1) \to \cI(L^0,L^1) \cup \cI(L^0,
C^t, C, L^1) ;$$ 
this amounts to repeating the argument of Proposition \ref{bijection}.  The
same filtration arguments as before are used to construct an exact
triangle of {\em quilted} Floer cohomology groups
$$ \tri{\hskip -.7in$HF(L^0,\tau_C^{-1}L^1)$}{\hskip -.5in
  $HF(L^0,C^t,C,L^1)[\dim(B)]$.}{$HF(L^0,L^1)$} .$$
Now functoriality of quilted Floer cohomology groups under the symplectomorphism $\tau_C$
gives an exact triangle 
\vskip .1in
$$ \tri{\hskip -.5in$HF(L^0,L^1)$}{
 \hskip -.5in $HF(L^0,C^t,C,L^1)[\dim(B)]$.}{$HF(L^0,\tau_C L^1)$} $$
Using the identifications in \eqref{ident} this proves Theorem
\ref{period}.  With integer coefficients, there is a complication
caused by the fact that the diagonal correspondence $\Delta$ does not
have a canonical relative spin structure.  However, the proof goes
through by treating the diagonal as the ``empty correspondence'', see
Section \ref{closedfukaya} below.

\section{Applications to surgery exact triangles}
\label{field} 

In this section we apply the exact triangle to obtain versions of the
Floer \cite{don:floer}, Khovanov \cite{kh:ca}, and Khovanov-Rozansky
\cite{kr:ma} exact triangles.  We have already established in Section
\ref{flat} that Dehn twists of surfaces induce fibered Dehn twists of
moduli spaces.

\subsection{Exact triangle for three-bordisms} 

In \cite{field}, we introduced a category-valued-field theory
associated to certain connected, decorated surfaces and bordisms.
We use freely the notation and terminology from \cite{field}:

\begin{definition} 
\begin{enumerate} 
\item {\rm (Decorated surface)} A decorated surface is a compact
  connected oriented surface $X$ equipped with a line bundle with
  connection $D \to X$.  The {\em degree} of $D$ is the integer $d =
  (c_1(D),X)$.
\item {\rm (Decorated bordisms)} A {\em bordism} from $X_-$ to $X_+$
  is a compact, oriented $3$-manifold $Y$ with boundary equipped with
  an orientation-preserving diffeomorphism
$$\phi: \partial Y \to \ol{X}_-
  \sqcup X_+ .$$  
Here $\ol{X}_-$ denotes the manifold $X_-$ equipped with the opposite
orientation.  We often omit $\phi$ from the notation and assume that
$X_-,X_+$ are boundary components of $Y$.  A {\em decorated bordism}
of degree $d$ is a compact connected oriented bordism $Y$ between
decorated surfaces $(X_\pm,D_\pm)$ equipped with a line-bundle-with
connection $D \to Y$ such that $D | X_\pm = D_\pm$.  Given a decorated
bordism $(Y,D)$ between $(X_\pm,D_\pm)$ one can obtain another
decorated bordism by tensoring $D$ with any line bundle that is
trivial on the boundary of $Y$.
\item {\rm (Moduli spaces for decorated surfaces)} Suppose that $X$ is
  a decorated surface with line bundle $D$.  Let $M(X,D)$ denote the
  moduli space of constant curvature rank $2$ bundles on $X$ with
  fixed determinant.  The space $M(X,D)$ is a compact, monotone
  symplectic manifold with monotonicity constant $1/4$ with a unique
  Maslov cover of any even order.  In \cite{field} we describe how to
  equip $M(X,D)$ with the background classes so that $M(X,D)$ is
  equipped with the structure of a symplectic background.
\item {\rm (Correspondences for decorated bordisms)} Suppose that
  $Y$ is a decorated surface with line bundle $D$ between decorated
  surfaces $(X_-,D_-)$ and $(X_+,D_+)$.  Let 
$$L(Y,D) \subset M(X_-,D_-)^- \times M(X_+,D_+)$$ 
denote the set of isomorphism classes of constant central curvature
bundles with fixed determinant that extend over the interior of $Y$.
If $Y$ is an elementary bordism (that is, admits a Morse function
with a single critical point) then $L(Y,D)$ is a smooth Lagrangian
correspondence.  In \cite{field} we describe how to equip $L(Y,D)$
with relative spin structures and gradings, so that $L(Y,D)$ has the
structure of an admissible Lagrangian brane.
\end{enumerate}
\end{definition} 

For the purposes of the exact triangle, we will need an alternative
description as flat bundles with fixed holonomy around an additional
marking.

\begin{definition} 
\begin{enumerate}
\item {\rm (Marked surfaces)} For any integer $n \ge 0$, an $n$ {\em
  marked surface} is a compact oriented surface $X$ equipped with a
  tuple $\ul{x} = (x_1,\ldots, x_n)$ of distinct points on $X$ and a
  labelling $\ul{\mu} = (\mu_1,\ldots,\mu_n)$.  In this section we
  take labels all equal to $1/2$ corresponding to the central
  element $-I$ of $\Alc$.
\item {\rm (Moduli spaces for marked surfaces)} Denote by
  $M(X,\ul{\mu})$ the moduli space of flat bundles on $X - \ul{x}$ with
  holonomy $-I$ around each marking $x_i$; we consider more general
  holonomies in the next subsection.  The moduli space $M(X,\ul{\mu})$
  may be identified with the moduli space $M(X,D)$ where $D$ is a line
  bundle of degree $n$, by a construction described in Atiyah-Bott
  \cite{at:mo} given by twisting by a fixed central connection.
\item {\rm (Marked bordisms)} A {\em marked bordism} from
  $(X_-,\ul{x}_-)$ to $(X_+,\ul{x}_+)$ is a compact, oriented
  bordism $Y$ equipped with a tangle (compact oriented one-dimension
  submanifold transverse to the boundary) $K \subset Y$ such that $K
  \cap X_\pm = \ul{x}_\pm$.  
\item {\rm (Moduli spaces for marked bordisms)} Denote by 
$$L(Y,K) \subset M(X_-,\ul{\mu}_-)^- \times M(X_+,\ul{\mu}_+)$$ 
the moduli space of bundles that extend over $Y - K$ with holonomy
$-I$ around $K$.  The identification $M(X_\pm,\ul{\mu}_\pm) \to
M(X_\pm,D_\pm)$ induces a homeomorphism $M(Y,K) \to M(Y,D)$ where $D
\to Y$ is a line bundle whose first Chern class is dual to the tangle
$K$.  Thus, in particular, the addition to $K$ of a circle component
$K'$ corresponds to twisting the determinant line bundle $D$ by a line
bundle whose first Chern class is dual to the homology class of $K'$.
\end{enumerate} 
\end{definition} 

The following proposition relates the moduli spaces of bundles with
fixed holonomy around an embedded circle with the Lagrangian
correspondences associated to elementary bordisms.

\begin{proposition} {\rm (Correspondences for elementary bordisms)} 
\label{tangledef} 
\begin{enumerate}
\item 
Let $Y$ be a bordism from $X_-$ to $X_+$ containing a single
critical point of index $1$ and a trivial tangle $K$ (that is, a union
of intervals connecting $\ul{x}_-$ to $\ul{x}_+$) and $C \subset X_+$
is the attaching cycle.  The Lagrangian $L(Y,K)$ is diffeomorphic via
the projection to $M(X_+,\ul{\mu}_+)$ to the subset of connections on
$X_+ - \ul{x}_+$ with holonomy along $C$ equal to $I$.
\item Let $Y$ be a decorated bordism from $X_-$ to $X_+$ containing
  a single critical point of index $1$, $C \subset X_+$ the attaching
  cycle, $K_0$ a trivial bordism connecting $\ul{x}_-$ to $\ul{x}_+$
  and $K_1 \subset Y$ the unstable manifold of the critical point.
  The Lagrangian $L(Y,K_0 \cup K_1)$ is diffeomorphic to the subset of
  flat bundles on $X_+ - \ul{x}_+$ with holonomy along $C$ equal to
  $-I$.
\end{enumerate}  
\end{proposition}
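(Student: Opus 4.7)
The plan is to exploit the standard handle decomposition of the elementary bordism $Y$ to reduce the extension problem for flat connections to a computation on a single 1-handle. Write $Y = (X_-\times [0,1])\cup_\phi H$, where $H = D^1\times D^2$ is attached along $\phi: S^0\times D^2\hookrightarrow X_-\times\{1\}$; then $X_+$ is recovered from $X_-$ by excising the two attaching disks and inserting the lateral boundary $D^1\times S^1$. Under this identification the attaching cycle $C\subset X_+$ is the belt circle $\{0\}\times\partial D^2$, bounding the cocore $\{0\}\times D^2\subset X_+$, while the unstable manifold of the critical point is the core arc $K_1 = D^1\times\{0\}$ meeting $X_-$ at the two points $p_\pm = \phi(\{\pm 1\}\times\{0\})$. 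In both cases the tangle lying in $X_-\times[0,1]$ is a trivial collection of arcs transverse to the product structure, so extension of a connection from $X_+$ through $X_-\times[0,1]$ is automatic by parallel transport in the interval direction, and the essential question is extension across $H$ (part (a)) or across $H\setminus K_1$ (part (b)).

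For part (a), $H$ is contractible, so any flat $SU(2)$-bundle on $H$ is trivial; a necessary condition for extension is therefore the vanishing of $\on{Hol}_\gamma(A_+)$ for every loop $\gamma\subset\partial H$ which bounds in $H$. Such loops are generated, up to free homotopy on $\partial H$, by $C$ (which bounds the cocore) and the boundary circles of the two attaching disks (which are simply connected). Consequently $\on{Hol}_C(A_+) = I$ is also sufficient: one trivializes $A_+$ on $D^1\times S^1$ using the fact that every $\{t\}\times\partial D^2$ is isotopic to $C$, and extends trivially across $H$. The resulting flat extension to $Y-K$, unique up to gauge, provides a smooth inverse to the projection $L(Y,K)\to M(X_+,\ul{\mu}_+)$ and realizes the claimed diffeomorphism onto $\rho_C^{-1}(I)$.

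For part (b), the relevant complement $H\setminus K_1 = D^1\times(D^2\setminus\{0\})$ deformation retracts onto the cocore minus its center, hence onto $C$, so flat bundles on $H\setminus K_1$ are classified up to gauge by the conjugacy class $\on{Hol}_C\in SU(2)$, now unconstrained. The endpoints $p_\pm$ of $K_1$ are the additional markings of $X_-$ with label $1/2$, and a small meridian $\gamma_\pm$ of $p_\pm$ inside an attaching disk is freely homotopic within $H\setminus K_1$ to $C^{\pm 1}$. The label condition $\on{Hol}_{\gamma_\pm}(A) = -I$ therefore forces $\on{Hol}_C(A_+) = -I$, which conversely suffices for existence and uniqueness of the extension by the argument of part (a) with $H$ replaced by $H\setminus K_1$. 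The remaining task — promoting these set-theoretic bijections to diffeomorphisms onto smooth level sets of $\rho_C$ — is the main technical point; it reduces under the standing irreducibility hypotheses to the fiber-bundle description of $\rho_C^{-1}(\lambda)$ provided by Lemma \ref{facts}, together with the smooth dependence of the extension $A_+\mapsto A$ on its input, which is immediate from the explicit trivialization/constant-holonomy constructions above.
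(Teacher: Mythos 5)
Your argument is correct and, at heart, the same as the paper's: the paper invokes Seifert--van Kampen in part~(a) to see that $\pi_1(Y-K)$ is obtained from $\pi_1(X_+-K)$ by killing $[C]$, and for part~(b) uses the gradient-flow deformation retraction of $Y-K_0-K_1$ onto $X_+-\ul{x}_+$ together with the observation that $C$ is a meridian of $K_1$; you replace the Seifert--van Kampen/Morse packaging with an explicit handle decomposition $Y=(X_-\times[0,1])\cup_\phi H$ and extend flat connections piece by piece. The content is identical --- $C$ bounds the cocore disk of $H$, so $\on{Hol}_C=I$ is exactly the extension obstruction across the handle, and in the punctured handle $H\setminus K_1$ the belt circle $C$ is homotopic to the meridians of $K_1$, forcing $\on{Hol}_C=-I$ from the label $1/2$. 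Your version is somewhat more explicit and self-contained, which is a modest gain; the paper's is more compressed.

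Two small slips worth fixing: the cocore $\{0\}\times D^2$ lies in $H\subset Y$, not in $X_+$ (only its boundary, the belt circle $C$, lies in $X_+$); and the phrase ``every loop $\gamma\subset\partial H$ which bounds in $H$'' is vacuous since $\partial H\cong S^2$ is simply connected --- what you actually mean, and implicitly use, is that the only relevant loop for the extension problem is $C$ viewed in the annular region $D^1\times\partial D^2=X_+\cap\partial H$. Your closing remark correctly flags that the smooth-structure part of the claim (that the projection is a diffeomorphism onto the holonomy level set) rests on Lemma~\ref{facts} and the irreducibility hypotheses; the paper's own proof is equally terse on this point, so this is an honest and adequate acknowledgment rather than a gap.
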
 

\begin{proof} By Seifert-van Kampen, $\pi_1(Y - K)$ is the quotient 
of $\pi_1(X_+ - K)$ by the ideal generated by the element $[C]$
obtained from $C$ by joining by a path to the base point.  Hence in
the first case, $L(Y,K)$ is diffeomorphic to the submanifold of
$M(X,\ul{\mu}_+)$ obtained by setting the holonomy along $C$ equal to
the identity.  For the second case, the gradient flow the Morse
function defines a deformation retract of $Y - K_0 - K_1$ to $X_+ -
\ul{x}_+$.  Homotopy invariance implies that $\pi_1(Y - K_0 - K_1 )$
is isomorphic to $\pi_1(X_+ - \ul{x}_+)$.  Since $C$ is a loop around
$K_1$, the holonomy around $K_1$ is equal to the holonomy along $C$,
hence the claim.
\end{proof}

In order to obtain smooth Lagrangian correspondences, we break the
given bordism into elementary bordisms.

\begin{definition}
\begin{enumerate} 
\item {\rm (Cerf decompositions)} A {\em Cerf decomposition} of a
  bordism $Y$ is a decomposition of $Y$ into elementary bordisms
  $Y_1,\ldots,Y_K$, that is, bordisms admitting a Morse function
  with at most one critical point.  Associated to any Cerf
  decomposition
$$ Y = Y_1 \cup_{X_1} \ldots \cup_{X_{k-1}} Y_k $$
and a decoration on $Y$ is a {\em generalized Lagrangian
  correspondence}
$$\ul{L}(Y) = (L(Y_1),\ldots, L(Y_k)) .$$
\item 
  The generalized correspondence $\ul{L}(Y)$ may be equipped with a
  relative spin structure via its structure as a fibration over the
  moduli space of the incoming or outgoing surface.  Thus $\ul{L}(Y)$
  gives rise to a functor of generalized Fukaya categories
$$ \Phi(Y) : \Fuk^{\sharp}(M(X_-)) \to \Fuk^{\sharp}(M(X_+)) .$$
This functor is independent of the choice of Cerf decomposition
\cite{field}.
\item 
Given admissible Lagrangian branes $L^\pm \subset M(X_\pm)$, define
\begin{eqnarray*} HF(Y;L^-,L^+) &:=& H(\Hom_{\Fuk^{\sharp}(M(X_+))}(\Phi(Y)L^-,L^+)) \\ 
&:=& HF(L^-, \ul{L}(Y), L^+) \end{eqnarray*}
where the second equality is by definition of the Fukaya category.
Since we have ignored absolute gradings
and the minimal Chern number is two 
this is a relatively $\Z_4$-graded group depending only on the
equivalence class of the decorated bordism $Y$.
\end{enumerate} 
\end{definition} 

We prove the following surgery exact triangle for the invariants
$HF(Y;L^-,L^+)$.

\begin{definition} 
\begin{enumerate} 
\item {\rm (Knots)} A {\em knot} in a bordism $Y$ is an embedded,
  connected $1$-manifold $K \subset Y$ disjoint from the boundary.
\item {\rm (Knot framings)} A {\em framing} of a knot $K \subset Y$ is
  a non-vanishing section of its normal bundle 
up to homotopy.  Given a framed knot, the other framings are obtained
by twisting by representations of $\pi_1(K) \cong \Z$ into $SO(2)$,
and so are indexed by $\Z$.
\item {\rm (Knot surgeries)} For $\lambda \in \Z$ the {\em
  $\lambda$-surgery} $Y_{\lambda,K}$ of $Y$ along $K$ is obtained by
  removing a tubular neighborhood of $K$ and gluing in a solid torus
  $D^2 \times S^1$ so that the {\em meridian} $\partial D^2 \times \{
  \pt \}$ is glued along the curve given by the framing of the knot
  corresponding to $\lambda$.  Denote by $K_\lambda$ the knot in
  $Y_{\lambda,K}$ corresponding to a longitude in $\partial D^2 \times
  S^1$. Thus $K_\lambda$ intersects the meridian transversally once.
\end{enumerate}
\end{definition} 

\begin{remark} \label{elem}
{\rm (Knot surgeries in terms of decompositions into elementary
  bordisms)} The three-manifolds $Y_{0,K},Y_{-1,K}$ obtained by a
$0$ resp. $-1$-knot surgery have decompositions into elementary
bordisms given as follows.  Suppose that $Y$ is decomposed into
elementary bordisms $Y_1,\ldots, Y_l$, so that $K$ is contained in
the boundary $(\partial Y_i)_+ = (\partial Y_{i+1})_-$ and the framing
is the direction normal to the boundary.  Gluing in $D^2 \times S^1$
produces two new critical points.  The first critical point has stable
manifold with unit sphere equivalent to $K$ and the second has
unstable manifold with unit sphere equivalent to $K$.  Thus
\begin{enumerate} 
\item The zero-surgery $Y_{0,K}$ has a decomposition into elementary
  bordisms with two additional pieces, $Y_\cup, Y_\cap$ inserted
  between $Y_i$ and $Y_{i+1}$.  The knot $K_0 \subset Y_{0,K}$ is
  divided into the two additional pieces $K_0 \cap Y_\cup$ and $K_0
  \cap Y_\cap$.  The correspondence $L(Y_\cup, Y_\cup \cap K_0) \circ
  L( Y_\cap ,Y_\cap \cap K_0)$ is the correspondence associated to the
  moduli space of bundles $L(Y_\cup) \circ L(Y_\cap)$ on the decorated
  surface $Y_\cup \cup Y_\cap$ with the {\em shifted line bundle} as
  in Proposition \ref{tangledef}.
\item The $-1$ surgery $Y_{-1,K}$ has decomposition into simple
  bordisms $Y_1,\ldots, Y_l$ but where the identification $(\partial
  Y_i)_+ \to (\partial Y_{i+1})_-$ is the Dehn twist along $K$.
\end{enumerate} 
\end{remark} 

\begin{lemma} \label{compat} {\rm (Existence of Cerf decompositions compatible with a knot)}  
For any framed knot $K \subset Y$, there exists a Cerf decomposition
$Y= Y_1 \cup \ldots \cup Y_l$ so that $K$ is contained in the boundary
$(\partial Y_i)_+ = (\partial Y_{i+1})_-$ for some $i = 1,\ldots, l$
and the framing is the direction normal to the boundary.
\end{lemma}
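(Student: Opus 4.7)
The plan is to construct a Morse function $f\colon Y\to[0,1]$ whose regular level set $f^{-1}(1/2)$ contains $K$, and whose gradient with respect to some Riemannian metric restricts along $K$ to the given framing; the Cerf decomposition is then obtained by slicing at regular values between consecutive critical values, one of which lies between $1/2$ and the next critical value on each side.

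First I would use the framing to pick a tubular neighborhood embedding $\varphi\colon K\times D^2\hookrightarrow Y\ssm\partial Y$ with $\varphi(k,0)=k$ and $d\varphi_{(k,0)}(\partial_u)$ equal to the framing vector at $k$, where $(u,v)$ are coordinates on $D^2$. On the image $N=\varphi(K\times D^2)$ set $f_0(\varphi(k,u,v)):=\tfrac{1}{2}+\eps u$ for some small $\eps>0$; this is regular on $N$ with $f_0^{-1}(1/2)\cap N\supset K$ and with $\grad f_0|_K$ proportional to the framing in the flat product metric on $N$.

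Next I would pick any smooth function $g\colon Y\to[0,1]$ that equals $0$ on $X_-$, equals $1$ on $X_+$, has $dg$ non-vanishing on a collar neighborhood of $\partial Y$, and takes values in $(\tfrac{1}{2}-\eps',\tfrac{1}{2}+\eps')$ on $N$ for $\eps'$ small. Using a bump function supported on $N$, interpolate between $g$ and $f_0$ to produce a smooth function $f_1\colon Y\to[0,1]$ that agrees with $f_0$ on a smaller tubular neighborhood $N'\Subset N$ and with $g$ near $\partial Y$. By the relative Morse approximation theorem (see e.g.\ Hirsch, \emph{Differential Topology}, Ch.~6), $f_1$ can be approximated in $C^\infty$ by a function $f$ which is Morse on $Y\ssm(N'\cup \text{collar of }\partial Y)$ and which coincides with $f_1$ on $N'$ and on the collar. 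A further generic perturbation (supported away from $N'$ and the collar) makes the critical values of $f$ all distinct and disjoint from $1/2$.

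Finally I would read off the Cerf decomposition. Order the critical values $c_1<\cdots<c_k$ of $f$, choose regular values $t_0=0<t_1<\cdots<t_{k-1}<t_k=1$ separating them, and insert the value $1/2$ into this list (it is regular and lies strictly between two consecutive $c_j$'s by the last perturbation). The preimages $f^{-1}([t_{i-1},t_i])$ give the elementary bordisms $Y_1,\ldots,Y_l$, and if $i_0$ is the index with $t_{i_0}=1/2$ then $K\subset f^{-1}(1/2)=(\partial Y_{i_0})_+=(\partial Y_{i_0+1})_-$. By construction $\grad f$ restricts along $K$ to a positive multiple of the framing, so the framing points normal to $(\partial Y_{i_0})_+$, completing the proof. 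The only point requiring care is the relative Morse approximation, which is the main technical input but is entirely standard.
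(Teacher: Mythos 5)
Your first step is essentially the same as the paper's --- produce a Morse function $f\colon Y\to[0,1]$ that is constant on $K$ with $\grad f|_K$ equal to the framing --- and you flesh it out with a careful local model near $K$ and a relative Morse approximation argument, which the paper simply asserts. However, your proof is missing the step that actually does most of the work in the paper's argument: ensuring that the intermediate regular level sets of $f$ are \emph{connected}. In this paper a ``Cerf decomposition'' is only useful if each slice $(\partial Y_i)_+$ is a connected surface, since the definitions of decorated surface, the moduli space $M(X,D)$, and hence the Lagrangian correspondences $L(Y_i)$ all require connectedness. A Morse function produced by generic approximation, as in your construction, has no reason to have connected regular fibers; for instance a $0$-handle attached far from $X_-$ creates a disconnected level set just above the corresponding critical value. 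The paper handles this by first adding $1$-handles to $Y$ to make the level set $f^{-1}(\lambda)$ containing $K$ connected, and then replacing $f$ on each of $f^{-1}((-\infty,\lambda])$ and $f^{-1}([\lambda,\infty))$ by a self-indexing Morse function; self-indexing on a connected cobordism with connected ends forces all regular level sets to be connected. To repair your argument you would need to insert an analogous connectivity step, supported away from the tubular neighborhood $N'$ so that the conditions you have arranged at $K$ (membership in $f^{-1}(1/2)$, gradient equal to the framing) are preserved.
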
 

\begin{proof}   Choose a Morse function $f: Y \to \R$ such that
$f$ is constant on $K$ and the framing is given by the gradient of $f$
  at $K$.  The level set $f^{-1}(\lambda)$ containing $K$ can be made
  connected by adding $1$-handles in $Y$, so that $f^{-1}(\lambda)$
  becomes a connected surface containing $K$ separating the boundary
  components of $Y$.  By taking a self-indexing Morse function on
  $f^{-1}((\pm \infty,\lambda])$ one obtains that $f$ is Morse and has
connected fibers.  \end{proof}

\begin{theorem} \label{floertri}  {\rm (Exact triangle for knot surgery)}  
 Let $Y$ be a decorated bordism from $X_-$ to $X_+$, let $K \subset
 Y$ be a framed knot contained in the interior of $Y$, and let
 $Y_{-1,K},Y_{0,K}$ denote the $-1$ and $0$-surgeries on $K$.  Let
 $L^-,L^+$ be admissible Lagrangian branes in $M(X_\pm)$.  There is a
 long exact sequence of (relatively graded) Floer cohomology groups
$$ \ldots \to HF(Y_{0,K};L^-,L^+) \to HF(Y;L^-,L^+) \to
 HF(Y_{-1,K};L^-,L^+) \to \ldots $$
where the determinant bundle on $Y_{0,K}$ has been shifted by the dual
class of the knot $K_0 \subset Y_{0,K}$, or equivalently, $Y_{0,K}$ is
considered as a marked bordism with knot $K_0$.
\end{theorem}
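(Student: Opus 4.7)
The plan is to realize the three Floer cohomology groups as quilted Floer cohomology groups associated to a single generalized Lagrangian correspondence in which exactly one entry differs, and then to invoke the fibered exact triangle of Theorem \ref{main}. I would start by applying Lemma \ref{compat} to choose a Cerf decomposition $Y = Y_1 \cup_X \cdots \cup_X Y_l$ such that $K$ lies in some intermediate level surface $X$, with its framing transverse to $X$. Write this decomposition so that the generalized correspondence takes the form
$$
\ul{L}(Y) = \bigl( L^-, \ldots, L(Y_i), \Delta_{M(X)}, L(Y_{i+1}), \ldots, L^+ \bigr),
$$
where we have inserted the diagonal $\Delta_{M(X)} \subset M(X)^- \times M(X)$ at the slice containing $K$; by the isomorphism theorem for quilted Floer cohomology this insertion does not change $HF(Y;L^-,L^+)$.

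Next I would use Remark \ref{elem} to rewrite the $(-1)$-surgery and the $0$-surgery as modifications of this generalized correspondence. For $Y_{-1,K}$, Remark \ref{elem}(b) says that the gluing at $X$ is precomposed with the Dehn twist $\tau_K: X \to X$, so the corresponding generalized correspondence is obtained from $\ul{L}(Y)$ by replacing $\Delta_{M(X)}$ with $\on{graph}((\tau_K^{-1})^*)$. By Theorem \ref{nomarkings} the map $(\tau_K^{-1})^*$ is a fibered Dehn twist $\tau_C$ along a spherically fibered coisotropic $C \subset M(X,\ul{\mu})$ of codimension $3$ (if $K$ is non-separating) or of codimension $1$ treated as in Remark \ref{codimonemain} (if $K$ separates, one can perturb the determinant bundle slightly to land in the codimension $1$ strongly monotone setting, or reduce by a small isotopy to an interior level set). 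For $Y_{0,K}$, Remark \ref{elem}(a) inserts the additional pieces $Y_\cap \cup Y_\cup$ between $Y_i$ and $Y_{i+1}$, and Proposition \ref{tangledef}(b) together with the twist of the determinant bundle by the dual class of $K_0$ identifies the resulting composition $L(Y_\cup, Y_\cup \cap K_0) \circ L(Y_\cap, Y_\cap \cap K_0)$ with the pair $(C^t, C)$, where $C$ is the submanifold of $M(X,\ul{\mu})$ defined by trivial holonomy around $K$ in the untwisted picture, i.e.\ $C = \rho_K^{-1}(1/2)$ after the determinant bundle shift. Thus
$$
HF(Y_{0,K};L^-,L^+) \;=\; HF\bigl(L^-,\ldots,L(Y_i),C^t,C,L(Y_{i+1}),\ldots,L^+\bigr)[\dim B],
$$
the degree shift coming from the embedding dimension of the base $B$ of $C \to B$.

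At this point the three cohomology groups are precisely the three terms in the statement of Theorem \ref{main}, with $\ul{L}^0 = (L^-, \ldots, L(Y_i))$ and $\ul{L}^1 = (L(Y_{i+1}),\ldots, L^+)$. I would verify admissibility and monotonicity of these generalized Lagrangian branes and of the coisotropic brane $C$, using the brane structures constructed in \cite{field} together with the spin structure on $C$ induced from its fibration over $B$ with structure group a spin principal bundle. Then a direct application of Theorem \ref{main} (or Remark \ref{codimonemain} in the separating case) produces the desired long exact sequence.

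The main obstacle I anticipate is the identification in the $0$-surgery step: one needs to match the composed correspondence $L(Y_\cup) \circ L(Y_\cap)$ (as a generalized Lagrangian) with the concatenation $C^t \sharp C$ at the level of brane structures (orientations, gradings, and relative spin structures), not merely as smooth Lagrangians. The geometric identification follows from Proposition \ref{tangledef} and Theorem \ref{nomarkings}, but bookkeeping of the background classes, Maslov grading, and compatibility of the spin structures under the handle cancellation requires careful comparison of the constructions of \cite{field} with the framework for spherically fibered coisotropics developed in Section \ref{fd}. Once this compatibility is in hand, the exact triangle follows immediately from Theorem \ref{main}.
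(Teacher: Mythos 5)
Your proposal follows the same route as the paper: use Lemma~\ref{compat} to arrange a Cerf decomposition with $K$ in an intermediate level surface $X$, use Remark~\ref{elem} to express the $(-1)$- and $0$-surgeries as replacing the identity slice by $\graph((\tau_K^{-1})^*)$ respectively by the cup-cap pair, identify that Dehn twist and that pair with $\tau_C$ and $(C^t,C)$ via Theorem~\ref{nomarkings} and Proposition~\ref{tangledef}, and apply Theorem~\ref{main}. The one substantive gap is your treatment of the case where $K$ separates $X$.

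You propose to cover the separating case by invoking Remark~\ref{codimonemain}, suggesting one can "perturb the determinant bundle" or "reduce by a small isotopy to an interior level set" to obtain strong monotonicity. Neither works. For a separating $K$ the relevant coisotropic $\rho_K^{-1}(\lambda)$ has codimension one, and Remark~\ref{codimonemain} requires the strong monotonicity of Definition~\ref{stronglymonotone}, namely positivity of $c_1(M_C)-[B_+]-[B_-]$ on the cut space $M_C$. The paper's own remark immediately after Theorem~\ref{floertri} explicitly states that this condition has \emph{not} been verified for moduli of flat bundles and lies outside the framework of the paper. Changing the determinant bundle changes the monotonicity constant but does not establish strong monotonicity, and the separating case already uses an interior $\lambda$, so neither of your fixes helps. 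What the paper does instead (implicit in its phrase "with $K\subset X$ a non-separating knot") is to arrange, by further stabilizing the Cerf decomposition, that $K$ is non-separating: if $K$ separates the level surface, insert a cancelling $1$-/$2$-handle pair near that level to raise the genus of $X$ and make $K$ non-separating, without changing the diffeomorphism type of $Y$. This always places you in the codimension-three case of Theorem~\ref{nomarkings}(b), for which Theorem~\ref{main} applies directly. You should delete the parenthetical handling of the separating case and substitute this stabilization argument.
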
  

\begin{proof} 
  By combining Remark \ref{elem}, Lemma \ref{compat}, Theorem
  \ref{nomarkings}, the statement becomes a special case of Theorem
  \ref{main}: Let $Y = Y_- \cup_X Y_+$ be a decomposition into
  compression bodies with $K \subset X$ a non-separating knot and
  $\tau$ a corresponding Dehn twist.  We have an exact sequence
\begin{multline}  \ldots \to HF(L(Y_-),
L(Y^\cup, Y^\cup \cap K_0), L(Y^\cap, Y^\cap \cap K_0),
L(Y_+)) \to HF(L(Y_-),L(Y_+)) \\ \to
HF(L(Y_-),\graph((\tau^{-1})^*),L(Y_+)) \to \ldots .\end{multline}
The addition of the knot $K_0$ is equivalent to a shift in the
determinant line bundle, as explained in Remark \ref{elem}; we thank
Guillem Cazassus for pointing out the missing shift in an earlier
version of the paper.
\end{proof}  

\begin{remark} {\rm (Dehn twists for separating curves)}   
We do not discuss here the exact triangle for a Dehn twist around a
separating curve, because the moduli space $M$ does not satisfy the
``strong monotonicity'' condition of Definition
\ref{stronglymonotone}.  Instead, one needs to establish positivity
properties of the form $c_1(M_C) - [B_-] - [B_+]$ with respect to the
{\em canonical} complex structure on the moduli space of parabolic
bundles $M_C$.  This seems like to hold but would take us outside the
framework of this paper.
\end{remark} 

\subsection{Exact triangles for tangles}

We obtain Floer-theoretic invariants of tangles constructed in
\cite{fieldb} exact triangles that are the same as those obtained by
Khovanov \cite{kh:ta,kh:ca} and Khovanov-Rozansky \cite{kr:ma}.  We
assume freely the terminology from \cite{fieldb}, in particular the
terminology for moduli spaces of bundles with fixed holonomy for
marked surfaces.  In this subsection we take $G = SU(2)$ and $\Alc
\cong [0,1/2]$ via the identification \eqref{interval}.

\begin{definition} {\rm (Functors for bordisms with tangles via flat bundles with fixed holonomy)} 
\begin{enumerate} 
\item {\rm (Correspondences for elementary tangles)} Let $X_\pm$ be a
  compact, oriented surface with odd numbers of markings $\ul{x}_\pm$
  admissible labels $\ul{\mu}_\pm$ all equal to $1/4 \in \Alc$.  Let
  $K \subset Y: =X \times [-1,1]$ be a tangle, that is, a bordism
  between marked surfaces $(X_-,\ul{\mu}_-)$ and $(X_+,\ul{\mu}_+)$.
  Let $M(X_\pm,\ul{\mu}_\pm)$ denote the moduli space of $SU(2)$
  bundles in Definition \ref{labels}.  Let $L(K)$ denote the subset of
  $M(X_-,\ul{\mu}_-)^- \times M(X_+,\ul{\mu}_+)$ of bundles that
  extend over the interior of the bordism.  Assuming that $K$ is
  elementary (admits a Morse function with at most one critical point
  on $K$, and none on $Y$) the Lagrangian $L(K)$ has an admissible
  brane structure \cite{fieldb}.
\item {\rm (Functors for tangles)} More generally, let $K \subset Y =
  X \times [-1,1]$ be an arbitrary tangle, and $K = K_1 \cup \ldots
  \cup K_r$ a decomposition into elementary tangles.  Associated to
  each elementary tangle $K_i$ is a Lagrangian correspondence $L_i$
and so a generalized Lagrangian correspondence associated to $K$
$$ \ul{L}(K) = (L_1,\ldots, L_r ) .$$
Composing the functors $\Phi(L_i)$ for these correspondences gives a
functor
$$ \Phi(K): \Fuk^{\sharp}(M(X_-,\ul{\mu}_-)) \to
\Fuk^{\sharp}(M(X_+,\ul{\mu}_+)) ;$$
In \cite{fieldb} we proved that $\Phi(K)$ is independent, up to
isomorphism, of the choice of decomposition $K_1 \cup \ldots \cup
K_r$.
\item {\rm (Group valued invariants)} Given objects 
$$L^\pm \in
  \Obj(\Fuk^{\sharp}(M(X_\pm,\ul{\mu}_\pm))) ,$$ 
such that the sum of disk invariants for $L_-, L(K), L_+$ is zero, the
cohomology
$$ HF(K;L^-,L^+) =
H(\Hom_{\Fuk^{\sharp}(M(X_+,\ul{\mu}_+))}(\Phi(K)L^-, L^+)) $$
is a ($\Z_2$-relatively graded) invariant of $K$.  
\end{enumerate} 
\end{definition} 

We prove the following surgery exact triangle for these invariants.

\begin{definition}   \label{cupcap} 
Given a tangle $K \subset Y$, a separating embedded surface $\Sigma
\subset Y$, and a disk $D \subset \Sigma$ meeting $K$ in two points.
let $K^{\cupcap}, K^{\twist}$ be the tangles obtained by modifying $K$
by a half-twist, respectively adding a cup and cap as in Figure
\ref{crossingchange}.
\end{definition} 

\begin{figure}[h]
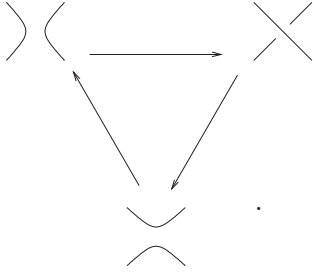

\vskip .2in
$$ \tri{\hskip -.2in \three}{\one}{\two} .$$
\vskip .2in
\caption{Exact triangle for a crossing change, $SU(2)$ case}
\label{crossingchange}
\end{figure}

\begin{theorem} \label{tantri} {\rm (Exact triangle for changing a crossing)}  
Let $K,K^{\cupcap},K^{\twist}$ be tangles in a cylindrical bordism
$Y = X \times [-1,1]$ between $X_\pm = X \times \{ \pm 1 \}$ as in
Definition \ref{cupcap} and $L^\pm$ admissible Lagrangian branes in
$M(X_\pm,\ul{\mu}_\pm)$ such that the sum of the disk invariants for
$\ul{L}(K), L^-,L^+$ is zero. The cohomology groups for
$K^{\twist},K,K^{\cupcap}$ are related by an exact triangle
\vskip 0in\begin{equation} \label{tri2} \tri{\hskip
  -1in$HF(K;L^-,L^+)$}{$HF(K^{\cupcap};L^-,L^+)$}{$HF(K^{\twist};L^-,L^+).$}
 \hskip 1in \end{equation}
\end{theorem}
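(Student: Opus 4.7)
The approach is to apply the main exact triangle Theorem \ref{main} with the spherically fibered coisotropic provided by Theorem \ref{halftwist}. First I would choose a decomposition of the tangle $K$ such that the crossing disk $D \subset \Sigma$ is supported within a small cylindrical subregion $X_0 \times [a,b] \subset Y$ (where $X_0 \cong X$), with $-1 < a < b < 1$, on which $K$ restricts to two parallel vertical strands meeting $D$ in two markings $x_i, x_j$ of common label $1/4$. Write $K = K_- \cup K_{\on{mid}} \cup K_+$ accordingly, so that the three tangles $K, K^{\cupcap}, K^{\twist}$ agree on $K_-$ and $K_+$ and differ only in the middle piece $K_{\on{mid}}$.

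Next I would identify the generalized Lagrangian correspondences associated to the three middle pieces. The two parallel strands of $K_{\on{mid}}$ yield the diagonal $\ul{L}(K_{\on{mid}}) = \Delta_{M(X_0, \ul{\mu})}$. The crossing tangle $K^{\twist}_{\on{mid}}$ is, up to isotopy, the mapping cylinder of a half-twist $\tau_Y$ around the disk $D$ exchanging $x_i$ and $x_j$, and so yields $\ul{L}(K^{\twist}_{\on{mid}}) = \graph(\tau_Y^*)$. The cup-cap tangle $K^{\cupcap}_{\on{mid}}$ decomposes as a cap (reducing the two markings to none by joining the arcs) followed by a cup (reintroducing them), passing through an intermediate surface with the markings $x_i, x_j$ removed. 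By the description of elementary tangle correspondences in \cite{fieldb} together with Definition \ref{CY}, the correspondence of the cap is precisely the fibered coisotropic $C_Y$, viewed as a Lagrangian correspondence from $M(X_0, \ul{\mu})$ to $B = M(X_0, \ul{\mu} - \{\mu_i, \mu_j\})$, and the correspondence of the cup is its transpose $C_Y^t$. By Theorem \ref{halftwist}, $\tau_Y^*$ acts on $M(X_0, \ul{\mu})$ as the fibered Dehn twist $\tau_{C_Y}$ along $C_Y$, which has codimension $2$.

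Setting $\ul{L}^0 = L^- \sharp \ul{L}(K_-)$ and $(\ul{L}^1)^t = \ul{L}(K_+) \sharp L^+$ in the conventions of Theorem \ref{main}, so that the quilted Floer cohomologies appearing there represent precisely the three tangle invariants, the exact triangle of Theorem \ref{main} applied to $C = C_Y \subset M(X_0, \ul{\mu})$ becomes exactly the triangle \eqref{tri2}. The degree shift $[\dim B]$ on the middle term $HF(K^{\cupcap}; L^-, L^+)$ is absorbed into the relative grading convention on the tangle invariants.

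The main obstacle will be the precise identification $\ul{L}(K^{\cupcap}_{\on{mid}}) = (C_Y^t, C_Y)$ as generalized Lagrangian correspondences with compatible brane structures: one must verify that the coisotropic embedding of a neighborhood of $C_Y$ implicit in the proof of Theorem \ref{halftwist} agrees with the correspondences determined geometrically by the cup and cap tangles via the constructions of \cite{fieldb}, and that the relative spin structures and gradings on both sides coincide. Secondary obstacles include checking that monotonicity, admissibility, and disk-invariant hypotheses of Theorem \ref{main} are inherited from the tangle setup; in particular, when components of minimal Maslov number $2$ arise, the hypothesis that the sum of disk invariants for $\ul{L}(K), L^-, L^+$ vanishes ensures compatibility via the extension of the exact triangle described in Section \ref{maslovtwo}.
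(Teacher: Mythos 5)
Your proposal is correct and takes essentially the same approach as the paper: identify the correspondences for the cup and cap with $C_Y^t$ and $C_Y$ from Theorem \ref{halftwist}, identify the correspondence for the half-twist with the graph of the fibered Dehn twist $\tau_{C_Y}$, and then apply the quilted exact triangle of Theorem \ref{main}. The paper phrases this by inserting $C_j^\cap, C_j^\cup$ into an existing cylindrical Cerf decomposition (and replacing $L_i$ by $(\tau_{C_j}\times 1)L_i$), whereas you isolate a cylindrical middle piece with correspondence the diagonal and then apply the triangle there; these are equivalent after composing the diagonal out. Your flagged obstacles (brane structure compatibility under the coisotropic embedding, and the Maslov-two disk invariant condition) are genuine points the paper defers to \cite{fieldb} and to the hypothesis in the statement, and one small convention to watch is whether $C_Y$ is taken in $B^-\times M$ (as in Definition~\ref{sphere}) or in $M^-\times B$, which determines whether the cap is $C_Y$ or $C_Y^t$; this does not affect the validity of the argument.
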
 

\begin{proof}   The crossing change depicted in Figure \ref{crossingchange} 
has the following effect on the decomposition of elementary tangles.
Suppose that $K = K_1 \cup \ldots \cup K_r$ is such a decomposition.
Denote by
$$ L_i \subset M(X_i,\ul{\mu}_i)^- \times M(X_{i+1},\ul{\mu}_{i+1}), \ i =
1,\ldots, r $$
the corresponding Lagrangian correspondences.  The tangle $K^{\twist}$
is obtained by inserting a half-twist of the $j$ and $j+1$-st markings
after some elementary tangle $K_i$.  We may assume that $K_i$ is a
cylindrical bordism (that is, admits a Morse function with no
critical points) so that the cylindrical Cerf decomposition of
$K^{\twist}$ is obtained from that of $K$ by replacing $K_i$ with a
half-twist.  Similarly for $K^{\cupcap}$, let $K_j^{\cap},K_j^{\cup}$
denote the tangles corresponding to the cap and cup of the $j$-th and
$j+1$-st strands.  Then
$$ K^{\cupcap} = K_1 \cup \ldots \cup K_{i-1} \cup K_j^{\cap} \cup
K_j^{\cup} \cup K_{i+1} \cup \ldots \cup K_r $$
is a cylindrical Cerf decomposition of $K^{\cupcap}$.  The Lagrangian
correspondence for $K_j^\cup$ is that associated to the coisotropic
submanifold
$$ C_j^\cup = \{ g_{j} g_{j+1} = 1 \} \subset M(X_{i},\ul{\mu}_{i}) $$
where $g_i$ is the holonomy around the $j$-th marking.  Similarly the
correspondence for $K_j^\cap$ is $ C_j^\cap = (C_j^\cup)^t $.  These
correspondences are simply-connected, hence automatically
monotone. Let $\tau_{C_j} \in \Diff(M(X_{i+1},\ul{\mu}_{i+1}))$ denote
the corresponding fibered Dehn twist.  We have
\begin{eqnarray*}
 CF(K;L^-,L^+) &=& CF(L^-,L_1,\ldots,L_r,L^+) \\
 CF(K^{\cupcap};L^-,L^+) &=&
CF(L^-,L_1,\ldots,L_i,C_j^\cap,C_j^\cup,L_{i+1},\ldots,L_r,L^+) \\
 CF(K^{\twist};L^-,L^+) &=& CF(L^-,L_1,\ldots,L_{i-1},(\tau_{C_j}
 \times 1)L_i,L_{i+1},\ldots,L_r, L^+ ) .\end{eqnarray*}
Theorem \ref{tantri} now follows from Theorems \ref{main} and
\ref{halftwist}.\end{proof}

More generally, in higher rank invariants we obtain an exact triangle
for the {\em Khovanov-Rozansky modification}.

\begin{definition}  \label{krmod} Let $G= SU(r)$ for some integer $r \ge 2$. 
\begin{enumerate}
\item {\rm (Admissible labels)} An {\em admissible label} is a
  projection of the barycenter of $\Alc$ onto some face.  In the
  absence of reducibles, given a marked surface $(X,\ul{x})$ with
  admissible labels $\ul{\mu}$ the moduli space $M(X,\ul{\mu})$ of
  bundles with fixed holonomy around the markings $\ul{x}$ in the
  conjugacy classes associated to $\ul{\mu}$ is a smooth, compact,
  monotone symplectic manifold.
\item {\rm (Correspondences for admissible graphs)} Let 
$$K \subset Y := X \times [-1,1]$$ 
be a trivalent graph with admissible labels, where each trivalent
vertex is of the form described in Definition \ref{kr}.  This means
that $K$ is made up of a finite number of {\em vertices} that are
points in $Y$, and {\em edges} that are embedded, compact one-manifold
with boundary.  The endpoints of the edges are either vertices or
points on the boundary of $Y$.  Trivalence means that each vertex is
required to lie in the boundary of exactly three edges.  Each edge is
labelled either by $\nu_k^1$ or $\nu_k^2$, so that at any vertex the
labels are $\nu_k^1,\nu_k^1,\nu_k^2$.  See Figure \ref{elemg} where
the squiggly edge represents a vertex labelled $\nu_k^2$.

\begin{figure}[h]
\includegraphics[height=2in]{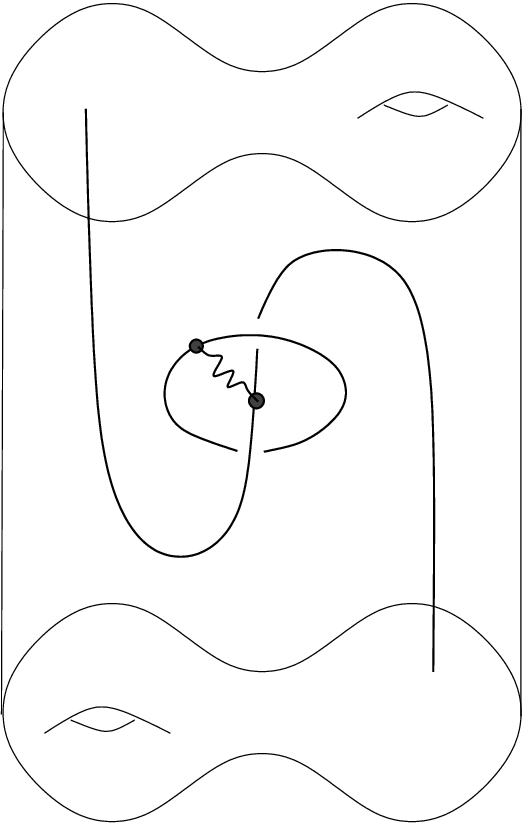}
\caption{Trivalent graph in a bordism}
\label{elemg} 
\end{figure}

\item {\rm (Decomposition into elementary graphs)} Let 
$$ K = K_1 \cup
  \ldots \cup K_e$$ 
be a decomposition into {\em elementary graphs} admitting cylindrical
Morse functions with at most one critical point or vertex.  Each
elementary graph $K_i$ defines a smooth Lagrangian correspondence with
admissible brane structure $L_i$ consisting of bundles that extend
over the interior, see \cite{fieldb}, and so a generalized Lagrangian
correspondence $\ul{L} = (L_1,\ldots, L_e)$.  The functor
$$\Phi(K) : \Fuk^{\sharp}( M(X_-,\ul{\mu}_-)) \to \Fuk^{\sharp}(
M(X_+,\ul{\mu}_+)) $$
obtained by composing the functors $\Phi(L(K_i))$ is independent of
the choice of decomposition into elementary graphs \cite{fieldb}, up
to \ainfty homotopy.
\item {\rm (Khovanov-Rozansky modification of a graph)} Suppose $K
  \subset X \times [-1,1]$ is a trivalent graph with admissible
  labels.  Let $(K_1,\ldots,K_e)$ be a cylindrical Cerf decomposition
  of $K$ and $ K \cap (X \times \{ b_i \}) $ a slice such that two
  points have the same label $\nu_k^1$ from \ref{kr}.  We obtain a new
  trivalent graphs $K^{\twist},K^{\kr}$ by inserting a half-twist,
  respectively inserting two new vertices as shown in Figure
  \ref{gentri}.  Here the intermediate edge represented by a squiggle
  is labelled $\nu_k^2$ from \ref{kr}.
\end{enumerate} 
\end{definition}

\begin{figure}[h]
\includegraphics{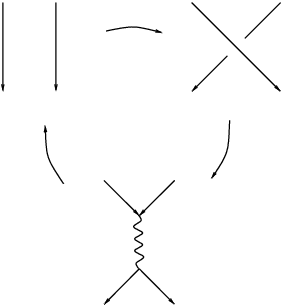}%
\caption{Exact triangle for a crossing change, $G=SU(r)$ case}
\label{gentri}
\end{figure}

 By Theorems \ref{main} and \ref{halftwistr},

\begin{theorem}  {\rm (Exact triangle for a Khovanov-Rozansky modification)}  
Suppose that $G = SU(r)$, $(X,\ul{\mu})$ is a marked surface with
$\mu_i = \mu_j = \nu_k^1$, with $\nu_k^1 =\hh(\omega_k +
\omega_{k+1})$.  Let $\nu_k^2 = \hh(\omega_{k+2} + \omega_{k})$ and
suppose that $K,K^{\twist}, K^{\kr}$ are as in Definition \ref{krmod},
and $L^\pm$ are Lagrangian branes such that the sum of the disk
invariants for $\ul{L}(K), L^-,L^+$ is zero.  There exists an exact
triangle
\vskip .1in
\begin{equation} \label{tri2d}
 \tri{\hskip
 -1in$HF(K;L^-,L^+)$}{$HF(K^{\kr};L^-,L^+).$}{$HF(K^{\twist};L^-,L^+)$}
 \end{equation}

\end{theorem}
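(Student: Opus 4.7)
The plan is to mimic exactly the proof of Theorem \ref{tantri} in the rank-two case, with Theorem \ref{halftwistr} replacing Theorem \ref{halftwist} as the geometric input that identifies the half-twist with a fibered Dehn twist. First I would choose a cylindrical Cerf decomposition $K = K_1 \cup \cdots \cup K_e$ of the graph such that the slice $X \times \{b_i\}$ on which the modification is performed lies between two adjacent cylindrical pieces $K_i$ and $K_{i+1}$, and such that $K_i$ is itself a cylindrical bordism (no critical points or vertices); this can always be arranged by refining the decomposition near the slice. Write $L_j := L(K_j)$ for the associated admissible Lagrangian branes, so that by definition
$$CF(K;L^-,L^+) = CF(L^-, L_1,\ldots,L_i,L_{i+1},\ldots,L_e,L^+).$$

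Next I would identify the correspondences inserted by the two modifications. For $K^{\kr}$, the two new trivalent vertices joined by the $\nu_k^2$-labeled edge form an elementary sub-graph whose associated correspondence is precisely the pair $(C_Y^t, C_Y)$, where $C_Y \subset M(X_{i+1},\ul{\mu}_{i+1})$ is the spherically fibered coisotropic from \eqref{CY2} of Definition \ref{kr} --- this identification is immediate from the description of $C_Y$ as the flat bundles whose holonomy along a small loop $Y$ around the $i$-th and $j$-th markings is conjugate to $\exp(\nu_k^2)$, together with the usual ``bundles that extend'' description of the cap/cup correspondences. For $K^{\twist}$, inserting a half-twist exchanges the labeled markings $\mu_i = \mu_j = \nu_k^1$ and, by Theorem \ref{halftwistr}, acts on $M(X_{i+1},\ul{\mu}_{i+1})$ by a fibered Dehn twist $\tau_{C_Y}$ along this same coisotropic $C_Y$. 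Thus
\begin{eqnarray*}
 CF(K^{\kr};L^-,L^+) &=& CF(L^-, L_1,\ldots,L_i, C_Y^t, C_Y, L_{i+1},\ldots,L_e,L^+), \\
 CF(K^{\twist};L^-,L^+) &=& CF(L^-, L_1,\ldots, (\tau_{C_Y}\times 1)L_i, L_{i+1},\ldots,L_e,L^+).
\end{eqnarray*}

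With these identifications, the theorem becomes a direct application of the main exact triangle Theorem \ref{main} with generalized Lagrangian branes
$$\ul{L}^0 = (L^-, L_1,\ldots,L_i), \qquad \ul{L}^1 = (L_{i+1},\ldots,L_e,L^+)^t$$
and spherically fibered coisotropic $C = C_Y \subset M(X_{i+1},\ul{\mu}_{i+1})$, whose codimension is $c = 2 \geq 2$. It remains to verify that the hypotheses of Theorem \ref{main} hold: monotonicity and admissibility of each $L_j$ and of $C_Y$ as a brane follow from the results of \cite{fieldb} and the spherically fibered coisotropic structure of $C_Y$ established in Theorem \ref{halftwistr}; compatibility of the disk invariants is built into the hypothesis on $L^\pm$ and $\ul{L}(K)$.

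The main obstacle I anticipate is not the overall strategy, which is dictated by Theorems \ref{main} and \ref{halftwistr}, but rather the bookkeeping for brane structures (gradings, relative spin structures, and the resulting degree shifts) so that the exact triangle \eqref{tri2d} holds with the correct signs and relative $\Z_2$-grading. In particular, one must verify that the brane structure on $C_Y$ induced from its description via $SU(r)$-moduli is compatible with the brane structures on the cap/cup correspondences used in the construction of the functor $\Phi(K^{\kr})$, and that the disk invariants of $\ul{L}(K)$, $\ul{L}(K^{\kr})$, $\ul{L}(K^{\twist})$ are equal so that all three Floer cohomologies are defined with the same coefficient and sit in a common exact triangle. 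Granting these checks --- which are of the same nature as those carried out in \cite{fieldb} for the $SU(2)$ case --- the triangle follows at once.
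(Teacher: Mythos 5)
Your proposal is correct and follows the same route the paper intends: the paper states this result as an immediate consequence of Theorems \ref{main} and \ref{halftwistr}, without writing out details, and your argument is exactly the transcription of the explicit $SU(2)$ proof of Theorem \ref{tantri} (choose a cylindrical Cerf decomposition with the modification sandwiched between cylindrical pieces, identify the $K^{\kr}$ insertion with $(C_Y^t,C_Y)$ and the half-twist with $\tau_{C_Y}$ via Theorem \ref{halftwistr}, then apply Theorem \ref{main}). The bookkeeping concerns you flag about brane structures and disk invariants are real but are precisely the hypotheses already built into the theorem statement and deferred to \cite{fieldb}, so they do not represent a gap in the argument.
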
 

Note that this generalizes the $SU(2)$ exact triangle since if $k = 2$
then $\omega_0 = \omega_2 = 0$.  The exact triangle of
Khovanov-Rozansky \cite{kr:ma} has a similar form.  The theories for
the other standard markings will not in general have surgery exact
triangles of this form, since the corresponding symplectomorphisms are
not, in general, Dehn twists. It would be interesting to understand
whether there is a replacement for the surgery exact triangle in these
more general cases.

\section{Fukaya versions of the exact triangle} 
\label{fukayaversion} 

 In \cite{Ainfty} the authors constructed \ainfty functors for
 Lagrangian correspondences between Fukaya categories.  The gluing
 results necessary for the construction of the \ainfty functors for
 Lagrangian correspondences are proved in \cite{mau:gluing}.  Applied
 to the Lagrangian correspondences arising from moduli spaces of flat
 bundles one obtains a (partial) \ainfty-category-valued field theory.
 We now explain the Fukaya-categorical versions of the exact triangle
 for fibered Dehn twist.

\subsection{Open Fukaya-categorical version}
\label{fukaya}

Let $M$ be a symplectic background.  We take $\Fuk(M)$ to be the
\ainfty category whose objects are admissible monotone branes with
torsion fundamental group and minimal Maslov number at least three.
That is, we disallow the minimal Maslov number two case.  Morphisms
are Floer cochain groups and composition maps count (perturbed)
holomorphic polygons with boundary on the given Lagrangians.  In
\cite{Ainfty} we defined a similar category $\GFuk(M)$ whose objects
are generalized Lagrangian submanifolds.  Let $D^\flat \GFuk(M)$
denote its bounded derived category, as defined by Kontsevich, see
\cite{se:bo}.  Given any Lagrangian correspondence $L_{01} \subset
M_0^- \times M_1$ with admissible brane structure, \cite{Ainfty}
constructs an \ainfty functor
$$ \Phi(L_{01}) : \GFuk(M_0) \to \GFuk(M_1) .$$
The functor $\Phi(L_{01})$ is defined on the level of objects by
concatenating $L_{01}$ to the sequence, and on the level of morphisms
by counting holomorphic quilts.  In particular, given a spherically
fibered coisotropic $M \supset C \to B$ of codimension $c \ge 2$ with
admissible brane structure, one obtains \ainfty functors
$$ \Phi(C): \GFuk(M) \to \GFuk(B), \quad \Phi(C^t): \GFuk(B) \to
\GFuk(M) .$$
\begin{theorem}  \label{fukthm} {\rm (Exact triangle in the 
derived Fukaya category)} Let $(M,\omega)$ be a symplectic background,
  $L \subset M$ an admissible Lagrangian brane, and $\iota: C \to M$ a
  coisotropic submanifold of codimension at least $2$ equipped with
  admissible brane structure spherically fibered over a symplectic
  manifold $B$.  Let $\tau_C : M \to M$ denote a fibered Dehn twist
  along $C$.  There is an exact triangle in $D^\flat \GFuk(M)$ of the
  form
$$ \tri{\hskip .1in $L$}{\hskip -.0in $\Phi(C)\Phi(C^t)
    L$.}{$\tau_C L$}
$$
\end{theorem}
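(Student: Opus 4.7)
The plan is to deduce the Fukaya-categorical triangle from the Floer-cohomological triangle of Theorem \ref{main} together with an $A_\infty$-enhancement of the chain-level maps $C\Phi_1, C\Phi_2$ and null-homotopy $h$ constructed in Section \ref{exact}, combined with a Yoneda argument. Throughout I work in the twisted-complex category $\Tw \GFuk(M)$, whose homotopy category is equivalent to $D^\flat \GFuk(M)$.

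First, I construct an $A_\infty$-morphism
$$\psi: \ul{L} \sharp C^t \sharp C [\dim(B)] \longrightarrow \ul{L}\sharp \on{graph}(\tau_C)$$
by counting pseudoholomorphic quilts over the chaps surface $\ul{S}_1$ of Figure \ref{pants}, now allowing arbitrarily many additional incoming strip-like ends labelled by admissible test branes $\ul{L}'$. These counts assemble into a family of multilinear maps satisfying the $A_\infty$-morphism relations, exactly as in the construction of the functors $\Phi(L_{01})$ in \cite{Ainfty}, with $C^t, C$ and $\on{graph}(\tau_C)$ playing the roles of correspondences. In $\Tw \GFuk(M)$ one then forms the mapping cone $\Cone(\psi)$, producing an exact triangle
$$ \ul{L}\sharp C^t\sharp C[\dim(B)] \xrightarrow{\psi} \ul{L}\sharp \on{graph}(\tau_C) \to \Cone(\psi) \to.$$

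Second, I exhibit a morphism $\Cone(\psi)[-1] \to \ul{L}$ in $\Tw \GFuk(M)$: such a morphism is determined by a chain map $k: \ul{L}\sharp \on{graph}(\tau_C) \to \ul{L}$ and a null-homotopy $h$ of $k \circ \psi$, both realized as $A_\infty$-enhancements of the Lefschetz-Bott relative invariant associated to $(E_2, F_2)$ from Figure \ref{tauC} and of the parametric relative invariant for the one-parameter family $\ul{S}_t$ from Figure \ref{deform} appearing in Lemma \ref{vanishes}. To see that this morphism is a quasi-isomorphism, I apply Yoneda: for any admissible generalized Lagrangian brane $\ul{L}'$ from a point to $M$, pairing with $\ul{L}'$ produces exactly the cochain map $(k,h): \Cone(C\Phi_1)\to CF(\ul{L}',\ul{L})$ of Theorem \ref{cone}, which is a quasi-isomorphism by the double cone Lemma \ref{doublecone}. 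Since the Yoneda functor from $\Tw \GFuk(M)$ to the category of cohomological functors on $\GFuk(M)$ is conservative, $\Cone(\psi)[-1] \simeq \ul{L}$, yielding the desired triangle.

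The main obstacle is the first step: upgrading the quilted relative invariants on $\ul{S}_1$, $\ul{S}_t$ and $(E_2,F_2)$ from cochain maps to $A_\infty$-morphisms requires constructing compact moduli spaces of quilted polygons with extra strip-like ends and verifying the $A_\infty$-relations via codimension-one boundary strata. The compactness, transversality and orientation arguments are carried out in the setting without Lefschetz-Bott singularities in \cite{Ainfty} and \cite{mau:gluing}, so the technical content is to extend them to the present setup by combining them with the monotonicity and energy-gap estimates of Theorem \ref{leadingthm} and the parametric pinching argument of Corollary \ref{pinch}; the codimension hypothesis $c \ge 2$ ensures that vanishing-cycle bubbling does not obstruct these extensions. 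A secondary subtlety is that the above Yoneda argument requires test objects of the form of arbitrary admissible generalized branes from a point, not merely single Lagrangians, so one must verify that the double-cone Lemma \ref{doublecone} and its hypotheses (Theorem \ref{leadingthm} and Lemma \ref{vanishes}) apply uniformly in the generalized setting, which follows from the quilted version of Theorem \ref{main} stated at the end of Section \ref{exact}.
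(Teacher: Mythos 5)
Your overall plan is essentially the paper's: construct the two morphisms as relative invariants of quilted surfaces, form a cone, package the third morphism together with a null-homotopy (you phrase this in $\Tw\GFuk(M)$, the paper does it via Lemma~\ref{xyz}), and prove the resulting morphism from the cone is a quasi-isomorphism by testing against admissible branes and invoking the energy filtration of Section~\ref{exact}.  Proposition~\ref{fukver} and its proof via Lemma~\ref{xyz}, the parametric null-homotopy of Figure~\ref{hpic}, and the leading-order filtration argument carry out exactly this program.

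Two points deserve correction.  First, the morphism you call $\psi$ is mislabeled, both in target and in which surface computes it.  The chaps quilt -- more precisely its one-ended variant shown in Figure~\ref{first}, not Figure~\ref{pants} -- produces a closed cochain
$$f \in CF(L, C^t, C, L)[\dim(B)] = \Hom\bigl(\Phi(C)\Phi(C^t)L,\, L\bigr),$$
i.e.\ a morphism from $\Phi(C)\Phi(C^t)L$ to $L$, \emph{not} from $\ul{L}\sharp C^t\sharp C$ to $\ul{L}\sharp\on{graph}(\tau_C)$.  Correspondingly, the isomorphism one must establish is $\Cone(f) \simeq \tau_C L$ (this is Proposition~\ref{fukver}), not $\Cone(\psi)[-1] \simeq \ul{L}$.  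If you instead want the rotated variant you describe, you need a concrete relative invariant realizing a morphism $\tau_C L \to \Phi(C)\Phi(C^t)L$; neither the chaps quilt nor the Lefschetz-Bott strip from Figure~\ref{tauC} produces that arrow (the latter yields the element $k\in\Hom(L,\tau_C L)$), and the paper constructs it only implicitly as the connecting map of the derived triangle.

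Second, the step you identify as "the main obstacle" -- upgrading the relative invariants "from cochain maps to $A_\infty$-morphisms" -- is not an obstacle in this approach.  Here $f$, $k$, and $h$ are single cochains in Floer morphism complexes, not $A_\infty$-functors or independently-defined $A_\infty$-module morphisms: $f$ is a closed degree-zero element of $\Hom(\Phi(C)\Phi(C^t)L,L)$, $k$ a closed degree-zero element of $\Hom(L,\tau_C L)$, and $h$ a degree $-1$ element with $\mu^1(h)=-\mu^2(f,k)$.  The higher $A_\infty$-module data needed to form $\Cone(f)$ and to run the Yoneda argument is supplied entirely by the composition maps $\mu^d$ of the already-constructed category $\GFuk(M)$ from \cite{Ainfty}; the one genuinely new moduli space is the one-parameter family realizing $h$ in Figure~\ref{hpic}, whose boundary stratification (as in \eqref{consists}) gives precisely the homotopy relation of Lemma~\ref{xyz}.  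The energy estimates of Theorem~\ref{leadingthm}, Remark~\ref{impossible}, and the pinching Corollary~\ref{pinch} then give acyclicity for every test object, as you correctly anticipate.
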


\noindent The morphisms in the triangle Theorem \ref{fukthm} are Floer
cochains defined as relative invariants associated to quilted surfaces
that are variations of the surfaces involved in the proof of Theorem
\ref{main}.  The first map
$$f = C\Phi_S \in CF(L,C^t,C,L)[\dim(B)] = \Hom( \Phi(C)
\Phi(C^t)L,L) $$
is obtained by counting the elements of the moduli spaces $\M_1(x), x
\in \cI(L,C^t,C,L)$ associated to the quilted surface $S$ shown in
Figure \ref{first} with Lagrangian boundary and seam conditions $L,C$:
$$ f = \sum_{\substack{x \in \cI(L,C^t,C,L) \\ u \in \M_1(x)_0}} o(u) \bra{x} .$$
\begin{figure}[ht]
\begin{picture}(0,0)%
\includegraphics{LCC.pstex}%
\end{picture}%
\setlength{\unitlength}{4144sp}%
\begingroup\makeatletter\ifx\SetFigFont\undefined%
\gdef\SetFigFont#1#2#3#4#5{%
  \reset@font\fontsize{#1}{#2pt}%
  \fontfamily{#3}\fontseries{#4}\fontshape{#5}%
  \selectfont}%
\fi\endgroup%
\begin{picture}(1749,1805)(1601,-1591)
\put(2438,-216){\makebox(0,0)[lb]{{{{$M$}%
}}}}
\put(2479, 95){\makebox(0,0)[lb]{{{{$L$}%
}}}}
\put(2448,-998){\makebox(0,0)[lb]{{{{$B$}%
}}}}
\put(2452,-531){\makebox(0,0)[lb]{{{{$C$}%
}}}}
\end{picture}%
\caption{Quilted surface defining the morphism from $\Phi(C) \Phi(
  C^t) L$ to $L$}
\label{first}
\end{figure}

The second map in the exact triangle is the relative invariant for the
standard Lefschetz-Bott fibration with a single end and Lagrangian
boundary condition $L$ in Figure \ref{second}. That is, if $\M_2(y)_0$
denotes the zero-dimensional component of the moduli space of
pseudoholomorphic sections then the map is
\begin{equation} \label{kdef} k = \sum_{\substack{y \in \cI(\tau_C^{-1}L,L) \\ u \in \M_2(y)_0}} o(u) \bra{y}
.\end{equation}
\begin{figure}
\begin{picture}(0,0)%
\includegraphics{LTC.pstex}%
\end{picture}%
\setlength{\unitlength}{4144sp}%
\begingroup\makeatletter\ifx\SetFigFont\undefined%
\gdef\SetFigFont#1#2#3#4#5{%
  \reset@font\fontsize{#1}{#2pt}%
  \fontfamily{#3}\fontseries{#4}\fontshape{#5}%
  \selectfont}%
\fi\endgroup%
\begin{picture}(1032,1674)(1471,-1873)
\put(1981,-1254){\makebox(0,0)[lb]{{$M$}%
}}
\put(1386,-871){\makebox(0,0)[lb]{{$L$}%
}}
\put(2116,-826){\makebox(0,0)[lb]{$\tau_C$}%
}
\end{picture}%

\caption{Lefschetz-Bott fibration defining the morphism from
  $\tau_C^{-1} L$ to $L$}
\label{second}
\end{figure}

\begin{proposition} \label{fukver} 
The morphism $k \in \Hom(L, \tau_C L)$ of \eqref{kdef} induces an
isomorphism of the mapping cone $\Cone(f: \Phi(C) \Phi(C^t) L \to L)$
with $\tau_C L$.
\end{proposition}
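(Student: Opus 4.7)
The plan is to adapt the proof of Theorem \ref{cone} to the $A_\infty$-categorical setting, using Yoneda's principle to reduce isomorphism in $D^\flat \GFuk(M)$ to quasi-isomorphism of Hom-complexes computed against arbitrary test objects.

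First, I would construct a null-homotopy $h \in \Hom(\Phi(C)\Phi(C^t)L, \tau_C L)$ of the composition $k \circ f$ as the relative invariant of the one-parameter family of Lefschetz-Bott fibrations interpolating between the gluing of the quilted chaps of Figure \ref{first} with the Lefschetz-Bott strip of Figure \ref{second}, and the nodal configuration in which a disk with boundary on $C$ is pinched off at the seam. This is the single-ended analogue of the deformation shown in Figure \ref{deform}. Counting pairs (parameter $\rho$, pseudoholomorphic section) of the zero-dimensional parametrized moduli space defines $h$, and the boundary analysis of the one-dimensional parametrized moduli space yields the homotopy relation $\partial h + h \partial = k \circ f$, exactly parallel to the relation in the proof of Theorem \ref{cone}, with vanishing of the boundary contribution from the nodal end provided by Corollary \ref{pinch}. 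Thus $(k,h)$ defines a morphism $\Cone(f) \to \tau_C L$ in $D^\flat \GFuk(M)$.

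Second, I would verify that this morphism is an isomorphism by applying $\Hom(\cdot, \ul{L}')$ for an arbitrary admissible generalized Lagrangian brane $\ul{L}'$. On Floer cochains this yields maps
$$
CF(L, \ul{L}') \xrightarrow{\,f^*\,} CF(L, C^t, C, \ul{L}')[\dim B], \qquad
CF(L \sharp \graph(\tau_C), \ul{L}') \xrightarrow{\,k^*\,} CF(L, \ul{L}'),
$$
together with a null-homotopy $h^*$ of $k^* f^*$, computed by the same moduli spaces that appear in the proof of Theorem \ref{main} with $\ul{L}^0 = L$ and $\ul{L}^1 = \ul{L}'$. The identification of these morphisms with the relative invariants used in Section \ref{exact} follows from the gluing formula \eqref{gluealongends} and the construction of $\Phi(C), \Phi(C^t)$ in \cite{Ainfty} in terms of counts of holomorphic quilts.

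Third, I would invoke the double mapping cone Lemma \ref{doublecone} with $C_0, C_1, C_2$ as in \eqref{c0c1c2} (with $\tau_C^{-1}\ul{L}'$ in place of $\tau_C^{-1} L^1$), exactly as in the proof of Theorem \ref{cone}. Theorem \ref{leadingthm} shows that the leading-order parts of $f^*$ and $k^*$ give a short exact sequence of abelian groups via the bijection of Lemma \ref{bijection}, while Lemma \ref{vanishes} and Remark \ref{impossible} supply the required positivity of $q$-exponents in $h^*$ and in the differentials. The lemma then shows that $\Cone(k^*, h^*)$ is acyclic, so $(k^*, h^*)$ is a quasi-isomorphism for every $\ul{L}'$; by Yoneda, $(k,h)$ is an isomorphism in $D^\flat \GFuk(M)$. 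The main obstacle will be ensuring that the perturbation data used to define $f$, $k$, $h$ is compatible with the $A_\infty$-structure of $\GFuk(M)$ --- that is, that these relative invariants descend to honest morphisms in the derived category and that their compositions are computed by the same Hom-complex operations used in the Yoneda pairing; this compatibility is the content of the gluing results underlying the construction of the $A_\infty$ functors $\Phi(C), \Phi(C^t)$ in \cite{Ainfty} and \cite{mau:gluing}.
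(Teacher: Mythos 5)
Your proposal follows essentially the same route as the paper's proof. Both construct the null-homotopy $h$ from the parametrized moduli space of the one-parameter family of quilted Lefschetz-Bott fibrations interpolating between the glued configuration and the nodal one where a disk bubbles off at the $C$-seam (the paper's Figure \ref{hpic}), both derive the homotopy relation from counting boundary points of the one-dimensional parametrized moduli space together with Corollary \ref{pinch}, and both reduce isomorphism in the derived category to acyclicity of a triple Hom-complex verified by the leading-order short exact sequence and energy filtration from Theorem \ref{cone}. The only cosmetic difference is that you apply the contravariant Yoneda functor $\Hom(\cdot,\ul{L}')$, whereas the paper's Lemma \ref{xyz} phrases the criterion via the covariant functor $\Hom(W,\cdot)$; this changes nothing essential. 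One small point to watch: the paper's Lemma \ref{xyz} is the honest $A_\infty$ statement and its differential contains a $\mu^3(a,f,k)$ term reflecting that $\Cone(f)$ is taken in the $A_\infty$ sense, so invoking the chain-level Lemma \ref{doublecone} ``exactly'' requires first passing through \ref{xyz} (or checking that the $\mu^3$ term also has positive order so that the filtration argument is unaffected); the paper does this implicitly, and it is not a gap but it should be made explicit.
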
 

\noindent The proof of Proposition \ref{fukver} depends on the
following lemma, whose proof is left as an exercise (c.f. \cite[Lemma
  2.6]{se:ho}).

\begin{lemma} \label{xyz}  {\rm (Sufficient condition for a morphism from a mapping
cone to be an isomorphism)} Let $\cC$ be a c-unital $A_\infty$
  category.  Let $X,Y,Z$ be objects of $\cC$ and
$$ f \in \Hom^0(X,Y) .$$
Any pair 
$$ k \in \Hom^0(Y,Z), \ \  h \in \Hom^{-1}(X,Z) $$
satisfying 
\begin{equation} \label{chain}
 \mu^1(k) = 0, \ \ \ \mu^1(f) = 0, \ \ \ 
\mu^1(h) + \mu^2(f,k) = 0 
\end{equation}
defines a morphism $\Cone(f) \to Z$ in $D^\flat(\cC)$.  This is an
isomorphism if for all $W \in \Ob(\cC)$, the complex
$$ \Hom(W,X)[2] \oplus \Hom(W,Y)[1] \oplus \Hom(W,Z) $$
with differential
\begin{equation} \label{differ}
 (a,b,c) \mapsto (\mu^1(a), \mu^1(b) + \mu^2(a,f),
\mu^1(c) + \mu^2(b,k) + \mu^2(a,h) + \mu^3(a,f,k)) 
\end{equation}
is acyclic.
\end{lemma}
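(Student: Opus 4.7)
The plan is to reduce the claim to a Yoneda-style criterion for isomorphisms in the derived $A_\infty$ category, following Seidel \cite[Lemma 2.6]{se:ho}. First, I would recall the twisted complex description of the mapping cone: $\Cone(f) = X[1] \oplus Y$ as an object in the additive enlargement of $\cC$, with twisting differential given by $f$, interpreted as an element of $\Hom^1(X[1],Y)$. The hypothesis $\mu^1(f) = 0$ is precisely the Maurer--Cartan condition making this a well-defined twisted complex. In the same formalism, a closed degree-zero pre-morphism from $\Cone(f)$ to $Z$ unpacks into a pair $(k,h)$ with $k \in \Hom^0(Y,Z)$ and $h \in \Hom^{-1}(X,Z)$, and the closedness condition $\mu^1_{\rm tw}(k,h) = 0$ in the twisted complex category unfolds, via the standard formula for $\mu^1$ in a twisted complex, into exactly the three relations $\mu^1(k) = 0$, $\mu^1(f) = 0$, and $\mu^1(h) + \mu^2(f,k) = 0$ of \eqref{chain}. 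This gives a well-defined class in $H^0\Hom(\Cone(f),Z)$, hence a morphism in $D^\flat(\cC)$.

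Next I would invoke the Yoneda lemma for $A_\infty$ categories, as developed for instance in Seidel's book: a degree-zero closed morphism $\phi: A \to B$ represents an isomorphism in $D^\flat(\cC)$ iff for every object $W$ the induced chain map $\phi_*: \Hom_\cC(W,A) \to \Hom_\cC(W,B)$ is a quasi-isomorphism of cochain complexes. Applied to $\phi = (k,h) : \Cone(f) \to Z$, this reduces the claim to showing that the cone (of cochain complexes) of $(k,h)_*$ is acyclic for every $W$.

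The third step is to identify this cone with the complex displayed in \eqref{differ}. The Hom-complex out of a twisted complex in the additive enlargement computes to a totalization: $\Hom(W,\Cone(f)) = \Hom(W,X)[1] \oplus \Hom(W,Y)$ with differential $(a,b) \mapsto (\mu^1(a),\mu^1(b) + \mu^2(a,f))$, which is exactly the first two components of \eqref{differ}. By the same twisted-complex composition rule, the chain map induced by the morphism $(k,h)$ sends $(a,b) \in \Hom(W,\Cone(f))$ to $\mu^2(b,k) + \mu^2(a,h) + \mu^3(a,f,k) \in \Hom(W,Z)$; the extra $\mu^3$ term arises from inserting the twisting morphism $f$ between $a$ and $k$. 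Taking the mapping cone of this chain map yields precisely the module $\Hom(W,X)[2] \oplus \Hom(W,Y)[1] \oplus \Hom(W,Z)$ with differential \eqref{differ}. Acyclicity of this cone is the hypothesis, so $(k,h)_*$ is a quasi-isomorphism for every $W$, and the claim follows from Yoneda.

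The main technical obstacle is bookkeeping: one must carry out the twisted complex calculus with attention to signs and degree shifts to confirm that the composition law involving the twisting by $f$ produces exactly the $\mu^3(a,f,k)$ correction term appearing in \eqref{differ}, and no others (higher corrections such as $\mu^4(a,f,f,k)$ vanish because $f$ appears only once in the twisting). Once this verification is in place, both the identification of the cone-of-cone with the complex \eqref{differ} and the final Yoneda step are entirely formal.
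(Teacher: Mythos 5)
Your proposal is correct and follows exactly the route the paper intends: the paper leaves this lemma as an exercise with a pointer to Seidel's book and to \cite[Lemma 2.6]{se:ho}, and the standard argument there is precisely your twisted-complex unpacking of $\Cone(f)=X[1]\oplus Y$ together with the Yoneda criterion that a closed degree-zero morphism is invertible in $D^\flat(\cC)$ iff it induces quasi-isomorphisms on all $\Hom(W,-)$ complexes, whose cone you correctly identify with \eqref{differ} (including the single $\mu^3(a,f,k)$ correction and the vanishing of higher insertions of $f$).
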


\begin{proof}[Proof of Proposition \ref{fukver}] 
 Consider the relative cochain-level invariant associated to the surface
 in the center in Figure \ref{hpic}.
\begin{figure}[h]
\begin{picture}(0,0)%
\includegraphics{hpic.pstex}%
\end{picture}%
\setlength{\unitlength}{4144sp}%
\begingroup\makeatletter\ifx\SetFigFontNFSS\undefined%
\gdef\SetFigFontNFSS#1#2#3#4#5{%
  \reset@font\fontsize{#1}{#2pt}%
  \fontfamily{#3}\fontseries{#4}\fontshape{#5}%
  \selectfont}%
\fi\endgroup%
\begin{picture}(5795,1614)(698,-1482)
\put(3597,-264){\makebox(0,0)[lb]{{{C}%
}}}
\put(3597, 75){\makebox(0,0)[lb]{{{L}%
}}}
\put(3578,-87){\makebox(0,0)[lb]{{{M}%
}}}
\put(3588,-455){\makebox(0,0)[lb]{{{B}%
}}}
\put(6027,-409){\makebox(0,0)[lb]{{{M}%
}}}
\put(6037,-1025){\makebox(0,0)[lb]{{{B}%
}}}
\put(5779,-362){\makebox(0,0)[lb]{{{L}%
}}}
\put(5825,-751){\makebox(0,0)[lb]{{{C}%
}}}
\put(1159,-264){\makebox(0,0)[lb]{{{C}%
}}}
\put(1159, 75){\makebox(0,0)[lb]{{{L}%
}}}
\put(1140,-87){\makebox(0,0)[lb]{{{M}%
}}}
\put(1149,-455){\makebox(0,0)[lb]{{{B}%
}}}
\put(2604,-543){\makebox(0,0)[lb]{{{$\rho \to 0$}%
}}}
\put(5071,-520){\makebox(0,0)[lb]{{{$\rho \to 1$}%
}}}
\end{picture}%
\caption{Null-homotopy for the composition $\Phi(C) \Phi(C^t) L$ to
  $\tau_C L$}
\label{hpic}
\end{figure}
We consider a family of deformations of this surface depending on a
parameter $\rho$ as follows.  As $\rho \to 1$, deform the glued
surface so that a disk with values in $E_{C,r}$ bubbles off.  By the
proof of Proposition \ref{pinch}, the relative invariant for the
picture on the right corresponding to $\rho = 1$ is zero on the
cochain level for $r$ sufficiently large.  As $\rho \to 0$, we pinch
off a pair of pants as in the left side of Figure \ref{hpic}.  Let
$$\widetilde{\M}(x) = \bigcup_{\rho \in [0,1]} \{ \rho \} \times
\widetilde{\M}^\rho(x) $$
denote the parametrized moduli space for this deformation consisting
of pairs of a parameter $\rho$ and a holomorphic quilt for the quilt
corresponding to the parameter.  Standard transversality arguments
show that $\widetilde{\M}(x)$ is smooth for generic choices of perturbation
data.  The monotonicity conditions imply the absence of sphere and
disk bubbling, hence compactness of the moduli space.  The boundary of
the moduli space admits a natural identification
$$ \partial \widetilde{\M}(x)_1 \cong \bigcup_{y,z} (\M_1(y)_0 \times \M_2(z)_0
\times {\M}(x,y,z)_0) \cup \bigcup_{y} (\widetilde{\M}(y)_0 \times
       {\M}(x,y)_0).
$$
Here ${\M}(x,y),{\M}(x,y,z) $ are the moduli spaces for the $2$
resp.\ $3$ marked disk, counted by the compositions $\mu^1,\mu^2$.
The first part consists of the $\rho = 0$ boundary of $\widetilde{\M}(x)_1$,
and corresponds to $\mu^2(f,k)$. The other boundary components at
$\rho \in (0,1)$ are formed by splitting off Floer trajectories $v \in
{\M}(x,y)$ for $L,C^t,C,L$.  Define
$$ h \in CF(L,C^t,C,L), \ \ h = \sum_{(u,\rho) \in \widetilde{\M}(y)_0}
o(u) q^{A(u)} \bra{y} .$$
Then by counting the ends of the one-dimensional component of the
moduli space we obtain
$$ 0 = \sum_{\substack{x \in \cI(L,C^t,C,L) \\ (u,\rho) \in \partial
  \widetilde{\M}(x)_1}} o(u) q^{A(u)} \bra{x} = \mu^2(f,k) + \mu^1(h) $$
as claimed.

Now let $L^1$ be another object in $D^\flat \Fuk^{\sharp}(M)$, for
simplicity unquilted.  Acyclicity of the differential \eqref{differ}
is shown as follows.  It suffices to prove acyclicity with $L$
replaced with $\tau_C^{-1}L$.  The terms of lowest order in $q$ are
$\mu^2(f,a)$ and $\mu^2(k,b)$.  As in Section \ref{exact}, the leading
term of $\mu^2(f,a)$ corresponds to the canonical injection $ \cI(L^1,
C^t,C,\tau_C^{-1} L) \to \cI(L^1,\tau_C^{-1} L) $.  On the other hand,
the leading term of $\mu^2(k,b)$ corresponds to the canonical
injection $ \cI(L^1,L) \to \cI(L^1,\tau_C^{-1} L) .$ As before, the
lowest order terms in complex are acyclic, after a small shift in the
$\R$-degrees of the generators.  Filtering the complex by energy shows
that entire complex is acyclic. An application of Lemma \ref{xyz}
completes the proof of Proposition \ref{fukver}.
\end{proof} 

Theorem \ref{fukthm} follows by taking the long exact sequence
associated to the mapping cone in Proposition \ref{fukver}.

\subsection{Periodic Fukaya-categorical version} 
\label{closedfukaya}

In this section we discuss a version of the triangle taking values in
$D^\flat \GFuk(M,M)$, the bounded derived Fukaya category of
generalized Lagrangian correspondences from $M$ to $M$.  Recall from
\cite{Ainfty} that the empty correspondence $\emptyset$ from $M$ to
$M$ considered as a sequence of length zero induces the identity
$$\Phi(\emptyset) = \Id: \GFuk(M) \to \GFuk(M) $$
on the Fukaya category; indeed, labelling each seam by the empty set
has the effect of ``removing the seam''. 

\begin{theorem}  {\rm (Exact triangle in the derived Fukaya category 
of correspondences)} Let $(M,\omega)$ be a symplectic background, and
  $i: C \to M$ a coisotropic submanifold of codimension at least two,
  whose null foliation $p: C \to B$ is spherically fibrating over a
  manifold $B$, equipped with an admissible brane structure.  Let
  $\tau_C : M \to M$ denote a fibered Dehn twist along $C$.  There is
  an exact triangle in $D^\flat \GFuk(M,M)$ of the form
\vskip .05in
$$ \tri{$\emptyset$} {$C^t {\sharp} C[\dim(B)].$}{\hskip -.0in
  $\on{\graph}(\tau_C) $}
$$
\end{theorem}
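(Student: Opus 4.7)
The strategy parallels that of Theorem \ref{fukthm}, with every quilted surface now closed up so as to produce morphisms between generalized Lagrangian correspondences from $M$ to $M$ rather than between objects of $\GFuk(M)$. My plan is to construct two morphisms
\[
f \in \Hom_{\GFuk(M,M)}(C^t \sharp C[\dim(B)], \emptyset), \qquad k \in \Hom_{\GFuk(M,M)}(\emptyset, \graph(\tau_C))
\]
as relative invariants of appropriate quilted surfaces, exhibit a canonical null-homotopy of $\mu^2(f,k)$, and conclude via Lemma \ref{xyz} that the resulting morphism $\Cone(f) \to \graph(\tau_C)$ is an isomorphism, which yields the stated exact triangle.

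First I would build $f$ from a cylindrical version of the quilted surface in Figure \ref{first}: a quilt with one incoming cylindrical end carrying the generalized correspondence $C^t \sharp C$ and one outgoing end carrying $\emptyset$, with a circular interior seam labelled $C$ that bounds a $B$-patch. Similarly, $k$ should be the relative invariant of a Lefschetz-Bott fibration over a cylinder with monodromy $\tau_C$, patterned on Figure \ref{second}, with seam labels chosen so that the output lies in $\Hom(\emptyset, \graph(\tau_C))$. Both constructions are the direct cylindrical analogues of the morphisms from Section \ref{fukayaversion}, adapted so that their outputs land in the morphism spaces of $\GFuk(M,M)$.

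Next I would produce the null-homotopy $h$ with $\mu^1(h) + \mu^2(f,k) = 0$ by rerunning the parametrized moduli space argument of Proposition \ref{fukver} on the cylindrical analogue of Figure \ref{hpic}. At one end ($\rho=0$) the family degenerates to the composition $\mu^2(f,k)$, while at the other end ($\rho=1$) it pinches off a standard Lefschetz-Bott fibration disk whose relative invariant vanishes by Corollary \ref{pinch} for $r$ sufficiently small. Counting boundary points of the one-dimensional parametrized moduli space yields $h$, and Lemma \ref{xyz} then produces a canonical morphism $(k,h):\Cone(f) \to \graph(\tau_C)$ in $D^\flat \GFuk(M,M)$.

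To show this morphism is an isomorphism I would reduce to Theorem \ref{fukthm} by a Yoneda argument. For any admissible brane $\ul{L} \in \GFuk(M)$, the gluing identity \eqref{gluealongends} for quilted relative invariants implies that applying the functor $\Phi(-)\ul{L}$ to the triple $(f,k,h)$ reproduces precisely the morphisms and null-homotopy used in Proposition \ref{fukver}, so the induced map $\Phi(\Cone(f))\ul{L} \to \tau_C \ul{L}$ is an isomorphism in $D^\flat \GFuk(M)$ for every $\ul{L}$. The acyclicity criterion built into Lemma \ref{xyz}, tested universally in $\ul{L}$, then forces $(k,h)$ itself to be an isomorphism, and rotating the mapping-cone triangle produces the claimed exact triangle. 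The principal technical obstacle is the one already flagged after Theorem \ref{period}: identifying the empty correspondence $\emptyset$ with the diagonal $\Delta \subset M^- \times M$ fails at the level of relative spin structures, since $\Delta$ carries no canonical such structure. All quilted invariants involving $\emptyset$ must therefore be set up via the seam-removal formalism of \cite{Ainfty} rather than by substitution of $\Delta$, and the coherent orientations provided by Theorem \ref{EF} must be chosen compatibly throughout the relevant moduli spaces so that the chain-level identities $\mu^1(f)=\mu^1(k)=0$ and $\mu^1(h)+\mu^2(f,k)=0$ hold with the correct signs.
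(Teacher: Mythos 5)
Your proposal follows the paper's outline for most of its length: the cylindrical analogues of $f$ and $k$, the null-homotopy $h$ produced from a parametrized family that pinches off a standard Lefschetz--Bott fibration and is killed by Corollary \ref{pinch}, and the invocation of Lemma \ref{xyz} to produce a morphism $\Cone(f)\to\graph(\tau_C)$ in $D^\flat\GFuk(M,M)$ all match what the paper does. The caveat about the empty correspondence versus the diagonal and relative spin structures is also correctly identified.

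The gap is in your final step. Lemma \ref{xyz} asks for acyclicity of the complex \eqref{differ} for \emph{every} test object $W\in\Ob(\cC)$ where $\cC=\GFuk(M,M)$, i.e.\ for every generalized Lagrangian correspondence from $M$ to $M$. Your Yoneda-style reduction instead checks only that, after applying the functor $\Phi(-)$, the induced map $\Phi(\Cone(f))\ul{L}\to\tau_C\ul{L}$ is an isomorphism for every $\ul{L}\in\GFuk(M)$. This is a strictly weaker statement: it amounts to testing against objects $W$ of a special form (those arising as $\ul{L}^t\sharp\ul{L}'$ under folding), not against an arbitrary $W\in\GFuk(M,M)$. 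To promote it to an isomorphism of $(k,h)$ one would need the functor $\GFuk(M,M)\to\Fun(\GFuk(M),\GFuk(M))$ to be conservative, which is nowhere established in the paper or in \cite{Ainfty}. What the paper does instead (implicitly, in the phrase ``as in Lemma \ref{xyz}'') is run the same energy-filtration acyclicity argument as in the proof of Proposition \ref{fukver}, but directly against an arbitrary generalized correspondence $W$ inserted at the boundary of the quilted annulus: the bijection of Lemma \ref{bijection} and the leading-order exactness of Theorem \ref{leadingthm} go through verbatim with the strip replaced by an annulus, and the resulting $\R$-graded complex is acyclic for every such $W$. You should replace the Yoneda reduction by this direct verification.
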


\begin{proof}[Sketch of proof]
The proof is the similar to that of Theorem \ref{fukver}, replacing
the strip-like ends with cylindrical ends.
\begin{figure}[ht]
\includegraphics{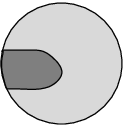}%
\caption{Quilt defining the morphism from $(C^t,C)[\dim(B)]$ to
  $\emptyset$}
\label{quiltcap6}
\end{figure}
The morphism from $C \sharp C^t$ to $\emptyset$ is obtained from the
quilted cap $\ul{S} =(S_M, S_B)$ in Figure \ref{quiltcap6}, where
\begin{itemize} 
\item the outer circle represents a quilted cylindrical end with seams
  $C^t,C$, 
\item the lightly shaded patch $S_M$ maps to $M$ and
\item the darkly shaded patch $S_B$ maps to $B$.
\end{itemize}
\begin{figure}[ht]
\includegraphics{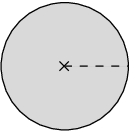}%
\caption{Quilt defining the morphisms from $\emptyset$ to
$\graph(\tau_C)$}
\label{quiltcap7}
\end{figure}
The morphism from $\emptyset$ to $\graph(\tau_C)$ is defined by the
Lefschetz-Bott fibration over the cap shown in Figure \ref{quiltcap7},
where the outer circle represents a cylindrical end with monodromy
around the end given by $\tau_C$.  The composition of the two maps is
defined by the surface shown in Figure \ref{quiltcap8}.  By deforming
the singularity on the surface onto a disk with boundary condition in
$C$, one obtains a null-homotopy of the composition.  This
null-homotopy defines, as in Lemma \ref{xyz}, an isomorphism of the
mapping cone $\Cone(C^t {\sharp} C[\dim(B)] \to \emptyset)$ with
$\tau_C$.
\begin{figure}[h]
\includegraphics{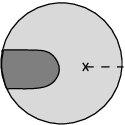}
\caption{Composition of quilts}
\label{quiltcap8}
\end{figure}
Applying the
functor
$$\GFuk(M,M) \to \Fun(\GFuk(M),\GFuk(M))$$
constructed in \cite{Ainfty} this leads to an exact triangle in
$\Fun(D^\flat \GFuk(M),D^\flat \GFuk(M))$
$$ \tri{$\on{id}$}
{\hskip -.3in $D^\flat \Phi(C) \circ D^\flat \Phi(C^t)$.}
{\hskip -.1in $D^\flat \Phi(\tau_C) $} 
 $$
That is, there exists an isomorphism in $\Fun(D^\flat \GFuk(M),
D^\flat \GFuk(M))$, 
$$ D^\flat \Phi(\tau_C) \to \Cone( D^\flat \Phi(C) D^\flat \Phi(C^t)
\to \on{id}) .$$
Applying this exact triangle to any object $L$ of $D \GFuk(M)$ leads
to the exact triangle given in Theorem \ref{fukthm}. 
\end{proof}  

\begin{remark} {\rm (\ainfty results for minimal Maslov two) } 
Similar results hold in the case case of minimal Maslov number two for
the \ainfty categories $\GFuk(M,w)$ whose objects $L$ have disk
invariant $w = w(\ul{L})$, counting the number of Maslov index disks
passing through a generic point in $\ul{L}$. See \cite[Section
  4.4]{Ainfty} for more on the disk invariant and Fukaya category
$\GFuk(M,w)$.
\end{remark} 

\def\cprime{$'$} \def\cprime{$'$} \def\cprime{$'$} \def\cprime{$'$}
\def\cprime{$'$} \def\cprime{$'$}
\def\polhk#1{\setbox0=\hbox{#1}{\ooalign{\hidewidth
      \lower1.5ex\hbox{`}\hidewidth\crcr\unhbox0}}} \def\cprime{$'$}
\def\cprime{$'$}

\end{document}